\newcommand{\tdot}{circle [radius=0.1]}
\newtheorem{theorem}{Theorem}[section]
\newtheorem{proposition}[theorem]{Proposition}
\newtheorem{lemma}[theorem]{Lemma}
\newtheorem{corollary}[theorem]{Corollary}
\theoremstyle{definition}
\newtheorem{definition}[theorem]{Definition}
\newtheorem{example}[theorem]{Example}
\newtheorem{remark}[theorem]{Remark}
\numberwithin{figure}{section}
\newcommand{\A}{\mathcal{A}}
\newcommand{\C}{\mathcal{C}}
\newcommand{\D}{\mathcal{D}}
\newcommand{\I}{\mathcal{I}}
\renewcommand{\L}{\mathcal{L}}
\newcommand{\M}{\mathcal{M}}
\renewcommand{\O}{\mathcal{O}}
\renewcommand{\P}{\mathcal{P}}
\newcommand{\R}{\mathcal{R}}
\renewcommand{\S}{\mathrm{Sp}} 
\renewcommand{\AA}{\mathbb{A}}
\newcommand{\CC}{\mathbb{C}}
\newcommand{\EE}{\mathbb{E}}
\newcommand{\II}{\mathbb{I}}
\newcommand{\LL}{\mathbb{L}}
\renewcommand{\SS}{\mathbb{S}}
\newcommand{\TT}{\mathbb{T}}
\newcommand{\ZZ}{\mathbb{Z}}
\newcommand{\id}{\text{id}}
\renewcommand{\i}{\infty}
\newcommand{\too}{\longrightarrow}
\newcommand{\op}{\mathrm{op}}
\newcommand{\cg}{\mathrm{cg}}
\newcommand{\prop}{\mathrm{prop}}
\newcommand{\nrv}{\mathrm{N}}
\newcommand{\PrL}{\mathcal{P}r^{\mathrm{L}}}
\newcommand{\co}{\colon\thinspace}
\newcommand{\into}{\hookrightarrow}
\DeclareMathOperator{\Mod}{Mod}
\DeclareMathOperator{\Alg}{Alg}
\DeclareMathOperator{\CAlg}{CAlg}
\DeclareMathOperator{\PGL}{PGL}
\DeclareMathOperator{\Ring}{Ring}
\DeclareMathOperator{\Br}{Br}
\DeclareMathOperator{\Aut}{Aut}
\DeclareMathOperator{\Az}{Az}
\DeclareMathOperator{\Cat}{Cat}
\DeclareMathOperator{\End}{End}
\DeclareMathOperator{\Ext}{Ext}
\DeclareMathOperator{\Fun}{Fun}
\DeclareMathOperator{\Hom}{Hom}
\DeclareMathOperator{\Map}{Map}
\DeclareMathOperator{\Mat}{Mat}
\DeclareMathOperator{\ab}{Ab}
\DeclareMathOperator{\colim}{colim}
\DeclareMathOperator{\Pic}{Pic}
\DeclareMathOperator{\Spec}{Spec}
\DeclareMathOperator{\GL}{GL}
\DeclareMathOperator{\Op}{Op}
\DeclareMathOperator{\Comm}{Comm}
\def\rrarrow{  \hspace{.05cm}\mbox{\,\put(0,-2){$\rightarrow$}\put(0,2){$\rightarrow$}\hspace{.45cm}}}
\def\rrrarrow{ \hspace{.05cm}\mbox{\,\put(0,-3){$\rightarrow$}\put(0,1){$\rightarrow$}\put(0,5){$\rightarrow$}\hspace{.45cm}}}
\def\rrrrarrow{\hspace{.05cm}\mbox{\,\put(0,-3.5){$\rightarrow$}\put(0,0){$\rightarrow$}\put(0,3.5){$\rightarrow$}\put(0,7){$\rightarrow$}
               \hspace{.45cm}}}
\renewcommand{\epsilon}{\varepsilon}
\DeclareMathOperator{\Tor}{Tor} 
\DeclareMathOperator{\Der}{Der}
\DeclareMathOperator{\pic}{\mathfrak{pic}}
\DeclareMathOperator{\br}{\mathfrak{br}}
\DeclareMathOperator{\Sq}{Sq}
\DeclareMathOperator{\Act}{Act}
\DeclareMathOperator{\LMod}{LMod}
\DeclareMathOperator{\Equiv}{Equiv}
\DeclareMathOperator{\Tot}{Tot}
\newcommand{\Rs}{{R_\star}}
\newcommand{\As}{A_\star}
\newcommand{\Bs}{{B_\star}}
\begin{document}

\title{Brauer groups and Galois cohomology of commutative ring spectra}

\author{David Gepner\thanks{Partially supported by NSF grant 1406529.}\,\, and Tyler Lawson\thanks{Partially supported by NSF grant 1206008.}}

\maketitle

\tableofcontents

\section{Introduction}
\label{sec:introduction}

The Brauer group of a field $F$, classifying central simple algebras over $F$, plays a critical role in class field theory. The definition was generalized by Auslander and Goldman \cite{auslander-goldman-brauer} to the case of a commutative ring: the Brauer group of $R$ consists of Morita equivalence classes of Azumaya algebras over $R$.

In recent years these concepts have been extended to derived algebraic geometry \cite{toen-azumaya}, to homotopy theory \cite{baker-richter-szymik}, to more general categorical frameworks \cite{johnson-azumaya-bicategory}, and generalized to the Morita theory of $E_n$-algebras \cite{haugseng-highermorita}. Associated to a commutative ring spectrum $R$, there is a category of Azumaya algebras over $R$ and a Brauer space $\Br(R)$ classifying Morita equivalence classes of such $R$. Joint work of Antieau with the first author gave an in-depth study of these Brauer spaces when $R$ is connective \cite{antieau-gepner-brauergroups}, and in particular found that the set of Morita equivalence classes could be calculated cohomologically.

There are two important tools developed in \cite{antieau-gepner-brauergroups} which make this cohomological identification possible. First, Azumaya algebras $A$ over connective $R$ are \'etale-locally trivial: there exist enough ``$\pi_*$-\'etale'' maps $R \to S$ such that $S \otimes_R A$ is Morita trivial. Second, generators descend: an $R$-linear category which is \'etale-locally a category of modules over an Azumaya algebra is a category of modules over a global Azumaya algebra. The goal of this paper is to calculate the Brauer group of {\em nonconnective} ring spectra $R$, and these tools are absent in the case when $R$ is nonconnective. Moreover, the first outright fails: there exist Azumaya algebras which are not $\pi_*$-\'etale-locally trivial.

This should not necessarily be suprising: detecting \'etale extensions on the level of $\pi_*$ is fundamentally not adequate for nonconnective ring spectra. For example, the homotopy pullback of the diagram of Eilenberg--Mac Lane spectra
\[
\xymatrix{
R \ar@{.>}[r] \ar@{.>}[d] & \CC[x,y^{\pm 1}] \ar[d]\\
\CC[x^{\pm 1},y] \ar[r] & \CC[x^{\pm 1},y^{\pm 1}]
}
\]
has a map $\CC[x,y] \to R$ which is not $\pi_*$-\'etale. On the level of module categories, however, $R$-modules are equivalent to $\CC[x,y]$-modules supported away from the origin, and so this gives an ``affine'' but nonconnective model for the open immersion $\AA^2 \setminus \{0\} \hookrightarrow \AA^2$ \cite[2.4.4]{lurie-dag8}. In this and other quasi-affine cases, the coefficient ring does not exhibit all of the useful properties of this map \cite[Section~8]{mathew-residuefields}.

Our first tool for calculations will be obstruction theory. We show that the homotopy category of those Azumaya algebras over $R$ whose underlying graded coefficient ring is a projective module over $\pi_* R$ form a category equivalent to the category of Azumaya $\pi_* R$-algebras in the graded sense (a result of Baker--Richter--Szymik \cite{baker-richter-szymik}). Moreover, we show that there exist natural exact sequences that calculate the homotopy groups of the space of automorphisms of such an Azumaya algebra. For example, the space of automorphisms of the matrix algebra $M_n(R)$ is an extension of a discrete group of ``outer automorphisms'' by a group which might be called $PGL_n(R)$. With an eye towards future applications, we have developed our obstruction theory so that one may extend from a $\ZZ$-grading to general families $\Gamma$ of elements of the Picard groupoid of $R$.

Our second tool for calculations will be descent theory. For a Galois extension of ring spectra $R \to S$ with Galois group $G$ in the sense of Rognes \cite{rognes-galois} we develop descent-theoretic methods for lifting Azumaya algebras and Morita equivalences from $S$ to $R$. In particular, there are maps $B\Pic(S)^{hG} \to \Br(S)^{hG} \xrightarrow{\sim} \Br(R)$. The first map is an equivalence above degree zero and an injection on $\pi_0$, with image consisting of those Morita equivalence classes of $R$-algebras which become Morita trivial $S$-algebras. This allows us to use calculations with the homotopy fixed-point spectrum of the Picard spectrum $\pic(S)$ from \cite{mathew-stojanoska-picard} to detect interesting Brauer classes, and employ an obstruction theory for cosimplicial spaces due to Bousfield \cite{bousfield-obstructions} to lift Azumaya algebras. In order to carry this out we need to connect the space of autoequivalences of a module to the space of autoequivalences of its endomorphism algebra. We will make heavy use of the machinery of $\i$-categories to make this possible.

In Section~\ref{sec:calculations} we will collect these together and apply them to calculations. For even-periodic ring spectra $E$, we find that the algebraic Azumaya algebras are governed by the Brauer--Wall group \cite{small-brauerwallgroup} and are generated by three phenomena: ordinary Azumaya algebras over $\pi_0 E$,  $\ZZ/2$-graded ``quaternion'' algebras over $E$, and (if $2$ is invertible) associated $1$-periodic ring spectra.  In particular, all algebraic Azumaya algebras over $KU$ are Morita trivial, and the algebraic Azumaya algebras over Lubin--Tate spectra have either $4$ or $2$ Morita equivalence classes depending on whether $2$ is invertible or not.\footnote{We note that Angeltveit--Hopkins--Lurie have announced the existence of a host of ``exotic'' elements exhausting the $K(n)$-local Brauer group of a Lubin--Tate spectrum $E$, obtained as $E$-module Thom spectra on tori.}

Finally, our most difficult calculation studies Azumaya $KO$-algebras which become Morita-trivial $KU$-algebras; we show that there exist exactly two Morita equivalence classes of these. The nontrivial Morita equivalence class is realized by an ``exotic'' $KO$-algebra lifting $M_2(KU)$ which we construct by finding a path through an obstruction theory spectral sequence. This requires a careful analysis of what happens near the bottom of the homotopy fixed-point spectral sequence for $B\Pic(KU)^{hC_2}$.

\subsection*{Acknowledgements}

The authors would like to thank 
Benjamin Antieau,
Andrew Baker,
Clark Barwick,
Saul Glasman,
Akhil Mathew,
Lennart Meier,
Birgit Richter,
Vesna Stojanoska,
and
Markus Szymik
for discussions related to this paper.

\section{Homological algebra}

In this section we will recall some important results on categories of graded objects, their algebras, and their homological algebra.

\subsection{Graded objects}

\begin{definition}
A Picard groupoid $\Gamma$ is a symmetric monoidal groupoid such that the monoidal operation makes $\pi_0(\Gamma)$ into a group.

Given a symmetric monoidal category $\C$, the Picard groupoid $\Pic(\C)$ is the groupoid of objects in $\C$ which have an inverse under the monoidal product, with maps being isomorphisms between them.
\end{definition}
We will abusively use the symbol $+$ to denote the symmetric monoidal structure on a Picard groupoid $\Gamma$, and write $0$ for the unit object.

\begin{definition}
For an ordinary category $\C$, we define the category $\C_\Gamma$ of $\Gamma$-graded objects to be the category of contravariant functors $M_\star\co \Gamma^\op \to\C$, and for $\gamma \in \Gamma$ we write $M_\gamma$ for the image. In particular, $\ab_\Gamma$ is the category of $\Gamma$-graded abelian groups.
\end{definition}

Suppose $\C$ is cocomplete and symmetric monoidal under an operation $\otimes$ with unit $\II$.  If $\otimes$ preserves colimits in each variable separately, then $\C_\Gamma$ has a symmetric monoidal closed structure given by the Day convolution product.  Specifically, its values are given by
\[
(M \otimes N)_\gamma = {\colim}_{\alpha + \beta \to \gamma} M_\alpha \otimes N_\beta,
\]
and the unit is given by the functor $\gamma \mapsto \coprod_{\Hom(\gamma, 0)} \II$.
Making choices of representatives for all isomorphism classes $[\gamma] \in \pi_0 \Gamma$ gives rise to a noncanonical isomorphism
\[
(M \otimes N)_{\gamma} \cong \coprod_{\{([\alpha], [\beta])\ |
\ \alpha + \beta \cong \gamma\}} M_\alpha \otimes_{\Aut_\Gamma(0)} N_\beta.
\]

\begin{definition}
A $\Gamma$-graded commutative ring $\Rs$ is a commutative monoid object in $\ab_\Gamma$. The unit of $\Rs$ is the induced map $\ZZ[\Aut_\Gamma(0)] \to R_0$.
\end{definition}
\begin{proposition}
The category $\Mod_{\Rs}$ of $\Gamma$-graded $\Rs$-modules is a symmetric monoidal closed abelian category, with tensor product $\otimes_{\Rs}$, internal $\Hom$ objects $F_{\Rs}(-,-)$, and arbitrary products and coproducts which are exact.
\end{proposition}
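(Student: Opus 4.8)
The plan is to recognize $\Mod_{\Rs}$ as the category of modules over a commutative monoid in the symmetric monoidal closed abelian category $\ab_\Gamma$, and then to run through the standard formal consequences of that situation; the only genuinely non-formal input will be the good exactness behavior of $\ab$ itself.

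First I would record the ambient structure. Since $\Gamma$ is essentially small and $\ab$ is a complete and cocomplete abelian category, the functor category $\ab_\Gamma=\Fun(\Gamma^\op,\ab)$ is again abelian, with all small limits and colimits, each computed gradingwise. Because $\ab$ is cocomplete symmetric monoidal closed with $\otimes$ preserving colimits in each variable, the discussion preceding the statement equips $\ab_\Gamma$ with the Day convolution symmetric monoidal closed structure; write $\underline\otimes$ for its product and $\underline F(-,-)$ for its internal hom. Bilinearity of the convolution and gradingwise computation of colimits show that $\underline\otimes$ is additive and right exact in each variable.

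Next, $\Rs$ is by definition a commutative monoid in $\ab_\Gamma$, and $\Mod_{\Rs}$ is its category of modules, equivalently the category of algebras for the monad $\Rs\,\underline\otimes\,(-)$. The forgetful functor $U\co\Mod_{\Rs}\to\ab_\Gamma$ has left adjoint $\Rs\,\underline\otimes\,(-)$, the free module functor, and --- since $\ab_\Gamma$ is closed --- right adjoint $\underline F(\Rs,-)$; it is moreover monadic, hence faithful and conservative, so it creates all small limits, and (the monad being itself a left adjoint) all small colimits, and in particular it is exact and detects exactness. Since $U$ creates kernels and cokernels, reflects isomorphisms, and is exact, the usual argument shows $\Mod_{\Rs}$ is abelian. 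Its tensor product is the relative tensor product $M\otimes_{\Rs}N=\coeq\!\big(M\,\underline\otimes\,\Rs\,\underline\otimes\,N\rightrightarrows M\,\underline\otimes\,N\big)$ formed in $\ab_\Gamma$ with the evident $\Rs$-action; this exists by cocompleteness, is symmetric monoidal with unit $\Rs$ because $\underline\otimes$ is and preserves these reflexive coequalizers, and has the concrete value displayed before the statement. The internal hom $F_{\Rs}(M,N)$ is the equalizer of the two natural maps $\underline F(M,N)\rightrightarrows\underline F(\Rs\,\underline\otimes\,M,N)$ that cut out the $\Rs$-linear maps, again an $\Rs$-module, and the adjunction $\Hom_{\Mod_{\Rs}}(L\otimes_{\Rs}M,N)\cong\Hom_{\Mod_{\Rs}}(L,F_{\Rs}(M,N))$ follows by a diagram chase from the tensor--hom adjunction in $\ab_\Gamma$.

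Finally, arbitrary products and coproducts in $\Mod_{\Rs}$ are created by $U$, hence computed gradingwise in $\ab$; since products and coproducts of abelian groups are exact (Grothendieck's axioms AB4* and AB4) and $U$ is exact and detects exactness, they are exact in $\Mod_{\Rs}$ as well. There is no single difficult step: the work lies in correctly invoking the Day-convolution closed structure, checking that $\ab_\Gamma$ has --- and $\underline\otimes$ preserves --- the colimits needed to build $\otimes_{\Rs}$ and $F_{\Rs}$, and transporting the tensor--hom adjunction to modules. The one place where one leaves formal category theory behind is the exactness of infinite (co)products, which rests on the specific AB4/AB4* properties of $\ab$ rather than on abstract nonsense.
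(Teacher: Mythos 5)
There is no written proof of this proposition in the paper --- it is stated without argument as a piece of standard homological algebra. Your proof is correct and is essentially the standard one that the authors had in mind: identify $\Mod_{\Rs}$ as the category of modules over a commutative monoid in the symmetric monoidal closed abelian category $\ab_\Gamma$ (equipped with its Day convolution structure), note that the forgetful functor to $\ab_\Gamma$ is monadic for a colimit-preserving monad and has a right adjoint $\underline F(\Rs,-)$, so it creates all limits and colimits and detects exactness, and then read off the abelian structure, the relative tensor product and its internal hom, and finally the exactness of infinite (co)products from the AB4 and AB4* properties of $\ab$. One small imprecision worth correcting: the formula ``displayed before the statement'' in the paper is the value of the Day convolution $\underline\otimes$ on $\ab_\Gamma$, not of the relative tensor $\otimes_{\Rs}$; the latter is obtained from the former only after passing to the reflexive coequalizer. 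This does not affect the validity of the argument.
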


\begin{remark}
Suppose that $A$ and $G$ are abelian groups and $\epsilon$ is a bilinear pairing $A \times A \to G$. Then $\epsilon$ determines the structure of a Picard groupoid on $\Gamma = A \times BG$. The monoidal structure is split, in the sense that it is the product of the abelian group structures on $A$ and $BG$, but the natural symmetry isomorphism $\tau_{a,b}\co a + b \to b + a$ is given by $\epsilon_{a,b} \in \Aut(a+b)$. In particular, the splitting usually does not respect the symmetry isomorphism.

A $\Gamma$-graded commutative ring then consists of an $A$-indexed collection $R_\gamma$ of abelian groups, multiplication maps $R_\alpha \otimes R_\beta \to R_{\alpha+\beta}$, and a homomorphism $G \to R_0^{\times}$. These are required to satisfy associativity and unitality conditions, and the commutativity condition takes the form  $x \cdot y = \epsilon_{\alpha,\beta} (y \cdot x)$ for $x \in R_\alpha$, $y \in R_\beta$. The category of graded $R$-modules then inherits a symmetric monoidal structure using $\epsilon$ to describe a ``Koszul sign convention'' for the tensor product. We thus recover the framework of \cite{childs-garfinkel-orzech,ikai-gradedazumaya} without the assumption that $R$ is concentrated in degree zero.
\end{remark}

For $\gamma \in \Gamma$, write $\ZZ^\gamma$ for the $\Gamma$-graded abelian group obtained from the $\Gamma$-graded set $\Hom_\Gamma(-,\gamma)$ by taking the free group levelwise. We have natural isomorphisms $\ZZ^\alpha \otimes \ZZ^\beta \to \ZZ^{\alpha + \beta}$ that determine a functor $\Gamma \to \Pic(\ab_\Gamma)$.
Let the suspension operator $\Sigma^\gamma$ be the tensor product with $\ZZ^\gamma$, an automorphism of the category of $\Rs$-modules. There is an isomorphism $M_\delta \cong (\Sigma^\gamma M)_{\gamma + \delta}$, and this extends to isomorphisms
\[
M_\gamma \cong \Hom_{\Rs}(\Sigma^\gamma \Rs, M_\star).
\]
In general, if $I$ is a finite $\Gamma$-graded set, we will write $\Rs^I$ for the free $\Gamma$-graded $\Rs$-module on $I$, which can be constructed as the tensor product of $\Rs$ with the free $\Gamma$-graded abelian group on $I$.

\begin{definition}
Suppose $\As$ is an algebra in the category $\Mod_{\Rs}$. We call a right $\As$-module $P_\star$ a {\em graded generator} if $\{\Sigma^\gamma P_\star\}_{\gamma \in \Gamma}$ is a set of compact projective generators of $\Mod_{\As}$.
\end{definition}

For example, $\Rs$ is always a graded generator of $\Mod_{\Rs}$. It is unlikely to be a generator of $\Mod_{\Rs}$ in the ordinary sense unless the $\Gamma$-graded ring $\Rs$ contains units in $R_\gamma$ for each $\gamma \in \Gamma$.

Let $\theta\co \Gamma\to\Gamma'$ be a homomorphism of Picard groupoids and let $\Rs$ be a $\Gamma$-graded commutative ring.  The pullback functor $\theta^*$ from $\Gamma'$-graded modules to $\Gamma$-graded modules has a left adjoint $\theta_!$, given by left Kan extension along $\theta$.

\begin{proposition}
Suppose $\C$ is cocomplete and symmetric monoidal, and that the symmetric monoidal structure preserves colimits in each variable. Then the functor $\theta_!\co \C_\Gamma \to \C_\Gamma'$ is symmetric monoidal.
\end{proposition}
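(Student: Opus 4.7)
My plan is to verify directly that $\theta_!$ respects Day convolution by computing both $\theta_!M \otimes \theta_!N$ and $\theta_!(M \otimes N)$ as colimits and identifying them. The two crucial ingredients will be that $\theta$ is symmetric monoidal (so $\theta(\alpha + \beta) \cong \theta(\alpha) + \theta(\beta)$ naturally and $\theta(0) \cong 0$), and that the tensor product on $\C$ preserves colimits in each variable, allowing nested colimits to commute with $\otimes$.

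The pointwise formula for left Kan extension gives $(\theta_! M)_{\gamma'} \cong \colim_{\theta(\alpha)\to\gamma'} M_\alpha$. Applying this to $\theta_!M \otimes \theta_!N$, unfolding the Day convolution, and pulling the tensor through the colimits yields a colimit of $M_\alpha \otimes N_\beta$ indexed over tuples $(\alpha, \beta, \alpha', \beta',\, \theta(\alpha)\to\alpha',\, \theta(\beta)\to\beta',\, \alpha'+\beta'\to\gamma')$. Since $\Gamma'$ is a groupoid, this indexing category is equivalent to the simpler one of tuples $(\alpha, \beta,\, \theta(\alpha)+\theta(\beta)\to\gamma')$, by collapsing $\alpha' \cong \theta(\alpha)$ and $\beta' \cong \theta(\beta)$. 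A parallel computation expresses $\theta_!(M \otimes N)_{\gamma'}$ as a colimit of $M_\alpha \otimes N_\beta$ indexed by tuples $(\alpha, \beta, \delta,\, \alpha+\beta\to\delta,\, \theta(\delta)\to\gamma')$, which by the same groupoid argument reduces to $(\alpha, \beta,\, \theta(\alpha+\beta)\to\gamma')$. The strong monoidal isomorphism $\theta(\alpha+\beta) \cong \theta(\alpha) + \theta(\beta)$ identifies these two indexing data, giving the required natural isomorphism. The unit is handled by a similar but easier computation using $\theta(0) \cong 0$.

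The main obstacle is not the existence of these isomorphisms but their coherence: I must check that the natural isomorphisms constructed above respect the associativity, symmetry, and unit constraints of the Day convolution on both sides. This reduces mechanically to the coherence axioms for $\theta$ as a symmetric monoidal functor together with the naturality of the cofinality identifications, but it is the bookkeeping-heavy step most prone to error. A cleaner alternative, if one wishes to avoid the explicit check, is to invoke the universal property of Day convolution: symmetric monoidal colimit-preserving functors out of $\C_\Gamma$ correspond to symmetric monoidal functors out of $\Gamma$, so the observation that $\theta_!$ is the unique colimit-preserving extension of the composite $\Gamma \xrightarrow{\theta} \Gamma' \to \C_{\Gamma'}$ automatically endows it with a symmetric monoidal structure.
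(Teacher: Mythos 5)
Your primary argument is correct and is in substance the same as the paper's. The paper organizes the identical computation more compactly: it observes that $\theta_!(M\otimes N)$ and $(\theta_! M)\otimes(\theta_! N)$ are the left Kan extensions of $M\otimes N\co \Gamma^\op\times\Gamma^\op\to\C$ along the two composites around the square comparing $+\co\Gamma^\op\times\Gamma^\op\to\Gamma^\op$ with $+\co(\Gamma')^\op\times(\Gamma')^\op\to(\Gamma')^\op$, using colimit-preservation of $\otimes$ in each variable to recognize the lower-left composite as $(\theta_! M)\otimes(\theta_! N)$ and the monoidal constraint of $\theta$ (the natural isomorphism filling the square) to compare the two composites. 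Your pointwise unfolding of the Day convolution and the Kan extension formula, followed by the reduction of the indexing categories (which harmlessly uses that $\Gamma'$ is a groupoid, as Picard groupoids are), is exactly this argument written out explicitly; and both you and the paper leave the compatibility with associativity, symmetry, and unit constraints as a routine diagram check.

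The ``cleaner alternative'' you offer at the end, however, is not correct as stated. The category $\C_\Gamma=\Fun(\Gamma^\op,\C)$ is the free symmetric monoidal cocompletion of $\Gamma$ only when $\C=\Set$: for general cocomplete $\C$ the objects $\coprod_{\Hom_\Gamma(-,\gamma)}\II$ coming from $\Gamma$ do not generate $\C_\Gamma$ under colimits (unless $\II$ generates $\C$), so a colimit-preserving functor out of $\C_\Gamma$ is not determined by its restriction to $\Gamma$, and symmetric monoidal colimit-preserving functors out of $\C_\Gamma$ do not correspond to symmetric monoidal functors out of $\Gamma$. Already for $\Gamma$ trivial your statement would assert that a cocomplete symmetric monoidal category admits an essentially unique symmetric monoidal colimit-preserving endofunctor, which is false; likewise ``the unique colimit-preserving extension'' of $\Gamma\to\Gamma'\to\C_{\Gamma'}$ does not single out $\theta_!$. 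The shortcut can be repaired by invoking the $\C$-enriched (equivalently $\C$-linear) form of the Day convolution universal property, noting that $\theta_!$ commutes with the pointwise $\C$-tensoring, but that is a heavier input than the proposition requires; your direct argument (and the paper's Kan-extension packaging of it) is the one to keep.
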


\begin{proof}
For $M$, $N$ objects of $\C_\Gamma$, we consider the square
\[
\xymatrix{
\Gamma^\op \times \Gamma^\op \ar[r]^-+ \ar[d] & \Gamma^\op \ar[d] \\
(\Gamma')^\op \times (\Gamma')^\op \ar[r]_-+ & (\Gamma')^\op.
}
\]
The object $\theta_! (M \otimes N)$ is obtained by starting with $M \otimes N\co \Gamma^\op \times \Gamma^\op \to \C$ and taking Kan extension along the two functors in the upper-right portion of the square. Because the tensor product preserves colimits in each variable, the composite Kan extension of $M \otimes N$ along the lower-left portion of the square is canonically isomorphic to $(\theta_! M) \otimes (\theta_! N)$. The natural isomorphism making the square commute determines a natural isomorphism between these two composites. Similar diagrams show that when $\theta$ preserves the unit and is compatible with the associativity, symmetry, and unit isomorphisms, $\theta_!$ does the same.
\end{proof}

In particular, the ring $\Rs$ gives rise to a $\Gamma'$-graded ring $(\theta_! R)_\star$ defined by the formula
\[
(\theta_! R)_{\gamma'}=\colim_{\gamma' \to \theta (\gamma)} R_\gamma.
\]
Moreover, an $\Rs$-module $M_\star$ determines an $(\theta_! R)_\star$-module $(\theta_! M)_\star$.

We also have the notion of a $\theta$-graded ring map $\Rs\to \Rs'$, which is just a $\Gamma'$-graded ring map $\theta_! \Rs\to \Rs'$.
Given a $\theta$-graded ring map $\Rs\to \Rs'$, we obtain a functor
\[
(-)\otimes_{\Rs} \Rs'\co \Mod_\Rs\too\Mod_{\Rs'}
\]
which sends the $\Rs$-module $M_\star$ to the $\Rs'$-module $M'_\star:=M_\star\otimes_{\Rs} \Rs'$ defined by
\[
M'=(\theta_! M)_\star\otimes_{(\theta_! R)_\star} \Rs'.
\]
Here the tensor product on the right is the usual base-change along a $\Gamma'$-graded ring map.

\begin{proposition}
For a map $\theta\co \Gamma \to \Gamma'$ and a $\theta$-graded map $\Rs \to \Rs'$, the functor
$
(-)\otimes_{\Rs} \Rs'\co \Mod_\Rs\too\Mod_{\Rs'}
$
is symmetric monoidal.
In particular, it extends to a functor
$
(-)\otimes_{\Rs} \Rs'\co \Alg_{\Rs}\too\Alg_{\Rs'}
$
between categories of algebra objects.
\end{proposition}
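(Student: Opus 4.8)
The plan is to factor the functor $(-)\otimes_\Rs\Rs'$ as a composite of two symmetric monoidal functors and treat each in turn. By definition a $\theta$-graded ring map $\Rs\to\Rs'$ is the same datum as a $\Gamma'$-graded ring map $\theta_!\Rs\to\Rs'$, and the defining formula for base change reads $M_\star\otimes_\Rs\Rs'=(\theta_!M)_\star\otimes_{\theta_!\Rs}\Rs'$. Thus $(-)\otimes_\Rs\Rs'$ is the composite
\[
\Mod_\Rs\xrightarrow{\ \theta_!\ }\Mod_{\theta_!\Rs}\xrightarrow{\ (-)\otimes_{\theta_!\Rs}\Rs'\ }\Mod_{\Rs'},
\]
so it suffices to show each arrow is symmetric monoidal.

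For the first arrow I would invoke the previous proposition with $\C=\ab$, which gives that $\theta_!\co\ab_\Gamma\to\ab_{\Gamma'}$ is symmetric monoidal; moreover $\theta_!$ is a left adjoint (it is left Kan extension along $\theta$), hence cocontinuous. A cocontinuous symmetric monoidal functor between cocomplete symmetric monoidal categories whose tensor preserves colimits carries the commutative algebra $\Rs$ to $\theta_!\Rs$ and induces a symmetric monoidal functor $\Mod_\Rs(\ab_\Gamma)\to\Mod_{\theta_!\Rs}(\ab_{\Gamma'})$: the relative tensor product $\otimes_\Rs$ is computed by the usual reflexive coequalizer (bar construction) formed from $\otimes$ and the module actions, $\theta_!$ preserves this coequalizer and carries its structure maps to the corresponding ones over $\theta_!\Rs$, and the coherence isomorphisms are inherited from those of $\theta_!$ on $\ab_\Gamma$. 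On underlying graded abelian groups this functor sends $M_\star$ to $(\theta_!M)_\star$, so it is exactly the functor $\theta_!$ occurring in the factorization.

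For the second arrow this is ordinary extension of scalars along a map $S_\star\to\Rs'$ of $\Gamma'$-graded commutative rings, with $S_\star=\theta_!\Rs$. I would verify directly that the canonical comparison maps give natural isomorphisms $\Rs'\cong S_\star\otimes_{S_\star}\Rs'$ and $(M_\star\otimes_{S_\star}\Rs')\otimes_{\Rs'}(N_\star\otimes_{S_\star}\Rs')\cong(M_\star\otimes_{S_\star}N_\star)\otimes_{S_\star}\Rs'$ of $\Rs'$-modules, compatible with the unit, associativity, and symmetry constraints; equivalently, one uses that base change to modules over a commutative algebra object of a symmetric monoidal category is always symmetric monoidal. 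Composing the two functors yields the first assertion.

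Finally, a (lax) symmetric monoidal functor sends associative algebra objects to associative algebra objects and algebra maps to algebra maps, so $(-)\otimes_\Rs\Rs'$ restricts to $\Alg_\Rs\to\Alg_{\Rs'}$, giving the last claim. I expect the only genuine subtlety to lie in the first arrow: one must be careful that ``$\theta_!$ on module categories'' is symmetric monoidal for the \emph{relative} tensor products and not merely levelwise, which is precisely where the cocontinuity of $\theta_!$ enters; the second arrow and the passage to algebra objects are formal.
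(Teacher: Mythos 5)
The paper states this proposition without proof, evidently regarding it as a formal consequence of the preceding setup. Your argument is correct and follows exactly the factorization the paper builds into the definition ($M'_\star = (\theta_! M)_\star \otimes_{(\theta_! R)_\star} \Rs'$): first $\theta_!$ applied to modules, then ordinary base change along the $\Gamma'$-graded ring map $\theta_!\Rs\to\Rs'$. Your observation that the only nonformal point is that $\theta_!$ respects \emph{relative} tensor products---which follows from $\theta_!$ being a cocontinuous strong symmetric monoidal functor, so it carries the defining (reflexive) coequalizer over $\Rs$ to the corresponding one over $\theta_!\Rs$---is precisely the right thing to flag, and the remaining steps (base change along a commutative algebra map is symmetric monoidal; symmetric monoidal functors induce functors on algebra objects) are standard, as you say.
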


\begin{proposition}
Suppose $\A$ is an additive symmetric monoidal category with unit $\II$ such that the monoidal product is additive in each variable, and that we have a symmetric monoidal functor $\Gamma \to \Pic(\A)$ given by $\gamma \mapsto A^\gamma$. Then there is a canonical additive, lax symmetric monoidal functor $\phi\co \A \to \ab_\Gamma$, sending $M$ to the object $M_\star$ with
\[
M_\gamma = \Hom(A^\gamma, M).
\]
In particular, $\II_\star$ is a $\Gamma$-graded commutative ring, and $\phi$ lifts to the category of $\II_\star$-modules.
\end{proposition}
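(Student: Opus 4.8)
The plan is to recognize $\phi$ as a restricted Yoneda embedding and to harvest the lax symmetric monoidal structure from the interaction of Day convolution with Yoneda and with restriction. Working inside an essentially small symmetric monoidal full subcategory of $\A$ that contains the objects $A^\gamma$ and is closed under $\otimes$ and the unit (this suffices, since the structure maps produced below are visibly functorial on all of $\A$ and independent of such a choice), consider the $\ab$-enriched presheaf category $\Fun(\A^\op,\ab)$ with Day convolution. By Day's theorem this is symmetric monoidal closed and the Yoneda embedding $y\co \A \too \Fun(\A^\op,\ab)$ is strong symmetric monoidal. The hypothesis provides a strong symmetric monoidal functor $i\co \Gamma \too \A$, $\gamma \mapsto A^\gamma$ (every symmetric monoidal functor into $\Pic(\A)\subseteq\A$ is such), so the restriction functor $i^*\co \Fun(\A^\op,\ab)\too\Fun(\Gamma^\op,\ab) = \ab_\Gamma$ is lax symmetric monoidal, being the right adjoint of the strong symmetric monoidal functor ``left Kan extension along $i$'' between Day-convolution categories. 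The composite $i^*\circ y$ carries $M$ to the presheaf $\gamma\mapsto \Hom(A^\gamma,M) = M_\gamma$, i.e.\ it is exactly $\phi$; it is additive because $\Hom$ out of a fixed object is additive and $y$, $i^*$ preserve biproducts, and it is lax symmetric monoidal as a composite of (strong, hence lax) and lax symmetric monoidal functors.

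For concreteness I would then unwind this to explicit formulas. Using the colimit presentation of Day convolution, $(M_\star \otimes N_\star)_\gamma$ is a quotient of $\bigoplus_{\alpha + \beta \to \gamma} M_\alpha \otimes_\ZZ N_\beta$, so a map to $\phi(M\otimes N)_\gamma = \Hom(A^\gamma, M\otimes N)$ amounts to a $\ZZ$-bilinear map $M_\alpha\otimes_\ZZ N_\beta \too \Hom(A^\gamma,M\otimes N)$ for each $\alpha+\beta\to\gamma$, compatibly. Given $f\co A^\alpha\to M$ and $g\co A^\beta\to N$ one assigns the composite $A^\gamma \cong A^{\alpha+\beta}\cong A^\alpha\otimes A^\beta \xrightarrow{f\otimes g} M\otimes N$, where the first isomorphism uses that $\Gamma$ is a groupoid and the second is the structure isomorphism of the monoidal functor $i$. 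Bilinearity is additivity of $\otimes$ on morphisms of $\A$; that these maps form a cocone over the indexing diagram (hence descend) is the naturality of $A^\alpha\otimes A^\beta\cong A^{\alpha+\beta}$ together with functoriality of $\gamma\mapsto A^\gamma$. The unit map sends $u\in\Hom_\Gamma(\gamma,0)$ to $A^\gamma\xrightarrow{A^u}A^0\cong\II$ and extends $\ZZ$-linearly. Verifying the associativity, unit, and symmetry coherence diagrams for these data then reduces to the coherence already present in $i$ and in $\A$.

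The last two sentences of the statement follow by general nonsense about lax symmetric monoidal functors, which is what I would cite: such a functor carries the unit object, canonically a commutative algebra, to a commutative algebra, so $\phi(\II) = \II_\star$ is a $\Gamma$-graded commutative ring; and since every $M\in\A$ is canonically a module over $\II$ via the unit isomorphism $\II\otimes M\cong M$, and lax symmetric monoidal functors send $\II$-modules to $\phi(\II)$-modules with structure map $\II_\star\otimes\phi(M)\to\phi(\II\otimes M)\cong\phi(M)$, naturally in $M$, the functor $\phi$ lifts to $\Mod_{\II_\star}$. The only real obstacle is bookkeeping: either justifying the passage to a small subcategory so that the Day-convolution machinery literally applies, or, if one prefers to build the lax structure by hand, carrying the coherence diagram chases through. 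Neither is conceptually difficult, and I would favor the abstract route, relegating the explicit $\mu_{M,N}$ and unit formulas to a remark.
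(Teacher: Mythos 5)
Your argument is correct, and it takes a genuinely different route from the paper's. The paper simply writes down the lax structure map by hand — given $f\co A^\alpha \to M$ and $g\co A^\beta \to N$, form $A^{\alpha+\beta} \xleftarrow{\sim} A^\alpha \otimes A^\beta \xrightarrow{f\otimes g} M \otimes N$ — and then observes that the coherence diagrams for $\phi$ reduce directly to the symmetric monoidality of $\Gamma \to \Pic(\A)$ and the naturality of associators and symmetries in $\A$; it does not invoke Day convolution or Yoneda at all. You instead factor $\phi$ as $i^* \circ y$ and derive lax monoidality from two general facts: the enriched Yoneda embedding into a Day-convolution presheaf category is strong symmetric monoidal, and restriction along a strong symmetric monoidal $i$ is lax symmetric monoidal as the right adjoint of the strong monoidal left Kan extension $i_!$ (a form of the doctrinal adjunction principle). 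Your unwinding then reproduces exactly the paper's formula, so the two proofs converge at the level of the data being constructed. What the abstract route buys is conceptual transparency — the lax structure is seen to be forced, not invented — and it also makes the lift to $\II_\star$-modules come for free, which the paper's proof leaves implicit. What it costs is bookkeeping: you must justify passing to an essentially small monoidal subcategory so that Day convolution literally applies, and you should note that the relevant presheaf category on the $\Gamma$-side is over the free $\ab$-linearization $\ZZ\Gamma$ (so that $\Fun(\Gamma^\op,\ab) \simeq [\ZZ\Gamma^\op,\ab]$ and $i$ factors through $\ZZ\Gamma \to \A$ before restricting). Neither issue is a gap — you flag the first, and the second is routine — but the paper's direct construction simply sidesteps them.
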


\begin{proof}
Since $\A$ is additive, the set $\Hom(M,N)$ of maps from $M$ to $N$ admits an abelian group structure such that composition is bilinear. This determines the functor $\phi$. It remains to show that $\phi$ is lax symmetric monoidal.

The lax monoidal structure map sends a pair $(A^\alpha \to M)$ in $M_\alpha$ and $(A^\beta \to N)$ in $N_\beta$ to the composite determined by
\[
A^{\alpha + \beta} \xleftarrow{\sim} A^\alpha \otimes A^\beta \to M \otimes N,
\]
an element in $(M \otimes N)_{\alpha + \beta}$. The natural associativity and commutativity diagrams
\[
\xymatrix{
A^\alpha \otimes A^\beta \ar[r]^\sim \ar[d] & A^\beta \otimes A^\alpha \ar[d] &
(A^\alpha \otimes A^\beta) \otimes A^\gamma \ar[r]^\sim \ar[d] & 
A^\alpha \otimes (A^\beta \otimes A^\gamma) \ar[d] \\
M \otimes N \ar[r]^\sim & N \otimes M &
(M \otimes N) \otimes P \ar[r]^\sim & M \otimes (N \otimes P)
}
\]
(together with a similar unitality diagram) reduce the proof that $\phi$ is a lax symmetric monoidal functor to the fact that $\Gamma \to \Pic(\A)$ is symmetric monoidal.
\end{proof}

\begin{definition}
Suppose $\A$ is an additive symmetric monoidal category such that the monoidal product is additive in each variable, and that we have a symmetric monoidal functor $\Gamma \to \Pic(\A)$ given by $\gamma \mapsto A^\gamma$. The shift operator $\Sigma^\gamma\co \A \to \A$ is defined by
\[
\Sigma^\gamma M = A^\gamma \otimes M.
\]
We then define
\[
\Hom(M,N)_\gamma:=\Hom(\Sigma^\gamma M,N).
\]
\end{definition}

The notation is compatible with the shift notation for $\Gamma$-graded abelian groups, because there is a natural isomorphism $(\Sigma^{\gamma} M)_\star  \cong \Sigma^{\gamma} (M_\star)$.

\begin{proposition}
\label{prop:graded-enrich}
In the situation of the previous definition, the $\Gamma$-graded abelian groups $\Hom(-,-)_\star$ make $\A$ into a category enriched in $\II_\star$-modules. More, this enrichment is compatible with the symmetric monoidal structure.
\end{proposition}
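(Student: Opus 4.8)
The plan is to first equip $\A$ with an enrichment in the symmetric monoidal category $\ab_\Gamma$, then observe that the unit maps coming from $\II\in\A$ automatically refine this to an enrichment in $\II_\star$-modules, and finally check the symmetric monoidal compatibility by tracking coherence diagrams exactly as in the proof that $\phi$ is lax symmetric monoidal. The conceptual engine throughout is that $\gamma\mapsto\Sigma^\gamma=A^\gamma\otimes(-)$ is a strong monoidal functor from $\Gamma$ to the groupoid of auto-equivalences of $\A$, so $\Gamma$ acts on $\A$ and $\Hom(M,N)_\star$ is the associated ``graded hom''. (If $\A$ happened to be closed with internal hom $\underline{\Hom}$, one would have $\Hom(M,N)_\star\cong\phi(\underline{\Hom}(M,N))$ and the $\II_\star$-module structure would be immediate from the previous proposition; the point of the argument below is to get by without that object.)

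For the $\ab_\Gamma$-enrichment I would define the composition pairing $\Hom(N,P)_\star\otimes\Hom(M,N)_\star\to\Hom(M,P)_\star$ on representatives over the colimit defining Day convolution: a pair $f\co A^\alpha\otimes N\to P$ in degree $\alpha$ and $g\co A^\beta\otimes M\to N$ in degree $\beta$ goes to
\[
A^{\alpha+\beta}\otimes M \xleftarrow{\ \sim\ } A^\alpha\otimes A^\beta\otimes M \xrightarrow{\ \id\otimes g\ } A^\alpha\otimes N \xrightarrow{\ f\ } P
\]
in degree $\alpha+\beta$, and the identity $u_M\co\II_\star\to\Hom(M,M)_\star$ sends $h\co A^\gamma\to\II$ to $A^\gamma\otimes M\xrightarrow{h\otimes\id}\II\otimes M\cong M$. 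One checks that this pairing is natural in the variables $A^\alpha,A^\beta$, so that it descends from the colimit (including over the $\Aut_\Gamma(0)$-action), and that associativity and unitality hold; both reduce to the coherence of the isomorphisms $A^\alpha\otimes A^\beta\cong A^{\alpha+\beta}$, i.e.\ to $\Gamma\to\Pic(\A)$ being monoidal, by the same associativity and unit diagrams used earlier.

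To upgrade to $\II_\star$-modules, note that each $\Hom(M,N)_\star$ acquires an $\II_\star$-module structure by composing the pairing above with the unit $u_N$ (equivalently with $u_M$), and that associativity of composition makes the composition pairings balanced over $\II_\star$, hence they factor through $\otimes_{\II_\star}$. The one substantive check is that this module structure is \emph{central}: postcomposition with $u_N(r)$ and precomposition with $u_M(r)$ must agree for $r\in\II_\star$. This is a diagram chase: after identifying $A^{\alpha+\beta}\otimes M$ with $A^{\beta+\alpha}\otimes M$ via the symmetry of $\Gamma$, the two composites coincide by naturality of the braiding and the interchange law for $\otimes$ in $\A$ (moving $r\co A^\alpha\to\II$ past $f\co A^\beta\otimes M\to N$). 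This is the first place one genuinely uses that $\Gamma\to\Pic(\A)$ is \emph{symmetric} monoidal rather than merely monoidal, and I expect it to be the main — though mild — obstacle.

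Finally, for compatibility with the symmetric monoidal structure I would produce the enriched tensor
\[
\Hom(M,N)_\star\otimes_{\II_\star}\Hom(M',N')_\star\too\Hom(M\otimes M',\,N\otimes N')_\star
\]
sending $(f,g)$ to $A^{\alpha+\beta}\otimes M\otimes M'\cong A^\alpha\otimes M\otimes A^\beta\otimes M'\xrightarrow{f\otimes g}N\otimes N'$, using the symmetry of $\A$ to move $A^\beta$ past $M$, and then verify functoriality with respect to composition and units, $\II_\star$-bilinearity, and that the associator, unitors, and braiding of $\A$ are enriched natural isomorphisms. Each of these is a coherence diagram that, exactly as in the proof that $\phi$ is lax symmetric monoidal, reduces to the symmetric monoidality of $\Gamma\to\Pic(\A)$ together with the coherence axioms of $\A$; none requires a new idea, so I would record these as ``similar diagrams'' rather than carry them out in full.
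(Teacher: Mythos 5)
Your proposal is correct and takes essentially the same approach as the paper: the composition pairing you write (shift $g$ by $\Sigma^\alpha$, compose with $f$, precompose with the isomorphism $\Sigma^{\alpha+\beta}\cong\Sigma^\alpha\Sigma^\beta$) is literally the paper's formula, and the unit is defined identically. The paper's own proof is considerably terser — it records only the composition and unit and asserts associativity — whereas you flag and correctly resolve the genuine subtleties (descent from the Day colimit, centrality of the $\II_\star$-action via the symmetry of $\Gamma\to\Pic(\A)$, and the enriched tensor map), so if anything your version is a more careful account of the same argument.
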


\begin{proof}
There are canonical isomorphisms $\Sigma^{\alpha+\beta} L \cong \Sigma^\alpha \Sigma^\beta L$.  Using this, we may define composition of graded maps by
\begin{align*}
\Hom(\Sigma^\alpha M,N)
\otimes 
\Hom(\Sigma^\beta L,M)
&\too
\Hom(\Sigma^\alpha M,N)
\otimes
\Hom(\Sigma^\alpha \Sigma^\beta L,\Sigma^\alpha M)\\
&\too\Hom(\Sigma^{\alpha + \beta} L,N).
\end{align*}
This composition is associative, and the unit $\II_\star \to \Hom(M,M)_\star$ sends $f\co A^\gamma \to \II$ to $f \otimes \id_M$.
\end{proof}

\begin{remark}
In section 14 of \cite{hovey-strickland-moravaktheory}, a group cohomology element in $H^3(\pi_0 \Gamma; \pi_1\Gamma)$ is described which obstructs our ability to make $\Gamma$-grading monoidal, in the sense of the functor $\otimes$ inducing an associative exterior product $\otimes\co \pi_\alpha(X) \otimes \pi_\beta(Y) \to \pi_{\alpha + \beta}(X \otimes Y)$. This group cohomology element is the unique $k$-invariant of the classifying space $B\Gamma$.

Since $\Gamma$ is assumed symmetric monoidal, $B\Gamma$ admits an infinite delooping and one can calculate that this $k$-invariant must vanish. This removes the obstruction to $\otimes$ inducing a monoidal pairing. However, this becomes replaced by a spectrum $k$-invariant
\[
\epsilon \in H^2(H\pi_0 \Gamma, \pi_1\Gamma) \cong \Hom(\pi_0 \Gamma, \pi_1 \Gamma)[2]
\]
which classifies the ``sign rule.''

More specifically, the sign rule is equivalent to a bilinear pairing $\pi_0 \Gamma \times \pi_0 \Gamma \to \pi_1 \Gamma$ sending $\alpha, \beta \in \pi_0 \Gamma$ to the element $\epsilon_{\alpha,\beta} \in \pi_1\Gamma$ such that for $x \in \pi_\alpha X$ and $y \in \pi_\beta Y$, $x \otimes y = \epsilon_{\alpha,\beta} (y \otimes x)$. (The elements $\epsilon_{\alpha,\beta}$ are not invariant under equivalence; the isomorphism with the group of $2$-torsion homomorphisms indicates that such Picard groupoids are determined completely by the $\epsilon_{\alpha,\alpha}$, together describing a $2$-torsion homomorphism $\pi_0 \Gamma \to \pi_1 \Gamma$ \cite{johnson-osorno}.)
\end{remark}

\begin{remark}
One needs to be extremely cautious with isomorphisms between $\Gamma$-graded objects due to the sign rule. For example, a casual expression like 
\[
F_{\Rs}(\Sigma^\alpha M_\star, \Sigma^\beta N_\star) = \Sigma^{\beta-\alpha} F_{\Rs}(M_\star, N_\star)
\]
hides several implicit isomorphisms \cite{adams-nasty}.
\end{remark}

\subsection{Graded Azumaya algebras}

We continue to fix a Picard groupoid $\Gamma$ and let $\Rs$ be a $\Gamma$-graded commutative ring with module category $\Mod_\Rs$.

\begin{definition}
If $\As$ is an algebra in $\Mod_\Rs$ with multiplication $\mu$, the opposite algebra $\As^\op$ is the algebra with the same underlying object and unit, but with multiplication $\mu \circ \tau$ precomposed with the twist isomorphism $\tau$.
\end{definition}
\begin{definition}
A $\Gamma$-graded Azumaya $\Rs$-algebra is an associative algebra $\As$ in the category $\Mod_\Rs$ such that
\begin{itemize}
\item the underlying module $\As$ is a graded projective generator of the category $\Mod_\Rs$, and
\item the natural map of algebras $\As \otimes_{\Rs} \As^\op \to \End_{\Rs}(\As)$, adjoint to the left action
\[
(\As \otimes_{\Rs} \As^\op) \otimes_{\Rs} \As \xrightarrow{1 \otimes \tau} \As \otimes_{\Rs} \As \otimes_{\Rs} \As^\op \xrightarrow{\mu (1 \otimes \mu)} \As,
\]
is an isomorphism.
\end{itemize}
\end{definition}

\begin{proposition}
If $P_\star$ is a graded generator of the category $\Mod_\Rs$, then the endomorphism algebra $\End_\Rs(P_\star)$ is an Azumaya $\Rs$-algebra.
\end{proposition}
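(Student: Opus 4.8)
The plan is to verify, for $\As := \End_\Rs(P_\star) = F_\Rs(P_\star,P_\star)$ equipped with its composition product, the two clauses in the definition of a graded Azumaya $\Rs$-algebra. The structural fact I would record first is that, in the symmetric monoidal abelian category $\Mod_\Rs$, the following conditions on an $\Rs$-module are equivalent: being a retract of a finite free module $\Rs^I$; being dualizable; being a compact projective object. The implications ``retract of finite free $\Rightarrow$ dualizable $\Rightarrow$ compact projective'' are formal. For ``compact projective $\Rightarrow$ retract of finite free'', cover the module by an epimorphism $\bigoplus_\gamma \Sigma^\gamma \Rs \twoheadrightarrow M$ (possible since $M_\gamma \cong \Hom_\Rs(\Sigma^\gamma\Rs, M_\star)$), split it using projectivity, and observe that the splitting factors through a finite subcoproduct by compactness.

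\emph{The underlying module of $\As$ is a graded projective generator.} Since $P_\star$ is compact projective it is a retract of some $\Rs^I$ with $I$ a finite $\Gamma$-graded set, so $\As = F_\Rs(P_\star,P_\star)$ is a retract (via the idempotent $\phi \mapsto e\phi e$) of the finite free module $F_\Rs(\Rs^I,\Rs^I)$; hence $\As$ and every $\Sigma^\gamma\As$ are compact projective. It remains to see the $\Sigma^\gamma\As$ generate, and since the $\Sigma^\gamma\Rs$ already generate it suffices to realize $\Rs$ as a retract of a finite direct sum of shifts of $\As$. Because $\{\Sigma^\gamma P_\star\}$ generates there is an epimorphism $\bigoplus_{j=1}^n \Sigma^{\gamma_j}P_\star \twoheadrightarrow \Rs$, and projectivity and compactness of $\Rs$ make $\Rs$ a retract of $\bigoplus_{j}\Sigma^{\gamma_j}P_\star$. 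Applying the functor $F_\Rs(-,P_\star)$ together with the isomorphism $F_\Rs(\Sigma^{\gamma}P_\star,P_\star) \cong \Sigma^{-\gamma}\As$ exhibits $P_\star \cong F_\Rs(\Rs,P_\star)$ as a retract of $\bigoplus_j \Sigma^{-\gamma_j}\As$, and composing the two retractions gives $\Rs$ as a retract of a finite sum of shifts of $\As$.

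\emph{The canonical map $\As \otimes_\Rs \As^\op \to \End_\Rs(\As)$ is an isomorphism.} Here I would argue purely with the duality on compact projectives. Set $P^\vee_\star := F_\Rs(P_\star,\Rs)$, so $P_\star$ is dualizable with $P^{\vee\vee}_\star \cong P_\star$. Transposition of endomorphisms is an isomorphism of $\Rs$-algebras $\As^\op = \End_\Rs(P_\star)^\op \xrightarrow{\ \sim\ } \End_\Rs(P^\vee_\star)$. Using $F_\Rs(M,-) \cong M^\vee \otimes_\Rs (-)$ for dualizable $M$ one has natural isomorphisms $\End_\Rs(M) \cong M^\vee \otimes_\Rs M$ and $(M \otimes_\Rs N)^\vee \cong M^\vee \otimes_\Rs N^\vee$, which after reshuffling tensor factors give an isomorphism of $\Rs$-algebras $\End_\Rs(M) \otimes_\Rs \End_\Rs(N) \cong \End_\Rs(M \otimes_\Rs N)$ for dualizable $M,N$. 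Taking $M = P_\star$ and $N = P^\vee_\star$ and using $P_\star \otimes_\Rs P^\vee_\star \cong \End_\Rs(P_\star) = \As$ produces
\[
\As \otimes_\Rs \As^\op \;\cong\; \End_\Rs(P_\star) \otimes_\Rs \End_\Rs(P^\vee_\star) \;\cong\; \End_\Rs(P_\star \otimes_\Rs P^\vee_\star) \;\cong\; \End_\Rs(\As).
\]
One then checks that this composite is the canonical map of the definition; since both are $\Rs$-algebra maps out of $\As \otimes_\Rs \As^\op$, it suffices to check they restrict to left multiplication on $\As \otimes 1$ and to right multiplication on $1 \otimes \As^\op$, which is a diagram chase through the identifications above.

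\textbf{Main obstacle.} The genuinely delicate part is the sign bookkeeping flagged in the remarks above: the isomorphisms $(M\otimes_\Rs N)^\vee \cong M^\vee \otimes_\Rs N^\vee$, the reshuffle of tensor factors, the identification $\As^\op \cong \End_\Rs(P^\vee_\star)$, and the twisted multiplication on $M^\vee \otimes_\Rs M$ all invoke the symmetry isomorphisms of $\Mod_\Rs$, which carry the $\epsilon$-signs; one must choose them coherently so that each step is an isomorphism of \emph{algebras} (not merely of $\Rs$-modules) and so that the resulting composite agrees with the canonical map, which is itself defined via a twist. By contrast, the generator argument and the equivalences ``compact projective $=$ dualizable $=$ retract of finite free'' are routine consequences of the closed symmetric monoidal abelian structure on $\Mod_\Rs$ with its exact products and coproducts.
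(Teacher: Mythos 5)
The paper gives no proof of this proposition---it is evidently treated as a standard classical fact, consistent with the remark a few lines later that the surrounding graded statements have proofs ``essentially identical to their classical counterparts''---so there is no paper argument to compare against. Your proof is correct and is precisely the classical argument transported into $\Mod_\Rs$: the three-fold equivalence compact projective $\Leftrightarrow$ dualizable $\Leftrightarrow$ retract of a finite free (using that $\Rs$ is compact, since $\Hom_\Rs(\Sigma^\gamma\Rs,M_\star)\cong M_\gamma$ is exact and preserves coproducts, and that the shifts $\Sigma^\gamma\Rs$ are invertible); the generation step, where you split $\bigoplus_j\Sigma^{\gamma_j}P_\star\twoheadrightarrow\Rs$ and apply the contravariant $F_\Rs(-,P_\star)$ to exhibit $\Rs$ as a retract of a finite sum of shifts of $\As$; and the duality chain
\[
\As\otimes_\Rs\As^\op\;\cong\;\End_\Rs(P_\star)\otimes_\Rs\End_\Rs(P^\vee_\star)\;\cong\;\End_\Rs(P_\star\otimes_\Rs P^\vee_\star)\;\cong\;\End_\Rs(\As)
\]
for the Azumaya condition. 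You have correctly identified the one genuinely delicate point: the symmetry isomorphisms appearing in that chain carry the $\epsilon$-signs and must be chosen coherently so that each arrow is an isomorphism of algebras, and one must then check that the composite agrees with the specific twist-corrected left-action map in the definition. For that last identification it is worth recording explicitly that two algebra maps out of $\As\otimes_\Rs\As^\op$ which agree on $\As\otimes 1$ and $1\otimes\As^\op$ must coincide, since those two subalgebras generate. Your caution about the sign bookkeeping matches the paper's own remark that casual graded formulas hide several implicit symmetry isomorphisms.
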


\begin{definition}
Let $\Cat_\Rs$ be the $2$-category of additive categories which are {\em left-tensored} over the monoidal category $\Mod_\Rs$: abelian categories $\A$ with a functor $\otimes\co \Mod_\Rs \times \A \to \A$ which preserves colimits in each variable, together with a natural isomorphism
\[
\II \otimes A \xrightarrow{\sim} A
\]
and
\[
(M \otimes N) \otimes A \xrightarrow{\sim} M \otimes (N \otimes A)
\]
that respects the unit and pentagon axioms.

Morphisms in $\Cat_{\Rs}$ are $\Mod_\Rs$-linear: colimit-preserving functors $F\co A \to A'$, together with natural isomorphisms $M \otimes F(A) \to F(M \otimes A)$ that respect associativity and the unit isomorphisms. The $2$-morphisms in $\Cat_{\Rs}$ are natural isomorphisms of functors which commute with the tensor structure.
\end{definition}

\begin{remark}
In particular, a left-tensored category $\A$ inherits suspension operators by defining $\Sigma^\gamma M = (\Sigma^\gamma R) \otimes M$ via the left action. This allows us to define graded function objects by
\[
F_\A(M, N)_\gamma = \Hom_\A(\Sigma^\gamma M, N).
\]
This definition makes a $\Mod_{\Rs}$-linear category into a category enriched in $\Rs$-modules in such a way that $\Mod_{\Rs}$-linear functors preserve this enrichment.
\end{remark}

\begin{definition}
The functor
\[
\Mod\co \Alg_\Rs \to \Cat_\Rs
\]
sends an $\Rs$-algebra $\As$ to the category $\Mod_{\As}$ of right $\As$-modules in $\Mod_\Rs$, viewed as left-tensored over $\Rs$ via the tensor product in the underlying category $\Mod_\Rs$. A map $\As \to \Bs$ is sent to the functor $\Mod_{\As} \to \Mod_{\Bs}$ given by extension of scalars. (Composite ring maps have natural isomorphisms of composite functors which satisfy a coherence condition: $\Mod$ is a pseudofunctor.)
\end{definition}

The following theorems have proofs which are essentially identical to their classical counterparts; for example, see \cite{ikai-gradedazumaya}. We will sketch the main points below.

\begin{theorem}[Graded Eilenberg--Watts]
The map sending an $\As$-$\Bs$-bimodule $L_\star$ to the functor
\[
N_\star \mapsto N_\star \otimes_{\As} L_\star
\]
determines a canonical equivalence of categories from the category ${}_{\As}\Mod_{\Bs}$ of $\As$-$\Bs$-bimodules to the category of $\Mod_\Rs$-linear functors $\Mod_{\As} \to \Mod_{\Bs}$.
\end{theorem}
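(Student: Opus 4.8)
The strategy is the standard proof of the Eilenberg--Watts theorem, carried out with attention to the $\Gamma$-grading. Write $\Phi$ for the functor of the statement, $L_\star \mapsto (-\otimes_{\As} L_\star)$, and $\Psi$ for the prospective inverse, sending a $\Mod_\Rs$-linear functor $F\co \Mod_{\As}\to\Mod_{\Bs}$ to the object $F(\As)$, where $\As$ is regarded as a right module over itself. First I would check that $\Phi$ is well defined: the functor $-\otimes_{\As} L_\star$ preserves colimits, and it is $\Mod_\Rs$-linear because the shift operators $\Sigma^\gamma$ on both sides are given by tensoring over $\Rs$ with the invertible objects $\Sigma^\gamma\Rs$, so the associativity isomorphism for the tensor product supplies the required coherence data. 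Next, $\Psi$ is well defined because $F$, being $\Mod_\Rs$-linear, preserves the enrichment in $\Rs$-modules, hence induces a map of $\Gamma$-graded rings
\[
\As \;\cong\; \End_{\As}(\As)_\star \;\too\; \End_{\Bs}(F\As)_\star,
\]
which endows the right $\Bs$-module $F(\As)$ with a compatible left $\As$-module structure, making it an $\As$-$\Bs$-bimodule. Both $\Phi$ and $\Psi$ are visibly functorial on morphisms (bimodule maps, respectively $\Mod_\Rs$-linear natural transformations evaluated at $\As$).

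I would then exhibit natural isomorphisms $\Psi\Phi\cong\id$ and $\Phi\Psi\cong\id$. For the first, $\Psi\Phi(L_\star) = \As\otimes_{\As}L_\star \cong L_\star$ canonically, and a short diagram chase shows this identification intertwines the left $\As$-action read off from the enrichment of $-\otimes_{\As}L_\star$ with the original left action on $L_\star$. For the second I must construct, for each right $\As$-module $N_\star$, a natural isomorphism $\eta_{N_\star}\co N_\star\otimes_{\As}F(\As)\xrightarrow{\sim}F(N_\star)$ of right $\Bs$-modules. When $N_\star=\Sigma^\gamma\As$ both sides are canonically $\Sigma^\gamma F(\As)$ (using $\Mod_\Rs$-linearity of $F$ to move the shift across), so $\eta$ is tautological there; it extends to finite coproducts of shifted copies of $\As$ by additivity, to arbitrary coproducts since both functors preserve colimits, and finally to an arbitrary $N_\star$ by choosing a presentation $\bigoplus_{j}\Sigma^{\gamma_j}\As\to\bigoplus_{i}\Sigma^{\delta_i}\As\to N_\star\to 0$ --- which exists because $\{\Sigma^\gamma\As\}_{\gamma\in\Gamma}$ is a set of compact projective generators of $\Mod_{\As}$ --- together with the right exactness of $-\otimes_{\As}F(\As)$ and of $F$. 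Naturality of $\eta$ in $N_\star$, and the claim that $\eta_{N_\star}$ respects the $\Bs$-actions, follow by the same reduction to the generators.

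Finally I would record that $\Phi$ is not merely essentially surjective (which is $\Phi\Psi\cong\id$) but also fully faithful: a $\Mod_\Rs$-linear natural transformation $\Phi(L_\star)\Rightarrow\Phi(L'_\star)$, evaluated at $\As$, yields a bimodule map $L_\star\to L'_\star$, and this assignment is inverse to $\Phi$ on morphisms by the same presentation argument used for $\eta$. Hence $\Phi$ is an equivalence of categories.

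The conceptual content is entirely classical, so the real work --- and the step I expect to be the main obstacle --- is the bookkeeping of the $\Gamma$-graded twist and sign data needed to make every isomorphism above honest: the identification $\End_{\As}(\As)_\star\cong\As$ with its ring structure, the compatibility of $\eta$ with the symmetry and associativity constraints of $\otimes$, and the check that the coherence isomorphisms carried by $F$ make the ``tautological'' step at $N_\star=\Sigma^\gamma\As$ genuinely compatible as $\gamma$ varies. As the remarks following Proposition~\ref{prop:graded-enrich} emphasize, such shifted graded $\Hom$- and tensor-objects conceal several sign-laden isomorphisms, and essentially the only nontrivial part of the proof is verifying that none of them obstructs the constructions.
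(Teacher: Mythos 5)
Your proposal is correct and follows essentially the same route as the paper's proof: both directions construct a quasi-inverse by extracting the $\As$-$\Bs$-bimodule $G(\As)$ (using the $\Mod_\Rs$-linear structure of $G$ to produce the left $\As$-action, in your case via the enrichment, in the paper via the tensoring isomorphism and its adjoint), and both reduce the comparison $N_\star\otimes_{\As}G(\As)\xrightarrow{\sim}G(N_\star)$ to shifted free modules via a presentation built from the graded generators $\Sigma^\gamma\As$. If anything you are slightly more thorough: the paper's proof only makes the essential-surjectivity direction ($\Phi\Psi\cong\id$) explicit, while you also spell out $\Psi\Phi\cong\id$ and full faithfulness, and you correctly flag the implicit sign-bookkeeping in the graded isomorphisms, which the paper leaves tacit.
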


\begin{proof}
Functors of the form $(-) \otimes _{\As} L_\star$ are colimit-preserving and come with a natural associativity isomorphism
\[
M_\star \otimes _{\Rs} (N_\star \otimes _{\As} L_\star) \to
(M_\star \otimes _{\Rs} N_\star) \otimes _{\As} L_\star,
\]
making them maps $\Mod_{\As} \to \Mod_{\Bs}$ in $\Cat_\Rs$. This produces the desired functor. Conversely, any $\Mod_\Rs$-linear functor $G\co \Mod_{\As} \to \Mod_{\Bs}$ preserves the shift operators $\Sigma^\gamma$ and extends to a $\Gamma$-graded functor. In particular, the action map 
\[
\As \otimes_\Rs G(A_\star) \to G(\As \otimes_\Rs \As) \to G(\As)
\]
induced by the multiplication is adjoint to a ring map $\As \to F_{\Bs}(G(\As),G(\As))$ making $G(\As)$ into an $\As$-$\Bs$-bimodule. Given the canonical presentation
\[
N_\star \cong \colim \left( \bigoplus_{\Sigma^\delta \As \to \Sigma^\gamma \As \to N_\star} \Sigma^\delta \As \rightrightarrows \bigoplus_{\Sigma^\gamma \As \to N_\star} \Sigma^\gamma A_\star \right),
\]
the two colimit-preserving functors $G$ and $(-)\otimes_{\As}G(\As)$ both give us naturally isomorphic presentations
\[
G(N_\star) \cong \colim \left( \bigoplus_{\Sigma^\delta \As \to \Sigma^\gamma \As \to N_\star} \Sigma^\delta G(\As) \rightrightarrows \bigoplus_{\As \to N_\star} \Sigma^\gamma G(A_\star) \right).
\]
Therefore, these two functors are canonically equivalent.
\end{proof}

\begin{theorem}[Graded Morita theory]
Let $\As$ be an $\Rs$-algebra, and $\Mod_{\As}^{\cg}$ be the full subcategory of  $\Mod_{\As}$ spanned by the graded generators $P_\star$. Then there are canonical pullback diagrams of categories:
\[
\xymatrix{
\Pic({}_{\As}\Mod_{\As}) \ar[r] \ar[d] &
(\Mod_{\As}^{\cg})^\simeq \ar[r] \ar[d] &
\{\Mod_{\As}\} \ar[d] \\
\{\As\} \ar[r] &
(\Alg_\Rs)^\simeq \ar[r] &
\Cat_{\Rs}.
}
\]
More generally, the fiber of $(\Mod^{\mathrm{\cg}}_{\As})^\simeq \to (\Alg_\Rs)^\simeq$ over an algebra $B_\star$ is either empty or a principal torsor for the Picard group $\Pic({}_{\As}\Mod_{\As})$ of the category of bimodules.
\end{theorem}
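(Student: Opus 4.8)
The plan is to derive everything from the Graded Eilenberg--Watts theorem together with the standard principle that a compact projective generator reconstructs its module category from its endomorphism algebra. I read the corner $\Cat_\Rs$ in these diagrams through its maximal subgroupoid, so that the pullback of $(\Alg_\Rs)^\simeq \to \Cat_\Rs \leftarrow \{\Mod_\As\}$ is the groupoid of pairs $(\Bs,\Phi)$ with $\Phi\co\Mod_{\Bs}\xrightarrow{\sim}\Mod_\As$ a $\Mod_\Rs$-linear equivalence (morphisms: algebra isomorphisms together with a compatibility $2$-cell), and the pullback of the outer rectangle is the groupoid of $\Mod_\Rs$-linear self-equivalences of $\Mod_\As$. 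The first thing to establish is the right-hand vertical map. Given a graded generator $P_\star$ of $\Mod_\As$, so that $\{\Sigma^\gamma P_\star\}$ is a set of compact projective generators, equip $P_\star$ with its tautological $\End_\As(P_\star)$-$\As$-bimodule structure; then the $\Mod_\Rs$-linear functor $N_\star\mapsto N_\star\otimes_{\End_\As(P_\star)}P_\star$ is an equivalence $\Mod_{\End_\As(P_\star)}\xrightarrow{\sim}\Mod_\As$. This is proved by the presentation argument already used for Graded Eilenberg--Watts: the functor is colimit-preserving, being $\Mod_\Rs$-linear it carries the generators $\Sigma^\gamma\End_\As(P_\star)$ to the generators $\Sigma^\gamma P_\star$, and it is fully faithful on these because the enrichment identity of Proposition~\ref{prop:graded-enrich} identifies $F_{\End_\As(P_\star)}(\Sigma^\gamma\End_\As(P_\star),\Sigma^\delta\End_\As(P_\star))\cong\Sigma^{\delta-\gamma}\End_\As(P_\star)\cong F_\As(\Sigma^\gamma P_\star,\Sigma^\delta P_\star)$; resolving an arbitrary module by shifted copies of the generator forces full faithfulness and essential surjectivity. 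This simultaneously defines the functor $(\Mod^{\cg}_\As)^\simeq\to(\Alg_\Rs)^\simeq$ by $P_\star\mapsto\End_\As(P_\star)$ and supplies the comparison equivalence needed to lift it into the pullback of the right square.

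For the right square, the functor just described sends $P_\star$ to $(\End_\As(P_\star),\,\Mod_{\End_\As(P_\star)}\xrightarrow{\sim}\Mod_\As)$. Essential surjectivity is Eilenberg--Watts: given $(\Bs,\Phi)$, the theorem writes $\Phi\cong(-)\otimes_{\Bs}\Phi(\Bs)$; setting $P_\star:=\Phi(\Bs)$, this is a graded generator (image of a compact projective generator under an equivalence), its induced left $\Bs$-action is classified by a map $\Bs\to\End_\As(P_\star)$ which is an isomorphism since $\Phi$ is fully faithful, and under these identifications $(\Bs,\Phi)$ is the image of $P_\star$. For full faithfulness one unwinds the definition: a morphism of the pullback is an algebra isomorphism $\End_\As(P_\star)\cong\End_\As(P'_\star)$ compatible with the two comparison equivalences to $\Mod_\As$, and evaluating those equivalences on the rank-one free module translates such data into an $\As$-module isomorphism $P_\star\cong P'_\star$, i.e.\ into a morphism of $(\Mod^{\cg}_\As)^\simeq$.

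For the left square and the torsor statement, note that since the right square is a pullback, the pasting lemma reduces the left square being a pullback to the outer rectangle being a pullback. The pullback of $\{\As\}\to\Cat_\Rs\leftarrow\{\Mod_\As\}$ is the groupoid of $\Mod_\Rs$-linear self-equivalences of $\Mod_\As$, which by Graded Eilenberg--Watts, viewed as a monoidal equivalence ${}_\As\Mod_\As\simeq\Fun_{\Mod_\Rs}(\Mod_\As,\Mod_\As)$ carrying $\otimes_\As$ to composition, is exactly $\Pic({}_\As\Mod_\As)$; a check that the induced map to $\{\Mod_\As\}$ agrees with the stated one completes it. The ``more generally'' clause runs on the same mechanism: the fiber of $(\Mod^{\cg}_\As)^\simeq\to(\Alg_\Rs)^\simeq$ over $\Bs$ is the groupoid of graded generators $P_\star$ together with an isomorphism $\End_\As(P_\star)\cong\Bs$; if $P_\star$ and $P'_\star$ both lie over $\Bs$, the corresponding comparison equivalences $\Mod_{\Bs}\simeq\Mod_\As$ differ by a $\Mod_\Rs$-linear self-equivalence of $\Mod_\As$, hence by Eilenberg--Watts by $(-)\otimes_\As L_\star$ for a unique invertible $\As$-bimodule $L_\star$, yielding $P'_\star\cong P_\star\otimes_\As L_\star$. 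Transitivity of the $\Pic({}_\As\Mod_\As)$-action is this statement, and freeness is the observation that a compatible isomorphism $P_\star\otimes_\As L_\star\cong P_\star$ makes $(-)\otimes_\As L_\star$ agree with $\id$ on the generator $P_\star$, hence $(-)\otimes_\As L_\star\cong\id$ and $L_\star\cong\As$.

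I expect the main obstacle to be bookkeeping rather than ideas: carrying out the presentation argument in the $\Rs$-enriched graded setting with the Koszul sign conventions of the earlier remarks, and — more pointedly — making the $2$-categorical (or $\infty$-categorical) pullbacks and the pseudofunctoriality of $\Mod$ precise enough that ``compatible with the comparison equivalences'' genuinely rigidifies the automorphism data down to honest module isomorphisms in the full-faithfulness step of the right square and in the transitivity/freeness argument for the torsor.
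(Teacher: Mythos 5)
Your proposal is correct and follows essentially the same route as the paper: both proofs hinge on the graded Eilenberg--Watts theorem to identify the pullback of $\Alg_{\Rs} \to \Cat_{\Rs} \leftarrow \{\Mod_{\As}\}$ with $(\Mod^{\cg}_{\As})^\simeq$ via $P_\star \mapsto (\End_{\As}(P_\star), (-)\otimes P_\star)$, and then identify the outer rectangle's pullback (equivalently the fiber over $\Bs$) with $\Pic({}_{\As}\Mod_{\As})$ via self-equivalences of $\Mod_{\As}$ corresponding to invertible bimodules. The only minor stylistic divergence is in verifying that a graded generator $P_\star$ induces an equivalence $\Mod_{\End_{\As}(P_\star)} \to \Mod_{\As}$: you check full faithfulness directly on the shifted generators via the enrichment isomorphism of Proposition~\ref{prop:graded-enrich} and then resolve, whereas the paper sets up the adjunction with right adjoint $F_{\As}(P_\star,-)$ (colimit-preserving because $P_\star$ is finitely generated projective) and checks the unit and counit on free modules before resolving; these are interchangeable packagings of the same presentation argument.
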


\begin{proof}
We will first identify $\Mod^{\cg}_{\As}$ with the right-hand fiber
product. The pullback of the diagram $\Alg_{\Rs} \to \Cat_{\Rs}
\leftarrow \{\Mod_{\As}\}$ is the category of pairs $(\Bs,\phi)$,
where $\Bs$ is an $\Rs$-algebra and $\phi$ is an equivalence
$\Mod_{\Bs} \to \Mod_{\As}$ in $\Cat_{\Rs}$. Such a functor is
colimit-preserving, so by the graded Eilenberg--Watts theorem such a
functor is represented by a certain type of pair $(\Bs, P_\star)$. For
this functor to be an equivalence, the graded generator $\Bs$ must map to a graded generator $P_\star$, and we must have
$\Bs = \End_{\As}(P_\star)$. It remains to show that any such
$P_\star$ determines an equivalence of categories.

Given a right $\As$-module $P_\star$ as in the statement, we obtain an 
$\Rs$-algebra $\Bs = F_{\As}(P_\star,P_\star)$ and a functor $(-) \otimes_{\Bs} P_\star \co \Mod_{\Bs} \to \Mod_{\As}$. This functor is colimit-preserving. It also has a colimit-preserving right adjoint $F_{\As}(P_\star,-)$ because $P_\star$ is finitely generated projective.

The unit map
\[
M_\star \to F_{\As}(P_\star, P_\star \otimes_{\Bs} M_\star)
\]
is an isomorphism when $M_\star = \Sigma^\gamma B_\star$. Both sides preserve colimits, and so applying this unit to a resolution $F_1 \to F_0 \to M_\star \to 0$ where $F_i$ are (graded) free modules shows that the unit is always an isomorphism.

The counit map
\[
F_{\As}(P_\star, N_\star) \otimes_{\Bs} P_\star \to N_\star 
\]
is an isomorphism when $N_\star = P_\star$. Because the set of objects $\Sigma^\gamma P_\star$ is a set of generators there always exists a resolution $F_1 \to F_0 \to N_\star \to 0$ where $F_i$ are direct sums of shifts of $P_\star$. Again, as the functors in question preserve colimits, the counit is always an isomorphism.

We now consider the left-hand square. As pullbacks can be calculated iteratively, the pullback of a diagram $\Bs \to \Alg_{\Rs} \leftarrow (\Mod^{\cg}_{\As})^\simeq$ is equivalent to the pullback of the diagram $\{\Mod_{\Bs}\} \to \Cat_{\Rs} \leftarrow \{\Mod_{\As}\}$. If these categories are inequivalent as $\Rs$-linear categories, this is empty. If these categories are equivalent, then this is instead isomorphic to the groupoid of self-equivalences of $\Mod_{\As}$. Such a functor is given up to unique isomorphism by tensoring with an $\As$-bimodule $P_\star$, and there must exist an inverse given by tensoring with an $\As$-bimodule $Q_\star$. For these to be inverse to each other, we must have an isomorphism of $\As$-bimodules
\[
P_\star \otimes_{\As} Q_\star \cong Q_\star \otimes_{\As} P_\star \cong \As.
\]
Such a $Q_\star$ exists if and only if $P_\star$ is an invertible element in the category of bimodules.
\end{proof}

\begin{corollary}[Graded Rosenberg-Zelinsky exact sequence]
\label{cor:rz}
For an Azumaya $\Rs$-algebra $\As$, there is an exact sequence of groups
\[
1 \to (R_0)^\times \to (A_0)^\times \to \Aut_{\Alg_{\Rs}}(\As) \to \pi_0 \Pic(\Mod_\Rs).
\]
The group $\Pic(\Mod_\Rs)$ acts on the set of isomorphism classes of compact generators of $\Mod_{\As}$ with quotient the set of isomorphism classes of Azumaya $\Rs$-algebras $\Bs$ such that $\Mod_{\As} \simeq \Mod_{\Bs}$. The stabilizer of $\As$, viewed as a right $\As$-module, is the image of the outer automorphism group in $\Pic(\Mod_\Rs)$.
\end{corollary}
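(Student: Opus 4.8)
The plan is to read everything off from the two pullback squares of the graded Morita theorem, once we know that for an Azumaya algebra $\As$ the bimodule category is, monoidally, just $\Mod_\Rs$. So the first thing I would record is that the functor $\Mod_\Rs \to {}_\As\Mod_\As$, $L_\star \mapsto \As \otimes_\Rs L_\star$ (left and right $\As$-actions on the $\As$-factor, with $L_\star$ sitting centrally in the middle), is a symmetric monoidal equivalence: this is the standard consequence of the isomorphism $\As \otimes_\Rs \As^\op \xrightarrow{\sim} \End_\Rs(\As)$ together with $\As$ being a graded generator of $\Mod_\Rs$ (cf.\ \cite{ikai-gradedazumaya}), the bimodule tensor product $\otimes_\As$ going over to $\otimes_\Rs$ because the central module $L_\star$ slides past the middle $\As$. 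It induces a symmetric monoidal equivalence $\Pic(\Mod_\Rs) \xrightarrow{\sim} \Pic({}_\As\Mod_\As)$, under which the forgetful functor $\Pic({}_\As\Mod_\As) \to (\Mod^{\cg}_\As)^\simeq$ of the Morita theorem becomes $L_\star \mapsto \As \otimes_\Rs L_\star$ regarded as a right $\As$-module. In particular $\pi_1\Pic({}_\As\Mod_\As) \cong \Aut_{\Mod_\Rs}(\Rs) = (R_0)^\times$, which also records $Z(\As) = \Rs$.

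Next I would use that the left square of the graded Morita theorem exhibits $\Pic({}_\As\Mod_\As)$ as the homotopy fiber of $(\Mod^{\cg}_\As)^\simeq \to (\Alg_\Rs)^\simeq$ over $\As$. Since $(\Alg_\Rs)^\simeq$ is a $1$-groupoid (so $\pi_{\ge 2}$ vanishes), the long exact sequence of this fibration sequence, with basepoint $\As$, reads
\[
1 \to \pi_1\Pic({}_\As\Mod_\As) \to \pi_1(\Mod^{\cg}_\As)^\simeq \to \pi_1(\Alg_\Rs)^\simeq \xrightarrow{\partial} \pi_0\Pic({}_\As\Mod_\As) \to \pi_0(\Mod^{\cg}_\As)^\simeq \to \pi_0(\Alg_\Rs)^\simeq.
\]
Here $\pi_1(\Mod^{\cg}_\As)^\simeq = (A_0)^\times$ (a right-module automorphism of $\As$ is left multiplication by a degree-$0$ unit), $\pi_1(\Alg_\Rs)^\simeq = \Aut_{\Alg_\Rs}(\As)$, and the two $\pi_0$'s are the isomorphism classes of graded generators of $\Mod_\As$, respectively of $\Rs$-algebras. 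Substituting $\pi_1\Pic({}_\As\Mod_\As) = (R_0)^\times$ and $\pi_0\Pic({}_\As\Mod_\As) = \pi_0\Pic(\Mod_\Rs)$ from the first paragraph gives the first four terms $1 \to (R_0)^\times \to (A_0)^\times \to \Aut_{\Alg_\Rs}(\As) \to \pi_0\Pic(\Mod_\Rs)$. Naturality of the fibration sequence identifies the maps: the first is induced by the unit $\Rs \to \As$; the second sends a unit $u$ to conjugation $a \mapsto u a u^{-1}$, this being the effect on $\End_\As(\As) = \As$ of the automorphism $u$ of the right module $\As$; and $\partial$ sends an algebra automorphism $\sigma$ to the class of the twisted bimodule ${}_1\As_\sigma$, whose underlying right module is again $\As$ --- the classical Rosenberg--Zelinsky map.

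The two remaining assertions are the residual exactness of this sequence together with the torsor clause of the Morita theorem. The last map $\pi_0\Pic(\Mod_\Rs) \to \pi_0(\Mod^{\cg}_\As)^\simeq$ is $[L_\star] \mapsto [\As \otimes_\Rs L_\star]$, which is the orbit-of-the-basepoint map for the action of $\pi_0\Pic(\Mod_\Rs)$ on isomorphism classes of graded generators defined by $[L_\star]\cdot[P_\star] = [P_\star \otimes_\Rs L_\star]$ (equivalently, tensoring on the right with the invertible bimodule $\As \otimes_\Rs L_\star$). By the torsor clause, the orbits of this action are exactly the nonempty fibers of $(\Mod^{\cg}_\As)^\simeq \to (\Alg_\Rs)^\simeq$, and by the right-hand pullback square these are indexed by the isomorphism classes of $\Rs$-algebras $\Bs$ with $\Mod_\Bs \simeq \Mod_\As$ in $\Cat_\Rs$ --- each of which is Azumaya, since the Azumaya condition on $\Bs$ depends only on the $\Mod_\Rs$-linear category $\Mod_\Bs$ with its module structure. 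This yields the stated quotient. Finally, exactness at $\pi_0\Pic(\Mod_\Rs)$ identifies the stabilizer of $[\As]$ with the image of $\partial$, and exactness at $\Aut_{\Alg_\Rs}(\As)$ identifies that image with $\Aut_{\Alg_\Rs}(\As)$ modulo the inner automorphisms, i.e.\ with the image of the outer automorphism group in $\pi_0\Pic(\Mod_\Rs)$.

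I expect the only real work to be in the first step: making precise the symmetric monoidal equivalence $\Pic({}_\As\Mod_\As) \simeq \Pic(\Mod_\Rs)$ and checking that it intertwines the bimodule tensor product with $\otimes_\Rs$ and is compatible with the Morita-theory forgetful functor, since this is the one place the Azumaya hypothesis genuinely enters. Everything after that is bookkeeping with the long exact sequence of a homotopy pullback and the identification of its boundary maps, both routine given the earlier theorems.
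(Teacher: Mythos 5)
Your argument is correct and is essentially the paper's proof: both take the long exact sequence of the left-hand pullback square of graded Morita theory, use the Morita equivalence $\As\otimes_\Rs\As^\op \sim \Rs$ to identify $\Pic({}_\As\Mod_\As)$ with $\Pic(\Mod_\Rs)$, and read off the torsor/quotient description from identifying the square with the principal fibration over $(\Cat_\Rs)^\simeq$. You spell out the boundary map and the identification of the outer automorphisms a bit more explicitly than the paper does, but the route is the same.
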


\begin{proof}
We consider the pullback diagram of categories
\[
\xymatrix{
\Pic({}_{\As}\Mod_{\As}) \ar[r] \ar[d] &
(\Mod_{\As}^{\cg})^\simeq \ar[d] \\
\{\As\} \ar[r] &
(\Alg_\Rs)^\simeq 
}
\]
obtained from graded Morita theory. This is a homotopy pullback
diagram of groupoids, and so we may take the nerve and obtain a long exact sequence in homotopy groups at the basepoint $\As$ of $\Pic$.  Put together, this gives an exact sequence
\[
\!\!\! 1 \to \Aut_{\Pic({}_{\As}\Mod_{\As})}(\As) \to \Aut_{\Mod_{\As}}(\As) \to \Aut_{\Alg_{\Rs}}(\As) \to \pi_0 \Pic({}_{\As}\Mod_{\As}).
\]
Moreover, the category of $\As$-bimodules is equivalent to the category of modules over $\As \otimes_{\Rs} \As^\op$, which is Morita equivalent to $\Rs$. This gives us a symmetric monoidal equivalence of categories $\Pic({}_{\As}\Mod_{\As}) \simeq \Pic({\Rs})$ that carries $\As$ to $\Rs$. The desired description of this exact sequence follows by identifying $\Aut_{\Mod_{\As}}(\As)$ with $A_0^\times$ and $\Aut_{\Pic(\Rs)}(\Rs)$ with $R_0^\times$.

Similarly, the description of the action of $\Pic$ follows by identifying this fiber square with the principal fibration associated to the map $(\Alg_{\Rs})^\simeq \to (\Cat_{\Rs})^\simeq$.
\end{proof}

\begin{remark}
In the exact sequence above, suppose $v \in A_\gamma$ is a unit in the graded ring $\As$. Then conjugation by $v$ determines an element in $\Aut_{\Alg_\Rs}(\As)$ whose image in $\Pic(\Mod_\Rs)$ is $[\Sigma^\gamma A]$.
\end{remark}

\subsection{Matrix algebras over graded commutative rings}
\begin{definition}
Let $\Rs$ be a $\Gamma$-graded commutative ring.
An $\Rs$-algebra is a matrix $\Rs$-algebra if it is isomorphic to the endomorphism $\Rs$-algebra
\[
\End_{\Rs}(M_\star) = F_{\Rs}(M_\star,M_\star)
\] of an $\Rs$-module of the form $M_\star\cong \Rs^I$ for some $\Gamma$-graded set $I$.
\end{definition}

In general we write $\Mat_{I}(\Rs)$ for the $\Gamma$-graded matrix algebra $\End_{\Rs}(\Rs^{I})$ and $\GL_{I}(\Rs)$ for the group $[\Aut_{\Rs}(\Rs^{I})]_0^\times$ of automorphisms of the graded $\Rs$-module $\Rs^{I}$.

\begin{proposition}
If $R_\gamma=0$ for $\gamma\neq 0$ then there is an isomorphism of groups
\[\GL_{I}(R)\cong\prod_{\gamma\in\Gamma}\GL_{I_\gamma}(R_0),\]
where the groups on the right are the usual general linear groups of the commutative ring $R_0$.
\end{proposition}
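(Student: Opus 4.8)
The plan is to compute the degree‑zero endomorphisms of the free graded module $\Rs^I$ directly: show that they form a block‑diagonal ring, with one block for each isomorphism class $[\gamma]$ in $\Gamma$ and the $[\gamma]$‑block an ordinary matrix ring over $R_0$; the claimed group isomorphism then follows by passing to units.

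First I would record the basic $\Hom$‑computation. Using the natural isomorphisms $M_\gamma\cong\Hom_\Rs(\Sigma^\gamma\Rs,M_\star)$ and $M_\delta\cong(\Sigma^\gamma M)_{\gamma+\delta}$ recalled in the subsection on graded objects, one gets
\[
\Hom_\Rs(\Sigma^\alpha\Rs,\Sigma^\beta\Rs)\;\cong\;(\Sigma^\beta\Rs)_\alpha\;\cong\;R_{\alpha-\beta},
\]
which under the hypothesis $R_\gamma=0$ for $\gamma\neq0$ vanishes unless $\alpha\cong\beta$, and is a (possibly twisted) copy of $R_0$ otherwise. Decomposing $\Rs^I=\bigoplus_\gamma\Rs^{I_\gamma}$ according to the partition of $I$ by connected components of $\Gamma$ — so $\Rs^{I_\gamma}$ is the free module on the part of $I$ lying in degree $\gamma$, built from copies of $\Sigma^\gamma\Rs$ — the computation above shows that any graded module map $\Rs^{I_\gamma}\to\Rs^{I_\delta}$ vanishes when $\gamma\not\cong\delta$. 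Since $I$ is finite, only finitely many $\Rs^{I_\gamma}$ are nonzero, so the degree‑zero part of $\Mat_I(\Rs)=\End_\Rs(\Rs^I)$ is
\[
\Hom_\Rs(\Rs^I,\Rs^I)\;\cong\;\prod_\gamma\Hom_\Rs(\Rs^{I_\gamma},\Rs^{I_\gamma})
\]
as a ring, the product running over a set of representatives for $\pi_0\Gamma$ — which is how I would read the $\prod_{\gamma\in\Gamma}$ of the statement, the factors attached to isomorphic $\gamma$ being canonically identified.

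Next I would identify each block with an ordinary matrix ring. Restricting graded $\Rs$‑modules to the component of a fixed $\gamma$, evaluation $M_\star\mapsto M_\gamma$ relates them to $R_0$‑modules (up to the action of $\Aut_\Gamma(\gamma)$, addressed below), carries the $\Rs$‑linear enrichment to the $R_0$‑linear $\Hom$, and sends $\Rs^{I_\gamma}$ to the free module $R_0^{I_\gamma}$; hence $\Hom_\Rs(\Rs^{I_\gamma},\Rs^{I_\gamma})\cong\End_{R_0}(R_0^{I_\gamma})=\Mat_{I_\gamma}(R_0)$ as rings. Combining the two displays gives $\Hom_\Rs(\Rs^I,\Rs^I)\cong\prod_\gamma\Mat_{I_\gamma}(R_0)$, and since the unit group of a finite product of rings is the product of the unit groups, $\GL_I(\Rs)=\Hom_\Rs(\Rs^I,\Rs^I)^\times\cong\prod_\gamma\GL_{I_\gamma}(R_0)$.

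The $\Hom$‑vanishing and the product‑of‑units step are routine. The one place demanding care — and the main, if modest, obstacle — is the evaluation step when $\Gamma$ has nontrivial automorphisms: one must say precisely what the free graded module $\Rs^{I_\gamma}$ is as an $R_0$‑module, choosing representatives for $\pi_0\Gamma$ and tracking how $\Aut_\Gamma(\gamma)$ acts both through the graded‑set structure of $I$ and through the unit map $\ZZ[\Aut_\Gamma(0)]\to R_0$, so as to confirm that its degree‑$\gamma$ component is the honest free $R_0$‑module on the set $I_\gamma$ and that its graded endomorphism ring is the honest matrix ring $\Mat_{I_\gamma}(R_0)$.
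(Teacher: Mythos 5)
The paper states this proposition without proof, so there is no internal argument to compare against; your proof is correct and is essentially the natural one given the surrounding material. The chain $\Hom_\Rs(\Sigma^\alpha\Rs,\Sigma^\beta\Rs)\cong(\Sigma^\beta\Rs)_\alpha\cong R_{\alpha-\beta}$ is exactly right, and it is consistent with the preceding proposition $\End_\Rs(\Rs^I)\cong\bigoplus_{i,j}\Sigma^{\gamma_j-\gamma_i}\Rs$: the degree-zero piece of each summand is $R_{\gamma_i-\gamma_j}$, which vanishes unless $\gamma_i\cong\gamma_j$, giving the block-diagonal decomposition of $[\End_\Rs(\Rs^I)]_0$ indexed by $\pi_0\Gamma$, and passing to units is immediate.

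Your closing caveat is well placed, and the way to discharge it is the one you already hint at. The displayed $\prod_{\gamma\in\Gamma}$ must be read over $\pi_0\Gamma$, as you do, since a literal product over objects of a non-skeletal groupoid over-counts. More substantively, the cleanest reading of $I_\gamma$ here is not the raw presheaf value $I(\gamma)$ (which, when $\Aut_\Gamma(\gamma)$ is nontrivial, is a torsor-like enlargement of what you want) but rather the index set of summands $\Sigma^\gamma\Rs$ appearing in $\Rs^I$, i.e. the $i$ with $\gamma_i\cong\gamma$ in the paper's parametrization $I\simeq\coprod_i\Hom(\gamma_i,-)$. With that identification, fixing once and for all isomorphisms $\gamma_i\cong\gamma$ within each block gives the honest isomorphism of the block with $\Mat_{I_\gamma}(R_0)$; the ambiguity in these choices changes the isomorphism only by an inner automorphism coming from the image of the unit map $\ZZ[\Aut_\Gamma(0)]\to R_0$, so the group isomorphism is well defined. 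With that remark your argument is complete.
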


\begin{proposition}
If $I$ is a finite $\Gamma$-graded set, then $I$ is a disjoint union of $\Hom(\gamma_i,-)$ for some $I$, and there is a canonical isomorphism of $\Rs$-modules
\[
\End_{\Rs}(\Rs^{I}) \cong \bigoplus_{i,j \in I} \Sigma^{\gamma_j - \gamma_i} \Rs.
\]
In particular, there is a natural $\Gamma$-graded set $\partial I$ such that it is of the form $\Rs^{\partial I}$.
\end{proposition}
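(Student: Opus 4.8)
The plan is to reduce all three assertions to two facts already available above: that for $\gamma\in\Gamma$ one has $\Sigma^\gamma\Rs=\Rs^{\Hom_\Gamma(-,\gamma)}$ (indeed $\Sigma^\gamma\Rs=\ZZ^\gamma\otimes\Rs$, and $\ZZ^\gamma$ is the free graded abelian group on the $\Gamma$-graded set $\Hom_\Gamma(-,\gamma)$), and that the free-module construction $I\mapsto\Rs^I$ preserves coproducts of $\Gamma$-graded sets, being a composite of left adjoints. For the first assertion I would just unwind definitions: a $\Gamma$-graded set, being a functor on the groupoid $\Gamma^\op$, splits canonically as a coproduct of its connected sub-functors (orbits); finiteness means there are finitely many of them, and --- this being exactly the condition under which $\Rs^I$ is a finite direct sum of shifts of $\Rs$ --- each orbit is a free orbit, hence a representable functor. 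Choosing an element in each orbit pins down objects $\gamma_i\in\Gamma$, well defined up to isomorphism, with $I\cong\coprod_i\Hom_\Gamma(-,\gamma_i)$, so that $\Rs^I\cong\bigoplus_i\Sigma^{\gamma_i}\Rs$ is a \emph{finite} direct sum.

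Next I would compute $\End_\Rs(\Rs^I)=F_\Rs(\Rs^I,\Rs^I)$ by pulling this finite direct sum out of each variable of the internal hom. It comes out of the first variable because $F_\Rs(-,N)$ sends finite coproducts to finite products, which agree with finite coproducts in the abelian category $\Mod_\Rs$; it comes out of the second because each summand $\Sigma^{\gamma_i}\Rs$ is compact projective --- indeed $F_\Rs(\Sigma^{\gamma_i}\Rs,-)\cong\Sigma^{-\gamma_i}(-)$ is an equivalence and hence preserves colimits. This gives
\[
\End_\Rs(\Rs^I)\;\cong\;\bigoplus_{i,j}F_\Rs(\Sigma^{\gamma_i}\Rs,\Sigma^{\gamma_j}\Rs),
\]
and each summand is identified with $\Sigma^{\gamma_j-\gamma_i}\Rs$ via the shift formula $F_\Rs(\Sigma^\alpha M,\Sigma^\beta N)\cong\Sigma^{\beta-\alpha}F_\Rs(M,N)$ together with $F_\Rs(\Rs,\Rs)\cong\Rs$. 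Combining yields the displayed isomorphism $\End_\Rs(\Rs^I)\cong\bigoplus_{i,j\in I}\Sigma^{\gamma_j-\gamma_i}\Rs$; I would note in passing that under this identification the algebra structure is the expected ``matrix'' multiplication, assembled from the composition pairings $F_\Rs(\Sigma^{\gamma_j}\Rs,\Sigma^{\gamma_k}\Rs)\otimes_\Rs F_\Rs(\Sigma^{\gamma_i}\Rs,\Sigma^{\gamma_j}\Rs)\to F_\Rs(\Sigma^{\gamma_i}\Rs,\Sigma^{\gamma_k}\Rs)$, although only the module statement is asserted.

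For the last assertion I would feed this back through the two opening facts. After fixing inverses $-\gamma_i$ in $\Gamma$, set $\partial I:=\coprod_{i,j\in I}\Hom_\Gamma(-,\gamma_j-\gamma_i)$; then $\Sigma^{\gamma_j-\gamma_i}\Rs=\Rs^{\Hom_\Gamma(-,\gamma_j-\gamma_i)}$ and the free-module construction preserves coproducts, so $\End_\Rs(\Rs^I)\cong\Rs^{\partial I}$. Different choices of inverses give isomorphic $\partial I$, so $\partial I$ is canonical up to isomorphism, and one can promote $I\mapsto\partial I$ to a functor to justify the word ``natural.''

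The one step that is not purely formal is the identification $F_\Rs(\Sigma^{\gamma_i}\Rs,\Sigma^{\gamma_j}\Rs)\cong\Sigma^{\gamma_j-\gamma_i}\Rs$: as the remark on the sign rule warns, the casual formula $F_\Rs(\Sigma^\alpha M,\Sigma^\beta N)=\Sigma^{\beta-\alpha}F_\Rs(M,N)$ conceals several twist isomorphisms stemming from the symmetry of $\Gamma$. The cleanest route is to check the isomorphism degreewise --- using $(\Sigma^\gamma M)_\delta\cong M_{\delta-\gamma}$, equivalently $M_\delta\cong\Hom_\Rs(\Sigma^\delta\Rs,M_\star)$, both sides are computed in degree $\delta$ to be $R_{\delta+\gamma_i-\gamma_j}$ --- and then to verify that these degreewise isomorphisms are natural in all the data. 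This is bookkeeping rather than a genuine obstacle; everything else is adjunctions and the handling of finite (co)products.
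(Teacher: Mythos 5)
The paper states this proposition without proof, so there is nothing to compare against directly; taken on its own, your proposal is sound in its second and third steps but contains a genuine gap in the first. You deduce ``each orbit is free'' from ``finitely many orbits,'' but these are logically independent. If $\pi_1\Gamma=\Aut_\Gamma(0)$ is nontrivial, a functor $I\co\Gamma^\op\to\Set$ can have finitely many orbits while some orbit has a nontrivial stabilizer --- e.g., take $\Gamma=BG$ with $G$ finite nontrivial and $I$ the one-point $G$-set --- and in that case the Day convolution $\ZZ[I]\otimes\Rs$ is a coinvariants construction on $\Rs$, not a shift $\Sigma^\gamma\Rs$, so the displayed formula in the proposition fails. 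In other words, the first clause is not a consequence of finiteness; it is part of what ``finite $\Gamma$-graded set'' must mean for the proposition to be true, namely a finite coproduct of representables $\Hom_\Gamma(-,\gamma_i)$, equivalently a finite set equipped with a function to objects of $\Gamma$. Your own parenthetical --- ``this being exactly the condition under which $\Rs^I$ is a finite direct sum of shifts'' --- is accurate, but it is a restatement of what needs to hold, not an argument that it does; as written the step is circular.

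Once that convention is made explicit, the first clause is definitional and the remainder of your argument is correct. Pulling the finite direct sum out of the contravariant variable of $F_\Rs$ using finite product/coproduct agreement, and out of the covariant variable using the observation that $F_\Rs(\Sigma^{\gamma_i}\Rs,-)\simeq\Sigma^{-\gamma_i}$ is an equivalence, gives
\[
\End_\Rs(\Rs^I)\cong\bigoplus_{i,j}F_\Rs(\Sigma^{\gamma_i}\Rs,\Sigma^{\gamma_j}\Rs)\cong\bigoplus_{i,j}\Sigma^{\gamma_j-\gamma_i}\Rs,
\]
and your attention to the hidden twist isomorphisms in the shift formula is appropriate --- the degreewise check via $M_\delta\cong\Hom_\Rs(\Sigma^\delta\Rs,M_\star)$ is the right way to make the identification precise. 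Setting $\partial I=\coprod_{i,j}\Hom_\Gamma(-,\gamma_j-\gamma_i)$ then recovers $\End_\Rs(\Rs^I)\cong\Rs^{\partial I}$, with naturality up to the noted choice of inverses in $\Gamma$. So the only real repair needed is to reframe the first clause as a convention (or to supply the definition of ``finite $\Gamma$-graded set'' that forces freeness) rather than presenting it as deduced from finiteness.
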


\begin{proposition}
The formation of matrix algebras is compatible with base-change.
That is, for any homomorphism $\theta\co \Gamma\to\Gamma'$ of abelian groups, any $\theta$-graded ring map $\Rs\to \Rs'$, and any finite $\Gamma$-graded set $I$, the canonical $\Rs'$-algebra map
\[
\Mat_{I}(\Rs)\otimes_{\Rs} \Rs'\too\Mat_{\theta_! I}(\Rs')
\]
is an isomorphism.
\label{basechange}
\end{proposition}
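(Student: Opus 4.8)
\section*{Proof proposal}

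The plan is to reduce the claim to two facts: that base change along $\Rs\to\Rs'$ carries the free module $\Rs^I$ to the free module $(\Rs')^{\theta_! I}$, and that base change commutes with the internal function object $F_{\Rs}(-,-)$ when the source variable is free on a finite graded set.

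For the first fact, recall that $(-)\otimes_{\Rs}\Rs'$ is by construction the composite of the symmetric monoidal, colimit-preserving functor $\theta_!$ followed by ordinary base change along the $\Gamma'$-graded ring map $\theta_!\Rs\to\Rs'$. Since left Kan extension carries a representable graded set $\Hom_{\Gamma}(-,\gamma)$ to $\Hom_{\Gamma'}(-,\theta\gamma)$, and since free-module formation is compatible with $\theta_!$, we get natural isomorphisms $\Sigma^\gamma\Rs\otimes_{\Rs}\Rs'\cong\Sigma^{\theta\gamma}\Rs'$ and, writing a finite $I$ as $\coprod_i\Hom_{\Gamma}(-,\gamma_i)$, an isomorphism $\Rs^I\otimes_{\Rs}\Rs'\cong(\Rs')^{\theta_! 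I}$. Combined with the earlier explicit description $\End_{\Rs}(\Rs^I)\cong\bigoplus_{i,j}\Sigma^{\gamma_j-\gamma_i}\Rs$ and the fact that a symmetric monoidal functor commutes with finite direct sums, this already exhibits $\Mat_I(\Rs)\otimes_{\Rs}\Rs'$ and $\Mat_{\theta_! I}(\Rs')$ as abstractly isomorphic $\Rs'$-modules.

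The real content is to see that the \emph{canonical} comparison map, which is an $\Rs'$-algebra map, is this isomorphism. That map factors as the base-change comparison $\End_{\Rs}(P_\star)\otimes_{\Rs}\Rs'\to\End_{\Rs'}(P_\star\otimes_{\Rs}\Rs')$ --- adjoint to the action of the left-hand side on $P_\star\otimes_{\Rs}\Rs'$, hence a map of algebras for every $P_\star$ --- postcomposed with the algebra isomorphism induced by $\Rs^I\otimes_{\Rs}\Rs'\cong(\Rs')^{\theta_! I}$. So it is enough to show the base-change comparison is an isomorphism for $P_\star=\Rs^I$. I would prove the slightly more flexible statement that for $M_\star$ free on a finite graded set and $N_\star$ arbitrary, the lax-monoidal comparison $F_{\Rs}(M_\star,N_\star)\otimes_{\Rs}\Rs'\to F_{\Rs'}(M_\star\otimes_{\Rs}\Rs',\,N_\star\otimes_{\Rs}\Rs')$ is an isomorphism: it holds for $M_\star=\Sigma^\gamma\Rs$ because both sides are then identified with a single shift of $N_\star\otimes_{\Rs}\Rs'$ (using $F_{\Rs}(\Sigma^\gamma\Rs,-)\cong\Sigma^{-\gamma}(-)$ and the analogous fact over $\Rs'$), and it extends to finite direct sums of shifts of $\Rs$ because both sides are additive in $M_\star$ and $(-)\otimes_{\Rs}\Rs'$ preserves finite direct sums. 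Taking $M_\star=N_\star=\Rs^I$ finishes the argument.

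I expect the main obstacle to be the bookkeeping of the implicit sign isomorphisms built into $\Gamma$-gradings: pinning down the identification $F_{\Rs}(\Sigma^\gamma\Rs,-)\cong\Sigma^{-\gamma}(-)$, tracking how $\theta$ interacts with the symmetry constraint, and --- most importantly --- confirming that matrix multiplication is transported by the comparison map rather than by a twisted version of it. Once one commits to treating $(-)\otimes_{\Rs}\Rs'$ throughout as a symmetric monoidal functor and invokes the earlier proposition that it extends to $\Alg_{\Rs}\to\Alg_{\Rs'}$ for all algebra-level assertions, these points reduce to naturality of the evaluation and composition maps, and no computation beyond the ungraded, degree-zero case is needed.
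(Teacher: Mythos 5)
Your proposal is correct, and it is close in spirit to the paper's argument --- both hinge on the previous proposition's identification of $\End_{\Rs}(\Rs^I)$ as the free module $\Rs^{\partial I}$ on an explicit finite graded set, so that the question becomes whether base change carries free modules to free modules on $\theta_!$ of the underlying graded set. But the two proofs organize this differently, and each has a small advantage. The paper's proof factors the $\theta$-graded ring map as $\Rs \to \theta_!\Rs \to \Rs'$ and handles the two pieces separately: the first by the fact that the symmetric monoidal functor $\theta_!$ carries $\Rs^{\partial I}$ to $(\theta_!\Rs)^{\partial\theta_! I}$, the second by ordinary $\Gamma'$-graded base change of a free module. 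This is very economical but stays entirely at the level of underlying modules, leaving implicit that the resulting module isomorphism is the canonical algebra map. Your proof skips the factorization and instead proves the more flexible statement that the lax-monoidal comparison $F_{\Rs}(M_\star,N_\star)\otimes_{\Rs}\Rs' \to F_{\Rs'}(M_\star\otimes_{\Rs}\Rs',\,N_\star\otimes_{\Rs}\Rs')$ is an isomorphism when $M_\star$ is free and finite, by checking $M_\star=\Sigma^\gamma\Rs$ and using additivity. That buys you a clean reason the comparison is an algebra map (it is a composite of the general base-change comparison for endomorphism algebras with the isomorphism $\Rs^I\otimes_{\Rs}\Rs'\cong(\Rs')^{\theta_! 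I}$), which is exactly the point the paper's one-line remark ``the desired map is a composite'' glosses over. Your worry about signs and symmetry isomorphisms is legitimate but, as you say, is absorbed into the statement that $(-)\otimes_{\Rs}\Rs'$ is symmetric monoidal and $\Alg$-extending --- the same appeal the paper makes tacitly by invoking that $\theta_!$ is symmetric monoidal.
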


\begin{proof}
Write $\partial I$ for the $\Gamma$-graded set as in the previous proposition. First, let us assume that $\theta$ is the identity of $\Gamma$, so that $\Rs\to \Rs'$ is just a $\Gamma$-graded ring map. 
Then $\theta_! \partial I=\partial I$ and the map $\Rs^{\partial I} \otimes_{\Rs} \Rs'\to (\Rs')^{\partial I}$ is an equivalence between free $\Rs'$-modules on the same $\Gamma$-graded set.

Now suppose instead that $\theta$ is arbitrary and $\Rs'=\theta_! \Rs$.
Then the desired map is a composite
\[
\Rs^{\partial I} \otimes_{\Rs} \Rs' \cong\theta_!(\Rs^{\partial I})\cong(\theta_! \Rs)^{\partial\theta_! I}.
\]

Finally, an arbitrary $\theta$-graded ring map $R\to R'$ is a composite of ring maps of the type treated above, so the result follows.
\end{proof}

\subsection{Derivations and Hochschild cohomology}
\label{sec:derivations}

The following recalls some of Quillen's work on cohomology for associative rings \cite{quillen-ring-cohomology}.

In a symmetric monoidal abelian category where the monoidal operation $\otimes$ preserves colimits in each variable, any algebra $A$ sits in a short exact sequence
\[
0 \to \LL_A \to A \otimes A^\op \to A \to 0
\]
of $A$-bimodules, split (as left modules) by the unit. If $A$ is the tensor algebra on a projective object $P$, then $\LL_A$ can be identified with the projective bimodule $A \otimes P \otimes A^\op$. Moreover, for any $A$-bimodule $M$ with associated square-zero extension $M \rtimes A \to A$ in $\Alg_A$, there are canonical isomorphisms
\[
\Der(A,M) = \Hom_{\Alg_A / A}(A, M \rtimes A) \cong \Hom_{{}_A\Mod_A}(\LL_A,M).
\]
This allows us to relate the derived functors of derivations, in the sense of \cite{quillen-ring-cohomology}, to Hochschild cohomology in this category. The Andr\'e-Quillen cohomology groups of $A$ with coefficients in $M$ may be identified with the nonabelian derived functors $\Der^s(A,M)$. Applying the right derived functors of $\Hom_{{}_A\Mod_A}(-,M)$ to the exact sequence defining $\LL_A$ gives us isomorphisms
\[
\Der^s(A,M) \to HH^{s+1}(A,M)
\]
for $s > 0$ and an exact sequence
\[
0 \to HH^0(A,M) \to M \to \Der(A,M) \to HH^1(A,M) \to 0.
\]

\begin{proposition}
\label{prop:hochschild-vanishing}
Suppose $\As$ is an Azumaya $\Rs$-algebra.  For any $\As$-bimodule $M_\star$ in the category of $\Gamma$-graded $\Rs$-modules, we have a short exact sequence
\[
0 \to HH^0(\As, M_\star) \to M_\star \to \Der(\As,M_\star) \to 0.
\]
Both the Hochschild cohomology groups $HH^s_{\Rs}(\As, M_\star)$ and the derived functors $\Der^s_{\Rs}(\As, M_\star)$ vanish for $s > 0$.
\end{proposition}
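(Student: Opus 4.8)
The plan is to reduce everything to the single claim that $HH^s_{\Rs}(\As, M_\star) = 0$ for $s > 0$. Granting this, the four-term exact sequence recalled just before the statement,
\[
0 \to HH^0(\As, M_\star) \to M_\star \to \Der(\As, M_\star) \to HH^1(\As, M_\star) \to 0,
\]
collapses to the asserted short exact sequence since $HH^1$ vanishes, and the isomorphisms $\Der^s(\As, M_\star) \xrightarrow{\sim} HH^{s+1}(\As, M_\star)$ for $s > 0$ immediately give $\Der^s_{\Rs}(\As, M_\star) = 0$ for $s > 0$. So all of the content lies in the Hochschild vanishing.

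To prove that, recall that $HH^s_{\Rs}(\As, M_\star) = \Ext^s_{{}_{\As}\Mod_{\As}}(\As, M_\star)$, the $\Ext$ being computed in the abelian category of $\As$-bimodules (which has enough projectives, e.g.\ free bimodules). Now use the Azumaya hypothesis exactly as in the proof of Corollary~\ref{cor:rz}: the map $\As \otimes_{\Rs} \As^\op \to \End_{\Rs}(\As)$ is an isomorphism, and since $\As$ is a graded projective generator of $\Mod_{\Rs}$, graded Morita theory supplies an exact equivalence of abelian categories ${}_{\As}\Mod_{\As} \simeq \Mod_{\Rs}$ carrying the regular bimodule $\As$ to the free module $\Rs$. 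An exact equivalence preserves $\Ext$, so $HH^s_{\Rs}(\As, M_\star) \cong \Ext^s_{\Mod_{\Rs}}(\Rs, M'_\star)$, where $M'_\star$ is the image of $M_\star$. Finally, $\Hom_{\Mod_{\Rs}}(\Rs, -)$ is the exact functor $M'_\star \mapsto M'_0$ (kernels and cokernels in $\Mod_{\Rs}$ being formed degreewise), so $\Rs$ is a projective object and $\Ext^s_{\Mod_{\Rs}}(\Rs, -)$ vanishes for $s > 0$; this also recovers $HH^0_{\Rs}(\As, M_\star) \cong M'_0$, the bimodule invariants.

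The main thing to be careful about will be bookkeeping rather than mathematics: pinning down the $\op$ and left/right conventions so that $\As$-bimodules really are modules over $\As \otimes_{\Rs} \As^\op$ and so that the Morita equivalence of Corollary~\ref{cor:rz} sends the diagonal bimodule $\As$ to $\Rs$ (so that $\Ext$ against it computes $HH^*$), and confirming that the Quillen-style identifications $HH^s = \Ext^s_{{}_{\As}\Mod_{\As}}(\As, -)$ and $\Der^s \cong HH^{s+1}$ from the preceding discussion transfer verbatim to the $\Gamma$-graded setting. If one prefers not to route through graded Morita theory for the exactness step, an equivalent option is to show directly that $\As$ is projective as an $\As$-bimodule: after base change along $\As \otimes_{\Rs} \As^\op \xrightarrow{\sim} \End_{\Rs}(\As)$ the bimodule $\As$ becomes a retract of a free module, because $\As$ is a graded retract of a free $\Rs$-module. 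Either way the argument is short once $\Ext^{>0}_{\Mod_{\Rs}}(\Rs, -) = 0$ is in hand.
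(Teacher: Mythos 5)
Your argument is correct and matches the paper's approach: the core step in both is the graded Morita equivalence ${}_{\As}\Mod_{\As} \simeq \Mod_{\Rs}$ carrying the diagonal bimodule $\As$ to $\Rs$, which you use to transport the $\Ext$-vanishing directly, while the paper uses it to show that $\As$ (and hence $\LL_{\As}$, by splitting $0 \to \LL_{\As} \to \As \otimes_{\Rs} \As^{\op} \to \As \to 0$) is a projective bimodule. Your sketched alternative---exhibiting $\As$ as projective via the identification $\As \otimes_{\Rs} \As^{\op} \simeq \End_{\Rs}(\As)$---is essentially verbatim the paper's proof.
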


\begin{proof}
Consider the short exact sequence
\[
0 \to \LL_{\As} \to \As \otimes_{\Rs} \As^\op \to \As \to 0
\]
of bimodules. The center bimodule is free, hence projective. Moreover, under the chain of Morita equivalences
\[
\Mod_{\Rs} \simeq \Mod_{\End_{\Rs}(\As)} \simeq \Mod_{\As \otimes_{\Rs} \As^\op},
\]
the image of the projective $\Rs$-module $\Rs$ is $\As$, and hence $\As$ is also projective. Therefore, the sequence splits and $\LL_{\As}$ is projective too.
\end{proof}

\section{Obstruction theory}
\label{sec:matrix-algebras}

\subsection{Gradings for ring spectra}

\begin{definition}
Let $R$ be an $\EE_\i$-ring spectrum, with $\Gamma_R$ the algebraic Picard groupoid of invertible $R$-modules and homotopy classes of equivalences; similarly let $\Gamma_\SS$ be the Picard groupoid of the sphere spectrum. A {\em grading} for $R$ is a Picard groupoid $\Gamma$ together with a commutative diagram
\[
\xymatrix{
& \Gamma \ar[dr]\\
\Gamma_{\SS} \ar[rr] \ar[ur]^\nu && \Gamma_R
}
\]
of Picard groupoids.

The {\em period} of the grading is the minimum of the set
\[
\{n > 0 \mid \nu[S^n] = 0\text{ in }\pi_0 \Gamma\} \cup \{\infty\},
\]
where $[S^n] \in \pi_0 \Gamma_{\SS}$ is the equivalence class of the $n$-sphere.
\end{definition}

A grading provides a chosen lift of the suspension $\Sigma R$ to $\Gamma$ such that the twist on $\Sigma R\otimes \Sigma R$ lifts the automorphism $-1 \in (\pi_0 R)^\times$; it also provides an action $\Gamma_\SS \times \Gamma \to \Gamma$, $(n,\gamma) \mapsto n + \gamma$, compatible with that on $\Gamma_R$. The minimal and maximal options are $\Gamma_\SS$-grading (usually referred to as ``$\ZZ$-grading'') and $\Gamma_R$-grading (usually referred to as ``Picard-grading''). If $R$ is connective (and nontrivial) then $\pi_0 \Gamma_\SS \to \pi_0 \Gamma_R$ is a monomorphism, and so $R$ has period $\i$ (usually referred to as ``not being periodic'').

Throughout this section we will assume that we have chosen a grading for $R$. This produces elements $R^\gamma \in \Mod_R$ for $\gamma \in \Gamma$ and gives the category of $R$-modules $\Gamma$-graded homotopy groups $\pi_\star M$ as in Section~\ref{sec:pic-spectra}. These homotopy groups preserve coproducts and filtered colimits, as well as take cofiber sequences to long exact sequences. The fact that weak equivalences are detected on $\ZZ$-graded homotopy groups implies the following.
\begin{proposition}
If $R$ has a grading by $\Gamma$, a map $X \to Y$ of $R$-modules is an equivalence if and only if the map $\pi_\star X \to \pi_\star Y$ is an isomorphism of $\pi_\star R$-modules.
\end{proposition}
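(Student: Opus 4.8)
The plan is to reduce the claim to the classical Whitehead theorem for spectra, using that the $\ZZ$-grading factors through $\Gamma$. Recall that for $\gamma \in \Gamma$ the graded homotopy group $\pi_\gamma M$ is $\pi_0$ of the mapping spectrum $\map_R(R^\gamma, M)$, where $R^\gamma \in \Mod_R$ is the invertible module supplied by the grading. First I would record the compatibility with ordinary homotopy groups: the commutative triangle in the definition of a grading makes the composite $\Gamma_\SS \xrightarrow{\nu} \Gamma$ carry $[S^n]$ to a class whose associated $R$-module $R^{\nu[S^n]}$ is canonically $\Sigma^n R$ (this is exactly the ``chosen lift of the suspension'' remarked on after the definition). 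Since $\map_R(\Sigma^n R, M) \simeq \Sigma^{-n}\map_R(R,M) \simeq \Sigma^{-n}M$, this yields natural isomorphisms $\pi_{\nu[S^n]} M \cong \pi_n M$ with the homotopy groups of the underlying spectrum, for every $n \in \ZZ$.

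For the ``if'' direction, suppose $f\co X \to Y$ induces an isomorphism $\pi_\star X \to \pi_\star Y$ of $\pi_\star R$-modules. In particular $f$ is an isomorphism on each graded piece $\pi_\gamma$; restricting to the classes $\gamma = \nu[S^n]$ for $n \in \ZZ$ and invoking the identification above shows $f$ induces isomorphisms $\pi_n X \to \pi_n Y$ for all $n$. Hence $f$ is an equivalence of underlying spectra, and since the forgetful functor from $\Mod_R$ to spectra creates equivalences, $f$ is an equivalence of $R$-modules.

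For the ``only if'' direction, suppose $f$ is an equivalence in $\Mod_R$. Each $R^\gamma$ is invertible, hence dualizable, so $\map_R(R^\gamma, -) \simeq R^{-\gamma} \otimes_R (-)$; tensoring with the invertible module $R^{-\gamma}$ preserves equivalences, so $\map_R(R^\gamma, X) \to \map_R(R^\gamma, Y)$ is an equivalence and $\pi_\gamma X \to \pi_\gamma Y$ is an isomorphism for every $\gamma$. These isomorphisms are natural in $\gamma$ and compatible with the multiplication maps $\pi_\alpha R \otimes \pi_\gamma M \to \pi_{\alpha+\gamma}M$ because $f$ is a map of $R$-modules, so $\pi_\star f$ is an isomorphism of $\pi_\star R$-modules.

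The real content is the identification $\pi_{\nu[S^n]} M \cong \pi_n M$ together with the trivial observation that an isomorphism of $\pi_\star R$-modules is in particular a degreewise isomorphism of abelian groups; everything else is formal. The only step needing a moment's care is making sure the $\Gamma$-graded homotopy functor detects equivalences \emph{solely because} the $\ZZ$-graded one does --- but since the degrees $\nu[S^n]$ already suffice by Whitehead, no hypothesis on the size of the image of $\nu$ is required, and there is no genuine obstacle.
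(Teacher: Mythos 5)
Your proof is correct and is exactly the argument the paper has in mind: the paper gives no explicit proof, but prefaces the proposition with the remark that ``weak equivalences are detected on $\ZZ$-graded homotopy groups,'' which is precisely your reduction via $\nu\co\Gamma_\SS\to\Gamma$ and the identification $\pi_{\nu[S^n]}M\cong\pi_n M$. The ``only if'' direction via dualizability of $R^\gamma$ and the remark about compatibility with the $\pi_\star R$-module structure are the routine half, also implicit in the paper.
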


\begin{proposition}
If $A$ is an $R$-algebra, the $\Gamma$-graded groups $\pi_\star A$ form a $\pi_\star R$-algebra. If $A$ is a commutative $R$-algebra, $\pi_\star A$ is a graded-commutative $\pi_\star R$-algebra.
\end{proposition}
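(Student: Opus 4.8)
The plan is to obtain both statements at once by specializing the earlier proposition on additive symmetric monoidal categories $\A$ equipped with a symmetric monoidal functor $\Gamma \to \Pic(\A)$ (the one producing a canonical additive, lax symmetric monoidal $\phi\co \A \to \ab_\Gamma$ with $\phi(M)_\gamma = \Hom_\A(A^\gamma, M)$) to the case $\A = \mathrm{ho}(\Mod_R)$, the homotopy category of $R$-module spectra. Indeed, the $\Gamma$-graded homotopy groups are $\pi_\gamma M = \Hom_{\mathrm{ho}(\Mod_R)}(\Sigma^\gamma R, M) = \Hom_{\mathrm{ho}(\Mod_R)}(R^\gamma, M)$, which is exactly the formula of that proposition with $A^\gamma := R^\gamma$; so $\pi_\star M = \phi(M)$.

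First I would verify the hypotheses of the cited proposition. The category $\mathrm{ho}(\Mod_R)$ is additive (being the homotopy category of a stable $\i$-category) and symmetric monoidal under the relative smash $\otimes_R$ with unit $R$; since $\otimes_R$ preserves colimits in each variable $\i$-categorically, it preserves finite biproducts on the homotopy category, so the monoidal product is additive in each variable. The remaining datum, a symmetric monoidal functor $\Gamma \to \Pic(\mathrm{ho}\Mod_R)$, is provided by the grading: the map $\Gamma \to \Gamma_R$ is a homomorphism of Picard groupoids, and $\Gamma_R$ — the groupoid of invertible $R$-modules and homotopy classes of equivalences — is precisely $\Pic(\mathrm{ho}\Mod_R)$ in the sense of the paper, so $\gamma \mapsto R^\gamma$ is symmetric monoidal. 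By the very definition of a grading, the symmetry of $\Gamma$ is carried to the genuine twist isomorphisms on modules (the twist on $\Sigma R \otimes_R \Sigma R$ lifts $-1 \in (\pi_0 R)^\times$), so the sign rule encoded by $\Gamma$ is the one actually realized by $R$-modules.

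Then, applying the cited proposition, I would obtain an additive, lax symmetric monoidal functor $\phi\co \mathrm{ho}(\Mod_R) \to \ab_\Gamma$ with $\phi(M) = \pi_\star M$, such that $\phi(R) = \pi_\star R$ is a $\Gamma$-graded commutative ring and $\phi$ refines to a functor into $\pi_\star R$-modules. An $R$-algebra $A$ is an algebra object of $\Mod_R$, and passage to the homotopy category is symmetric monoidal, so $A$ gives an algebra object of $\mathrm{ho}(\Mod_R)$ — a commutative one when $A$ is commutative. Since a lax symmetric monoidal functor carries (commutative) algebra objects to (commutative) algebra objects, and since $\phi$ lands in $\pi_\star R$-modules, we conclude that $\pi_\star A$ is a $\pi_\star R$-algebra, graded-commutative when $A$ is, with commutativity understood with respect to the sign rule on $\Gamma$.

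I do not expect a serious obstacle here: the real content has been absorbed into the earlier proposition, and what remains is matching conventions. The one point deserving care is to confirm that the multiplication on $\pi_\star A$ produced abstractly by $\phi$ agrees with the expected pairing $\pi_\alpha A \otimes \pi_\beta A \to \pi_{\alpha+\beta}(A \otimes_R A) \to \pi_{\alpha+\beta} A$ — external smash followed by the multiplication $\mu\co A \otimes_R A \to A$. This is immediate from the explicit description of the lax structure map of $\phi$ in the proof of the earlier proposition, which sends $(R^\alpha \to A)$ and $(R^\beta \to A)$ to $R^{\alpha+\beta} \xleftarrow{\sim} R^\alpha \otimes_R R^\beta \to A \otimes_R A$; postcomposing with $\mu$ recovers exactly that pairing, and the Koszul signs are precisely those coming from the symmetry of $\Gamma$. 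So the verification is bookkeeping rather than mathematics.
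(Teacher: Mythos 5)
Your proof is correct and is the argument the paper sets up but leaves implicit: it specializes the earlier proposition on additive symmetric monoidal $\A$ equipped with $\Gamma \to \Pic(\A)$ to $\A = \mathrm{ho}(\Mod_R)$, identifying $\pi_\star$ with the lax symmetric monoidal functor $\phi$ landing in $\pi_\star R$-modules, so that algebra objects (resp.\ commutative algebra objects) of $\mathrm{ho}(\Mod_R)$ pass to $\pi_\star R$-algebras (resp.\ graded-commutative ones with the sign rule determined by $\Gamma$). Your closing sanity check that the lax structure map realizes the usual smash-then-multiply pairing is exactly the right thing to confirm, and it holds for the reason you give.
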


\subsection{Picard-graded model structures}
\label{sec:picard-graded-model}

In this section we describe model structures on categories of $R$-modules and $R$-algebras based on using elements of $\Pic(R)$ as basic cells.  The structure of this section is based on Goerss--Hopkins' work on obstruction theory for algebras over an operad \cite{goerss-hopkins-summary}, which in turn is based on Bousfield's work \cite{bousfield-cosimplicial}.  We carry this out under the simplifying assumptions that we are not using an auxiliary homology theory, and that the operad in question is the associative operad. However, we will remove the assumption that the base category is the stable homotopy category, and allow ourselves the use of homotopy groups graded by a Picard groupoid $\Gamma$ rather than integer-graded homotopy groups.

In this section we work in the flat stable model category structure on symmetric spectra (the $S$-model structure of \cite{shipley-convenient}). Fix a commutative model for our $\EE_\i$-ring spectrum $R$, and let $\Mod_R$ be the category of $R$-modules. We also fix a grading $\Gamma$ for $R$ as in the previous section, giving any $R$-module $M$ natural $\Gamma$-graded homotopy groups $\pi_\star M$.

According to \cite[2.6-2.7]{shipley-convenient}, the category $\Mod_R$ is a cofibrantly generated, proper, stable model category with generating sets of cofibrations and acyclic cofibrations with cofibrant source; it is also, compatibly, a simplicial model category (e.g. see \cite{proeinfty} for references in this direction).  The smash product $\wedge_R$ and function object $F_R(-,-)$ give $\Mod_R$ a symmetric monoidal closed structure under which $\Mod_R$ is a monoidal model category, and the category $\Alg_R$ of associative $R$-algebras is a cofibrantly generated simplicial model category with fibrations and weak equivalences detected in $\Mod_R$ \cite{shipley-schwede-algebrasmodules}. We let $\TT$ denote the monad taking $M$ to the free $R$-algebra $\TT(M) = \bigvee M^{\wedge_R n}$; algebras over $\TT$ are associative $R$-algebras.

The following definitions are dual to those in Bousfield \cite{bousfield-cosimplicial}, taking the category $\Gamma$ as generating a class $\P$ of cogroup objects.

\begin{definition}
Let $\D_R$ denote the homotopy category of $\Mod_R$.
\begin{enumerate}
\item A map $p\co X \to Y$ in $\D_R$ is {\em $\Pic$-epi} if the map $\pi_\star X \to \pi_\star Y$ is surjective.
\item An object $A \in \D_R$ is {\em $\Pic$-projective} if the map $p_*\co [A,X] \to [A,Y]$ is surjective whenever $p\co X \to Y$ is $\Pic$-epi.
\item A morphism $A \to B$ in $\Mod_R$ is a {\em $\Pic$-projective cofibration} if it has the left lifting property with respect to all $\Pic$-epi fibrations in $\Mod_R$.
\end{enumerate}
\end{definition}

\begin{remark}
Technically speaking, we should include the group $\Gamma$ in the notation, but we do not.
\end{remark}

Any object $P \in \Pic(R)$ with a lift to an element $\gamma \in \Gamma$ is  is automatically $\Pic$-projective, and the class of projective cofibrations is closed under coproducts, suspensions, and desuspensions.  There are enough $\Pic$-projective objects: to construct a $\Pic$-projective $P$ and a map $P \to X$ inducing a surjection $\pi_\star P \to \pi_\star X$, we can choose generators $\{x_\alpha \in \pi_{\gamma_\alpha} X\}$ of $\pi_\star X$ which are represented by a map $\bigvee_\alpha R^{\gamma_\alpha} \to X$. We can then describe a model structure on the category $s\Mod_R$ of simplicial $R$-modules.

\begin{definition}
Let $f\co X_\bullet \to Y_\bullet$ be a map of simplicial $R$-modules.
\begin{enumerate}
\item The map $f$ is a {\em $\Pic$-equivalence} if the map $\pi_\gamma f\co  \pi_\gamma X_\bullet \to \pi_\gamma Y_\bullet$ is a weak equivalence of simplicial abelian groups for all $\gamma \in \Gamma$.
\item The map $f$ is a {\em $\Pic$-fibration} if it is a Reedy fibration and the map $\pi_\gamma f\co \pi_\gamma X_\bullet \to \pi_\gamma Y_\bullet$ is a fibration of simplicial abelian groups for all $\gamma \in \Gamma$.
\item The map $f$ is a {\em $\Pic$-cofibration} if the latching maps
\[
X_n \coprod_{L_n X} L_n Y \to Y_n
\]
are $\Pic$-projective cofibrations for $n \geq 0$.
\end{enumerate}
\end{definition}

\begin{theorem}[{\cite{bousfield-cosimplicial}}]
These definitions give the category $s\Mod_R$ of simplicial $R$-modules the structure of a simplicial model category, which we call the {\em $\Pic$-resolution model structure}. This model structure is cofibrantly generated, and has generating sets of cofibrations and acyclic cofibrations with cofibrant source.  The forgetful functor to simplicial $R$-modules (with the Reedy model structure) creates fibrations.
\end{theorem}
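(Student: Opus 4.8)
The plan is to obtain this as an instance of the general existence theorem for resolution model structures from \cite{bousfield-cosimplicial}, applied with base category $\mathcal{C} = \Mod_R$ (with its flat stable simplicial model structure) and with the class $\P$ of $\Pic$-projective cofibrant objects, generated by $\{R^\gamma \mid \gamma \in \Gamma\}$, playing the role of Bousfield's ``models''. That theorem applies to any left proper, cofibrantly generated simplicial model category equipped with a class $\P$ of homotopy cogroup objects which admits enough projectives and is closed under coproducts, retracts and (de)suspension, and all of these hypotheses have been recorded above: $\Mod_R$ is cofibrantly generated, proper, stable and simplicial with generating (acyclic) cofibrations having cofibrant source; each $R^\gamma$ is $\Pic$-projective and cofibrant; the class of $\Pic$-projectives is closed under the required operations; there are enough $\Pic$-projectives, since any $X$ receives a $\Pic$-epi $\bigvee_\alpha R^{\gamma_\alpha} \to X$ realizing generators of $\pi_\star X$; and, because $\D_R$ is triangulated, every $R^\gamma$ carries a canonical homotopy cogroup structure in the simplicial direction. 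So the first step is simply to check off these conditions and quote the theorem, which delivers a cofibrantly generated simplicial model structure on $s\Mod_R$ with precisely the three classes of maps described.

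Unwinding the internal steps, I would first dispatch MC1--MC3: $s\Mod_R$ is complete and cocomplete since $\Mod_R$ is, two-out-of-three for $\Pic$-equivalences is immediate from the definition via the functors $\pi_\gamma$ and the corresponding property for simplicial abelian groups, and all three classes are visibly closed under retracts. The real content is the lifting and factorization axioms, and the key technical input is a \emph{characterization of fibrations}: a Reedy fibration $f\co X_\bullet \to Y_\bullet$ has the right lifting property against every acyclic $\Pic$-cofibration if and only if each $\pi_\gamma f$ is a Kan fibration of simplicial abelian groups. One direction produces enough explicit acyclic $\Pic$-cofibrations --- Bousfield's ``free'' trivial cofibrations, obtained by applying the simplicial tensor to horn-type inclusions on $\Pic$-projective cells --- so that the lifting property forces the Kan condition on each $\pi_\gamma$; the converse is an obstruction-theoretic induction up the skeletal filtration of an acyclic $\Pic$-cofibration, at each stage solving a lifting problem whose obstruction lies in a relative $\pi_\gamma$-homotopy group that vanishes because $f$ induces Kan fibrations and a $\Pic$-equivalence. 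Granting this lemma, the generating cofibrations and generating acyclic cofibrations are assembled from those of $\Mod_R$ together with boundary and horn inclusions of simplices, their sources are cofibrant because the simplicial tensor preserves cofibrant objects, the small object argument produces the functorial factorizations of MC5, and MC4 then follows formally. The simplicial axiom SM7 reduces, via the pushout-product, to a bookkeeping statement about latching objects combined with SM7 for $\Mod_R$: one checks that the pushout-product of a monomorphism of simplicial sets with a $\Pic$-cofibration is again a $\Pic$-cofibration, acyclic if either factor is acyclic. Finally, the same analysis of fibrations underlies the last clause, that the forgetful functor to Reedy simplicial $R$-modules creates fibrations.

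I expect the main obstacle to be exactly this characterization of fibrations together with the attendant nonstandard small object argument: unlike in a Reedy or projective model structure, the generating acyclic cofibrations here are not simply horn inclusions on free cells, so one must construct the correct ``$\P$-free'' acyclic cofibrations and prove that the right lifting property against them is equivalent to the condition that each $\pi_\gamma f$ be a Kan fibration. Once that equivalence is secured, everything else --- limits and colimits, two-out-of-three, retracts, cofibrant generation, the simplicial axiom, and the behavior of the forgetful functor --- is routine bookkeeping over the corresponding facts for $\Mod_R$ and for simplicial abelian groups.
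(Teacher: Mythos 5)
The paper does not prove this theorem; it cites Bousfield's cosimplicial-resolutions paper directly, having noted just above the definitions that they are ``dual to those in Bousfield \cite{bousfield-cosimplicial}.'' Your proposal is therefore not being compared against an in-paper proof but against Bousfield's construction, and as a reconstruction of that argument it is accurate: you correctly identify the ambient hypotheses (left properness, cofibrant generation, simplicial structure from Shipley's flat stable model structure, and that the $\Pic$-projectives form a suitable class of cogroup objects closed under coproducts, retracts, and (de)suspension with enough projectives), and you correctly isolate the characterization of fibrations --- equivalence between ``Reedy fibration inducing Kan fibrations on each $\pi_\gamma$'' and ``right lifting property against acyclic $\Pic$-cofibrations,'' established via the $\P$-free acyclic cofibrations and an inductive lifting argument --- as the technical crux underlying the small object argument and the ``creates fibrations'' clause. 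Your observation that stability of $\Mod_R$ automatically endows each $R^\gamma$ with a homotopy cogroup structure, so that no extra hypothesis on the models is needed, is the right reason this framework applies without fuss. In short, the approach is correct and is essentially the one the citation intends.
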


As in \cite[Section~3]{goerss-hopkins-summary}, for a simplicial $R$-module $X$ and $\gamma \in \Gamma$ we have ``natural'' homotopy groups $\pi_n^\natural(X;\gamma)$.  On geometric realization there is a homotopy spectral sequence with $E_2$-term
\[
\pi_p \pi_\gamma(X) \Rightarrow \pi_{p+\gamma} |X|\,.
\]
The $E_2$-term of this spectral sequence comes from an exact couple, the spiral exact sequence \cite[Lemma~3.9]{goerss-hopkins-summary}:
\[
\cdots \to \pi_{n-1}^\natural(X;\gamma) \to \pi_n^\natural(X;\gamma) \to \pi_n \pi_\gamma(X) \to \pi_{n-2}^\natural(X;\gamma) \to \cdots
\]

As applications of the $\Pic$-resolution model structure, we obtain $\Pic$-graded K\"unneth and universal coefficient spectral sequences.

\begin{theorem}
\label{thm:picard-kunneth-universal}
For $X, Y \in \D_R$, there are spectral sequences of $\Gamma$-graded $\Rs$-modules:
\begin{align*}
\Tor^{\Rs}_{p,\gamma}(\pi_\star X, \pi_\star Y) &\Rightarrow \pi_{p+\gamma}(X \wedge_R Y)\\
\Ext^{s,\tau}_{\Rs}(\pi_\star X, \pi_\star Y) &\Rightarrow \pi_{-s-\tau} F_R(X,Y)
\end{align*}
\end{theorem}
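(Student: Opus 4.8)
The plan is to build both spectral sequences out of a single $\Pic$-resolution of $X$, using the $\Pic$-resolution model structure on $s\Mod_R$ together with the geometric-realization spectral sequence recalled above. First I would choose, using that $\Mod_R$ has enough $\Pic$-projective objects, a cofibrant replacement $\epsilon\co P_\bullet \xrightarrow{\sim} cX$ in the $\Pic$-resolution model structure in which each $P_n$ is a coproduct of objects $R^\gamma$. Then $\pi_\star P_\bullet$ is a levelwise free simplicial $\Rs$-module whose normalized complex $N_\ast \pi_\star P_\bullet$ is a free resolution of $\pi_\star X$ over $\Rs$; and since the realization spectral sequence $\pi_p \pi_\gamma P_\bullet \Rightarrow \pi_{p+\gamma}|P_\bullet|$ is concentrated on the line $p = 0$, where it is $\pi_\star X$, the augmentation is an equivalence $|P_\bullet| \xrightarrow{\sim} X$ by the criterion that equivalences of $R$-modules are detected on $\pi_\star$. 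This resolution feeds both constructions.

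For the K\"unneth spectral sequence I would replace $Y$ by a cofibrant $R$-module and form the Reedy-cofibrant simplicial $R$-module $P_\bullet \wedge_R Y$; since geometric realization and $(-) \wedge_R Y$ are colimits and the levelwise smash is homotopically correct, $|P_\bullet \wedge_R Y| \simeq |P_\bullet| \wedge_R Y \simeq X \wedge_R Y$. Its realization spectral sequence is
\[
E_2^{p,\gamma} = \pi_p\bigl(\pi_\gamma(P_\bullet \wedge_R Y)\bigr) \Longrightarrow \pi_{p+\gamma}(X \wedge_R Y),
\]
and because each $P_n$ is a wedge of invertibles we have $\pi_\star(P_n \wedge_R Y) \cong \pi_\star P_n \otimes_{\Rs} \pi_\star Y$, so $\pi_\star(P_\bullet \wedge_R Y) = \pi_\star P_\bullet \otimes_{\Rs} \pi_\star Y$ as a simplicial $\Gamma$-graded module. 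By Dold--Kan its $\pi_p$ is $H_p\bigl(N_\ast \pi_\star P_\bullet \otimes_{\Rs} \pi_\star Y\bigr) = \Tor^{\Rs}_p(\pi_\star X, \pi_\star Y)$, and tracking the internal $\Gamma$-degree identifies $E_2^{p,\gamma}$ with $\Tor^{\Rs}_{p,\gamma}(\pi_\star X, \pi_\star Y)$.

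For the universal coefficient spectral sequence I would instead apply $F_R(-,Y)$: since $F_R(-,Y)$ takes homotopy colimits to homotopy limits and $|P_\bullet| = \hocolim_{\Delta^{\op}} P_\bullet$, we get $F_R(X,Y) \simeq \Tot F_R(P_\bullet, Y)$, a cosimplicial $R$-module whose Bousfield--Kan spectral sequence has $E_2^{s,\tau} = \pi^s\bigl(\pi_\ast F_R(P_\bullet, Y)\bigr)$. Here $\pi_\star P_n$ free gives $F_R(P_n, Y) \simeq \prod \Sigma^{-\gamma} Y$ with homotopy $F_{\Rs}(\pi_\star P_n, \pi_\star Y)$ and no higher terms, so the relevant cosimplicial graded group is $F_{\Rs}(\pi_\star P_\bullet, \pi_\star Y)$; dual Dold--Kan identifies its $s$-th cohomotopy with $H^s\Hom_{\Rs}(N_\ast \pi_\star P_\bullet, \pi_\star Y) = \Ext^s_{\Rs}(\pi_\star X, \pi_\star Y)$, and unwinding the grading conventions of the previous subsection (in which the superscript $\tau$ is minus the internal degree) gives the bidegree $(s,\tau)$ and the abutment $\pi_{-s-\tau} F_R(X,Y)$.

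The routine part is the identification of the $E_2$-pages, provided one is careful with the $\Gamma$-grading and the Koszul sign rule flagged earlier and with the (co)fibrancy of $Y$. I expect the main obstacle to be convergence. The K\"unneth spectral sequence is a half-plane spectral sequence supported in simplicial degrees $p \ge 0$ with entering differentials and converges conditionally; the only subtlety is that its $E_2$ is the naive $\pi_p \pi_\gamma$ rather than $\pi_p^\natural$, which is exactly what the spiral exact sequence settles once the terms are wedges of invertibles. The harder case is the universal coefficient spectral sequence: $\Tot F_R(P_\bullet, Y)$ is the inverse limit of a tower of partial totalizations, giving a half-plane spectral sequence with exiting differentials, and identifying its abutment with $\pi_\star F_R(X,Y)$ requires Boardman's conditional-convergence machinery together with a $\lim^1$-vanishing argument in each fixed $\Gamma$-degree for strong convergence. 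That last point is where the real care is needed.
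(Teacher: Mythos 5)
Your proposal follows essentially the same route as the paper: take a cofibrant replacement $P_\bullet$ of the constant simplicial object $cX$ in the $\Pic$-resolution model structure, so that $\pi_\star P_\bullet$ is a levelwise projective simplicial $\Rs$-module with $|P_\bullet| \simeq X$, then form the geometric realization of $P_\bullet \wedge_R Y$ and the totalization of $F_R(P_\bullet, Y)$ and read off the associated spectral sequences. The paper states this in three sentences and leaves the $E_2$-identification and the convergence discussion entirely implicit, so your fleshing out of those points (in particular the $\lim^1$ caveat on the universal-coefficient side and the remark that the spiral exact sequence reconciles $\pi_p^\natural$ with $\pi_p\pi_\gamma$ once the levels are wedges of invertibles) is a faithful expansion rather than a different argument.
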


\begin{proof}
Lift $X$ and $Y$ to $\Mod_R$, cofibrant or fibrant as appropriate. Then choose a cofibrant replacement $P \to X$, where $X$ is viewed as a constant simplicial object in the $\Pic$-resolution model category structure.  The result is a simplicial $R$-module, augmented over $X$, such that the map $|P| \to X$ is a weak equivalence and such that the associated simplicial object $\pi_\star P$ is levelwise projective as a $\Gamma$-graded $\pi_\star R$-module.  The spectral sequences in question are associated to the geometric realization of $P \wedge_R Y$ and the totalization of $F_R(P,Y)$, which are equivalent to the derived smash $X \wedge_R Y$ and derived function object $F_R(X,Y)$, respectively.
\end{proof}

\begin{corollary}
\label{cor:freetofree}
Suppose $P$ is a cofibrant $R$-module such that $\pi_\star P$ is a projective $\pi_\star R$-module. Then $\pi_\star \TT(P)$ is isomorphic to the free $\pi_\star R$-algebra on $\pi_\star P$.
\end{corollary}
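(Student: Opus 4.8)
The plan is to read off $\pi_\star \TT(P)$ one summand at a time from the K\"unneth spectral sequence of Theorem~\ref{thm:picard-kunneth-universal}. Since $\TT(P) = \bigvee_{n \ge 0} P^{\wedge_R n}$ and $\Gamma$-graded homotopy groups preserve coproducts, there is a natural splitting $\pi_\star \TT(P) \cong \bigoplus_{n \ge 0} \pi_\star(P^{\wedge_R n})$, and because $P$ is cofibrant each smash power $P^{\wedge_R n}$ is again cofibrant and hence models the $n$-fold derived smash power. So it suffices to identify $\pi_\star(P^{\wedge_R n})$ with the $n$-fold tensor power $(\pi_\star P)^{\otimes_{\Rs} n}$, and to do so naturally enough that the concatenation maps $P^{\wedge_R a} \wedge_R P^{\wedge_R b} \to P^{\wedge_R(a+b)}$ defining the multiplication on $\TT(P)$ correspond to the evident maps $(\pi_\star P)^{\otimes a} \otimes_{\Rs} (\pi_\star P)^{\otimes b} \to (\pi_\star P)^{\otimes(a+b)}$.

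First I would establish the additive identification by induction on $n$, the cases $n = 0, 1$ being $\pi_\star R = \Rs$ and the tautology. For the inductive step, apply the $\Tor$-spectral sequence of Theorem~\ref{thm:picard-kunneth-universal} to $X = P^{\wedge_R(n-1)}$ and $Y = P$. By the inductive hypothesis $\pi_\star X \cong (\pi_\star P)^{\otimes(n-1)}$, which is a tensor power of projective $\Rs$-modules, hence itself projective and in particular flat; therefore $\Tor^{\Rs}_{p,\gamma}(\pi_\star X, \pi_\star P) = 0$ for $p > 0$ and the spectral sequence collapses onto its edge line, giving $\pi_\star(P^{\wedge_R n}) \cong \pi_\star X \otimes_{\Rs} \pi_\star P \cong (\pi_\star P)^{\otimes n}$. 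In particular $\pi_\star(P^{\wedge_R n})$ is again projective, so the induction is self-sustaining.

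Next I would upgrade this to an isomorphism of $\Rs$-algebras. The edge homomorphism of the K\"unneth spectral sequence is the natural external product $\pi_\star X \otimes_{\Rs} \pi_\star Y \to \pi_\star(X \wedge_R Y)$, induced by the pairing $[f] \otimes [g] \mapsto [f \wedge g]$ on homotopy, which makes $\pi_\star(-) \co \D_R \to \Mod_{\Rs}$ a lax monoidal functor. Consequently the isomorphisms $\pi_\star(P^{\wedge_R n}) \cong (\pi_\star P)^{\otimes n}$ assembled above from iterated edge maps are coherently associative, so the multiplication transported from $\TT(P)$ onto $\bigoplus_n (\pi_\star P)^{\otimes n}$ is precisely the concatenation product, and $\pi_\star \TT(P) \cong T_{\Rs}(\pi_\star P)$, the free associative $\Rs$-algebra on $\pi_\star P$. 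As a sanity check one can run the special case $P = \bigvee_\alpha R^{\gamma_\alpha}$, where $\TT(P)$ splits as a wedge of invertible modules $R^{\gamma_{\alpha_1} + \cdots + \gamma_{\alpha_n}}$ and the identification with the tensor algebra on $\bigoplus_\alpha \Sigma^{\gamma_\alpha} \Rs$ is manifest.

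The main obstacle is precisely this last bookkeeping: the additive statement is essentially automatic once one has the collapsing K\"unneth spectral sequence together with the stability of projectivity under tensor powers, whereas verifying that the resulting module isomorphism respects the multiplication requires unwinding the associativity coherence of the lax monoidal structure on $\pi_\star(-)$ (and, if one wants the graded-commutative refinement for commutative $P$, tracking the Koszul signs carried by the symmetry of $\Mod_{\Rs}$).
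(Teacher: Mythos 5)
Your proof is correct and follows the same strategy as the paper's: degenerate the K\"unneth spectral sequence of Theorem~\ref{thm:picard-kunneth-universal} using projectivity (hence flatness) of $\pi_\star P$ and its tensor powers to identify $\pi_\star(P^{\wedge_R n})$ with $(\pi_\star P)^{\otimes_{\Rs} n}$, then apply $\pi_\star$ to the wedge decomposition $\TT(P)\cong\bigvee_{k\geq 0} P^{\wedge_R k}$. You merely make explicit the induction, the self-sustaining projectivity, and the lax-monoidal compatibility of the edge maps with multiplication, all of which the paper leaves implicit.
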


\begin{proof}
This follows by first observing that the K\"unneth formula degenerates to isomorphisms
\[
\pi_\star (P \wedge_R \cdots \wedge_R P) \cong \pi_\star P \otimes_{\pi_\star R} \cdots \otimes_{\pi_\star R} \pi_\star P,
\]
and then applying $\pi_\star$ to the identification
\[
\TT(P) \cong \bigvee_{k \geq 0} P^{\wedge_R k}
\]
of $R$-modules.
\end{proof}

The $\Pic$-resolution model structure on simplicial $R$-modules now lifts to $R$-algebras.

\begin{theorem}
There is a simplicial model category structure on $s\Alg_R$ such that the forgetful functor $s\Alg_R \to s\Mod_R$ creates weak equivalences and fibrations.  We call this the {\em $\Pic$-resolution model structure} on simplicial $R$-algebras. This model structure is cofibrantly generated, and has generating sets of cofibrations and acyclic cofibrations with cofibrant source.

For each $X \in s\Alg_R$, there is a $\Pic$-equivalence $Y \to X$ with the following properties:
\begin{enumerate}
\item The simplicial object $Y$ is cofibrant in the $\Pic$-resolution model category structure on $s\Alg_R$.
\item \cite[3.7]{goerss-hopkins-summary} There are objects $Z_n$, which are wedges of cofibrant $R$-modules in $\Pic(R)$, such that the underlying degeneracy diagram of $Y$ is of the form
\[
Y_n = \TT\left(\coprod_{\phi: [n] \twoheadrightarrow [m]} Z_m\right).
\]
\end{enumerate}
\end{theorem}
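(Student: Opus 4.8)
The plan is to obtain the $\Pic$-resolution model structure on $s\Alg_R$ by transfer along the free--forgetful adjunction $\TT\co s\Mod_R \rightleftarrows s\Alg_R \co U$ from the $\Pic$-resolution model structure on $s\Mod_R$, exactly as in \cite{goerss-hopkins-summary} (which in turn follows \cite{bousfield-cosimplicial}); once the transferred structure exists, the simplicial enrichment and the explicit cofibrant replacement are obtained as in the module case. Concretely, one declares a map $f$ in $s\Alg_R$ to be a weak equivalence, resp.\ fibration, precisely when $Uf$ is one, and checks the standard transfer criterion for cofibrantly generated model categories (as in \cite{shipley-schwede-algebrasmodules}): that $s\Mod_R$ is cofibrantly generated with generating (acyclic) cofibrations having cofibrant source (already established), that $U$ preserves filtered colimits, and that every relative $\TT(J)$-cell complex is a weak equivalence, $J$ a set of generating acyclic cofibrations. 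The filtered-colimit point is immediate, since $\TT(M)=\bigvee_{k\ge 0} M^{\wedge_R k}$ is a finitary monad because $\wedge_R$ commutes with filtered colimits. Granting the transfer, the generating cofibrations and acyclic cofibrations of $s\Alg_R$ are $\TT(I)$ and $\TT(J)$; since $\TT$ is then left Quillen, and the initial $R$-algebra $R$ is trivially cofibrant, these have cofibrant source, and $s\Alg_R$ carries the canonical simplicial enrichment of a category of simplicial objects, making it a simplicial model category.

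The crux, and the step I expect to be the main obstacle, is the acyclicity condition: a pushout in $s\Alg_R$ of a map $\TT(j)$, $j\in J$, must be a $\Pic$-equivalence. The difficulty is that $\TT$ is far from exact, so one cannot simply feed the spiral exact sequence or a long exact sequence of $\pi_\star$ through it. One route is a path-object argument: $s\Alg_R$ is cotensored over simplicial sets, and for a suitably fibrant $A\in s\Alg_R$ the cotensor $A^{\Delta^1}$ factors the diagonal $A \to A^{\Delta^1} \to A\times A$ with the first map a levelwise equivalence of underlying modules (as $\Delta^1$ is contractible, hence a $\Pic$-equivalence) and the second a Reedy fibration; since weak equivalences and fibrations of the prospective transferred structure are created underneath, this is a path object, and Quillen's path-object argument then yields the acyclicity of relative $\TT(J)$-cell complexes. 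The alternative, more computational route -- the one actually carried out in \cite{goerss-hopkins-summary} -- is to filter such a pushout by word length and analyze it one smash power $P^{\wedge_R k}$ at a time, using the $\Pic$-graded K\"unneth spectral sequence of Theorem~\ref{thm:picard-kunneth-universal} together with Corollary~\ref{cor:freetofree}: because the cells of $J$ are built from $\Pic$-projectives, the relevant graded homotopy groups remain free, the K\"unneth spectral sequences collapse, and the associated graded of the filtration contributes nothing to $\Pic$-homotopy. I would present the latter argument, as it avoids delicate bookkeeping about fibrancy in the resolution model structure.

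Finally, for the explicit cofibrant replacement, which is \cite[3.7]{goerss-hopkins-summary} transported to the $\Gamma$-graded setting, I would argue by the standard step-by-step construction of a cofibrant object in a resolution model structure. We have already observed that $s\Mod_R$ has enough $\Pic$-projectives of the form $\bigvee_\alpha R^{\gamma_\alpha}$, wedges of elements of $\Pic(R)$ lifted to $\Gamma$. Given $X\in s\Alg_R$, build $Y\to X$ by induction on simplicial degree: at stage $n$ choose $Z_n$ a wedge of such Picard elements mapping onto the relevant natural-homotopy obstruction group (the $\pi_n^\natural$ term appearing in the spiral exact sequence for the partially built resolution), and attach a free $\TT$-cell along the latching map so as to kill it. The latching maps of the resulting $Y$ are then $\TT$ applied to $\Pic$-projective cofibrations of $s\Mod_R$, so $Y$ is $\Pic$-cofibrant in $s\Alg_R$; unwinding the construction identifies the underlying degeneracy diagram as $Y_n=\TT\!\bigl(\coprod_{\phi\co[n]\twoheadrightarrow[m]} Z_m\bigr)$, and the augmentation $Y\to X$ is a $\Pic$-equivalence because each stage removes exactly one layer of the obstruction to its being one.
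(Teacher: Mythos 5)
Your proposal takes essentially the same route as the paper. The paper does not actually give a proof of this theorem; it states the $s\Mod_R$ result as Bousfield's, cites \cite[3.7]{goerss-hopkins-summary} for the explicit cofibrant replacement, and treats the lift to $s\Alg_R$ as a standard instance of the Goerss--Hopkins transfer machinery. Your reconstruction -- transfer along the $\TT \dashv U$ adjunction, verify the monad is finitary, check acyclicity via a word-length filtration using the degenerate $\Pic$-graded K\"unneth spectral sequence and Corollary~\ref{cor:freetofree}, and then build the cofibrant replacement degreewise by attaching free cells on wedges of $\Gamma$-lifted Picard elements -- is precisely what those citations supply. One small caution on your alternative path-object route: in the $\Pic$-resolution structure the fibrations are Reedy fibrations that in addition induce fibrations of simplicial abelian groups on each $\pi_\gamma$, so verifying that $A^{\Delta^1} \to A \times A$ is a $\Pic$-fibration takes more than the Reedy observation you give; the filtration argument you say you prefer sidesteps this, so your stated preference is the right call.
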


Given this structure, we can use Goerss--Hopkins' moduli tower of Postnikov approximations to produce an obstruction theory. This both classifies objects and constructs a Bousfield-Kan spectral sequence for spaces of maps between $R$-algebras using $\Gamma$-graded homotopy groups. In order to describe the resulting obstruction theories, let $\Der^s_{\Alg_{\pi_\star R}}$ denote the derived functors of derivations in the category of $\Gamma$-graded $\pi_\star R$-algebras as in section~\ref{sec:derivations}.

\begin{theorem}
\begin{enumerate}
\item There are successively defined obstructions to realizing an algebra $\As \in \Alg_{\pi_\star R}$ by an $R$-algebra $A$ in the groups
\[
\Der^{s+2}_{\Alg_{\pi_\star R}}(\As, \Omega^s \As),
\]
and obstructions to uniqueness in the groups 
\[
\Der^{s+1}_{\Alg_{\pi_\star R}}(\As, \Omega^s \As),
\]
for $s \geq 1$.
\item For $R$-algebras $X$ and $Y$, there are successively defined obstructions to realizing a map $f \in \Hom_{\Alg_{\pi_\star R}}(\pi_\star X, \pi_\star Y)$ in the groups
\[
\Der^{s+1}_{\Alg_{\pi_\star R}}(\pi_\star X, \Omega^s \pi_\star Y),
\]
and obstructions to uniqueness in the groups 
\[
\Der^{s}_{\Alg_{\pi_\star R}}(\pi_\star X, \Omega^s \pi_\star Y),
\]
for $s \geq 1$.
\item Let $\phi \in \Map_{\Alg_R}(X,Y)$ be a map of $R$-algebras.  Then there is a fringed, second quadrant spectral sequence abutting to
\[
\pi_{t-s} (\Map_{\Alg_R}(X,Y),\phi),
\]
with $E_2$-term given by
\[
E_2^{0,0} = \Hom_{\Alg_{\pi_\star R}}(\pi_\star X, \pi_\star Y)
\]
and
\[
E_2^{s,t} = \Der^s_{\Alg_{\pi_\star R}}(\pi_\star X, \Omega^t \pi_\star Y)\text{\ for }t > 0.
\]
\end{enumerate}
\end{theorem}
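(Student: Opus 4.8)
The plan is to run the Goerss--Hopkins moduli-space machinery \cite{goerss-hopkins-summary} in the present setting essentially verbatim, using the $\Pic$-resolution model structure on $s\Alg_R$ and the special cofibrant resolutions $Y_n = \TT(\coprod_\phi Z_m)$ constructed above in place of their $E_\ast$-resolution model structure; the content is only to check that each ingredient survives the passage to $\Gamma$-graded homotopy and to a base other than the stable homotopy category. I will not redo their layer-by-layer arguments, only indicate which facts from this section supply their inputs.

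\emph{Parts (1) and (2).} For $\As \in \Alg_{\pi_\star R}$ let $\mathcal{M}(\As)$ be the moduli space of $R$-algebras $A$ equipped with an isomorphism $\pi_\star A \cong \As$, and write $\mathcal{M}(\As) \simeq \holim_n \mathcal{M}_n(\As)$ as a tower of moduli spaces of potential $n$-stages as in \cite{goerss-hopkins-summary}. The crucial input is that a cofibrant $Y \in s\Alg_R$ of the special form above has, by Corollary~\ref{cor:freetofree}, the property that $\pi_\star Y$ is levelwise a free $\Gamma$-graded $\pi_\star R$-algebra, hence is a cofibrant simplicial resolution computing the derived functors of derivations $\Der^\ast_{\Alg_{\pi_\star R}}$ of Section~\ref{sec:derivations}; this is exactly the hypothesis under which the Goerss--Hopkins layer analysis applies. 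Analyzing the tower, the homotopy fiber of $\mathcal{M}_{n+1}(\As)\to\mathcal{M}_n(\As)$ is that of a map to an Eilenberg--Mac Lane object with a single nonzero homotopy group, and identifying that group uses the spiral exact sequence recalled above, which converts the natural homotopy $\pi^\natural_\ast(-;\gamma)$ of a simplicial $R$-algebra into its true homotopy $\pi_\ast\pi_\gamma(-)$ and is responsible for the shift in degree by two. One concludes that the obstruction to extending an $n$-stage lies in $\Der^{s+2}_{\Alg_{\pi_\star R}}(\As,\Omega^s\As)$ and, once it vanishes, the extensions form a torsor under $\Der^{s+1}_{\Alg_{\pi_\star R}}(\As,\Omega^s\As)$, which is (1); running the identical argument for the moduli space of $R$-algebra maps $X\to Y$ lowers each degree by one and gives (2).

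\emph{Part (3).} Choose a cofibrant resolution $P_\bullet \to X$ in the $\Pic$-resolution model structure of the special form $P_n=\TT(\coprod_\phi Z_m)$. Then, as in \cite{goerss-hopkins-summary}, $\Map_{\Alg_R}(X,Y)\simeq \Tot\,\Map_{\Alg_R}(P_\bullet,Y)$, so the Bousfield--Kan spectral sequence of this cosimplicial space \cite{bousfield-cosimplicial} abuts to $\pi_{t-s}(\Map_{\Alg_R}(X,Y),\phi)$ with $E_2^{s,t}=\pi^s\pi_t\Map_{\Alg_R}(P_\bullet,Y)$. Since $P_n=\TT(W_n)$ is free on a wedge $W_n$ of objects of $\Pic(R)$, the adjunction identifies $\Map_{\Alg_R}(P_n,Y)$ with $\Map_{\Mod_R}(W_n,Y)$, whose homotopy groups are read off from $\pi_\star Y$; the resulting cosimplicial $\Gamma$-graded abelian group is the standard cotriple complex for derivations, giving $E_2^{s,t}=\Der^s_{\Alg_{\pi_\star R}}(\pi_\star X,\Omega^t\pi_\star Y)$ for $t>0$, while the bottom row recovers $E_2^{0,0}=\Hom_{\Alg_{\pi_\star R}}(\pi_\star X,\pi_\star Y)$. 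The spectral sequence is fringed in the usual way because $\Map_{\Alg_R}(P_0,Y)$ is merely a space.

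The hard part is not any single step but the bookkeeping at the edges: the non-additivity of $E_2^{0,0}$, the intervention of $\pi_1$ of the relevant mapping spaces, and checking that the spiral exact sequence is applied with the correct indexing now that $\Gamma$ need not be $\ZZ$, so that the suspension and the loop functor $\Omega$ are controlled by the chosen grading rather than by $[S^1]\in\pi_0\Gamma_\SS$. All of this is already present in \cite{goerss-hopkins-summary,bousfield-cosimplicial}; what must be verified is only that no step secretly used the finiteness of an auxiliary $E_\ast E$ or integer grading, and the $\Pic$-resolution model structure was built precisely so that none does.
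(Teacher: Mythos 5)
Your proposal follows the same route the paper itself sketches: defer the heavy lifting to the Goerss--Hopkins moduli-tower machinery (partial resolutions / potential $n$-stages) for parts (1)--(2) and to the Bousfield--Kan homotopy spectral sequence of the cosimplicial mapping space for part (3), with Corollary~\ref{cor:freetofree} supplying the key bridge that a special cofibrant $Y \in s\Alg_R$ has $\pi_\star Y$ levelwise a free $\Gamma$-graded $\pi_\star R$-algebra, and the spiral exact sequence governing the degree shift. This matches the paper's brief proof in substance and emphasis, so nothing further is needed.
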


This theorem is obtained using simplicial resolutions. Given an $R$-algebra $A$ we form a simplicial resolution of $A$ by free $R$-algebras, which becomes a resolution of $\pi_\star A$ by free $\pi_\star R$-algebras by Corollary~\ref{cor:freetofree}. We get the spectral sequences for mapping spaces from the associated homotopy spectral sequence (see \cite{bousfield-cosimplicial}). The obstruction theory for the construction of such $A$, instead, relies on constructing {\em partial} resolutions $P_n A$ as simplicial free $R$-algebras whose homotopy spectral sequence degenerates in a specific way, and then identifying the obstruction to extending the construction of $P_n(A)$ to $P_{n+1}(A)$ as lying in an Andr\'e--Quillen cohomology group.

\subsection{Algebraic Azumaya algebras}
\label{sec:matrix-obstructions}

We now apply the obstruction theory of the previous section to the algebraic case.  We continue to let $R$ be an $\EE_\i$-ring spectrum with a grading by $\Gamma$.

We recall that an algebra $A$ is an Azumaya $R$-algebra if $A$ is a compact generator of $\D_R$ and the left-right action map $A \wedge_R A^\op \to \End_R(A)$ is an equivalence in $\D_R$ \cite{baker-richter-szymik}.

\begin{proposition}
Suppose $A$ is an $R$-algebra such that $\pi_\star A$ is a projective $\pi_\star R$-module. Then $\pi_\star A$ is an Azumaya $\pi_\star R$-algebra if and only if $A$ is an Azumaya $R$-algebra.
\end{proposition}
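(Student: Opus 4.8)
The plan is to reduce the statement to $\Gamma$-graded homotopy groups, exploiting that the hypothesis ``$\pi_\star A$ projective over $\pi_\star R$'' collapses the K\"unneth and universal-coefficient spectral sequences of Theorem~\ref{thm:picard-kunneth-universal}. Since $\pi_\star A$, hence $\pi_\star A^\op$, is projective, all higher $\Tor$ and $\Ext$ terms vanish and one obtains natural isomorphisms of $\Gamma$-graded $\pi_\star R$-modules
\[
\pi_\star(A\wedge_R A^\op)\;\cong\;\pi_\star A\otimes_{\pi_\star R}\pi_\star(A^\op),\qquad
\pi_\star F_R(A,M)\;\cong\;\Hom_{\pi_\star R}(\pi_\star A,\pi_\star M)
\]
for every $R$-module $M$. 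Recording products, the first is an isomorphism of $\pi_\star R$-algebras, and the grading axioms (the twist on $\Sigma R\otimes\Sigma R$ lifting $-1\in(\pi_0 R)^\times$) identify $\pi_\star(A^\op)$ with the graded-opposite algebra $(\pi_\star A)^\op$ formed with the Koszul sign built into $\Mod_{\pi_\star R}$. Taking $M=A$ in the second isomorphism identifies the graded ring of homotopy classes of self-maps of $A$ with the graded endomorphism ring $\End_{\pi_\star R}(\pi_\star A)$; in particular $\pi_\star$ is \emph{faithful} on graded self-maps of $A$ (and likewise of $DA:=F_R(A,R)$), so we can lift identities of maps, not merely detect equivalences.

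First I would handle the action-map clause. Applying $\pi_\star$ together with the two collapsed spectral sequences turns the left--right multiplication map $A\wedge_R A^\op\to\End_R(A)$ into the algebraic left--right multiplication map $\pi_\star A\otimes_{\pi_\star R}(\pi_\star A)^\op\to\End_{\pi_\star R}(\pi_\star A)$, since both are given by the same formula. As a map of $R$-modules is an equivalence exactly when it induces an isomorphism on $\pi_\star$, the spectral action map is an equivalence if and only if the graded-algebraic one is an isomorphism.

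Next I would match ``compact generator of $\D_R$'' with ``graded projective generator of $\Mod_{\pi_\star R}$'', separating compactness from generation. \emph{Compactness.} If $A$ is compact it is dualizable in $\Mod_R$, so $\pi_\star DA\cong(\pi_\star A)^\vee$, and applying $\pi_\star$ to the evaluation, coevaluation, and triangle identities exhibits $\pi_\star A$ as dualizable, i.e.\ finitely generated projective, over $\pi_\star R$. Conversely, if $\pi_\star A$ is finitely generated projective, realize the algebraic coevaluation element of $\pi_\star A\otimes_{\pi_\star R}(\pi_\star A)^\vee\cong\pi_\star(A\wedge_R DA)$ as a map $R\to A\wedge_R DA$, verify the triangle identities after applying $\pi_\star$, and invoke faithfulness to conclude they hold on the nose; then $A$ is dualizable, hence compact. \emph{Generation.} If $\pi_\star A$ is a graded generator of $\Mod_{\pi_\star R}$, then any $X$ with $F_R(A,X)\simeq 0$ has $\Hom_{\pi_\star R}(\Sigma^\gamma\pi_\star A,\pi_\star X)=0$ for all $\gamma$, whence $\pi_\star X=0$ and $X\simeq 0$, so $A$ generates $\D_R$.

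The opposite implication for generation is the one real obstacle: not every graded $\pi_\star R$-module arises as $\pi_\star X$, so one cannot test the generating property of $\pi_\star A$ directly against all modules. The way around this is to use that $\pi_\star A$ is already known to be finitely generated projective, so its trace ideal $\mathfrak t\subseteq\pi_\star R$ is finitely generated and idempotent, hence generated by an idempotent, which --- being homogeneous --- lies in $\pi_0 R$. If $\pi_\star A$ were not a graded generator we would have $\mathfrak t\neq\pi_\star R$, so this idempotent $e\neq 1$ would lift to a product splitting $R\simeq R[e^{-1}]\times R[(1-e)^{-1}]$ of $\EE_\infty$-rings; then $X:=R[(1-e)^{-1}]$ is a nonzero $R$-module with $\pi_\star(A\wedge_R X)\cong\pi_\star A\otimes_{\pi_\star R}\pi_\star X=0$, so $A\wedge_R X\simeq 0$, contradicting that $A$ generates $\D_R$. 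Hence $A$ generating $\D_R$ forces $\mathfrak t=\pi_\star R$, i.e.\ $\pi_\star A$ is a graded projective generator. Combining the three clauses, $A$ is an Azumaya $R$-algebra $\iff$ $A$ is a compact generator of $\D_R$ with the action map an equivalence $\iff$ $\pi_\star A$ is a graded projective generator of $\Mod_{\pi_\star R}$ with the algebraic action map an isomorphism $\iff$ $\pi_\star A$ is an Azumaya $\pi_\star R$-algebra. Apart from the idempotent argument, everything is bookkeeping with the collapsed spectral sequences and the $\Gamma$-graded sign conventions.
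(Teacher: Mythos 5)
Your proof is correct, and it is substantially more explicit than the paper's. The paper's proof is two sentences: it observes that the K\"unneth and universal-coefficient spectral sequences of Theorem~\ref{thm:picard-kunneth-universal} degenerate, identifies the action map on $\pi_\star$ with the algebraic action map, and then asserts ``the two conditions are equivalent.'' Your treatment of the action-map clause is the same, but you also carefully address the matching between \emph{compact generator of $\D_R$} and \emph{graded projective generator of $\Mod_{\pi_\star R}$}, which the paper compresses. This is not a vacuous check: even classically, the action map $A\otimes_R A^\op\to\End_R(A)$ being an isomorphism with $A$ finitely generated projective does not imply $A$ is a generator (take $A$ supported on one factor of a product ring), so the generator clause must be verified independently. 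Your verification is sound: the dualizability argument gives $A$ compact $\iff$ $\pi_\star A$ finitely generated projective (the collapse of the UCSS gives faithfulness of $\pi_\star$ on graded self-maps, which is exactly what lets you lift the triangle identities); the easy direction of generation uses the UCSS collapse to pass from graded generation to detection of zero objects; and the harder direction correctly isolates the obstruction (not every graded module is realized by a spectrum) and routes around it via the trace-ideal idempotent, which is homogeneous of degree zero because it acts as identity on the homogeneous ideal $\mathfrak t$, and so lifts to a ring splitting of $R$. The only place you leave a short step tacit is the passage from $A\wedge_R X\simeq 0$ to a contradiction with generation; one should observe that $F_R(A,X)\simeq DA\wedge_R X$ (as $A$ is already known compact) and that $\pi_\star DA\cong(\pi_\star A)^\vee$ has the same trace ideal, so that $F_R(A,X)\simeq 0$ while $X\neq 0$. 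Overall your argument is a more complete account of what the paper leaves implicit.
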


\begin{proof}
The projectivity of $\pi_\star A$ makes the K\"unneth and universal coefficient spectral sequences of Theorem~\ref{thm:picard-kunneth-universal} degenerate. We find that that the action map $A \wedge_R A^\op \to \End_R(A)$ becomes, on $\pi_\star$, the map $\pi_\star A \otimes_{\pi_\star R} \pi_\star A^\op \to \End_{\pi_\star R}(\pi_\star A)$, and so the two conditions are equivalent.
\end{proof}

We have a similar result about Morita triviality.
\begin{proposition}
Let $M$ be an $R$-module whose $\Gamma$-graded homotopy groups $\pi_\star M$ form a finitely generated projective $\pi_\star R$-module.
The function spectrum $\End_R(M)$ has homotopy groups given by the $\pi_\star R$-algebra $\Hom_{\pi_\star R}(\pi_\star M, \pi_\star M)$.  The center of this algebra is the image of $\pi_\star R$, and if $\pi_\star M$ is a graded generator this algebra is Morita equivalent to $\pi_\star R$ in the category of $\Gamma$-graded $\pi_\star R$-algebras.
\end{proposition}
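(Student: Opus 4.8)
The plan is to extract the three assertions in turn, using the universal coefficient spectral sequence of Theorem~\ref{thm:picard-kunneth-universal} together with the graded Morita theory developed above. First I would run that spectral sequence,
\[
\Ext^{s,\tau}_{\pi_\star R}(\pi_\star M, \pi_\star M) \Rightarrow \pi_{-s-\tau} \End_R(M),
\]
with $X = Y = M$. Since $\pi_\star M$ is finitely generated projective over $\pi_\star R$, the groups $\Ext^{s}_{\pi_\star R}(\pi_\star M,\pi_\star M)$ vanish for $s>0$, so the spectral sequence collapses onto its edge and gives an isomorphism of $\Gamma$-graded $\pi_\star R$-modules $\pi_\star \End_R(M) \cong \Hom_{\pi_\star R}(\pi_\star M, \pi_\star M)$. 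To promote this to an isomorphism of $\pi_\star R$-algebras I would use that the edge map sends a homotopy class $\Sigma^\gamma M \to M$ to the homomorphism it induces on $\pi_\star$: the product on $\pi_\star \End_R(M)$ comes from the composition pairing $\End_R(M) \wedge_R \End_R(M) \to \End_R(M)$, and composition of homotopy classes induces composition of the associated graded homomorphisms, so the edge isomorphism is automatically multiplicative and unital. (Alternatively, one may lift a finite generating set of $\pi_\star M$ to a map $\bigvee_i \Sigma^{\gamma_i} R \to M$, use the collapse of the same spectral sequence with target $R$ to lift a $\pi_\star R$-linear splitting, and thereby exhibit $M$ as a retract in $\Mod_R$ of the finite wedge $F = \bigvee_i \Sigma^{\gamma_i} R$; then $\End_R(M)$ is the corner of the matrix algebra $\End_R(F)$ cut out by the resulting idempotent, and since $\pi_\star \End_R(F) = \Mat_I(\pi_\star R)$ one reads off the corner $\End_{\pi_\star R}(\pi_\star M)$ directly.)

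For the center, the case I really need is when $\pi_\star M$ is a graded generator: there $\Hom_{\pi_\star R}(\pi_\star M, \pi_\star M)$ is an Azumaya $\pi_\star R$-algebra by the proposition on endomorphism algebras of graded generators, and the center of an Azumaya $\Rs$-algebra $\As$ is $\Rs$. Indeed, the center is the graded ring $F_{\As \otimes_{\Rs} \As^\op}(\As, \As)_\star$ of $\As$-bimodule endomorphisms of $\As$, and the Morita equivalences $\Mod_{\Rs} \simeq \Mod_{\As \otimes_{\Rs} \As^\op}$ carrying $\Rs$ to $\As$ --- the ones used already in the proof of Corollary~\ref{cor:rz} --- identify this with $F_{\Rs}(\Rs, \Rs)_\star = \Rs$; since a graded generator is a faithful module, $\pi_\star R$ maps isomorphically onto this center. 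The general case reduces to this one: a finitely generated projective module over the graded-commutative ring $\pi_\star R$ has open-and-closed support, so $\pi_\star R$ splits as a product $S' \times S''$ with $\pi_\star M$ a faithful --- hence generating --- finitely generated projective $S'$-module, and then $\Hom_{\pi_\star R}(\pi_\star M, \pi_\star M) = \Hom_{S'}(\pi_\star M, \pi_\star M)$ has center $S'$, which is exactly the image of $\pi_\star R$.

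When $\pi_\star M$ is a graded generator, the Morita statement is then immediate from the graded theory: writing $\Bs = \Hom_{\pi_\star R}(\pi_\star M, \pi_\star M)$, the graded Eilenberg--Watts and graded Morita theorems applied to the graded generator $\pi_\star M \in \Mod_{\pi_\star R}$ show that the functor $(-) \otimes_{\Bs} \pi_\star M \co \Mod_{\Bs} \to \Mod_{\pi_\star R}$ is an equivalence of $\Gamma$-graded $\pi_\star R$-linear categories, which is precisely a Morita equivalence of $\Gamma$-graded $\pi_\star R$-algebras between $\Bs$ and $\pi_\star R$.

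The step I expect to require the most care --- though it is conceptually routine --- is the sign bookkeeping flagged in the remarks above: verifying that the edge isomorphism is a ring map in the first step, and handling the bimodule and center structures in the last two, both involve tracking the implicit twist isomorphisms in $\ab_\Gamma$ dictated by the Koszul sign rule.
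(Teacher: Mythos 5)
The paper itself leaves this proposition unproved, but the intended argument is clearly the one used in the immediately preceding proposition (degeneration of the universal coefficient spectral sequence of Theorem~\ref{thm:picard-kunneth-universal} when $\pi_\star M$ is projective), and your proof follows exactly that route, then hands off to the graded Morita theory already established. Your argument is correct; the only part not already implicit in the surrounding text is your treatment of the center in the non-generator case via the clopen-support splitting $\pi_\star R \cong S' \times S''$, which is a reasonable and standard reduction (though you should note that the graded-commutative determinant trick underlying it requires a word about signs, in the same spirit as the sign caveats you already flag).
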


\begin{definition}
An $R$-algebra is said to be an {\em algebraic $\Gamma$-graded Azumaya algebra} over $R$ if the multiplication on $\pi_\star A$ makes it into an Azumaya $\pi_\star R$-algebra.
\end{definition}

We may apply the Goerss--Hopkins obstruction theory to algebraic Azumaya $R$-algebras. Much of the following is originally due to Baker--Richter--Szymik \cite[6.1]{baker-richter-szymik}.

\begin{theorem}
\label{thm:obstruction-calcs}
\begin{enumerate}
\item Any Azumaya $\pi_\star R$-algebra is isomorphic to $\pi_\star A$ for some $\Gamma$-graded algebraic Azumaya $R$-algebra $A$.
\item Suppose $A$ is a $\Gamma$-graded algebraic Azumaya $R$-algebra.
For any $R$-algebra $S$ (not necessarily Azumaya), the natural map 
\[
[A, S]_{\Alg_R} \xrightarrow{\pi_\star} \Hom_{\Alg_{\pi_\star R}} (\pi_\star A, \pi_\star S),
\]
is an isomorphism. For any map $\phi\co A \to S$ of $R$-algebras (making $\pi_\star S$ into a $\pi_\star A$-bimodule) and any $t > 0$, we have an isomorphism
\[
\pi_t (\Map_{\Alg_R} (A, S), \phi) \cong (\pi_t S)/HH^0(\pi_\star A, \Omega^t \pi_\star S).
\]
\item If $A$ is a $\Gamma$-graded algebraic Azumaya $R$-algebra, the homotopy groups of the space $\Aut_{\Alg_R}(A)$ satisfy
\[
\pi_t(\Aut_{\Alg_R}(A), \mathrm{id}) \cong 
\begin{cases}
  \Aut_{\Alg_{\pi_\star R}}(\pi_\star A)&\text{if }t = 0,\\
  \pi_t A/ \pi_t R&\text{if }t > 0.
\end{cases}
\]
\item If $A$ is a $\Gamma$-graded algebraic Azumaya $R$-algebra, then for $t > 0$ the sequence
\[
0 \to \pi_t \GL_1(R) \to \pi_t \GL_1(A) \to \pi_t \Aut_{\Alg_R}(A) \to 0
\]
is exact, and there is an exact sequence of potentially nonabelian groups 
\[
1 \to \pi_0 \GL_1(R) \to 
\pi_0 \GL_1(A) \to 
\pi_0 \Aut_{\Alg_R}(A) \to \pi_0 \Pic(R).
\]
The image in $\pi_0 \Pic(R)$ of the last map is the group of outer automorphisms of $\pi_\star A$ as a $\pi_\star R$-algebra.
  \end{enumerate}
\end{theorem}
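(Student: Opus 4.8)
The plan is to deduce all four parts from the Goerss--Hopkins obstruction theory of the previous section, using the Hochschild cohomology vanishing for Azumaya algebras (Proposition~\ref{prop:hochschild-vanishing}) to collapse the relevant Andr\'e--Quillen cohomology, together with the graded Rosenberg--Zelinsky sequence (Corollary~\ref{cor:rz}) to organize the answer. The key algebraic input is that for an Azumaya $\pi_\star R$-algebra $\As$ and any $\As$-bimodule $M_\star$ one has $HH^s_{\pi_\star R}(\As,M_\star) = 0$ and $\Der^s_{\pi_\star R}(\As, M_\star) = 0$ for $s > 0$, with $\Der(\As,M_\star) = M_\star / HH^0(\As,M_\star)$ in degree zero. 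I would carry out the parts in order, since each feeds the next.

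For part (1), I would apply the realization obstruction theory: the obstructions to realizing $\As$ live in $\Der^{s+2}_{\Alg_{\pi_\star R}}(\As, \Omega^s \As)$ for $s \geq 1$, and these vanish because $s+2 \geq 3 > 0$; hence a realization $A$ exists, and it is a $\Gamma$-graded algebraic Azumaya $R$-algebra by definition. For part (2), I would feed the resolution of $A$ into the mapping-space spectral sequence for $\Map_{\Alg_R}(A,S)$. Its $E_2$-page is $E_2^{0,0} = \Hom_{\Alg_{\pi_\star R}}(\pi_\star A, \pi_\star S)$ and $E_2^{s,t} = \Der^s_{\Alg_{\pi_\star R}}(\pi_\star A, \Omega^t \pi_\star S)$ for $t > 0$; the Azumaya vanishing kills everything with $s > 0$, so the spectral sequence collapses onto the column $s = 0$. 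Collapse at $s=0$ with no higher differentials into or out of it gives the bijection on $\pi_0$ and, for $t > 0$, $\pi_t(\Map_{\Alg_R}(A,S),\phi) \cong E_2^{0,t} = \Der^0_{\Alg_{\pi_\star R}}(\pi_\star A, \Omega^t \pi_\star S) = (\pi_t S)/HH^0(\pi_\star A, \Omega^t \pi_\star S)$, using the degree-zero identification of $\Der^0$ as a cokernel of $HH^0$. Here I must be slightly careful that the fringed/second-quadrant nature of the spectral sequence does not interfere near the bottom corner, but since the only surviving column is $s=0$ this is harmless.

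Part (3) is the specialization $S = A$, $\phi = \mathrm{id}$. For $t = 0$ the identification $[A,A]_{\Alg_R} \cong \Hom_{\Alg_{\pi_\star R}}(\pi_\star A, \pi_\star A)$ from part (2) restricts on the subspaces of (homotopy) equivalences to $\pi_0 \Aut_{\Alg_R}(A) \cong \Aut_{\Alg_{\pi_\star R}}(\pi_\star A)$, since an $R$-algebra self-map is an equivalence iff it is a $\pi_\star$-isomorphism (by the grading proposition). For $t > 0$, part (2) gives $\pi_t(\Aut_{\Alg_R}(A),\mathrm{id}) \cong (\pi_t A)/HH^0(\pi_\star A, \Omega^t \pi_\star A)$; since $HH^0(\pi_\star A, \Omega^t \pi_\star A)$ is the center of $\pi_\star A$ in degree $t$, which is the image of $\pi_t R$, this is $\pi_t A / \pi_t R$. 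Finally, for part (4), I would compare the fibration sequence computing $\Aut_{\Alg_R}(A)$ with the one built from $\GL_1$. On homotopy in positive degrees, $\pi_t \GL_1(R) = \pi_t R$ and $\pi_t \GL_1(A) = \pi_t A$ (as the underlying spectra, in degrees $> 0$, agree with the group-like units), so the sequence $0 \to \pi_t R \to \pi_t A \to \pi_t A/\pi_t R \to 0$ is exactly the short exact sequence claimed, using part (3) to identify the cokernel with $\pi_t \Aut_{\Alg_R}(A)$. For the low-degree nonabelian sequence, I would pass to $\pi_0$ of the fibration sequence $\GL_1(R) \to \GL_1(A) \to \Aut_{\Alg_R}(A) \to {}?$ and identify the terms with $(\pi_0 R)^\times$, $(\pi_0 A)^\times$, $\pi_0\Aut_{\Alg_R}(A) = \Aut_{\Alg_{\pi_\star R}}(\pi_\star A)$, and match the boundary map to $\pi_0\Pic(R)$ with the one in the graded Rosenberg--Zelinsky sequence of Corollary~\ref{cor:rz} for the Azumaya $\pi_\star R$-algebra $\pi_\star A$; exactness and the identification of the image as the group of outer automorphisms is then inherited from that corollary.

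The main obstacle I expect is part (4): one has to exhibit a genuine fibration sequence of spaces relating $\GL_1(R)$, $\GL_1(A)$, and $\Aut_{\Alg_R}(A)$ — i.e., realize the Rosenberg--Zelinsky sequence at the space level rather than only after applying $\pi_\star$ — and check that the connecting map lands in $\pi_0\Pic(R)$ compatibly with the algebraic story. Getting the boundary map and the $\pi_0$-exactness of a sequence of potentially nonabelian groups to match the algebraic Corollary~\ref{cor:rz} on the nose, including the claim that the image is precisely the outer automorphism group, is where the real work is; everything above it is a collapse-of-spectral-sequence argument driven by Azumaya vanishing.
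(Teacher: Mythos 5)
Your proposal follows the paper's strategy closely: collapse the Goerss--Hopkins obstruction theory using the vanishing of $\Der^s$ for $s>0$ supplied by Proposition~\ref{prop:hochschild-vanishing} to get parts (1)--(3), and then handle part (4) by realizing the Rosenberg--Zelinsky sequence at the space level. You correctly flag that the substantive remaining work in part (4) is to produce an actual fibration sequence of spaces rather than a sequence of homotopy groups; the paper supplies exactly this via a forward reference to Corollary~\ref{cor:fibersequence} (built from the $\infty$-categorical Morita pullback square in Proposition~\ref{prop:bglquotient}), which is where the boundary map to $\pi_0\Pic(R)$ and the outer-automorphism description come from.

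One small wrinkle worth flagging: your derivation of the $t>0$ short exact sequence in (4) from part (3) alone --- matching $0\to\pi_t R\to\pi_t A\to\pi_t A/\pi_t R\to 0$ with $0\to\pi_t\GL_1(R)\to\pi_t\GL_1(A)\to\pi_t\Aut_{\Alg_R}(A)\to 0$ --- tacitly assumes that the map $\pi_t\GL_1(A)\to\pi_t\Aut_{\Alg_R}(A)$ coming from the conjugation action literally is the quotient by $\pi_t R$, and that $\pi_t R\to\pi_t A$ is injective (which does hold, since $\pi_\star A$ is a faithfully projective $\pi_\star R$-module, but needs to be said). The cleaner route, and what the paper actually does, is to feed part (3) into the long exact sequence of Corollary~\ref{cor:fibersequence}: once $\pi_t\Aut_{\Alg_R}(A)\cong\pi_t A/\pi_t R$ is known, the LES forces the connecting maps $\pi_{t+1}\Aut_{\Alg_R}(A)\to\pi_t\GL_1(R)$ to vanish for $t\geq 0$, so the LES breaks into exactly the stated sequences, with the $\pi_0\Pic(R)$ tail appearing in degree $0$. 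Aside from this, and the fact that you naturally could not have anticipated that the fibration lives in Section~\ref{sec:picard-brauer} of the paper, your argument is the intended one.
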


\begin{proof}
The Goerss--Hopkins obstruction groups $\Der^s_{\Alg_{\pi_\star R}}(\pi_\star A, M)$ appearing in Theorem~\ref{thm:obstruction-calcs} vanish identically for $s > 0$ by Proposition~\ref{prop:hochschild-vanishing}. In particular, the obstructions to existence and uniqueness vanish, so every Azumaya $\pi_\star R$-algebra lifts to an Azumaya $R$-algebra. Moreover, the obstructions to existence and uniqueness for lifting maps also vanish, and so every map of Azumaya $\pi_\star R$-algebras lifts uniquely to a map of $R$-algebras.

We then apply the long exact sequence on homotopy groups to Corollary~\ref{cor:fibersequence}. The previous theorem implies that this decomposes into short exact sequences on $\pi_t$ for $t > 1$ and the stated results on $\pi_1$ and $\pi_0$ once due caution is exercised regarding basepoints.
\end{proof}

\begin{corollary}
The functor $\pi_\star$ restricts to an equivalence from the homotopy category of algebraic $\Gamma$-graded Azumaya $R$-algebras to the category of Azumaya $\pi_\star R$-algebras.
\end{corollary}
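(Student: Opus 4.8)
The plan is to recognize the statement as a compact repackaging of Theorem~\ref{thm:obstruction-calcs}, so that essentially all of the work has already been done. First I would pin down the functor in question. An object of the source category is an $R$-algebra $A$ for which the multiplication on $\pi_\star A$ makes it an Azumaya $\pi_\star R$-algebra, and we take the \emph{full} subcategory of the homotopy category of $\Alg_R$ on such objects, so that morphisms are $[A,B]_{\Alg_R} = \pi_0 \Map_{\Alg_R}(A,B)$. Since $\pi_\star$ is homotopy-invariant, the assignment $A \mapsto \pi_\star A$, $[f] \mapsto \pi_\star f$ is a well-defined functor to the category of Azumaya $\pi_\star R$-algebras and $\pi_\star R$-algebra homomorphisms; that it lands there, and not merely in graded $\pi_\star R$-algebras, is exactly the definition of ``algebraic $\Gamma$-graded Azumaya $R$-algebra.''

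Next I would check the two conditions for an equivalence of categories. Essential surjectivity is precisely Theorem~\ref{thm:obstruction-calcs}(1): every Azumaya $\pi_\star R$-algebra is realized as $\pi_\star A$ for some algebraic Azumaya $R$-algebra $A$. Full faithfulness is Theorem~\ref{thm:obstruction-calcs}(2) with the target $R$-algebra $S$ taken to be a second algebraic Azumaya $R$-algebra $B$: the asserted bijection
\[
[A,B]_{\Alg_R} \xrightarrow{\ \pi_\star\ } \Hom_{\Alg_{\pi_\star R}}(\pi_\star A, \pi_\star B)
\]
says exactly that $\pi_\star$ is bijective on the hom-sets of the subcategory. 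A fully faithful and essentially surjective functor is an equivalence, which completes the argument; if an explicit inverse is wanted, send each Azumaya $\pi_\star R$-algebra to the $R$-algebra lift furnished by part (1), which is unique up to equivalence by the full faithfulness just established.

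I do not expect a real obstacle at this stage: the mathematical content has been spent earlier, namely the vanishing $\Der^s_{\Alg_{\pi_\star R}}(\pi_\star A, M_\star) = 0$ for $s > 0$ of Proposition~\ref{prop:hochschild-vanishing}, which annihilates the Goerss--Hopkins obstructions to existence and uniqueness of both algebras and algebra maps and so yields Theorem~\ref{thm:obstruction-calcs}. The only points worth a careful sentence are bookkeeping: that the source is understood as a full subcategory of the homotopy category of $\Alg_R$, so its morphism sets are the $\pi_0$'s of mapping spaces that part (2) computes; that $\pi_\star$ genuinely factors through homotopy and respects identities and composition (immediate); and that the essential image consists of \emph{all} Azumaya $\pi_\star R$-algebras together with \emph{all} their homomorphisms, rather than some smaller subcategory, which is again exactly what parts (1) and (2) provide. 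Much of this is due to Baker--Richter--Szymik \cite{baker-richter-szymik}.
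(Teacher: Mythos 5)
Your argument is correct and is exactly what the paper intends: the corollary is stated without proof immediately after Theorem~\ref{thm:obstruction-calcs}, and it follows directly by reading off essential surjectivity from part~(1) and full faithfulness from part~(2) applied with $S = B$ another algebraic Azumaya $R$-algebra. Your bookkeeping remarks about the source being a full subcategory of the homotopy category (so morphisms are $\pi_0$ of mapping spaces) and about $\pi_\star$ landing in Azumaya $\pi_\star R$-algebras by definition are the right points to flag, and you correctly trace the content back to the Hochschild-cohomology vanishing of Proposition~\ref{prop:hochschild-vanishing}.
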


\begin{remark}
There are two very common sources of non-algebraic Azumaya $R$-algebras. First, any compact generator $M$ of $\Mod_R$ produces an Azumaya $R$-algebra $\End_R(M)$ regardless of whether $\pi_\star M$ is projective or not (for example, the derived endomorphism ring of $\ZZ \oplus \ZZ/p$ is a non-algebraic derived Azumaya algebra over $\ZZ$). Second, the property of being algebraic also depends on the grading. If $P$ is an element in $\Pic(R)$ which is not a suspension of $R$, then $\End_R(R \oplus P)$ is likely to be exotic for $\ZZ$-grading but is definitely not exotic for $\Pic$-grading.
\end{remark}

\section{Presentable symmetric monoidal $\i$-categories}
\label{sec:commutative-algebras}

From this section forward, we will switch to an $\i$-categorical point of view on categories of Azumaya algebras and their module categories so that we can make use of the results of \cite{lurie-higheralgebra} and \cite{GH}.
Finding strict model-categorical versions of many of these constructions we will use seems extremely difficult. For example, it is hard to find point-set constructions that simultaneously give a construction of $GL_n(R)$ as a group, $M_n(R)$ as an $R$-algebra, an action of $GL_n(R)$ on $M_n(R)$ by conjugation, and a diagonal embedding $GL_1(R) \to GL_n(R)$ which acts trivially. If we also want these to be homotopically sensible then it becomes harder still.

Making this switch implicitly requires a translation process, which we will briefly sketch. Given a  commutative symmetric ring spectrum $R$, its image $\bar R$ in the $\i$-category $\S$ of spectra is a commutative algebra object in the sense of \cite[2.1.3.1]{lurie-higheralgebra}.
\begin{itemize}
\item \cite[4.1.3.10]{lurie-higheralgebra} Associated to $\Mod_R$ there is a stable presentable symmetric monoidal $\i$-category $\nrv^\otimes(\Mod_R^\circ)$, the operadic nerve of the category $(\Mod_R^\circ) \subset \Mod_R$ of cofibrant-fibrant $R$-modules.
\item \cite[4.3.3.17]{lurie-higheralgebra} This $\i$-category is equivalent to the $\i$-category of modules over the associated commutative algebra object $\bar R$ in $\S$.
\item \cite[4.1.4.4]{lurie-higheralgebra} The model category of associative algebra objects $\Alg_R$ has $\i$-category equivalent to the $\i$-category of associative algebra objects of $\nrv^\otimes(\Mod_R^\circ)$ in the sense of \cite[4.1.1.6]{lurie-higheralgebra}.
\item \cite[4.3.3.17]{lurie-higheralgebra} For such $R$-algebras, the model categories of left $A$-modules, right $A$-modules, or $A$-$B$ bimodules in $\Mod_R$ have associated $\i$-categories equivalent to the left modules, right modules, or bimodules over the corresponding associative algebra objects in $\nrv^\otimes(\Mod_R^\circ)$.
\end{itemize}

\begin{definition}
Let $\Ring:=\CAlg(\S)$ denote the $\i$-category of $\EE_\i$-ring spectra, or, equivalently, commutative algebra objects in $\S$.
\end{definition}
\subsection{Closed symmetric monoidal $\i$-categories}

\begin{definition}[\protect{\cite[4.1.1.7]{lurie-higheralgebra}}]
A monoidal $\i$-category $\C^\otimes$ is {\em closed} if, for each object $A$ of $\C$, the functors $A\otimes(-)\co \C\to\C$ and $(-)\otimes A\co \C\to\C$ admit right adjoints.
A symmetric monoidal $\i$-category $\C^\otimes$ is {\em closed} if the underlying monoidal $\i$-category is closed.
\end{definition}

Recall \cite[4.8]{lurie-higheralgebra} that the $\i$-category of
$\PrL$ of presentable $\i$-categories and colimit-preserving functors
\cite[5.5.3.1]{lurie-htt} admits a symmetric monoidal structure with unit the $\i$-category $\mathcal{S}$ of spaces.
We refer to (commutative) algebra objects in this $\i$-category as {\it presentable (symmetric) monoidal $\i$-categories}.

\begin{proposition}[\protect{\cite[4.2.1.33]{lurie-higheralgebra}}]
A presentable monoidal $\i$-category is closed.
\end{proposition}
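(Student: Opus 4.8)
The plan is to deduce closedness from the adjoint functor theorem for presentable $\i$-categories. By definition, a presentable monoidal $\i$-category $\C$ is an associative algebra object in the symmetric monoidal $\i$-category $(\PrL, \otimes)$, so in particular it comes equipped with a multiplication morphism $m\co \C \otimes \C \to \C$ that lies in $\PrL$, i.e.\ preserves small colimits. The first step is to unwind what this says about the underlying bifunctor: the tensor product on $\PrL$ is characterized \cite[4.8]{lurie-higheralgebra} by the universal property that colimit-preserving functors out of $\C \otimes \D$ correspond to functors $\C \times \D \to \E$ preserving colimits in each variable separately. Applying this to $m$, the underlying functor $\otimes\co \C \times \C \to \C$ preserves colimits separately in each variable; hence, fixing an object $A$ of $\C$, both $A \otimes (-)\co \C \to \C$ and $(-) \otimes A\co \C \to \C$ are colimit-preserving endofunctors of the presentable $\i$-category $\C$.

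Next I would invoke the adjoint functor theorem \cite[5.5.2.9]{lurie-htt}: a functor between presentable $\i$-categories admits a right adjoint if and only if it preserves small colimits. Applying this to $A \otimes (-)$ and to $(-) \otimes A$ produces the required right adjoints, which is precisely the definition of $\C^\otimes$ being closed. The symmetric case is immediate, since it only asks for closedness of the underlying monoidal $\i$-category, which is exactly what has just been established.

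There is no real obstacle here; the argument is formal once one has the two inputs. The only point that needs care is the identification of the monoidal product of a presentable monoidal $\i$-category with a variablewise-colimit-preserving bifunctor, which rests on the construction of the symmetric monoidal structure on $\PrL$ and its universal property (and, implicitly, on the fact that $\PrL$ is closed, so that algebra objects in $\PrL$ really are presentable $\i$-categories with a colimit-preserving multiplication together with coherence data). Size issues, which would otherwise be the delicate part of any adjoint functor argument, are handled automatically: presentable $\i$-categories are accessible, so the solution-set condition built into \cite[5.5.2.9]{lurie-htt} is satisfied, and no further calculation is needed.
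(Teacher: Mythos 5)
Your proof is correct and matches the paper's own argument: both establish that the monoidal product preserves colimits in each variable (yours by unwinding the universal property of $\otimes$ in $\PrL$, the paper by asserting this directly from the definition) and then invoke the adjoint functor theorem for presentable $\i$-categories. The extra detail you give about the universal property of the tensor product on $\PrL$ is a helpful elaboration but does not change the route.
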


\begin{proof}
Let $\C^\otimes$ be a presentable monoidal $\i$-category.
Then, by definition, the underlying $\i$-category $\C$ is presentable, and for each object $A$ of $\C$ the functors $A\otimes(-)$ and $(-)\otimes A$ commute with colimits.
It follows from the adjoint functor theorem \cite[5.5.2.2]{lurie-htt} that both of these functors admit right adjoints.
\end{proof}

Note that this implies that (the underlying $\i$-category of) a presentable symmetric monoidal $\i$-category $\C^\otimes$ is canonically enriched, tensored and cotensored over itself.
If $\C^\otimes$ is stable, then $\C$ is enriched, tensored and cotensored over $\S$, the $\i$-category of spectra.
We will not normally notationally distinguish between the internal mapping object and the mapping spectrum, which should always be clear from the context.

\begin{proposition}
A symmetric monoidal $\i$-category $\R$ is stable and presentable (as
a symmetric monoidal $\i$-category) if and only if the underlying
$\i$-category is stable and presentable and (any choice of) the tensor
bifunctor $\R\times\R\to\R$ preserves colimits in each variable.
In particular, a closed symmetric monoidal $\i$-category $\R$ is stable and presentable if and only if the underlying $\i$-category is stable and presentable.
\end{proposition}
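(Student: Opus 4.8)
The plan is to reduce both claims to Lurie's identification of commutative algebra objects of $\PrL$ with presentable symmetric monoidal $\i$-categories. Recall that ``presentable as a symmetric monoidal $\i$-category'' was \emph{defined} to mean ``commutative algebra object of $\PrL$'', while ``stable'' is a property of the underlying $\i$-category occurring verbatim on both sides of the claimed equivalence, so the genuine content is the comparison between a symmetric monoidal structure on a presentable $\i$-category with colimit-separately-preserving tensor and an algebra structure for the $\PrL$-tensor product. The input I would quote is \cite[Section~4.8.1]{lurie-higheralgebra}: a symmetric monoidal $\i$-category $\C^\otimes$ underlies a commutative algebra object of $\PrL$ (uniquely up to equivalence) if and only if $\C$ is presentable and the tensor bifunctor $\C\times\C\to\C$ preserves colimits in each variable separately. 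This rests in turn on the universal property of the tensor product of presentable $\i$-categories, $\Funl(\C\otimes\D,\E)\simeq\Funl(\C,\Funl(\D,\E))$, together with the fact that $\C\times\D\to\C\otimes\D$ is the universal functor to a presentable $\i$-category preserving colimits in each variable.

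Granting this, both directions of the first statement are formal. For the forward direction: if $\R$ is a commutative algebra object of $\PrL$, its underlying $\i$-category lies in $\PrL$ and is thus presentable, and the multiplication $\R\otimes\R\to\R$ is a morphism of $\PrL$ and hence colimit-preserving, so precomposing it with the universal functor $\R\times\R\to\R\otimes\R$ recovers the tensor bifunctor and exhibits it as preserving colimits in each variable; stability of the underlying $\i$-category is simply one of the hypotheses. For the converse: a symmetric monoidal $\R$ with $\R$ presentable and tensor preserving colimits separately underlies a commutative algebra object of $\PrL$ by the cited result, i.e.\ it is presentable as a symmetric monoidal $\i$-category, and again stability of the underlying category is just carried along. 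If one prefers to read ``stable'' monoidally---as a commutative algebra object of $\PrL_{\mathrm{St}}$ rather than of $\PrL$---I would add that $\PrL_{\mathrm{St}}\subset\PrL$ is reflective with reflection the smashing localization $\S\otimes(-)$ (stabilization), so by the theory of idempotent objects \cite[Section~4.8.2]{lurie-higheralgebra} a commutative algebra object of $\PrL$ lies in $\CAlg(\PrL_{\mathrm{St}})$ exactly when its underlying $\i$-category is stable; the two readings therefore agree.

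The final (``in particular'') clause is then immediate: if $\R$ is closed, each functor $A\otimes(-)$ and $(-)\otimes A$ is a left adjoint and so preserves all colimits, whence closedness already forces the colimit hypothesis of the first statement; thus for closed $\R$ the first statement applies as soon as the underlying $\i$-category is presentable, yielding the asserted equivalence. The step I expect to carry all the weight is the converse direction, and there everything rests on the quoted result of \cite[Section~4.8.1]{lurie-higheralgebra}: reconstructing the coherent commutative-algebra data for $\otimes_{\PrL}$ from the bare bilinear tensor functor on a presentable $\i$-category is the nontrivial point, and I would cite it rather than reprove it. Everything proper to this proposition---extracting presentability and the colimit property from a $\PrL$-algebra, matching the monoidal unit with the algebra unit $\mathcal{S}\to\R$, and the observation that stability never interacts with the monoidal structure---is bookkeeping.
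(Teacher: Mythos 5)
The paper states this proposition without proof, leaving it as an immediate consequence of the surrounding material; your argument supplies exactly that missing justification and follows the natural route. Your reduction to Lurie's identification of $\CAlg(\PrL)$ with presentable symmetric monoidal $\infty$-categories (underlying presentable $\infty$-category plus tensor preserving colimits separately in each variable) is correct, and the two readings of ``stable'' you compare do indeed agree by the smashing-localization description of $\PrL_{\mathrm{st}} = \Mod_{\S}(\PrL)$. The ``in particular'' clause is handled correctly: closedness forces each of $A \otimes (-)$ and $(-) \otimes A$ to be a left adjoint and hence colimit-preserving (this is just the adjoint functor theorem in reverse, complementing the preceding proposition in the paper), so the colimit hypothesis of the first statement is automatic. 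No gaps.
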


There is also the following multiplicative version of Morita theory.

\begin{proposition}[{\cite[7.1.2.7]{lurie-higheralgebra}, \cite[3.1]{antieau-gepner-brauergroups}}]
The functor
\[
\Mod\co\CAlg(\S)\too\CAlg(\PrL),
\]
sending $R$ to the (symmetric monoidal, presentable, stable) $\i$-category of $R$-modules, is a fully faithful embedding.
\end{proposition}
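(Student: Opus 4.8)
The plan is to realize $\Mod$ as a left adjoint and then verify that the unit of the adjunction is a natural equivalence; since a functor possessing a right adjoint is fully faithful exactly when its unit is a natural equivalence, this will finish the proof. The candidate right adjoint $U\co\CAlg(\PrL)\to\CAlg(\S)$ sends a presentable symmetric monoidal $\i$-category $\C^\otimes$ to the $\EE_\i$-endomorphism ring $\End_\C(\mathbf{1}_\C)$ of its unit object. This is well defined: by the propositions above $\C^\otimes$ is closed, hence---being stable and presentable---canonically enriched, tensored, and cotensored over $\S$, so $\End_\C(\mathbf{1}_\C)$ is a spectrum; and it acquires a canonical $\EE_\i$-ring structure because $\mathbf{1}_\C$ is the symmetric monoidal unit (the composition and tensor multiplications on $\End_\C(\mathbf{1}_\C)$ agree, by Eckmann--Hilton, and assemble into an $\EE_\i$-structure). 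A morphism $F\co\C^\otimes\to\D^\otimes$ in $\CAlg(\PrL)$ is a colimit-preserving symmetric monoidal functor; it carries $\mathbf{1}_\C$ to $\mathbf{1}_\D$ and preserves the $\S$-enrichment, hence induces a map $\End_\C(\mathbf{1}_\C)\to\End_\D(\mathbf{1}_\D)$ of $\EE_\i$-rings, functorially in $F$.

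The heart of the matter is the adjunction equivalence
\[
\Map_{\CAlg(\PrL)}(\Mod_R,\C^\otimes)\;\simeq\;\Map_{\CAlg(\S)}\bigl(R,\End_\C(\mathbf{1}_\C)\bigr),
\]
natural in $R\in\CAlg(\S)$ and $\C^\otimes\in\CAlg(\PrL)$. For this I would appeal to the $\i$-categorical Eilenberg--Watts and Morita theory of \cite[Section~4.8]{lurie-higheralgebra}, which is essentially the cited \cite[7.1.2.7]{lurie-higheralgebra}: colimit-preserving functors $\Mod_R\to\C$ are classified by the $R$-module objects of $\C$, i.e. $\Funl(\Mod_R,\C)\simeq\Mod_R(\C)$, and refining along the symmetric monoidal structure, a colimit-preserving symmetric monoidal functor $\Mod_R\to\C$ is forced to send the unit $R$ to $\mathbf{1}_\C$ compatibly with $\EE_\i$-structures, after which the residual data is exactly an $\EE_\i$-ring map $R\to\End_\C(\mathbf{1}_\C)$. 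Since $\S=\Mod_\SS$ is the monoidal unit of $\PrL$ it is initial in $\CAlg(\PrL)$ (and $\SS$ is initial in $\CAlg(\S)$), so one may bootstrap from the case $R=\SS$---where the claim is just this initiality---to general $R$ by base change along $\SS\to R$. Carrying out this identification with the requisite care about mapping spaces in the (very large) $\i$-category $\PrL$ is the step I expect to be the main obstacle, and is where the real content of \cite[3.1]{antieau-gepner-brauergroups} resides.

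Granting the adjunction, it remains to check that the unit $\eta_R\co R\to U\Mod_R=\End_{\Mod_R}(\mathbf{1}_{\Mod_R})$ is an equivalence for every $R$. But $\mathbf{1}_{\Mod_R}=R$, and $\End_{\Mod_R}(R)\simeq F_R(R,R)\simeq R$ canonically as $\EE_\i$-rings; tracing through the construction, $\eta_R$ is exactly this canonical equivalence (the map classifying the action of $R$ on itself). Hence $\eta$ is a natural equivalence and $\Mod$ is fully faithful. Equivalently, and bypassing the adjoint formalism, one can argue directly that for $R,S\in\CAlg(\S)$ the map $\Map_{\CAlg(\S)}(R,S)\to\Map_{\CAlg(\PrL)}(\Mod_R,\Mod_S)$, $f\mapsto(-)\otimes_R S$, is inverse to the map $F\mapsto F(R)$ regarded as an $\EE_\i$-$R$-algebra via $R=\mathbf{1}_{\Mod_R}\to F(R)=\mathbf{1}_{\Mod_S}$, using the same Eilenberg--Watts input.
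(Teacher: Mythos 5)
The paper offers no proof of this proposition—it is stated with citations to Lurie and to Antieau--Gepner—so the comparison is against those sources, and your strategy is precisely the one found there. Exhibiting $\Mod$ as the left adjoint of $\C^\otimes \mapsto \End_\C(\mathbf{1}_\C)$ and observing that the unit $R \to \End_{\Mod_R}(R) \simeq R$ is an equivalence is exactly how Lurie establishes full faithfulness, with the technical input being the $\i$-categorical Eilenberg--Watts identification $\Funl(\Mod_R,\C) \simeq \Mod_R(\C)$ and its refinement over the commutative $\i$-operad, which you correctly cite.

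Two minor points worth tightening. First, the right adjoint $\C^\otimes \mapsto \End_\C(\mathbf{1}_\C)$ should be defined on $\CAlg(\PrL_{\mathrm{st}})$ rather than on all of $\CAlg(\PrL)$ as written: for an unstable presentable symmetric monoidal $\C$ the morphism object of the unit is merely a space, not a spectrum, so the proposed $U$ does not land in $\CAlg(\S)$. This is harmless, since $\S$ is an idempotent commutative algebra in $\PrL$ and hence the inclusion $\CAlg(\PrL_{\mathrm{st}}) \hookrightarrow \CAlg(\PrL)$ is fully faithful, so it suffices to prove the statement with target $\CAlg(\PrL_{\mathrm{st}})$. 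Second, the aside about ``bootstrapping from $R = \SS$ by base change'' does not quite reflect how the adjunction equivalence is actually proved, nor is the essential difficulty one of size issues with mapping spaces in $\PrL$; the substance resides in the Eilenberg--Watts theorem and its operadic upgrade, both of which you already invoke, so the bootstrap remark could simply be dropped. Neither point affects the correctness of the overall approach.
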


\subsection{Structured fibrations}

We will write $\Cat^\land_\i$ for the very large $\i$-category of large $\i$-categories.

\begin{definition}
Given a (possibly large) $\i$-category $\C$ and a functor $\C\to{\Cat^\land_\i}$, we will say that a coCartesian fibration $X\to S$ admits a $\C$-structure if its classifying functor $X\to{\Cat^\land_\i}$ factors through $\C\to{\Cat^\land_\i}$.
\end{definition}

We have a coCartesian fibration $\Mod \to \Ring$ \cite[4.5.3.6]{lurie-higheralgebra}
whose fiber over the $\EE_\i$-ring spectrum $R$ is the (large) $\i$-category $\Mod_R$ of $R$-modules.

\begin{proposition}[\protect{\cite[4.5.3.1, 4.5.3.2]{lurie-higheralgebra}}]
The coCartesian fibration $\Mod\to\Ring$ admits a canonical symmetric
monoidal structure: there is a coCartesian family of $\i$-operads
\[
\Mod^\otimes \to \Ring \times \Comm^\otimes
\]
classifying a functor $R \mapsto \Mod_R\co \Ring \to
\CAlg(\PrL_{\mathrm{st}})$ from $\EE_\i$-ring spectra to presentable stable symmetric
monoidal $\i$-categories.
\end{proposition}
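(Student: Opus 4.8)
The plan is to realize this as a special case of Lurie's construction of the symmetric monoidal structure on a relative module $\i$-category, applied to the symmetric monoidal $\i$-category $\S$ of spectra under the smash product. Recall that $\S^\otimes$ is a presentable, stable, closed symmetric monoidal $\i$-category; in particular its underlying $\i$-category admits geometric realizations of simplicial objects and the smash product preserves them separately in each variable, which is exactly what is needed to form relative tensor products.

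First I would invoke \cite[4.5.2.1, 4.5.3.1]{lurie-higheralgebra} to produce from $\S^\otimes$ a coCartesian family of $\i$-operads over $\CAlg(\S) \times \Comm^\otimes$; since $\CAlg(\S) = \Ring$ this is a map $\Mod^\otimes \to \Ring \times \Comm^\otimes$ whose underlying map to $\Ring$ is the coCartesian fibration $\Mod \to \Ring$ of \cite[4.5.3.6]{lurie-higheralgebra}. Concretely, over each $\EE_\i$-ring $R$ it restricts to the symmetric monoidal $\i$-category $(\Mod_R, \wedge_R)$ of $R$-modules under the relative smash product (computed as a two-sided bar construction), and for each map $R \to S$ the coCartesian edge over $\Ring$ induces the base-change functor $S \wedge_R (-)\co \Mod_R \too \Mod_S$, which is symmetric monoidal and colimit-preserving.

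Next I would pin down the target of the classifying functor. Each $\Mod_R$ is presentable (it is compactly generated, with $R$ a compact generator) and stable (being tensored over the stable $\i$-category $\S$, equivalently because the conservative forgetful functor $\Mod_R \to \S$ preserves all limits and colimits), and $\wedge_R$ preserves colimits in each variable since it is a geometric realization of a diagram assembled from colimit-preserving functors. By the characterization recalled earlier in this section --- a symmetric monoidal $\i$-category is stable and presentable exactly when its underlying $\i$-category is stable and presentable and the tensor preserves colimits in each variable --- each $\Mod_R^\otimes$ is an object of $\CAlg(\PrL_{\mathrm{st}})$, and the base-change functors are morphisms there. Unstraightening the coCartesian family over $\Ring$ then yields the desired functor $R \mapsto \Mod_R\co \Ring \too \CAlg(\PrL_{\mathrm{st}})$.

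The genuinely hard part is internal to Lurie's treatment of modules over commutative algebra objects \cite[4.5.2, 4.5.3]{lurie-higheralgebra}: assembling $\Mod^\otimes$ as a coCartesian family of $\i$-operads --- coherently in $R$, not merely fiberwise --- requires the machinery of $\i$-operads, the operadic description of module categories, and a verification that relative tensor products are compatible with base change along maps of commutative algebras. Taking that result as given, the only points left for us are the routine verification of its hypotheses for $\S^\otimes$ and the bookkeeping above that identifies the fibers, the base-change functors, and the refinement of the target from $\PrL$ to $\PrL_{\mathrm{st}}$.
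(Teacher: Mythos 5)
Your proposal is correct and follows the same route as the paper, which simply cites \cite[4.5.3.1, 4.5.3.2]{lurie-higheralgebra} with no further proof; you have essentially unpacked what those results say when specialized to $\S^\otimes$ and verified that the fibers and base-change functors land in $\CAlg(\PrL_{\mathrm{st}})$. (One small terminological slip: passing from the coCartesian family to the classifying functor $\Ring \to \CAlg(\PrL_{\mathrm{st}})$ is \emph{straightening}, not unstraightening, but the intent is clear.)
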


We next consider algebra objects. By applying
\cite[4.8.3.13]{lurie-higheralgebra}, we similarly find that we have a
coCartesian fibration $\Alg\to\Ring$ whose fiber over the ring $R$ is the (large) $\i$-category $\Alg_R$ of $R$-algebras.

\begin{proposition}
The coCartesian fibration $\Alg\to\Ring$ admits a canonical symmetric monoidal structure such that the forgetful functor from algebras to modules induces a morphism of symmetric monoidal coCartesian fibrations
\[
\xymatrix{\Alg\ar[rr]\ar[rd] & & \Mod\ar[ld]\\
& \Ring &}
\]
over $\Ring$.
\end{proposition}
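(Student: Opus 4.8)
The plan is to build the symmetric monoidal structure on $\Alg \to \Ring$ by applying Lurie's machinery for algebra objects in a coCartesian family of $\i$-operads, and then to check that the forgetful natural transformation $\Alg_R \to \Mod_R$ upgrades to a map of such families. Concretely, I would start from the coCartesian family of $\i$-operads $\Mod^\otimes \to \Ring \times \Comm^\otimes$ supplied by the preceding proposition, which classifies the functor $R \mapsto \Mod_R$ into $\CAlg(\PrL_{\mathrm{st}})$. Since each $\Mod_R$ is a presentable symmetric monoidal $\i$-category, the $\i$-category $\Alg(\Mod_R) = \Alg_R$ of associative (i.e. $\EE_1$-) algebra objects inherits a symmetric monoidal structure: the tensor product of two $R$-algebras is their tensor product as $R$-modules, equipped with the induced algebra structure (this is \cite[3.2.4.3, 3.2.4.4]{lurie-higheralgebra}, using that $\Comm = \EE_\i$ receives a map from $\EE_1$ and that tensoring along this map is symmetric monoidal). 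The first real step is thus to observe that this construction is functorial in $R$: the assignment $R \mapsto \Alg_R$ refines to a functor $\Ring \to \CAlg(\PrL)$ — or, more carefully, to a coCartesian family of $\i$-operads over $\Ring \times \Comm^\otimes$ — because base change $\Mod_R \to \Mod_{R'}$ along a ring map is symmetric monoidal and hence carries algebras to algebras compatibly with tensor products.

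The cleanest way to organize this is relative: rather than straining the already-existing fiberwise statements, I would invoke \cite[4.8.3.13]{lurie-higheralgebra} (which the excerpt already cites to produce the coCartesian fibration $\Alg \to \Ring$) in its monoidal form, or equivalently apply the functor ``$\Alg(-)$'' to the coCartesian family of $\i$-operads $\Mod^\otimes \to \Ring \times \Comm^\otimes$ in the sense of \cite[3.2.4]{lurie-higheralgebra}. This produces a coCartesian family of $\i$-operads $\Alg^\otimes \to \Ring \times \Comm^\otimes$ whose fiber over $(R, \langle 1 \rangle)$ is $\Alg_R$ and whose symmetric monoidal structure on that fiber is the relative-tensor-product one described above; unwinding the classifying functor gives $\Alg \to \Ring$ a canonical symmetric monoidal structure. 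The second step is to produce the forgetful map: the natural transformation of functors $\Alg(\Mod_R^\otimes) \to \Mod_R^\otimes$ forgetting the algebra structure is lax symmetric monoidal and, in fact, symmetric monoidal (it sends $A \otimes_R B$ to the underlying module, which is literally the $R$-module tensor product), and it is natural in $R$; packaging this naturality yields a morphism $\Alg^\otimes \to \Mod^\otimes$ of coCartesian families of $\i$-operads over $\Ring \times \Comm^\otimes$, which is exactly the asserted morphism of symmetric monoidal coCartesian fibrations over $\Ring$ fitting into the displayed triangle.

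I expect the main obstacle to be bookkeeping rather than mathematics: making precise the sense in which ``$\Alg(-)$'' is applied to a coCartesian family of $\i$-operads and checking that the output is again a coCartesian family whose fiberwise symmetric monoidal structure is the one claimed. Lurie's general results on algebra objects (\cite[3.2.4]{lurie-higheralgebra}) are stated for a single presentable symmetric monoidal $\i$-category, and one needs the relative version over the base $\Ring$; the standard move is to observe that everything in sight preserves the relevant limits/colimits, so the construction can be performed fiberwise and assembled by the straightening/unstraightening equivalence, using that a map classified fiberwise by symmetric monoidal functors is automatically a map of symmetric monoidal fibrations (\cite[2.4.2]{lurie-higheralgebra}). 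Once the relative framework is set up, the triangle over $\Ring$ commutes by construction, since both legs are induced by the evident forgetful and module functors and the forgetful map to $\Mod$ was built precisely to be compatible with the projection to $\Ring$.
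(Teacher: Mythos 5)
Your proposal follows essentially the same route as the paper: both start from the coCartesian family of $\i$-operads $\Mod^\otimes \to \Ring \times \Comm^\otimes$, straighten it to a functor $\Ring \to (\Op_\i)^{/\Comm^\otimes}$, apply Lurie's algebra-object construction objectwise (the paper cites \cite[3.4.2.1]{lurie-higheralgebra} here, where you cite the surrounding \cite[3.2.4]{lurie-higheralgebra}), use \cite[3.2.4.3]{lurie-higheralgebra} to see that the forgetful map preserves coCartesian arrows, and unstraighten to obtain the morphism $\Alg^\otimes \to \Mod^\otimes$ of coCartesian $\Ring$-families. The two arguments differ only in bookkeeping and citation granularity, not in substance.
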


\begin{proof}
As in \cite[5.3.1.20]{lurie-higheralgebra}, the coCartesian family of
$\i$-operads $\Mod^\otimes \to \Ring \times \Comm^\otimes$ classifies
a functor $\Ring \to (\Op_\i)^{/\Comm^\otimes}$, taking $R$ to the
coCartesian fibration $\Mod_R^\otimes \to \Comm^\otimes$. Applying
\cite[3.4.2.1]{lurie-higheralgebra}, we obtain a functor $\Alg\co
\Ring \to (\Op_\i)^{/\Comm^\otimes}$, taking $R$ to a coCartesian
fibration $\Alg_R^\otimes \to \Comm^\otimes$ with a forgetful map
\[
\xymatrix{\Alg^\otimes_R\ar[rr]\ar[rd] & & \Mod^\otimes_R\ar[ld]\\
& \Comm^\otimes &}
\]
that preserves coCartesian arrows
\cite[3.2.4.3]{lurie-higheralgebra}. Converting this back, we obtain a
diagram
\[
\xymatrix{\Alg^\otimes\ar[rr]\ar[rd] & & \Mod^\otimes\ar[ld]\\
& \Ring \times \Comm^\otimes &}
\]
of coCartesian $\Ring$-families of symmetric monoidal $\i$-operads,
lifting the underlying map $\Alg \to \Mod$ to one compatible with the symmetric monoidal structure.
\end{proof}

Restricting the coCartesian fibration $\Mod\to\Ring$ to the 
subcategory of coCartesian arrows between compact modules, we obtain a
left fibration
\[
\Mod^\omega\too\Ring
\] whose fiber over $R$ is the $\i$-groupoid $\Mod^\omega_R$ of compact (or perfect \cite[7.2.5.2]{lurie-higheralgebra}) $R$-modules and equivalences thereof. More specifically, an arrow $(R,M)\to (R',M')$ of $\Mod^\omega$ is an arrow $(R,M)\to (R',M')$ of $\Mod$ such that $M$ is compact (as an $R$-module) and the map $M\otimes_R R'\to M'$ is an equivalence.
Note that $\Mod^\omega_R$ should not be confused with the full subcategory of $\Mod_R$ spanned by the compact objects; rather, it is the full subgroupoid of $\Mod_R$ spanned by the compact objects.

Lastly, let $\Alg^\prop\to\Ring$ denote the left fibration whose source $\i$-category is the subcategory $\Alg^\prop$ of proper algebras defined by the pullback
\[
\xymatrix{\Alg^\prop\ar[r]\ar[d] & \Alg\ar[d]\\
\Mod^\omega\ar[r] & \Mod.}
\]
This time, however, $\Alg^\prop_R$ is not the full subgroupoid of $\Alg_R$ on the compact $R$-algebras, but rather the full subgroupoid of $\Alg_R$ consisting of the $R$-algebras $A$ whose underlying $R$-module is compact.

\begin{proposition}
The morphism of symmetric monoidal coCartesian fibrations $\Alg\too\Mod$ over $\Ring$ restricts to a morphism of symmetric monoidal left fibrations $\Alg^\prop\to\Mod^\omega$ over $\Ring$.
\end{proposition}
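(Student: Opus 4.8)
The plan is to obtain the statement by restricting the symmetric monoidal morphism $\Alg^\otimes\to\Mod^\otimes$ of coCartesian $\Ring$-families of symmetric monoidal $\i$-operads built in the previous proposition, and the whole matter comes down to two closure properties. First, the compact objects of $\Mod_R$ are the perfect modules \cite[7.2.5.2]{lurie-higheralgebra}, i.e.\ the retracts of finite colimits of copies of $R$: this class contains the unit and is closed under $\otimes_R$, so the perfect modules form a symmetric monoidal full subcategory of $\Mod_R$ whose core is $\Mod^\omega_R$. Since the forgetful functor $\Alg_R\to\Mod_R$ is symmetric monoidal --- the underlying module of $A\otimes_R B$ is the tensor product of the underlying modules, which is part of the content of the previous proposition --- the proper algebras similarly contain the unit and are closed under $\otimes_R$, so $\Alg^\prop_R$ is a symmetric monoidal $\i$-groupoid whose forgetful functor factors through $\Mod^\omega_R$. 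Second, base change $(-)\otimes_R R'$ along a ring map $R\to R'$, being symmetric monoidal, preserves perfect modules and hence proper algebras, so these subcategories are carried into one another by the transition functors of the families $\Mod\to\Ring$ and $\Alg\to\Ring$.

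Granting this, I would let $\Mod^{\omega,\otimes}\subseteq\Mod^\otimes$ be the subcategory on the tuples of compact modules (over varying rings) whose morphisms are the arrows of $\Mod^\otimes$ between such tuples that are coCartesian over $\Comm^\otimes$ within a single fibre of $\Mod^\otimes\to\Ring$ (inert projections and iterated tensor products), the $\Ring$-coCartesian arrows between them (base change), and composites of these; define $\Alg^{\prop,\otimes}\subseteq\Alg^\otimes$ analogously. The two closure properties are exactly what is needed to run the recognition criteria for monoidal subcategories \cite[Section~2.2.1]{lurie-higheralgebra} (applied to the family over $\Ring$) and conclude that $\Mod^{\omega,\otimes}\to\Ring\times\Comm^\otimes$ and $\Alg^{\prop,\otimes}\to\Ring\times\Comm^\otimes$ are again coCartesian $\Ring$-families of symmetric monoidal $\i$-operads --- in fact of symmetric monoidal $\i$-groupoids, their fibres being $\i$-groupoids --- and that the forgetful functor restricts to a morphism $\Alg^{\prop,\otimes}\to\Mod^{\omega,\otimes}$ of such. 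Pulling back along $\Ring\times\{\langle 1\rangle\}\into\Ring\times\Comm^\otimes$ recovers precisely the left fibrations $\Mod^\omega\to\Ring$ and $\Alg^\prop\to\Ring$ of the earlier definitions, together with the forgetful map between them. (The same argument can be phrased through classifying functors: the previous proposition exhibits $R\mapsto(\Alg_R\to\Mod_R)$ as a natural transformation of functors $\Ring\to\CAlg(\Cat_\i)$, the closure properties let one restrict it objectwise and compatibly to $\Alg^\prop_R\to\Mod^\omega_R$, and unstraightening the resulting natural transformation of functors $\Ring\to\CAlg(\mathcal{S})$ produces the desired morphism of symmetric monoidal left fibrations over $\Ring$.)

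The closure properties are standard, so the real work is bookkeeping rather than mathematics, and the one point that needs care is the choice of morphisms in $\Mod^{\omega,\otimes}$ and $\Alg^{\prop,\otimes}$: one must keep exactly the $\Comm^\otimes$-coCartesian arrows and the $\Ring$-coCartesian arrows, so that the underlying left fibration over $\Ring$ is the one already defined --- whose only fibrewise morphisms are equivalences --- rather than the larger full subcategory of $\Mod$ (resp.\ $\Alg$) spanned by the compact modules (resp.\ proper algebras), which would not be a left fibration. Once the morphism classes are pinned down correctly, the remainder is a routine application of Lurie's operadic machinery.
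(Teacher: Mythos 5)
Your proof is correct and is essentially an expanded version of the paper's one-line argument, which reduces the claim to the closure of compact $R$-modules under $\otimes_R$ and leaves the structural bookkeeping implicit. The key input you use --- that the perfect modules contain the unit and are closed under tensor, and that base change preserves them --- is exactly the content the paper invokes via \cite[5.3.1.17]{lurie-higheralgebra}; your careful handling of which morphisms to retain so that the result is a left fibration is a correct elaboration of what the paper takes for granted.
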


\begin{proof}
Tensors of compact modules are compact \cite[5.3.1.17]{lurie-higheralgebra}.
\end{proof}

\subsection{Functoriality of endomorphisms}

In order to construct the endomorphism algebra as a functor, we need to extend the results of \cite[4.7.2]{lurie-higheralgebra}. In this, Lurie considers the category of tuples $(A,M,\phi\co A \otimes M \to M)$, which has a forgetful functor $p$ given by $p(A,M,\phi) = M$. He extends it in such a way as to give this functor $p$ monoidal fibers; this gives the terminal object $\End(M)$ in the fiber over $M$ a canonical monoid structure. For the reader's convenience, we will first review some details of Lurie's construction.

Let $\L\M^\otimes$ denote the $\i$-operad parametrizing pairs of an algebra and a left module \cite[4.2.1.7]{lurie-higheralgebra}. A coCartesian fibration $\O^\otimes \to \L\M^\otimes$ of $\i$-operads determines a monoidal $\i$-category $\C$ and an $\i$-category $\M$ such that $\M$ is {\em left-tensored} over $\C$ \cite[4.2.1.19]{lurie-higheralgebra}: in particular, there exist objects $A \otimes M$ for $A \in \C$ and $M \in \M$.  Associated to this there is a category $\LMod(\M)$ of left module objects in $\M$ \cite[4.2.1.13]{lurie-higheralgebra}: such an object is determined by an algebra $A \in \C$ and a left $A$-module $M \in \M$. There is a forgetful map $\LMod(\M) \to \M$ which is a categorical fibration.

\begin{proposition}
Let $\Act(\M)$ be the fiber product $\LMod(\M) \times_{\M} \M^{\simeq}$. The natural map $\Act(\M) \to \M^\simeq$ is a coCartesian fibration.
\end{proposition}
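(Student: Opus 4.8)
The plan is to deduce this from a general principle rather than constructing coCartesian lifts by hand. First I would show that $p\co\Act(\M)\to\M^\simeq$ is a \emph{categorical} fibration, and then observe that over an $\i$-groupoid any categorical fibration is automatically coCartesian. For the second point: $p$ is in particular an inner fibration, and given an object $X$ of $\Act(\M)$ together with an edge $\bar f\co p(X)\to N$ of $\M^\simeq$ — which is necessarily an equivalence — the right lifting property of a categorical fibration against the trivial cofibration $\{0\}\hookrightarrow J$, with $J$ the nerve of the free-living isomorphism, produces an equivalence $f\co X\to Y$ in $\Act(\M)$ lying over $\bar f$. Since an equivalence in the total space of an inner fibration is always coCartesian \cite[2.4.1.5]{lurie-htt}, the edge $f$ is the desired $p$-coCartesian lift; as $X$ and $\bar f$ were arbitrary, $p$ is a coCartesian fibration.

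It remains to check that $p$ is a categorical fibration, and for this I would begin with the forgetful functor $q\co\LMod(\M)\to\M$, which is a categorical fibration by its construction in \cite[Section~4.2]{lurie-higheralgebra} (it is realized, up to equivalence over $\M$, as a restriction-of-sections functor of the type that \cite{lurie-htt} identifies as a categorical fibration, and one checks directly that $q$ lifts equivalences to equivalences). Categorical fibrations are stable under pullback, and $\Act(\M)=\LMod(\M)\times_\M\M^\simeq$ by definition, so $p$ is the base change of $q$ along the inclusion $\M^\simeq\hookrightarrow\M$ and is therefore a categorical fibration. This also clarifies why one passes to $\M^\simeq$: the functor $q$ itself is not coCartesian, since a module structure cannot be transported along a non-invertible map of underlying objects, but it is a categorical fibration, and that property upgrades to coCartesianness precisely once the base is replaced by its maximal $\i$-groupoid.

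Unwinding the argument, the $p$-coCartesian edge lifting an equivalence $\bar f\co M\xrightarrow{\sim}N$ with source $(A,M)$ is the evident morphism $(A,M)\to(A,N)$ in which $N$ carries the left $A$-module structure transported along $\bar f$; this is the description one wants downstream, in order to compare the fibers of $p$ over equivalent objects and, eventually, to make $M\mapsto\End(M)$ functorial. I expect the only real friction to be bookkeeping: confirming from \cite{lurie-higheralgebra} that $\LMod(\M)\to\M$ is genuinely a categorical fibration, and not merely an inner fibration, since the whole argument turns on the ability to lift equivalences in the base. Granting that, everything else is a formal consequence of standard properties of categorical and coCartesian fibrations.
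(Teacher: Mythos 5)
Your proof is correct and follows the same route as the paper's: show $\Act(\M)\to\M^\simeq$ is a categorical fibration (via pullback of the categorical fibration $\LMod(\M)\to\M$ along $\M^\simeq\hookrightarrow\M$) and then invoke the fact that a categorical fibration over a Kan complex is automatically a coCartesian fibration, which the paper cites as \cite[2.4.1.5, 2.4.6.5]{lurie-htt}. Your write-up simply unwinds the second citation by hand (lift along $\{0\}\hookrightarrow J$ and note equivalences over an inner fibration are coCartesian), whereas the paper is terse; as a minor aside, the paper's proof reads ``$\Act(\M)\to\M$'' where it should say $\M^\simeq$, and you have the target right.
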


\begin{proof}
The map $\Act(\M) \to \M$ is a categorical fibration to a Kan complex, and so by \cite[2.4.1.5, 2.4.6.5]{lurie-htt} it is a coCartesian fibration.
\end{proof}

\begin{definition}[{\cite[4.2.1.28]{lurie-higheralgebra}}]
Suppose that $\M$ is left-tensored over the monoidal $\i$-category $\C$. A {\em morphism object} for $M$ and $N$ is an object $F_\M(M,N)$ of $\C$ equipped with a map $F_\M(M,N) \otimes M \to N$ such that the resulting natural homotopy class of map
\[
\Map_\C(C,F_\M(M,N)) \to \Map_\M(C \otimes M,N)
\]
is a homotopy equivalence for all $C \in \C$. If morphism objects exist for all $M$ and $N$, we say that the left-tensor structure gives $\M$ a {\em $\C$-enrichment}.
\end{definition}

\begin{proposition}[{\cite[4.7.2.40]{lurie-higheralgebra}}]
Suppose that $\M$ is left-tensored over $\C$, giving it a $\C$-enrichment. For any $M \in \M$, the fiber $\LMod(\M) \times_\M \{M\}$ has a final object $\End(M)$ whose image under the composite $\LMod(\M) \to \Alg_\C \to \C$ is $F_\M(M,M)$.
\end{proposition}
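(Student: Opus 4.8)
The plan is to reduce the statement to locating a final object in the fiber and computing its underlying object of $\C$. Write $\E := \LMod(\M)\times_\M\{M\}$; by the previous proposition this is the fiber over $M$ of the coCartesian fibration $\Act(\M)\to\M^\simeq$, and concretely an object of $\E$ is an algebra $A\in\Alg(\C)$ together with a left $A$-module structure on the fixed object $M$. The device I would use is an auxiliary monoidal $\i$-category $\C_M$ whose objects are, informally, pairs $(C,\mu)$ with $C\in\C$ and $\mu\co C\otimes M\to M$, with monoidal product $(C,\mu)\otimes(C',\mu')=(C\otimes C',\ C\otimes C'\otimes M\xrightarrow{1\otimes\mu'}C\otimes M\xrightarrow{\mu}M)$ and unit $(\II,\mathrm{id}_M)$. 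Unwinding definitions, an associative algebra in $\C_M$ is exactly an algebra $A\in\Alg(\C)$ equipped with a left $A$-module structure on $M$, so one expects a natural equivalence $\Alg(\C_M)\simeq\E$; moreover the monoidal forgetful functor $\C_M\to\C$, $(C,\mu)\mapsto C$, induces $\Alg(\C_M)\to\Alg(\C)$, and the composite $\E\simeq\Alg(\C_M)\to\Alg(\C)\to\C$ should agree with $\LMod(\M)\to\Alg_\C\to\C$.

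Granting this reduction, what remains is formal. First, $\C_M$ has a final object, namely the pair $(F_\M(M,M),\mathrm{ev})$, where $\mathrm{ev}\co F_\M(M,M)\otimes M\to M$ is the counit of the adjunction defining the morphism object: for any $(C,\mu)$, the space $\Map_{\C_M}\bigl((C,\mu),(F_\M(M,M),\mathrm{ev})\bigr)$ is the homotopy fiber over $\mu$ of $\Map_\C(C,F_\M(M,M))\xrightarrow{\sim}\Map_\M(C\otimes M,M)$, hence contractible. This is the only step that uses the enrichment hypothesis. Second, for any monoidal $\i$-category $\D$ with final object $t$, the object $t$ carries an essentially unique algebra structure, since each of its structure maps and coherences is a choice of point in a mapping space with final target, hence in a contractible space; and this algebra is final in $\Alg(\D)$, since for any $A\in\Alg(\D)$ the space $\Map_{\Alg(\D)}(A,t)$ is a limit of mapping spaces in $\D$ all of which have target (a tuple of copies of) $t$ and are therefore contractible. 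Applying this with $\D=\C_M$ and transporting along $\E\simeq\Alg(\C_M)$ produces a final object $\End(M)\in\E$, and the compatibility recorded above identifies its image in $\C$ with $F_\M(M,M)$.

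The real work — and the step I expect to be the main obstacle — is the construction of $\C_M$ as a genuine monoidal $\i$-category, together with the equivalence $\Alg(\C_M)\simeq\LMod(\M)\times_\M\{M\}$ and the stated compatibilities. This is a manipulation of the $\i$-operad $\L\M^\otimes$ and of the coCartesian fibration $\O^\otimes\to\L\M^\otimes$ presenting the left-tensored structure on $\M$: one builds $\C_M^\otimes$ over $\mathrm{Assoc}^\otimes$ by a comma-type construction from $\O^\otimes$ encoding the extra datum $\mu\co C\otimes M\to M$, and checks the inert-coCartesian conditions making it monoidal and its algebras what they should be, following \cite[4.7.2]{lurie-higheralgebra}. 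Once that infrastructure is in place, everything else is the soft argument above plus the coherence bookkeeping I have suppressed.
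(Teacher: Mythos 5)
The paper does not give its own proof of this statement — it cites \cite[4.7.2.40]{lurie-higheralgebra} directly — and your proposal reconstructs precisely Lurie's argument: build the auxiliary monoidal $\i$-category of pairs $(C,\mu\co C\otimes M\to M)$ (Lurie's $\C[M]$), identify its $\i$-category of algebra objects with $\LMod(\M)\times_\M\{M\}$, observe that $(F_\M(M,M),\mathrm{ev})$ is final in $\C[M]$ by the defining universal property of the morphism object, and invoke the general principle that a final object of a monoidal $\i$-category underlies a final algebra object. This is the same route, and you are right that the substantive content lies in the operad-level construction of $\C[M]^\otimes$ and the identification of its algebras, which you correctly defer to \cite[4.7.2]{lurie-higheralgebra}.
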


\begin{corollary}
\label{cor:endfunc}
Under these assumptions, there exists a functor $\End\co \M^\simeq \to \Alg_\C$ sending $M$ to $\End(M)$.
\end{corollary}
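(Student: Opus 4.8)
The plan is to produce $\End$ as the composite
\[
\M^\simeq \xrightarrow{s} \Act(\M) \too \LMod(\M) \too \Alg_\C,
\]
where the second arrow is the projection $\Act(\M) = \LMod(\M)\times_\M\M^\simeq \to \LMod(\M)$ and the third is the forgetful functor remembering only the underlying algebra. The only thing to build is the section $s$ of the coCartesian fibration $q\co \Act(\M)\to\M^\simeq$ from the first proposition of this subsection, and it must be the section that picks out, fiber by fiber, the final object $\End(M)$ of the fiber $\Act(\M)_M = \LMod(\M)\times_\M\{M\}$ supplied by the preceding proposition; granting that, the preceding proposition also identifies the image of $\End(M)$ in $\Alg_\C$, so the composite lands on $\End(M)$ as required.

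First I would record the structural feature that makes this possible: since $\M^\simeq$ is an $\i$-groupoid, every morphism of $\M^\simeq$ is an equivalence, so every coCartesian pushforward functor $\Act(\M)_M \to \Act(\M)_{M'}$ attached to $q$ is an equivalence of $\i$-categories, and in particular preserves final objects. Thus the family $\{\End(M)\}$ is compatible with the transition functors classifying $q$. To turn compatibility into an honest section I would pass to the full subcategory $\Act(\M)^{\mathrm{term}} \subseteq \Act(\M)$ spanned by the objects that are final in their fiber; a $q$-coCartesian arrow out of such an object again has fiberwise-final target, so $q$ restricts to a coCartesian fibration $\Act(\M)^{\mathrm{term}} \to \M^\simeq$ whose fibers are now contractible Kan complexes (the space of final objects of an $\i$-category is empty or contractible). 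A left fibration with contractible fibers is a trivial Kan fibration, so it admits a section, and composing with the inclusion yields $s$. More invariantly, one can identify the $\i$-category of sections of $q$ with the limit $\lim_{M\in\M^\simeq}\Act(\M)_M$ and check that the tuple $(\End(M))_M$ is a final object there — mapping spaces in a limit are the limits of the mapping spaces, and each $\Map_{\Act(\M)_M}(-,\End(M))$ is contractible — which additionally shows $s$ is unique up to a contractible space of choices.

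The hard part is precisely this middle step, gluing the fiberwise-final objects into a section; everything else is formal. I expect the argument to hinge entirely on the base being an $\i$-groupoid, which is exactly why the construction was set up using $\Act(\M) = \LMod(\M)\times_\M\M^\simeq$ rather than $\LMod(\M)$ itself — for $\LMod(\M)\to\M$ one would instead need to know that the relevant pushforwards preserve final objects, which is not automatic.
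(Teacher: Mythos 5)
Your argument is correct and follows the same skeleton as the paper's: pass to the full subcategory of $\Act(\M)$ spanned by fiberwise-final objects, observe that it maps to $\M^\simeq$ by a trivial Kan fibration, choose a section, and compose with the forgetful functor to $\Alg_\C$. The only difference is that where the paper invokes \cite[2.4.4.9]{lurie-htt} as a black box for the trivial Kan fibration, you rederive it by hand using the $\i$-groupoid structure of the base (pushforwards are equivalences, hence preserve final objects; fibers of the restriction are contractible; a coCartesian fibration over a Kan complex with Kan-complex fibers is a left, hence Kan, fibration, and contractible fibers then make it trivial) — a harmless elaboration, and your closing remark about why $\Act(\M)$ rather than $\LMod(\M)$ is needed is the right instinct, though the more precise reason is that $\LMod(\M)\to\M$ is not known to be coCartesian in the first place, rather than that its pushforwards might fail to preserve final objects.
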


\begin{proof}
By \cite[2.4.4.9]{lurie-htt}, the full subcategory of $\Act(\M)$ spanned by the final objects determines a trivial Kan fibration $\End(\M) \to \M^\simeq$. Choosing a section of this map, we obtain a composite functor
\[
\M^\simeq \to \LMod(\M) \to \Alg_\C
\]
with the desired properties.
\end{proof}

\begin{remark}
It should be sufficient to assume that $\C$ is monoidal and $\M$ merely $\C$-enriched, rather than including the stronger assumption that $\M$ is left-tensored over $\C$. However, we require this assumption in order to make use of the results from \cite[4.7.2]{lurie-higheralgebra}.
\end{remark}

\section{Picard and Brauer spectra}
\label{sec:picard-brauer}

\subsection{Picard spectra}
\label{sec:pic-spectra}

If $\C$ is a small $\i$-category, we write $\pi_0\C$ for the set of equivalence classes of objects of $\C$.
Note that, by definition, $\pi_0\C$ is an invariant of the underlying $\i$-groupoid $\C^\simeq$ of $\C$ (the $\i$-groupoid obtained by discarding the noninvertible arrows).

\begin{definition}
A symmetric monoidal $\i$-category $\C$ is grouplike if the monoid $\pi_0\C$ is a group.
\end{definition}

A symmetric monoidal $\i$-category $\C$ has a unique maximal grouplike symmetric monoidal subgroupoid $\C^\times$, the subcategory $\C^\times\subset\C$ consisting of the invertible objects and equivalences thereof.
That this is actually a symmetric monoidal subcategory in the $\i$-categorical sense follows from the fact that invertibility and equivalence are both detected upon passage to the symmetric monoidal homotopy category, and the grouplike condition is guaranteed by considering only the invertible objects.

Let $\PrL_{\mathrm{st}} \subset \PrL$ denote the $\i$-category of {\em stable} presentable $\i$-categories and colimit-preserving functors; by \cite[4.8.2.18]{lurie-higheralgebra} this is the category $\Mod_\S$ of left modules over the $\i$-category of spectra. We have the $\i$-category $\CAlg(\Mod_\S)$ of commutative ring objects in $\PrL_{\mathrm{st}}$; these are the same as commutative $\S$-algebras or presentable symmetric monoidal stable $\i$-categories.

\begin{definition}
Let $\R$ be a commutative $\S$-algebra.
The Picard $\i$-groupoid $\Pic(\R)$ of $\R$ is $\R^\times$, the maximal subgroupoid of the underlying $\i$-category of $\R$ spanned by the invertible objects.
\end{definition}

By \cite[8.9]{ando-blumberg-gepner-umkehr} $\Pic(\R)$ is equivalent to a small space, and by \cite[8.10]{ando-blumberg-gepner-umkehr} the functor $\Pic$ commutes with limits.

We have a symmetric monoidal cocartesian fibration
\[
\Mod(\Mod_\S)\too\CAlg(\Mod_\S)
\]
whose fiber over a commutative $\S$-algebra $\R$ is the symmetric monoidal $\i$-category $\Cat_\R$ of $\R$-linear $\i$-categories.
Writing 
\[
\Mod_{\S,\omega}\subset\Mod_\S
\]
for the symmetric monoidal subcategory consisting of the compactly-generated $\S$-modules and compact object preserving functors, this restricts to a symmetric monoidal cocartesian fibration
\[
\Mod(\Mod_{\S,\omega})\too\CAlg(\Mod_{\S,\omega})
\]
over the subcategory $\CAlg(\Mod_{\S,\omega})\subset\CAlg(\Mod_\S)$ of commutative algebra objects in $\Mod_{\S,\omega}\subset\Mod_\S$.
For a commutative algebra object $\R\in\CAlg(\Mod_{\S,\omega})$, a.k.a. a compactly generated commutative $\S$-algebra, we write $\Cat_{\R,\omega}^\simeq$ for the full subgroupoid of the fiber $\Cat_{\R,\omega}$ over $\R$, the symmetric monoidal $\i$-category of compactly generated $\R$-linear $\i$-categories in the sense of \cite[5.3.5]{lurie-htt}, and note that the map $\R \mapsto \Cat_{\R,\omega}^\simeq$ defines a left fibration $\Mod^\simeq(\Mod_{\S,\omega})\to\CAlg(\Mod_{\S,\omega})$.

\begin{proposition}[{cf. \cite[2.1.3]{mathew-stojanoska-picard}}]
Let $\R$ be a compactly-generated commutative $\S$-algebra.
Then any invertible object of $\R$ is compact.
\end{proposition}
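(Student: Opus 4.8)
The plan is to deduce the statement from two ingredients: that the unit object $\mathbf 1_{\R}$ of $\R$ is compact, and the general fact that in a presentable symmetric monoidal stable $\i$-category with compact unit every dualizable object is compact (any invertible object being dualizable).

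For the first ingredient I would simply unwind the hypothesis $\R\in\CAlg(\Mod_{\S,\omega})$. Since $\Mod_{\S,\omega}\subset\Mod_{\S}$ is, as recalled above, a symmetric monoidal subcategory whose morphisms are the colimit-preserving, compact-object-preserving functors, a commutative algebra object $\R$ in it comes with a unit map $\S\to\R$ that is such a morphism; as the sphere spectrum $\SS$ is a compact object of $\S$, its image $\mathbf 1_{\R}$ is compact in $\R$.

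For the second ingredient, let $L$ be invertible with inverse $L^{-1}$, so that $L$ is dualizable with dual $L^{-1}$ and $\mathbf 1_{\R}\simeq L^{-1}\otimes L$. Forgetting compact generation, $\R$ is a presentable monoidal $\i$-category and hence closed, so $L^{-1}\otimes(-)\co\R\to\R$ admits a right adjoint and preserves all colimits. For a filtered diagram $\{Y_i\}$ in $\R$ this gives
\begin{align*}
\Map_{\R}(L,\colim_i Y_i)
&\simeq \Map_{\R}(\mathbf 1_{\R},\, L^{-1}\otimes\colim_i Y_i)\\
&\simeq \Map_{\R}(\mathbf 1_{\R},\, \colim_i (L^{-1}\otimes Y_i))\\
&\simeq \colim_i \Map_{\R}(\mathbf 1_{\R},\, L^{-1}\otimes Y_i)\\
&\simeq \colim_i \Map_{\R}(L, Y_i),
\end{align*}
using the duality adjunction at the first and last steps, cocontinuity of $L^{-1}\otimes(-)$ at the second, and compactness of $\mathbf 1_{\R}$ at the third. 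Hence $\Map_{\R}(L,-)$ commutes with filtered colimits, i.e.\ $L$ is compact.

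I do not expect a real obstacle here; the entire content sits in the first step, namely reading the hypothesis correctly so that it delivers compactness of the unit. An essentially equivalent route, which I might prefer if it cites more cleanly against \cite{lurie-higheralgebra}, replaces the duality computation by the remark that $L\otimes(-)\co\R\to\R$ is an equivalence of $\i$-categories sending $\mathbf 1_{\R}$ to $L$, and that equivalences preserve compact objects.
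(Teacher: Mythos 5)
Your proof is correct and takes essentially the same route as the paper: reduce compactness of an invertible $L$ to compactness of the unit $\mathbf 1_{\R}$ via the adjunction $\Map(L,-)\simeq\Map(\mathbf 1_{\R}, L^{-1}\otimes -)$ and cocontinuity of $L^{-1}\otimes(-)$, and observe that $\mathbf 1_{\R}$ is compact since the unit map $\S\to\R$ is a morphism in $\Mod_{\S,\omega}$ and thus preserves compact objects. You spell out the justification for the compactness of the unit more explicitly than the paper does, which is a small improvement in exposition rather than a different argument.
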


\begin{proof}
Let $I$ be any invertible object of $\R$ and let $\{M_\alpha\}$ be a filtered system of objects of $\R$.
Then there are natural equivalences
\begin{align*}
\Map(I,\colim M_\alpha)
&\simeq\Map({\bf 1},\colim I^{-1}\otimes M_\alpha)\\
&\simeq\colim\Map({\bf 1},I^{-1}\otimes M_\alpha)\\
&\simeq\colim\Map(I,M_\alpha).
\end{align*}
The first equivalence follows because $\otimes$ commutes with
colimits. The second follows because the monoidal unit ${\bf 1}$
(the image of the sphere spectrum under the map $\S \to \R$) is compact by definition.
\end{proof}

Because $\Pic(\R)$ is closed under the symmetric monoidal product on
$\R$, it is a grouplike symmetric monoidal $\i$-groupoid, so by the recognition principle for infinite loop spaces we may
regard $\Pic(\R)$ as having an associated (connective) spectrum
$\pic(\R) = K(\Pic(\R))$ \cite[1.4]{clausen}.
Let $\Gamma_\R$ be the algebraic Picard groupoid of $\R$: the homotopy category of $\Pic(\R)$, which is the $1$-truncation of $\Pic(\R)$.  If $\R$ is unambiguous, we drop it and simply write $\Gamma$.  We will notationally distinguish between an object $\gamma \in \Gamma$ and the associated invertible object $R^\gamma \in \R$.

\begin{proposition}
The homotopy category of $\R$ is canonically enriched in the symmetric monoidal category of $\Gamma$-graded abelian groups.
\end{proposition}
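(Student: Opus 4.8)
The plan is to deduce this from Proposition~\ref{prop:graded-enrich}, applied with $\A$ taken to be the homotopy category $\mathrm{h}\R$ of $\R$. To invoke that proposition I first need to verify the hypotheses of the definition preceding it: that $\mathrm{h}\R$ is an additive symmetric monoidal category in which the monoidal product is additive in each variable, and that there is a symmetric monoidal functor $\Gamma\to\Pic(\mathrm{h}\R)$ sending $\gamma$ to the invertible object $R^\gamma$.

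For the additivity, I would argue as follows. Since $\R$ is stable and presentable, $\mathrm{h}\R$ is additive (in fact triangulated), and the tensor bifunctor of $\R$ preserves colimits in each variable; in particular it preserves finite coproducts, which in an additive category agree with biproducts, so after passing to $\mathrm{h}\R$ we obtain a biadditive symmetric monoidal product. For the grading functor, I would start from the inclusion $\Pic(\R)\into\R$, which is a symmetric monoidal functor of symmetric monoidal $\i$-categories, compose with the symmetric monoidal localization $\R\to\mathrm{h}\R$, and then pass to homotopy categories --- an operation that takes symmetric monoidal $\i$-functors to symmetric monoidal functors of ordinary categories. Since $\mathrm{h}\Pic(\R)$ is by definition $\Gamma$ and an object of $\R$ invertible for $\otimes$ remains invertible in $\mathrm{h}\R$, this produces a symmetric monoidal functor $\Gamma\to\Pic(\mathrm{h}\R)$, which on objects is $\gamma\mapsto R^\gamma$.

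Granting these, Proposition~\ref{prop:graded-enrich} directly furnishes the $\Gamma$-graded Hom-objects $\Hom(M,N)_\star$, with $\Hom(M,N)_\gamma=\Hom_{\mathrm{h}\R}(R^\gamma\otimes M,N)$, together with an enrichment of $\mathrm{h}\R$ in $\II_\star$-modules that is compatible with the symmetric monoidal structure; composing the Hom-objects with the lax symmetric monoidal forgetful functor $\Mod_{\II_\star}\to\ab_\Gamma$ then gives the asserted enrichment of $\mathrm{h}\R$ in the symmetric monoidal category of $\Gamma$-graded abelian groups. The part requiring the most care is the coherence bookkeeping when passing from $\R$ to $\mathrm{h}\R$, in particular checking that the symmetry isomorphism of $\Gamma\to\Pic(\mathrm{h}\R)$ is the one realizing the ``sign rule'' class $\epsilon\in\Hom(\pi_0\Gamma,\pi_1\Gamma)[2]$ from the earlier remark; but I expect this to cost no computation, since the functor is by construction the homotopy category of a genuine symmetric monoidal $\i$-functor and hence symmetric monoidal on the nose. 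The one point I would flag explicitly is that $\gamma\mapsto R^\gamma$ must be regarded as a functor valued in $\mathrm{h}\R$, not merely a choice of objects --- which is precisely why it is produced as the homotopy category of the inclusion $\Pic(\R)\subset\R$.
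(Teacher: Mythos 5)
Your proof follows the same route as the paper: both reduce the statement to Proposition~\ref{prop:graded-enrich} applied to $\mathrm{h}\R$, using stability of $\R$ for the additive structure and the definition $\Gamma=\mathrm{h}\Pic(\R)$ for the grading functor $\gamma\mapsto R^\gamma$. You spell out the hypothesis-checking (biadditivity of $\otimes$ on $\mathrm{h}\R$, symmetric monoidality of $\Gamma\to\Pic(\mathrm{h}\R)$) more explicitly than the paper, but the argument is the same.
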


\begin{proof}
Since $\R$ is stable, the set $\pi_0\Map(M,N)$ of homotopy classes of maps from $M$ to $N$ admits an abelian group structure which is natural in the variables $M$ and $N$ of $\R$, and composition is bilinear. The result then follows from Proposition~\ref{prop:graded-enrich}, defining $\pi_\star\Map(M,N)$ by the rule
\[
\pi_\gamma\Map(M,N):=\pi_0\Map(\Sigma^\gamma M,N).\qedhere
\]
\end{proof}

If $R$ is an $\EE_\i$-ring spectrum, then we will typically write $\Pic(R)$ in place of $\Pic(\Mod_R)$ and $\Gamma_R$ in place of $\Gamma_{\Mod_R}$.

\subsection{Brauer spectra}

\begin{definition}
Let $\R$ be a compactly generated $\S$-algebra. The Brauer $\i$-groupoid $\Br(\R)$ of $\R$ is the subgroupoid $\Pic(\Cat_{\R,\omega})$ of $\Cat_{\R,\omega}$ on the invertible $\R$-linear categories which admit a compact generator.
\end{definition}

\begin{remark}
If $\C$ is a presentable $\i$-category, then $\C\simeq\mathrm{Ind}_\kappa(\C^\kappa)$ is the $\kappa$-filtered colimit completion of the full subcategory $\C^\kappa\subset\C$ on the $\kappa$-compact objects for some sufficiently large cardinal $\kappa$.
If $\kappa$ can be taken to be countable, then $\C$ is said to be compactly generated, and if there exists a compact object $P\in\C$ such that $\C\simeq\Mod_{\End(P)}$ as $\S$-modules, then $\C$ is said to admit a compact generator.
Note that an $\S$-module $\C$ admits a compact generator $P$ if and only if the smallest thick subcategory of $\C^\omega$ containing $P$ is $\C^\omega$ itself, in which case $\C^\omega\simeq\Mod_{\End(P)}^\omega$. 
Also observe that there is a distinction between these objects---$\R$-linear $\i$-categories with a compact generator---and the compact objects in $\Cat_\R$.
\end{remark}

Because $\Br(\R)$ is closed under the symmetric monoidal product on $\Cat_\R$, it is a grouplike symmetric monoidal $\i$-groupoid, so we may associate to it a connective spectrum $\br(\R)$.

\begin{proposition}
Let $\R$ be a compactly-generated stable symmetric monoidal $\i$-category. Then there is a canonical equivalence $\Pic(\R)\to\Omega\Br(\R)$.
\end{proposition}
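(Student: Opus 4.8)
The plan is to identify $\Omega\Br(\R)$ with the space of $\R$-linear self-equivalences of $\R$, and then to recognize this space as $\Pic(\R)$ by means of the universal property of $\R$ as the unit of the $\i$-category of $\R$-linear categories.

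Since $\Br(\R)$ is, as already noted, a grouplike symmetric monoidal $\i$-groupoid whose monoidal structure is inherited from $\Cat_{\R,\omega}$, its basepoint is the unit object of $\Cat_{\R,\omega}$: namely $\R$ itself, regarded as a module category over itself. Thus $\Omega\Br(\R)$ is the space $\Aut_{\Br(\R)}(\R)$ of automorphisms of $\R$ in the $\i$-groupoid $\Br(\R)$. Now $\Br(\R)$ is a full subgroupoid of $\Cat_{\R,\omega}$, and every equivalence of $\i$-categories preserves compact objects, so that the equivalences in $\Cat_{\R,\omega}$ between compactly generated $\R$-linear categories are exactly the equivalences in $\Cat_\R=\Mod_\R(\PrL_{\mathrm{st}})$ between them. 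Hence $\Aut_{\Br(\R)}(\R)\simeq\Aut_{\Cat_{\R,\omega}}(\R)\simeq\Aut_{\Cat_\R}(\R)$.

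To compute $\Aut_{\Cat_\R}(\R)$ I would use that $\R$ is the unit object of $\Cat_\R$: evaluation at the unit $\mathbf 1_\R\in\R$ gives an equivalence $\Fun^L_\R(\R,\C)\xrightarrow{\sim}\C$ for every $\C\in\Cat_\R$, the $\i$-categorical Eilenberg--Watts (Morita) statement in the special case of the unit. Applying it with $\C=\R$ and tracking that composition of endofunctors corresponds to the tensor product on $\R$, one obtains a monoidal equivalence $\End_{\Cat_\R}(\R)\simeq\R$; passing to the maximal subgroupoids of invertible objects then yields $\Aut_{\Cat_\R}(\R)\simeq\R^\times=\Pic(\R)$. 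Chaining these equivalences gives the asserted $\Pic(\R)\xrightarrow{\sim}\Omega\Br(\R)$, whose underlying map sends an invertible object $P$ to the autoequivalence $(-)\otimes_\R P$ of $\R$.

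The step I expect to be the main obstacle is the bookkeeping needed to make the whole chain compatible with the grouplike $E_\infty$-structures carried by both sides --- equivalently, to promote it to an equivalence of connective spectra $\pic(\R)\xrightarrow{\sim}\Omega\br(\R)$ --- which is really the content of the word ``canonical'': one has to check that the equivalence $\End_{\Cat_\R}(\R)\simeq\R$, and hence the resulting identification on invertible objects, is compatible with the symmetric monoidal structure on $\Cat_\R$. By contrast, the identification of the basepoint with $\R$ and the reduction from $\Br(\R)$ to $\Cat_\R$ are routine.
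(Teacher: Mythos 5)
Your proof is correct and follows essentially the same route as the paper's: identify $\Omega\Br(\R)$ with the automorphism space of the basepoint $\R\in\Br(\R)\subset\Cat_{\R,\omega}$, then use the Morita/Eilenberg--Watts equivalence $\End_{\Cat_\R}(\R)\simeq\R$ (the paper cites \cite[4.8.4]{lurie-higheralgebra}) to identify invertible endofunctors with invertible objects, i.e.\ with $\Pic(\R)$. Your version spells out the intermediate step that equivalences between compactly generated $\R$-linear categories automatically preserve compactness, and your closing remark about the grouplike $E_\infty$-structure correctly flags the one point the paper leaves implicit.
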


\begin{proof}
We must show that $\Omega\Pic(\Cat_{\R,\omega})\simeq\Pic(\R)$.
To see this, first note that if $X$ is a pointed $\i$-groupoid, then $\Omega X\simeq\Aut_X(*)$ is the space of automorphisms of the distinguished object $*$ of $X$.
Hence $\Omega\Pic(\Mod_\R^\omega)\simeq\Aut_{\R}(\R)\simeq\Pic(\R)$, where the last equivalence follows from the fact that invertible $\R$-module endomorphisms of $\R$ correspond to invertible objects of $\R$ under the equivalence $\End_{\R}(\R)\simeq\R$ \cite[4.8.4]{lurie-higheralgebra}.
\end{proof}

If $R$ is an $\EE_\i$-ring spectrum, we will typically write $\Br_R$ for the $\i$-groupoid $\Br(\Mod_R)$.

\subsection{Azumaya algebras}

\begin{definition}[{\cite{baker-richter-szymik}, \cite[3.1.3]{antieau-gepner-brauergroups}, \cite{toen-azumaya}}]
Let $R$ be an $\EE_\i$-ring spectrum.
An Azumaya $R$-algebra is an $R$-algebra $A$ such that 
\begin{itemize}
\item
the underlying $R$-module of $A$ is a compact generator of $\Mod_R$ in the sense of \cite[5.5.8.23]{lurie-htt}, and
\item
the ``left-and-right'' multiplication map $A\otimes_R A^\op\to\End_R(A)$ is an $R$-algebra equivalence.
\end{itemize}
\end{definition}

\begin{remark}
In \cite[3.15]{antieau-gepner-brauergroups} it is shown that such $A$ are characterized by the fact that $\Mod_A$ is invertible in the cateory $\Cat_{R,\omega}$ of compactly generated $\Mod_R$-linear $\i$-categories.
\end{remark}

\begin{proposition}
If $A$ is an Azumaya $R$-algebra and $R\to R'$ is a ring map, then $A\otimes_R R'$ is an Azumaya $R'$-algebra.
\end{proposition}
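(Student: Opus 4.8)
The plan is to verify the two conditions in the definition of an Azumaya algebra for $A':=A\otimes_R R'$, using that base change along $R\to R'$ is a symmetric monoidal, colimit-preserving functor $(-)\otimes_R R'\co \Mod_R\to\Mod_{R'}$ whose right adjoint, restriction of scalars, preserves filtered colimits (so that base change preserves compact objects). For the generation condition: since $A$ is a compact generator of $\Mod_R$, the smallest thick subcategory of $\Mod_R$ containing $A$ is all of $\Mod_R^\omega$, so in particular $R\in\mathrm{thick}(A)$. Applying the exact, compact-object-preserving functor $(-)\otimes_R R'$ shows that $A'$ is compact and that $R'\simeq R\otimes_R R'$ lies in $\mathrm{thick}(A')$; hence $\mathrm{thick}(A')=\Mod_{R'}^\omega$, i.e. $A'$ is a compact generator of $\Mod_{R'}$.

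For the multiplication condition: formation of opposite algebras and of relative tensor products both commute with base change, so there is a natural identification $A'\otimes_{R'}(A')^{\op}\simeq (A\otimes_R A^{\op})\otimes_R R'$ of $R'$-algebras. On the target side, a compact $R$-module is perfect and hence dualizable, so $\End_R(A)\simeq A^{\vee}\otimes_R A$ and therefore $\End_R(A)\otimes_R R'\simeq (A')^{\vee}\otimes_{R'}A'\simeq\End_{R'}(A')$. Granting that these equivalences are compatible with the left-and-right multiplication maps, the map $A'\otimes_{R'}(A')^{\op}\to\End_{R'}(A')$ is obtained from $A\otimes_R A^{\op}\to\End_R(A)$ by applying $(-)\otimes_R R'$; the latter is an equivalence by hypothesis and base change is exact, so the former is an equivalence as well.

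An alternative route, which avoids this bookkeeping, is to use the characterization recalled in the remark after the definition of Azumaya algebra: $A$ is Azumaya exactly when $\Mod_A$ is invertible in the symmetric monoidal $\i$-category $\Cat_{R,\omega}$. The base-change functor $\Cat_{R,\omega}\to\Cat_{R',\omega}$, $\C\mapsto\C\otimes_{\Mod_R}\Mod_{R'}$, is symmetric monoidal, hence carries invertible objects to invertible objects, and $\Mod_A\otimes_{\Mod_R}\Mod_{R'}\simeq\Mod_{A\otimes_R R'}$ by base change for categories of modules; so $\Mod_{A'}$ is invertible and $A'$ is Azumaya. In either approach the only genuine content is that $\End$ commutes with base change, which is exactly where the hypothesis that $A$ is perfect (equivalently compact) is used; this is the step I would expect to require the most care, and it is also the feature that makes the statement as robust nonconnectively as it is in the connective setting.
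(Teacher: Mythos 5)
The paper states this proposition without proof, treating it as standard (it is essentially contained in Antieau--Gepner \cite[3.15]{antieau-gepner-brauergroups} and Baker--Richter--Szymik). Both routes you give are correct, so there is no ``paper's proof'' to compare against, but a few remarks are in order. Your first argument is the hands-on verification: base change is symmetric monoidal and preserves compact objects (since restriction of scalars along a ring map preserves all colimits, in particular filtered ones), so the thick-subcategory argument correctly shows $A'$ remains a compact generator; and since compact $=$ perfect $=$ dualizable here, $\End_R(A)\simeq A^\vee\otimes_R A$ commutes with the symmetric monoidal functor $(-)\otimes_R R'$. The one place you hedge---compatibility of these identifications with the left-and-right action maps---is a genuine check, but it is a routine naturality/adjunction diagram once one knows that base change preserves duals and opposite algebras; it does not constitute a gap. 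Your second argument, via invertibility of $\Mod_A$ in $\Cat_{R,\omega}$ and the fact that $\Cat_{R,\omega}\to\Cat_{R',\omega}$ is a symmetric monoidal functor with $\Mod_A\otimes_{\Mod_R}\Mod_{R'}\simeq\Mod_{A\otimes_R R'}$, is cleaner and more in the spirit of the paper's subsequent development (it is exactly the characterization recalled in the remark following the definition, and the key identification of relative tensors of module categories is cited there). Either argument is acceptable; the Morita-theoretic one absorbs the bookkeeping you flag in the first into standard facts about symmetric monoidal functors, which is probably why the authors felt no proof was needed.
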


We write $\Az$ for the full subcategory of $\Alg$ determined by pairs $(R,A)$ such that $A$ is an Azumaya $R$-algebra.
Because Azumaya algebras are stable under base-change, we have a morphism of left fibrations
\[
\xymatrix{\Az\ar[rr]\ar[rd] & & \Alg\ar[ld]\\
& \Ring &}
\]
over $\Ring$.

\begin{proposition}
Let $A$ be an Azumaya $R$-algebra.
Then the category of right $A$-modules $\Mod_A$ is an invertible $\Mod_R$-module with inverse $\Mod_{A^\op}$.
\end{proposition}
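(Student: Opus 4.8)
The plan is to prove that $\Mod_A \otimes_{\Mod_R} \Mod_{A^\op} \simeq \Mod_R$ as $\Mod_R$-linear $\i$-categories; since $\Mod_R$ is the unit of the symmetric monoidal $\i$-category $\Cat_{R,\omega}$, this exhibits $\Mod_A$ as an invertible $\Mod_R$-module with inverse $\Mod_{A^\op}$. Before starting I would observe that $\Mod_A$ and $\Mod_{A^\op}$ genuinely lie in $\Cat_{R,\omega}$: the object $A$ is always a compact generator of $\Mod_A$ (it corepresents the colimit-preserving forgetful functor), so $\Mod_A$ is a compactly generated $R$-linear $\i$-category, and likewise for $A^\op$.

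The first real step is the fact that forming module categories is symmetric monoidal: the assignment $B \mapsto \Mod_B$ upgrades to a symmetric monoidal functor from $\Alg_R$ — equipped with the symmetric monoidal structure coming from the symmetric monoidal coCartesian fibration $\Alg \to \Ring$ constructed above — to $\Cat_{R,\omega}$; concretely, for $R$-algebras $B$ and $C$ there is a natural $\Mod_R$-linear equivalence $\Mod_B \otimes_{\Mod_R} \Mod_C \simeq \Mod_{B \otimes_R C}$ (\cite[\S4.8]{lurie-higheralgebra}; this is the associative counterpart of the multiplicative Morita statement recalled earlier). Taking $B = A$ and $C = A^\op$ yields $\Mod_A \otimes_{\Mod_R} \Mod_{A^\op} \simeq \Mod_{A \otimes_R A^\op}$.

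The second step invokes the two defining properties of an Azumaya algebra. By hypothesis the left-and-right multiplication map is an equivalence $A \otimes_R A^\op \xrightarrow{\sim} \End_R(A)$ of $R$-algebras, so $\Mod_{A \otimes_R A^\op} \simeq \Mod_{\End_R(A)}$. Again by hypothesis $A$ is a compact generator of $\Mod_R$, so derived Morita theory (\cite{shipley-schwede-algebrasmodules}, or \cite[\S4.8]{lurie-higheralgebra}) supplies a canonical $\Mod_R$-linear equivalence $\Mod_{\End_R(A)} \simeq \Mod_R$, implemented by $M \mapsto M \otimes_{\End_R(A)} A$ with inverse $N \mapsto F_R(A, N)$. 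Composing these equivalences gives $\Mod_A \otimes_{\Mod_R} \Mod_{A^\op} \simeq \Mod_R$, which is precisely the assertion; the evaluation and coevaluation maps witnessing that $\Mod_{A^\op}$ is an inverse are the images of the corresponding duality data for $A$ under these identifications.

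I expect the main obstacle to be the first step: pinning down that passage from $R$-algebras to their $\i$-categories of modules is symmetric monoidal, i.e.\ that the relative tensor product $\otimes_{\Mod_R}$ of module categories computes $\Mod_{(-) \otimes_R (-)}$, compatibly with the symmetric monoidal structures on $\Alg \to \Ring$ and $\Mod \to \Ring$ set up above and with the restriction to compactly generated linear categories. Once that bookkeeping is in place, the rest is a formal consequence of the Azumaya axioms together with Morita theory. (This argument also recovers one implication of \cite[3.15]{antieau-gepner-brauergroups}; the converse, that invertibility of $\Mod_A$ in $\Cat_{R,\omega}$ forces $A$ to be Azumaya, is not needed here.)
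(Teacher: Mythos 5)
Your proposal follows essentially the same route as the paper's proof: use the symmetric monoidality of $\Mod$ to identify $\Mod_A \otimes_{\Mod_R} \Mod_{A^\op}$ with $\Mod_{A \otimes_R A^\op}$, apply the Azumaya equivalence $A \otimes_R A^\op \simeq \End_R(A)$, and conclude by Morita theory that $\Mod_{\End_R(A)} \simeq \Mod_R$. The extra remarks on compact generation and duality data are harmless elaborations; the core argument is identical.
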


\begin{proof}
We must show that $\Mod_{A}\otimes\Mod_{A^\op}\simeq\Mod_R$, where the tensor is taken in the category of left $\Mod_R$-linear categories.
Since $\Mod$ is symmetric monoidal \cite[4.8.5.16]{lurie-higheralgebra}, we have an equivalence $\Mod_{A}\otimes_{\Mod_R}\Mod_{A^\op}\simeq\Mod_{A\otimes_R A^\op}$, and as ``left-and-right'' multiplication $A\otimes_R A^\op\to\End_R(A)$ is an equivalence of $R$-algebras we see that $\Mod_{A}\otimes\Mod_{A^\op}\simeq\Mod_{\End_R(A)}$.
Finally, because $A$ is a compact generator of $\Mod_R$, Morita theory gives an equivalence $\Mod_R\simeq\Mod_{\End_R(A)}$ \cite[8.1.2.1]{lurie-higheralgebra}, and the result follows.
\end{proof}

\begin{remark}
We can instead show that the functor $\Mod$ itself is symmetric monoidal using the results of \cite{blumberg-gepner-tabuada-cyclotomic}. There it is shown that that the category of stable $\i$-categories is the symmetric monoidal localization of the category of spectral $\i$-categories, obtained by inverting the Morita equivalences. In particular, regarding ring spectra $A$ and $B$ as one-object spectral $\i$-categories, it follows that $\Mod_A\otimes \Mod_B\simeq \Mod_{A\otimes B}$. The relative tensors are computed as the geometric realization of two-sided bar constructions $B(A,R,B)$ and $B(\Mod_A,\Mod_R,\Mod_B)$ \cite[4.4.2.8]{lurie-higheralgebra}; the localization functor preserves geometric realization due to being a left adjoint.
\end{remark}

\begin{proposition}
The map of $\i$-groupoids $\Az_R\to\Br_R$ is essentially surjective.
Moreover, if $A$ and $B$ are Azumaya algebras such that the images of $A$ and $B$ become equal in $\pi_0\Br_R$, then $A$ and $B$ are Morita equivalent.
\end{proposition}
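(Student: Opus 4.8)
The plan is to prove the two assertions separately, in each case using the characterization recorded in the remark following the definition of an Azumaya algebra---that an $R$-algebra $A$ is Azumaya precisely when $\Mod_A$ is invertible in $\Cat_{R,\omega}$ \cite[3.15]{antieau-gepner-brauergroups}---together with the endomorphism-algebra functor of Corollary~\ref{cor:endfunc} and $\i$-categorical Morita theory.

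For essential surjectivity, I would begin with an arbitrary object $\C$ of $\Br_R$; by definition this is an invertible $\Mod_R$-linear $\i$-category which admits a compact generator $P$, and in particular $\C$ is compactly generated. Since $\C$ is left-tensored over $\Mod_R$, Corollary~\ref{cor:endfunc} (applied with $\M=\C$ and with $\Mod_R$ playing the role of $\C$ there) produces an $R$-algebra $A:=\End_R(P)$. Because $P$ is a compact generator of the presentable stable $\i$-category $\C$, the relative form of Morita theory (Schwede--Shipley; compare the use of \cite[8.1.2.1]{lurie-higheralgebra} in the proof that $\Mod_A$ is invertible) gives an $\Mod_R$-linear equivalence $\Mod_A\simeq\C$. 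Hence $\Mod_A$ is invertible in $\Cat_{R,\omega}$, so $A$ is an Azumaya $R$-algebra by \cite[3.15]{antieau-gepner-brauergroups}; since the functor $\Az_R\to\Br_R$ sends $A$ to $\Mod_A$, the object $\C$ lies in its essential image. A pleasant feature of this route is that it never requires checking directly that the underlying $R$-module of $\End_R(P)$ is a compact generator of $\Mod_R$, or that its two-sided multiplication map is an equivalence; these are consequences of the characterization rather than inputs to it.

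For the second assertion, suppose $A$ and $B$ are Azumaya $R$-algebras whose images in $\pi_0\Br_R$ coincide. Because $\Br_R$ is a full subgroupoid of $\Cat_{R,\omega}$ and the equivalences in $\Cat_{R,\omega}$ are exactly the $\Mod_R$-linear equivalences, this produces an $\Mod_R$-linear equivalence $G\co\Mod_A\xrightarrow{\sim}\Mod_B$. Such a $G$ is colimit-preserving and $\Mod_R$-linear, so by $\i$-categorical Eilenberg--Watts \cite[4.8.4]{lurie-higheralgebra} it has the form $(-)\otimes_A M$ for the $(A,B)$-bimodule $M:=G(A)$; and since $G$ is an equivalence, $M$ is invertible as a bimodule, i.e.\ there is a $(B,A)$-bimodule $N$ with $M\otimes_B N\simeq A$ and $N\otimes_A M\simeq B$. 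This exhibits $A$ and $B$ as Morita equivalent.

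I do not expect either step to pose a serious obstacle given the infrastructure already developed; the part requiring the most care is the bookkeeping of $\Mod_R$-linearity---ensuring that $\End_R(P)$ is genuinely an object of $\Alg_R$ and not merely an associative ring spectrum, and that the equivalence $\Mod_A\simeq\C$ is compatible with the $\Mod_R$-module structures---which is exactly what the left-tensored-category formalism and Corollary~\ref{cor:endfunc} are set up to deliver.
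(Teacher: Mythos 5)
Your proof is correct, and on the essential-surjectivity half it follows the same skeleton as the paper's argument: both start from an object $\C$ of $\Br_R$ with a chosen compact generator $P$, use Schwede--Shipley/Lurie (the relative $\i$-categorical Morita theorem) to obtain an $\Mod_R$-linear equivalence $\C\simeq\Mod_A$ for $A=\End_R(P)$, and then conclude that $A$ is Azumaya. The one genuine difference in route is in how you finish: the paper's proof closes by verifying the two defining conditions of an Azumaya algebra directly --- it argues that invertibility of $\Mod_A$ forces $A$ to be a compact generator of $\Mod_R$, whence $\End_R(A)$ is Morita trivial and the left-right action map $A\otimes_R A^\op\to\End_R(A)$ is an equivalence --- while you instead invoke the black-box characterization of \cite[3.15]{antieau-gepner-brauergroups} (``Azumaya iff $\Mod_A$ is invertible in $\Cat_{R,\omega}$''), which the paper only quotes in the preceding remark. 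Your route is a bit cleaner in that it does not require you to re-derive why the action map is an equivalence; the paper's route has the virtue of being self-contained up to the definition. You also spell out the second assertion (equal image in $\pi_0\Br_R$ implies Morita equivalence) via $\i$-categorical Eilenberg--Watts \cite[4.8.4]{lurie-higheralgebra}, whereas the paper's proof leaves that clause implicit --- essentially because ``Morita equivalent'' \emph{means} an $\Mod_R$-linear equivalence of module categories, which is exactly what an equivalence in $\Br_R\subset\Cat_{R,\omega}$ provides. Making that step explicit with the bimodule is harmless and a bit more informative, though it is not strictly necessary. One small bookkeeping point worth retaining: when you apply Corollary~\ref{cor:endfunc} with $\M=\C$ and $\Mod_R$ as the enriching category, you are implicitly using that a presentable $\Mod_R$-linear $\i$-category is closed (so that morphism objects exist); that is supplied by \cite[4.2.1.33]{lurie-higheralgebra}, as recorded earlier in the paper, so the application is legitimate.
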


\begin{proof}
Let $\R=\Mod_R$ and let $\I$ be an invertible object of $\Cat_{\R,\omega}$.
Then $\I$ has a compact generator, so $\I\simeq\Mod_A$ for some $R$-algebra $A$ (\cite{schwede-shipley}, \cite[7.1.2.1]{lurie-higheralgebra}), and invertibility implies that $A$ is a compact generator of $\Mod_R$.
It follows that $\End_R(A)$ is Morita equivalent to $R$, and thus that the $R$-algebra map $A\otimes_R A^\op\to\End_R(A)$ is an equivalence.
\end{proof}

We remark that we can identify the homotopy types of the fibers of the various left fibrations over $\Ring$.

\begin{proposition}
Let $R$ be an $\EE_\i$-ring spectrum.
Then
\[
\Mod^{\omega}_R\simeq\coprod_{[M]\in\pi_0\Mod^{\omega}_R} B\Aut_R(M)
\]
and
\[
\Az_R\simeq\coprod_{[A]\in\pi_0\Az_R} B\Aut_{\Alg_R}(A).
\]
\end{proposition}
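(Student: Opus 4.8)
The plan is to exhibit each of the two left fibrations as a coproduct over connected components, and then identify each component with the classifying space of the automorphism group of a chosen basepoint. First I would recall that $\Mod^\omega_R$ is an $\i$-groupoid (a Kan complex), being the fiber of the left fibration $\Mod^\omega \to \Ring$ over the object $R$; similarly $\Az_R$ is an $\i$-groupoid, being the fiber of the left fibration $\Az \to \Ring$. For any $\i$-groupoid $\X$, there is a canonical equivalence $\X \simeq \coprod_{[x] \in \pi_0 \X} \X_{[x]}$, where $\X_{[x]}$ is the connected component containing $x$; this is just the statement that an $\i$-groupoid decomposes as the disjoint union of its connected components, which one can see from the fact that $\X \simeq \colim$ over its components or simply from the fact that $\pi_0$ is the set of path components. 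So it remains to identify each connected component $\X_{[x]}$ with $B\Aut_\X(x)$.

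The key input here is the standard fact that a pointed connected $\i$-groupoid $(\Y, y)$ is canonically equivalent to $B\Aut_\Y(y)$, where $\Aut_\Y(y) = \Map_\Y(y,y) = \Omega_y \Y$ is the loop space, regarded as a grouplike $\EE_1$-space (an $\i$-group); this is the $\i$-categorical version of the classical equivalence between pointed connected spaces and their loop groups, and can be found in \cite[5.2.6.2, 7.2.2.11]{lurie-htt} or deduced from the theory of $\infty$-group actions. Applying this componentwise: for each class $[M] \in \pi_0 \Mod^\omega_R$, choose a representative $M$; then the component of $\Mod^\omega_R$ containing $M$ is $B\Aut_R(M)$, where $\Aut_R(M)$ denotes the space of self-equivalences of $M$ as an $R$-module (equivalently $\Omega_M \Mod^\omega_R$). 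Likewise, for each class $[A] \in \pi_0 \Az_R$, choosing a representative $A$ identifies its component with $B\Aut_{\Alg_R}(A)$, the space of $R$-algebra self-equivalences of $A$. Assembling these identifications over the sets of components gives the two displayed equivalences.

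The main obstacle—such as it is—is bookkeeping rather than mathematics: one must be careful that the loop space $\Omega_M \Mod^\omega_R$ genuinely computes the space of self-equivalences $\Aut_R(M)$ and not merely self-maps. This is automatic because $\Mod^\omega_R$ is by construction the maximal subgroupoid of $\Mod_R$ spanned by compact objects, so every morphism in it is already an equivalence; thus $\Map_{\Mod^\omega_R}(M,M) = \Aut_R(M)$ on the nose. The analogous point for $\Az_R$ requires knowing that $\Az_R$ is likewise an $\i$-groupoid, which follows since $\Az \to \Ring$ is a left fibration (shown earlier in the excerpt) and $\Ring$, or rather the relevant fiber, yields a Kan complex; here the ambient category is $\Alg_R$, so morphisms in $\Az_R$ are $R$-algebra equivalences and $\Map_{\Az_R}(A,A) = \Aut_{\Alg_R}(A)$. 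With these two observations in place the proof is a direct application of the decomposition of $\i$-groupoids into components together with the delooping of automorphism groups.
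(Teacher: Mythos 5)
The paper states this proposition without proof, as a remark about identifying the homotopy types of fibers of left fibrations over $\Ring$; your argument is precisely the standard one implicitly intended, and it is correct. Both $\Mod^\omega_R$ and $\Az_R$ are fibers of left fibrations, hence $\i$-groupoids, and your decomposition into components followed by delooping of automorphism spaces fills in exactly what the authors left to the reader.
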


\subsection{The conjugation action on endomorphisms}

Let $\R$ be a symmetric monoidal presentable stable $\i$-category with unit ${\bf 1}$, which is therefore enriched over itself (see \cite[4.2.1.33]{lurie-higheralgebra} or \cite[7.4.10]{GH})), and let $M$ be an object of $\R$.
In this section we analyze the fiber of the map
\[
\Aut_\R(M)\too\Aut_{\Alg_\R}(\End_\R(M)),
\]
which roughly sends an automorphism $\alpha$ of $M$ to the conjugation automorphism $\alpha^{-1}\circ(-)\circ\alpha$ of the endomorphism algebra $\End(M)$. This map arises from the map $\End_{\R}\co \R^\simeq\to\Alg_\R$ of Corollary \ref{cor:endfunc}.

\begin{proposition}
\label{prop:bglquotient}
Let $R$ be an $\EE_\i$ ring spectrum, $A$ an Azumaya $R$-algebra, and $\Mod_A^{\cg}$ denote the $\i$-category of compact generators of $\Mod_A$. Then there are canonical pullback diagrams of $\i$-categories
\[
\xymatrix{
\Pic(R) \ar[r] \ar[d] &
\Mod_A^{\cg} \ar[r] \ar[d] &
\{\Mod_A\} \ar[d] \\
\{A\} \ar[r] &
\Az_R \ar[r] &
\Br_R.
}
\]
More generally, the fiber of $\Mod^{\cg}_A \to \Alg_R$
over an $R$-algebra $B$ is either empty or a principal torsor for $\Pic(R)$.
\end{proposition}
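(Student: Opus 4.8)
The plan is to prove this as the $\i$-categorical upgrade of the graded Morita theorem, following the same formal steps but with the classical Eilenberg--Watts and Morita statements replaced by Lurie's $\i$-categorical versions and with the endomorphism-algebra construction supplied functorially by Corollary~\ref{cor:endfunc}. As in the graded case, all six corners of the diagram are $\i$-groupoids---$\Az_R$ and $\Br_R$ are $\i$-groupoids (disjoint unions of deloopings of automorphism spaces), and $\Mod_A^{\cg}$ is the full subgroupoid of $\Mod_A^\simeq$ on the compact generators---so ``pullback of $\i$-categories'' here means homotopy pullback of spaces.

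First I would identify $\Mod_A^{\cg}$ with the right-hand fiber product $\Az_R\times_{\Br_R}\{\Mod_A\}$. A point of the latter is a pair $(B,\phi)$ with $B$ an Azumaya $R$-algebra and $\phi\co \Mod_B\xrightarrow{\sim}\Mod_A$ an equivalence of $\Mod_R$-linear $\i$-categories. By the $\i$-categorical Eilenberg--Watts theorem \cite[4.8.4]{lurie-higheralgebra}, any colimit-preserving $\Mod_R$-linear functor $\Mod_B\to\Mod_A$ is given by relative tensor with a $B$-$A$-bimodule, and $\phi$ is an equivalence exactly when this bimodule is invertible; passing to the underlying right $A$-module, $P:=\phi(B)$ is then a compact generator of $\Mod_A$ (the image of the compact generator $B$ under an equivalence), and the left action recovers $B\simeq\End(P)$. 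Conversely, for a compact generator $P$ of $\Mod_A$, Corollary~\ref{cor:endfunc} applied to $\Mod_A$, which is left-tensored over $\Mod_R$, produces the $R$-algebra $\End(P)$ functorially in $P$, and $\i$-categorical Morita theory \cite[7.1.2.1]{lurie-higheralgebra}, \cite{schwede-shipley} gives a canonical equivalence $\Mod_{\End(P)}\simeq\Mod_A$; since $\Mod_A$ is invertible in $\Cat_{R,\omega}$ so is $\Mod_{\End(P)}$, so $\End(P)$ is Azumaya by the characterization of \cite[3.15]{antieau-gepner-brauergroups}. These two constructions are mutually inverse equivalences of $\i$-groupoids over $\Az_R$, and the equivalence $\Mod_{\End(P)}\simeq\Mod_A$ shows moreover that the composite $\Mod_A^{\cg}\to\Br_R$ is canonically the constant functor at $\Mod_A$; this is exactly what makes the right-hand square, hence the outer rectangle, commute and exhibits it as a pullback.

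For the left-hand square I would use that pullbacks compose: since $\Mod_A^{\cg}\simeq\Az_R\times_{\Br_R}\{\Mod_A\}$ over $\Az_R$, the pullback $\{A\}\times_{\Az_R}\Mod_A^{\cg}$ is $\{A\}\times_{\Br_R}\{\Mod_A\}=\Map_{\Br_R}(\Mod_A,\Mod_A)=\Aut_{\Br_R}(\Mod_A)$. Because $A$ is Azumaya, $\Mod_A$ is invertible in the grouplike symmetric monoidal $\i$-groupoid $\Br_R$, with inverse $\Mod_{A^\op}$, so translation by $\Mod_{A^\op}$ is an autoequivalence of $\Br_R$ carrying $\Mod_A$ to the unit $\Mod_R$; hence $\Aut_{\Br_R}(\Mod_A)\simeq\Aut_{\Br_R}(\Mod_R)=\Omega\Br_R\simeq\Pic(R)$ by the equivalence $\Pic(\Mod_R)\simeq\Omega\Br(\Mod_R)$ proved earlier. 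This identifies the outer rectangle as a pullback with value $\Pic(R)$, and the left square is then a pullback by the pasting law. For the general fiber statement, the map $\Mod_A^{\cg}\to\Alg_R$ sending $P$ to $\End(P)$ factors through the full subgroupoid $\Az_R$, so its fiber over $B\in\Alg_R$ is $\bigl(\{B\}\times_{\Alg_R}\Az_R\bigr)\times_{\Br_R}\{\Mod_A\}$: this is empty unless $B$ is Azumaya with $\Mod_B\simeq\Mod_A$, and in that case it is $\Map_{\Br_R}(\Mod_B,\Mod_A)$, a torsor under $\Aut_{\Br_R}(\Mod_A)\simeq\Pic(R)$, where the $\Pic(R)$-action corresponds to $L\mapsto L\otimes_R(-)$ on right $A$-modules under the identification $\Pic({}_A\Mod_A)\simeq\Pic(R)$ coming from $A\otimes_R A^\op\simeq\End_R(A)$.

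The main obstacle will be the first step: pinning down the $\i$-categorical Eilenberg--Watts statement in precisely the form needed---that the space of $\Mod_R$-linear colimit-preserving equivalences $\Mod_B\xrightarrow{\sim}\Mod_A$ is naturally equivalent, not merely bijective on components, to the $\i$-groupoid of invertible $B$-$A$-bimodules, and that this equivalence is compatible with the endomorphism-algebra functor of Corollary~\ref{cor:endfunc}---so that the identifications above are genuine equivalences of $\i$-groupoids rather than statements about equivalence classes. Once this naturality is in hand, everything that follows is the same manipulation of homotopy pullbacks as in the graded Morita theorem.
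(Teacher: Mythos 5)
Your proposal is correct and uses the same essential ingredients as the paper's proof: the $\i$-categorical Eilenberg--Watts theorem \cite[4.8.4]{lurie-higheralgebra} identifies the pullback of the right square with the $\i$-groupoid of compact generators of $\Mod_A$, and then one identifies the left-hand corner with $\Pic(R)$. Where you differ slightly is in the second step: the paper identifies the pullback of the outer rectangle directly as the $\i$-groupoid of invertible $A$-bimodules, i.e.\ invertible modules over $A \otimes_R A^\op \simeq \End_R(A)$, and then applies Morita theory to read this off as $\Pic(R)$, with the resulting map explicitly $I \mapsto I \otimes_R A$; you instead recognize the outer pullback as $\Aut_{\Br_R}(\Mod_A)$ and use translation by $\Mod_{A^\op}$ in the grouplike symmetric monoidal $\i$-groupoid $\Br_R$ to move to the base point, invoking $\Pic(R) \simeq \Omega \Br_R$. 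Both routes are valid and amount to the same computation; the paper's bimodule description is slightly more concrete (it exhibits the section $\Pic(R) \to \Mod_A^{\cg}$ as $I \mapsto I \otimes_R A$ on the nose, which is later useful in the action statement), whereas your translation argument is cleaner as an identification of the corner object. The concern you raise at the end---making the Eilenberg--Watts equivalence a genuine equivalence of $\i$-groupoids rather than just a bijection on components---is indeed where the real content sits, and the paper handles it by the same appeal to \cite[4.8.4]{lurie-higheralgebra} together with Corollary~\ref{cor:endfunc}; you've identified the right tools.
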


\begin{remark}
  Note that $\Mod_A^{\cg}$ is not to be confused with the larger subcategory $\Mod_A^\omega$ of compact objects.
\end{remark}
\begin{proof}
The pullback of the right-hand square is the $\i$-category of $R$-algebras equipped with a Morita equivalence to $\Mod_A$. In \cite[4.8.4]{lurie-higheralgebra} it is shown that the category of functors $\Mod_A \to \Mod_B$ is equivalent to a category of bimodules, and so this pullback category of Morita equivalences is equivalent to the category of compact generators of $\Mod_A$ via the map $M \mapsto \End_A(M)$.

The pullback of the left-hand square is the $\i$-category of $A$-bimodules inducing $\Mod_R$-linear Morita self-equivalences of $\Mod_A$. These are, in particular, invertible modules over $A \otimes_R A^{op} \simeq \End_R(A)$, and Morita theory implies that the map $I \mapsto I \otimes_R A$ makes this equivalent to $\Pic(R)$.
\end{proof}

Taking preimages of the unit component, we obtain the following.
\begin{corollary}
\label{cor:fibersequence}
For an $\EE_\i$-ring $R$, there is a fiber sequence
\[
\coprod_{[M] \in \pi_0 \Mod_R^{\cg}} B\Aut_R(M) \to 
\coprod_{[A] \in \pi_0 (\Az_R)^{triv}} B\Aut_{\Alg_{R}}(A) \to
B\Pic (R),
\]
where the middle coproduct is over Azumaya $R$-algebras Morita equivalent to $R$.
\end{corollary}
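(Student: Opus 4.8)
The plan is to specialize Proposition~\ref{prop:bglquotient} to the trivial Azumaya algebra $A = R$ and then restrict everything to the unit component of $\Br_R$, as the phrase ``taking preimages of the unit component'' suggests. With $A = R$, the $\i$-category $\Mod_A$ is the monoidal unit $\Mod_R$ of $\Br_R$, and the right-hand pullback square of Proposition~\ref{prop:bglquotient} reads
\[
\Mod_R^{\cg} \simeq \Az_R \times_{\Br_R} \{\Mod_R\}.
\]
That is, the homotopy fiber of the map $\Az_R \to \Br_R$, $B \mapsto \Mod_B$, over the basepoint $\Mod_R$ is the $\i$-groupoid $\Mod_R^{\cg}$ of compact generators, with fiber inclusion $M \mapsto \End_R(M)$ (which indeed lands among Azumaya algebras Morita equivalent to $R$, since $M$ being a compact generator makes $\Mod_{\End_R(M)} \simeq \Mod_R$).

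Next I would pin down the base. Write $\Br_R^{\,\circ} \subseteq \Br_R$ for the connected component of the basepoint $\Mod_R$. By definition $(\Az_R)^{triv}$ is exactly the preimage of $\Br_R^{\,\circ}$ under $\Az_R \to \Br_R$, so we obtain a map $(\Az_R)^{triv} \to \Br_R^{\,\circ}$; and because base change along the inclusion of the single point $\{\Mod_R\}$ cannot distinguish $\Az_R$ from $(\Az_R)^{triv}$, nor $\Br_R$ from $\Br_R^{\,\circ}$, the displayed identity still computes the homotopy fiber of this restricted map over $\Mod_R$ as $\Mod_R^{\cg}$. Finally, $\Br_R^{\,\circ}$ is connected with loop space $\Omega_{\Mod_R}\Br_R \simeq \Pic(R)$ by the proposition identifying $\Pic(\R)$ with $\Omega\Br(\R)$; since $\Pic(R)$ is grouplike and $B$ and $\Omega$ are mutually inverse between pointed connected $\i$-groupoids and $\i$-groups, this yields an equivalence $\Br_R^{\,\circ} \simeq B\Pic(R)$. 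Composing, we obtain a fiber sequence
\[
\Mod_R^{\cg} \to (\Az_R)^{triv} \to B\Pic(R).
\]

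It remains only to expand the two left-hand terms. Both are $\i$-groupoids --- $\Mod_R^{\cg}$ a full subgroupoid of $\Mod_R^{\omega}$, and $(\Az_R)^{triv}$ a full subgroupoid of $\Az_R$ --- so each is the coproduct over its set of equivalence classes of the classifying spaces of automorphism groups of its objects, exactly as in the decompositions of $\Mod^{\omega}_R$ and $\Az_R$ recorded earlier. Substituting these and keeping track of the fiber inclusion $M \mapsto \End_R(M)$ produces the stated fiber sequence.

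The only point requiring attention is the bookkeeping of components: one must check that ``Morita equivalent to $R$'' really cuts out a single component of $\Br_R$, that this is the unit component and hence connected, and that the fiber of $(\Az_R)^{triv} \to B\Pic(R)$ is computed by the same pullback as in Proposition~\ref{prop:bglquotient} rather than a larger one. All of this is immediate because the relevant base change is along the inclusion of a point, so no homotopy-theoretic input beyond Proposition~\ref{prop:bglquotient} and the identification $\Pic(\R)\simeq\Omega\Br(\R)$ is needed --- which is why the corollary is stated as a near-immediate consequence.
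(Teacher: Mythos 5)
Your proof is correct and is exactly the intended argument: specialize Proposition~\ref{prop:bglquotient} to $A=R$, restrict to the unit component of $\Br_R$ (noting this restriction does not change the fiber over $\{\Mod_R\}$ nor the preimage $(\Az_R)^{triv}$), identify $\Br_R^{\,\circ}\simeq B\Pic(R)$ via $\Pic(R)\simeq\Omega\Br_R$, and then expand the two $\i$-groupoids as coproducts of classifying spaces. The paper's proof is a one-line gloss on precisely this.
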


In particular, this implies that the map $\Aut_R(M) \to \Aut_{\Alg_{R}}(\End_R(M))$ factors through a quotient by $\GL_1(R)$.

\begin{corollary}[cf. \ref{cor:rz}]
\label{cor:pgl-sequence}
For any Azumaya $R$-algebra $A$, there is a long exact sequence of groups
\begin{align*}
\cdots &\to (\pi_n R)^\times \to (\pi_n A)^\times \to \pi_n (\Aut_{\Alg_R}(A)) \to \cdots\\
&\to (\pi_0 R)^\times \to (\pi_0 A)^\times \to \pi_0 (\Aut_{\Alg_R}(A)) \to \pi_0 \Pic(R).
\end{align*}
Moreover, the group $\pi_0 \Pic(R)$ acts on the set of isomorphism classes of compact generators of $\Mod_A$. The quotient is the set of isomorphism classes of Azumaya algebras $A$ Morita equivalent to $R$, and the stabilizer of $A$ is the image of the group of outer automorphisms of $\pi_\star A$ as a $\pi_\star R$-algebra.
\end{corollary}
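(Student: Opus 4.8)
The plan is to obtain this as the $\i$-categorical counterpart of the graded Rosenberg--Zelinsky sequence of Corollary~\ref{cor:rz}, by applying homotopy groups to the left-hand square of Proposition~\ref{prop:bglquotient}. All four corners of that square are $\i$-groupoids, so it is a homotopy pullback of spaces, and it exhibits $\Pic(R)$ as the homotopy fiber of $p\co \Mod_A^{\cg}\to\Az_R$ over the object $A$, pointed by the unit $R\in\Pic(R)$ (which maps to $A\otimes_R R\simeq A\in\Mod_A^{\cg}$). The first assertion is then the long exact sequence of homotopy groups of this fibration at these basepoints, once its terms are identified.

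To identify them, I would note that the component of $\Mod_A^{\cg}$ through $A$ is $B\Aut_{\Mod_A}(A)=B\GL_1(A)$, using $\End_A(A)\simeq A$ so that the self-equivalences of $A$ as a right $A$-module are the units; thus its $\pi_{n+1}$ is $\pi_n\GL_1(A)$, which is $(\pi_0A)^\times$ for $n=0$ and $\pi_n A$ for $n>0$, i.e.\ $(\pi_n A)^\times$ in the notational conventions of the statement. Likewise the component of $\Az_R$ through $A$ is $B\Aut_{\Alg_R}(A)$, with $\pi_{n+1}=\pi_n\Aut_{\Alg_R}(A)$, and $\pi_{n+1}\Pic(R)=\pi_n\GL_1(R)=(\pi_n R)^\times$ for $n\ge 0$ while $\pi_0\Pic(R)$ is the Picard group. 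Feeding these into the long exact sequence of $\Pic(R)\to\Mod_A^{\cg}\to\Az_R$ and reindexing produces exactly the displayed sequence; it is a sequence of groups down through $\pi_0\Pic(R)$, after which the next term $\pi_0\Mod_A^{\cg}$ is only a pointed set, which is why the sequence of groups terminates there.

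For the ``moreover'' assertions I would use that $\Pic(R)$ acts on the $\Mod_R$-linear category $\Mod_A$, hence on $\Mod_A^{\cg}$, by $M\mapsto M\otimes_R L$, and that this descends to an action of $\pi_0\Pic(R)$ on $\pi_0\Mod_A^{\cg}$. Since tensoring a compact generator by an invertible $R$-module does not change its endomorphism algebra up to equivalence (tensoring by an invertible module is a $\Mod_R$-linear self-equivalence, so $\End_A(M\otimes_R L)\simeq\End_A(M)$ as $R$-algebras), the map $p_*\co\pi_0\Mod_A^{\cg}\to\pi_0\Az_R$ is constant on $\pi_0\Pic(R)$-orbits, and its image is the set of isomorphism classes of Azumaya algebras $B$ with $\Mod_B\simeq\Mod_A$, since any such $B$ is $\End_A(M)$ for the compact generator of $\Mod_A$ corresponding to $B$. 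By the torsor clause of Proposition~\ref{prop:bglquotient}, each nonempty fiber of $p$ is a $\Pic(R)$-torsor, so on $\pi_0$ the orbit set of the action is exactly this set of Morita classes, which is the asserted quotient. The stabilizer of $[A]$ is, via the fibration $\Pic(R)\to(\Mod_A^{\cg})_{[A]}\to B\Aut_{\Alg_R}(A)$, the image of the connecting map $\partial\co\pi_0\Aut_{\Alg_R}(A)\to\pi_0\Pic(R)$ — which is just the ``exactness at $\pi_0\Pic(R)$'' already in the long exact sequence; by exactness one step earlier $\partial$ annihilates the image of $(\pi_0 A)^\times$, that is the inner automorphisms (cf.\ the remark after Corollary~\ref{cor:rz}), so its image coincides with that of the group of outer automorphisms of $\pi_\star A$ as a $\pi_\star R$-algebra.

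The argument is essentially formal once Proposition~\ref{prop:bglquotient} is in hand; the steps needing care are the bookkeeping of basepoints and reindexing so the connecting homomorphisms land in the stated groups, the checks that the equivalences $\End_A(A)\simeq A$ and $\End_A(M\otimes_R L)\simeq\End_A(M)$ respect algebra structures, and the final identification of $\partial(\pi_0\Aut_{\Alg_R}(A))$ with the image of the outer automorphism group of $\pi_\star A$. I expect this last identification — the fact that conjugation automorphisms act trivially on the set of compact generators — to be the one point that is not an immediate consequence of the pullback square, though it follows readily from exactness and the description of the $\Pic(R)$-action.
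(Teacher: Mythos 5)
Your proposal is correct and follows the same route the paper intends: the paper states this corollary without an explicit proof, presenting it as the $\i$-categorical analogue of Corollary~\ref{cor:rz}, and the intended derivation is exactly yours --- take the long exact sequence of the fibration $\Pic(R)\to\Mod_A^{\cg}\to\Az_R$ coming from the left-hand pullback square of Proposition~\ref{prop:bglquotient}, identify the terms via $\End_A(A)\simeq A$, and read off the ``moreover'' clauses from the torsor statement of that proposition, just as in the proof of Corollary~\ref{cor:rz}. The one point you rightly flag --- identifying $\pi_0\Aut_{\Alg_R}(A)$ modulo inner automorphisms with the outer automorphism group of $\pi_\star A$ --- is exactly where the paper leans on Theorem~\ref{thm:obstruction-calcs}(3)--(4) (and is stated there with the same looseness in the general, non-algebraic case), so your treatment is consistent with the source.
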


This long exact sequence generalizes the short exact sequences of Theorem~\ref{thm:obstruction-calcs} for $\Gamma$-graded algebraic Azumaya $R$-algebras.

\section{Galois cohomology}
\label{sec:galois-cohomology}

\subsection{Galois extensions and descent}

In this section we will review definitions of Galois extensions of ring spectra, due to Rognes \cite{rognes-galois}. Let $R$ be an $\EE_\i$-ring spectrum and let $G$ be an $R$-dualizable $\i$-group: $G\simeq\Omega BG$ is the space of automorphisms of an object in some pointed, connected $\i$-groupoid $BG$, and the associated group ring $R[G]:=R\otimes_{\SS}\Sigma^\infty_+G$ is dualizable as an $R$-module.

\begin{definition}
A Galois extension of $R$ by $G$ is a functor $f\co BG \to \Ring_{R/}$, sending the basepoint to a commutative $R$-algebra $S$ with $G$-action, such that
\begin{itemize}
\item
the unit map $R\to S^{hG} = \lim f$ is an equivalence, and
\item
the map $S\otimes_R S\to S\otimes_R D_R R[G]\simeq D_{S} S[G]$, induced by the action $R[G]\otimes_R S\to S$, is an equivalence.
\end{itemize}
A $G$-Galois extension $R\to S$ is {\em faithful} if $S$ is a faithful $R$-module.
\end{definition}

We will usually just write $f\co R\to S$ for the Galois extension without explicitly mentioning the $G$-action. All of the Galois extensions that we consider in this paper will be assumed to be faithful.

We have the following important result.
\begin{proposition}[{\cite[6.2.1]{rognes-galois}}]
Let $R \to S$ be a $G$-Galois extension. Then the underlying $R$-module of $S$ is dualizable.
\end{proposition}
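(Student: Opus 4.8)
The plan is to reduce the statement to the claim that $S\otimes_R S$ is dualizable as an $S$-module, obtain that from the second clause in the definition of a Galois extension, and then descend dualizability along the faithful map $R\to S$.

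For the first (easy) half I would argue as follows. The base-change functor $(-)\otimes_R S\colon\Mod_R\to\Mod_S$ is symmetric monoidal, so it carries dualizable objects to dualizable objects. Since $R[G]$ is dualizable over $R$ by the hypothesis on $G$, the object $S[G]=R[G]\otimes_R S$ is dualizable over $S$; hence so is its dual $D_S S[G]$, and the identification $S\otimes_R D_R R[G]\simeq D_S S[G]$ is just the canonical comparison for a symmetric monoidal functor applied to a dualizable object. The Galois condition asserts exactly that the composite $S\otimes_R S\to S\otimes_R D_R R[G]\simeq D_S S[G]$ is an equivalence. One then checks that this is an equivalence of $S$-modules for the action of $S$ on the first tensor factor of $S\otimes_R S$, i.e. for the structure exhibiting $S\otimes_R S$ as the base change to $S$ of the underlying $R$-module of $S$. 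Thus $(-)\otimes_R S$ carries the $R$-module $S$ to a dualizable $S$-module.

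For the second half I would invoke descent along $R\to S$. Because the Galois extension is faithful, $R\to S$ is an effective descent morphism: the canonical functor $\Mod_R\to\lim_{\Delta}\Mod_{S^{\otimes_R(\bullet+1)}}$ into the totalization of the cobar cosimplicial $\i$-category is a symmetric monoidal equivalence. (This is Rognes's Galois descent; the cobar terms $S^{\otimes_R(n+1)}$ are controlled by the Galois condition.) Dualizability in such a limit is detected on the terms: if $S\otimes_R S$ is dualizable over $S$, then by base change $S^{\otimes_R(n+1)}\otimes_R S$ is dualizable over $S^{\otimes_R(n+1)}$ for every $n$, the compatible family of duals $\bigl(S^{\otimes_R(n+1)}\otimes_R S\bigr)^\vee$ assembles to an object of the limit, and the triangle identities hold in the limit since they hold termwise. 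Hence the $R$-module $S$, which maps to $\{S^{\otimes_R(n+1)}\otimes_R S\}_n$, is dualizable over $R$.

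The main obstacle is this descent step: faithfulness of an arbitrary map of $\EE_\i$-rings does not by itself allow one to detect dualizability after base change, and one genuinely needs the extra finiteness built into a Galois extension — via the identification of $S\otimes_R S$ with the dual of $S[G]$ — to know that $R\to S$ admits descent. Everything else is formal bookkeeping with symmetric monoidal functors, duals, and the definition of a Galois extension.
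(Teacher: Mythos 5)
Your first step is sound: the Galois condition identifies $S \otimes_R S$, viewed as the base change to $\Mod_S$ of the $R$-module $S$, with $D_S S[G]$, which is a dualizable $S$-module because $R[G]$ (and hence $S[G] = R[G] \otimes_R S$) is dualizable by the hypothesis on $G$; the bookkeeping with duals along the symmetric monoidal base-change functor is correct.

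The descent step, however, is circular. The equivalence $\Mod_R \simeq \lim_{\Delta} \Mod_{S^{\otimes_R(\bullet+1)}}$ that you invoke --- in this paper Proposition~\ref{prop:amitsur-limit} together with Proposition~\ref{prop:cosimp-equiv}, and in the literature Mathew's descent theorem for faithful finite Galois extensions --- is itself proved by \emph{first} appealing to Rognes's Proposition 6.2.1 to conclude that $S$ is a dualizable (``proper'') $R$-module, and only then running the Barr--Beck--Lurie comonadicity argument: dualizability of $S$ over $R$ is exactly what makes $S \otimes_R (-) \simeq F_R(D_R S, -)$ limit-preserving, the key hypothesis for comonadic descent, while faithfulness supplies only conservativity. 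You rightly observe that faithfulness alone is not enough and that ``extra finiteness'' is needed, but that finiteness is precisely the conclusion to be established, and it does not follow from the $S$-module identification $S \otimes_R S \simeq D_S S[G]$ prior to knowing $S$ is perfect over $R$. There is also no ``Rognes's Galois descent'' for module $\i$-categories to cite independently: Rognes's memoir predates this formalism, and his actual proof of 6.2.1 does not route through descent at all. It constructs explicit evaluation and coevaluation data for the $R$-module $S$ directly out of the two defining equivalences $R \simeq S^{hG}$ and $S \otimes_R S \simeq F(G_+, S)$ together with the stable dualizability of $G$, which is exactly why dualizability can sit upstream of descent rather than appearing as one of its consequences.
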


In other words, $S$ is a {\em proper} $R$-algebra in the sense of \cite[4.6.4.2]{lurie-higheralgebra}. Using this, Mathew has deduced several important consequences.

\begin{proposition}[{\cite[3.36]{mathew-galois}}]
Let $R \to S$ be a faithful $G$-Galois extension with $G$ a finite group. Then $R \to S$ {\em admits descent} in the sense of \cite[3.17]{mathew-galois}.
\end{proposition}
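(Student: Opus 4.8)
The plan is to reduce ``admits descent'' to the \emph{uniform} convergence of the Amitsur (cobar) tower and then to exploit the two pieces of structure that a faithful finite Galois extension carries: the underlying $R$-module of $S$ is dualizable (Rognes's theorem, quoted above), and the cobar complex of $R \to S$ acquires an extra codegeneracy after base change to $S$.

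First I would recall (from \cite[\S 3]{mathew-galois}) that $R \to S$ admits descent if and only if the tower of partial totalizations $\{\Tot_n(S^{\otimes_R \bullet + 1})\}_n$ of the Amitsur cosimplicial $R$-algebra is pro-constant with value $R$; equivalently, writing $C^\bullet = S^{\otimes_R \bullet+1}$ for the cobar complex and $F_n = \fib(R \to \Tot_n C^\bullet)$, the tower $\{F_n\}_n$ is pro-zero. Ordinary (non-uniform) convergence $R \xrightarrow{\ \sim\ } \Tot C^\bullet$ already follows from faithfulness, via the standard fact that $C^\bullet$ becomes split after applying the conservative functor $(-)\otimes_R S$: the multiplication $\mu\colon S \otimes_R S \to S$ is an $S$-algebra retraction of the left unit $S \to S \otimes_R S$, and the cobar complex of a ring map with a section carries an extra codegeneracy. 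Since $G$ is finite, the Galois condition makes this concrete, identifying $S \otimes_R S \simeq \prod_{g \in G} S$ as $S$-algebras with $\mu$ the projection to the coordinate indexed by the identity. The real content is the \emph{uniform} convergence, i.e.\ pro-zero-ness of $\{F_n\}$.

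By Rognes's theorem the underlying $R$-module of $S$ is dualizable, hence each cobar stage $C^n = S^{\otimes_R(n+1)}$ is dualizable over $R$, and therefore so is each finite totalization $\Tot_n C^\bullet$ and each fiber $F_n$. Now I would argue in two steps. On the one hand, since $(-)\otimes_R S$ commutes with the finite limits $\Tot_n$, it carries $\{F_n\}$ to the fiber-tower of the split coaugmented cosimplicial $S$-module $S \otimes_R C^\bullet$, and the totalization tower of a split coaugmented cosimplicial object is pro-constant; hence $\{S \otimes_R F_n\}$ is pro-zero over $S$. On the other hand, pro-zero-ness of a tower of \emph{dualizable} $R$-modules descends along the faithful base change: for dualizable $F_n$ the tower $\{F_n\}$ is pro-zero if and only if $\colim_n F_n^\vee = 0$ (using compactness of the duals $F_n^\vee$), and since $S$ is faithful and $(-)\otimes_R S$ commutes with colimits and sends $F_n^\vee$ to the $S$-linear dual of $S \otimes_R F_n$, this holds if and only if $\colim_n (S \otimes_R F_n)^\vee = 0$, i.e.\ if and only if $\{S \otimes_R F_n\}$ is pro-zero over $S$. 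Combining the two steps shows $\{F_n\}$ is pro-zero, so $R \to S$ admits descent.

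The step I expect to be the main obstacle is this last ``descent of pro-zero-ness'' along $R \to S$. Faithfulness of $S$---equivalently, conservativity of $(-)\otimes_R S$---is by itself far from sufficient to force descent; the essential extra ingredient is the finiteness encoded in the dualizability of $S$ as an $R$-module (i.e.\ that $S$ is a \emph{proper} $R$-algebra), which is precisely what turns the pro-zero condition into the colimit-vanishing statement $\colim_n F_n^\vee = 0$ that $(-)\otimes_R S$ can detect. The Galois hypotheses therefore enter the argument only through Rognes's dualizability result and, more cosmetically, through the identification $S \otimes_R S \simeq \prod_G S$ for $G$ finite, which realizes the splitting of the base-changed cobar complex explicitly.
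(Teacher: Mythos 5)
The paper does not prove this statement; it is cited verbatim from Mathew's work, with Rognes's theorem that $S$ is a dualizable $R$-module recorded immediately beforehand as the essential input. So there is no in-paper argument to compare against. Your reconstruction is correct and is, in substance, the argument Mathew gives: dualizability makes each $F_n$ compact, so pro-nullity of the tower $\{F_n\}$ is equivalent to the vanishing of $\colim_n F_n^\vee$, a condition that commutes with the colimit-preserving functor $S\otimes_R(-)$, is compatible with dualization because $F_n^\vee\otimes_R S\simeq (F_n\otimes_R S)^\vee_S$ for perfect $F_n$, and is detected by $S\otimes_R(-)$ by faithfulness; the splitting of the base-changed cobar complex then supplies the vanishing on the $S$-side. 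Two small remarks. First, the retraction $S\to S\otimes_R S$ is the multiplication $\mu$ and exists for any commutative $R$-algebra $S$, so, as you correctly observe, the identification $S\otimes_R S\simeq\prod_G S$ plays no essential role: the Galois hypothesis enters only through Rognes's dualizability. Your argument therefore proves the more general fact, also in Mathew's paper, that any faithful commutative $R$-algebra that is perfect as an $R$-module admits descent. Second, in a final write-up the equivalence ``$\{F_n\}$ pro-zero $\iff$ $\colim F_n^\vee=0$'' for towers of compact objects deserves the one-line justification you are implicitly using: the forward direction is immediate, and conversely the composite $F_n^\vee\to\colim_m F_m^\vee=0$ being null together with compactness of $F_n^\vee$ forces $F_n^\vee\to F_m^\vee$ to be null for some $m\geq n$, which dualizes to nullity of $F_m\to F_n$. (One could also note that your side remark about ``ordinary convergence from faithfulness alone'' secretly uses dualizability as well, since one must commute $S\otimes_R(-)$ past the infinite $\Tot$; but this does not affect the main argument.)
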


\begin{proposition}
\label{prop:module-descent}
Let $R \to S$ be a faithful $G$-Galois extension with $G$ a finite group, and $M$ an $R$-module. Then several properties of $S$-modules descend:
\begin{itemize}
\item A map $M \to N$ of $R$-modules is an equivalence if and only if $S \otimes_R M \to S \otimes_R N$ is an equivalence.
\item $M$ is a faithful $R$-module if and only if $S \otimes_R M$ is a faithful $S$-module.
\item $M$ is a perfect $R$-module if and only if $S \otimes_R M$ is a perfect $S$-module.
\item $M$ is an invertible $R$-module if and only if $S \otimes_R M$ is an invertible $S$-module.
\end{itemize}
\end{proposition}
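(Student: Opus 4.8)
The plan is to organize the four statements into a soft pair---equivalences and faithfulness, which need only that $S$ is a faithful $R$-module and that $S\otimes_R(-)\colon\Mod_R\to\Mod_S$ is symmetric monoidal---and a harder pair---perfectness and invertibility, which genuinely use that $R\to S$ admits descent (the preceding proposition). In every case the ``only if'' direction is straightforward, since $S\otimes_R(-)$ is a symmetric monoidal, colimit-preserving functor and therefore preserves cofiber sequences, dualizable (hence perfect) objects, and invertible objects; so the content lies in the ``if'' directions.

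Since the extension is faithful, $S$ is a faithful $R$-module, meaning $S\otimes_R X\simeq 0$ forces $X\simeq 0$. Applying this to $X=\mathrm{cofib}(M\to N)$ and using $S\otimes_R\mathrm{cofib}(M\to N)\simeq\mathrm{cofib}(S\otimes_R M\to S\otimes_R N)$ gives the first bullet. For the second, I would record the two base-change identities $(S\otimes_R M)\otimes_S L\simeq M\otimes_R L$ (for $L$ an $S$-module, as underlying $R$-modules) and $(S\otimes_R M)\otimes_S(S\otimes_R N)\simeq S\otimes_R(M\otimes_R N)$ (for $N$ an $R$-module, from symmetric monoidality). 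The first shows $M$ faithful over $R$ implies $S\otimes_R M$ faithful over $S$; the second, combined with the faithfulness of $S$, shows the converse.

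The crux is the ``if'' direction of the third bullet: deduce that $M$ is perfect over $R$ from the assumption that $S\otimes_R M$ is perfect over $S$. Perfectness is a finiteness condition that is not detected by a single conservative base change, so here I would use the full strength of descent. By Mathew's analysis of descendable extensions \cite{mathew-galois}, the augmented cobar cosimplicial $R$-module $M\to\bigl([k]\mapsto S^{\otimes_R(k+1)}\otimes_R M\bigr)$ is uniformly convergent: $M$ is a retract of a finite partial totalization $\Tot_n$ of it. Each term $S^{\otimes_R(k+1)}\otimes_R M$ is perfect over $R$, being the base change of $S\otimes_R M$ along $S\to S^{\otimes_R(k+1)}$---hence perfect over $S^{\otimes_R(k+1)}$---while $S^{\otimes_R(k+1)}$ is dualizable, hence perfect, over $R$ by Rognes' theorem that the underlying $R$-module of $S$ is dualizable; perfectness transfers along $R\to S^{\otimes_R(k+1)}$ because restriction of scalars is exact and preserves retracts. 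Since perfect $R$-modules are closed under finite limits and retracts, $\Tot_n$ of this cosimplicial object is perfect, and hence so is its retract $M$. I expect the bookkeeping for this finite-totalization argument, and the appeal to the uniform bound furnished by ``admits descent'', to be the main technical point of the proof.

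For the ``if'' direction of the fourth bullet, suppose $S\otimes_R M$ is invertible over $S$. It is then dualizable, so by the third bullet $M$ is perfect over $R$, with $R$-linear dual $D_R M=F_R(M,R)$ satisfying $S\otimes_R D_R M\simeq D_S(S\otimes_R M)$. An object of a symmetric monoidal category is invertible exactly when it is dualizable and its evaluation map is an equivalence; since evaluation is compatible with the symmetric monoidal functor $S\otimes_R(-)$, the evaluation of $S\otimes_R M$ over $S$ is identified with $S\otimes_R\bigl(\mathrm{ev}_M\colon D_R M\otimes_R M\to R\bigr)$. This being an equivalence, the first bullet together with the faithfulness of $S$ forces $\mathrm{ev}_M$ to be an equivalence, so $M$ is invertible over $R$.
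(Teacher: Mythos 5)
Your proof is correct and follows essentially the same route as the paper. The paper handles the first two bullets with the same faithfulness and tensor-associativity arguments you give, and for the last two bullets it simply cites \cite[3.27, 3.29]{mathew-galois}; what you have done is unpack those citations into the underlying argument (descendability gives a uniform bound $n$ so that $M$ is a retract of $\Tot^n$ of the cobar cosimplicial object, properness of $S$ makes each term perfect over $R$, and then invertibility follows from perfectness together with the compatibility of the evaluation map with the symmetric monoidal base-change functor and the first bullet), which is precisely Mathew's reasoning.
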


\begin{proof}
The first statement is equivalent to the statement that $N/M$ is trivial if and only if $S \otimes_R N/M$ is, which is the definition of faithfulness. The second statement follows from the tensor associativity equivalence $N \otimes_S (S \otimes_R M) \simeq N \otimes_R M$. The third statement is {\cite[3.27]{mathew-galois}} and the fourth is  {\cite[3.29]{mathew-galois}}.
\end{proof}

Associated to a commutative $R$-algebra $S$, there is the associated Amitsur complex, a cosimplicial commutative $R$-algebra:
\[
S^{\otimes_R} := \left\{ S\rrarrow S\otimes_R S\rrrarrow S\otimes_R S\otimes_R S\rrrrarrow\cdots\right\}
\]
In degree $n$ this is the $(n+1)$-fold tensor power of $S$ over $R$. More explicitly, the Amitsur complex is the left Kan extension of the map $\{[0]\} \to \Ring_{R/}$ classifying $S$ along the inclusion $\{[0]\} \into \Delta$.

Composing with the functor $\Mod\co \CAlg \to \Cat^\land_\i$, we obtain a cosimplicial object
\[
\Mod_{S^{\otimes_R}} :=\left\{\Mod_S\rrarrow\Mod_{S\otimes_R S}\rrrarrow\Mod_{S\otimes_R S\otimes_R S}\rrrrarrow\cdots\right\}
\]
in $\Mod_R$-linear $\i$-categories; we also refer to this as the Amitsur complex.

\begin{proposition}[{cf. \cite[6.15, 6.18]{lurie-dag7}, \cite[3.21]{mathew-galois}}]
\label{prop:amitsur-limit}
Suppose $S$ is a proper commutative $R$-algebra and $A$ is an $R$-algebra. Then the natural map
\[
\theta\co \Mod_A \to \lim \Mod_{(S^{\otimes_R}) \otimes_R A}
\]
has fully faithful left and right adjoints. If $S$ is faithful as an
$R$-module, then $\theta$ is an equivalence.
\end{proposition}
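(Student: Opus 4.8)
The plan is to identify $\theta$ with the Amitsur/descent comparison functor for a single algebra map over the base $A$, and then to apply the descent results of Lurie \cite[6.15, 6.18]{lurie-dag7} and Mathew \cite[3.21]{mathew-galois} in that form. Write $B := S\otimes_R A$, an $A$-algebra whose underlying left $A$-module is $A\otimes_R S$. Since the functor $\Mod$ is symmetric monoidal, for $A$-algebras $C,C'$ there is a canonical equivalence $\Mod_C\otimes_{\Mod_A}\Mod_{C'}\simeq\Mod_{C\otimes_A C'}$, and iterating this together with $A\otimes_A(S\otimes_R A)\simeq S\otimes_R A$ identifies the cosimplicial $\i$-category $\Mod_{(S^{\otimes_R})\otimes_R A}$ in degree $n$ with $\Mod_{B^{\otimes_A (n+1)}}$, compatibly with all face and degeneracy maps (the face maps on both sides are induced by the multiplication of $S$, resp.\ $B$). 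Thus $\theta$ is identified with the comparison functor $\Mod_A\to\lim_\Delta\Mod_{B^{\otimes_A\bullet+1}}$. Note that commutativity of $S$ over $R$ is exactly what makes this diagram a cosimplicial object; associativity of $A$ suffices for everything else.

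The key structural input is that $S$ is proper, i.e.\ dualizable as an $R$-module, so that $B=A\otimes_R S$ is dualizable, hence perfect, as a left $A$-module (dualizability is preserved by the symmetric monoidal base change $-\otimes_R A$). Consequently the extension-of-scalars functor $f^*\co\Mod_A\to\Mod_B$, $M\mapsto M\otimes_A B$, preserves limits as well as colimits: it has the right adjoint $f_*$ (restriction of scalars) and, because $B$ is dualizable over $A$, a left adjoint $f_!\simeq(-)\otimes_A B^\vee$. Since $\theta$ is assembled levelwise from $f^*$, it preserves limits and colimits, and therefore admits both a left adjoint $\theta^L$ and a right adjoint $\theta^R$ by the adjoint functor theorem \cite[5.5.2.9]{lurie-htt}. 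The content of the proposition is that $\theta^L$ and $\theta^R$ are fully faithful, i.e.\ that the relevant unit $\id\to\theta\theta^L$ and counit $\theta\theta^R\to\id$ on $\lim_\Delta\Mod_{B^{\otimes_A\bullet+1}}$ are equivalences. For this I would run the standard argument: applying $f^*$ levelwise to the augmented cosimplicial $\i$-category $\Mod_A\to\Mod_{B^{\otimes_A\bullet+1}}$ yields a \emph{split} augmented cosimplicial object (the extra codegeneracy coming from the unit $A\to B$), so that its totalization recovers the augmentation; the projection formula, valid because $B$ is dualizable over $A$, transports this back along $f^*$ and forces the co/units above to be equivalences. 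This is precisely the computation carried out in \cite[6.15, 6.18]{lurie-dag7} and \cite[3.21]{mathew-galois}; their hypotheses use only that the extension is a dualizable algebra, so the argument applies verbatim with $(R,S)$ replaced by $(A,B)$. This step is the main obstacle --- it is the whole technical core --- and the only genuinely new point is checking that the Beck--Chevalley/splitting arguments of the cited references survive base change to the not-necessarily-commutative algebra $A$.

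Finally, suppose $S$ is faithful as an $R$-module. For any $M\in\Mod_A$ one has $M\otimes_A B\simeq M\otimes_R S$ as $R$-modules, so $M\otimes_A B\simeq 0$ forces $M\simeq 0$; that is, $f^*$ is conservative. Together with the fact that $f^*$ preserves ($f^*$-split) totalizations, the comonadic form of the Barr--Beck--Lurie theorem \cite[\S 4.7]{lurie-higheralgebra} then shows that $\Mod_A$ is comonadic over $\Mod_B$, and the limit of the associated cobar construction is exactly $\lim_\Delta\Mod_{B^{\otimes_A\bullet+1}}$; hence $\theta$ is an equivalence. Equivalently, $\theta^L$ being fully faithful makes $\theta$ essentially surjective, and conservativity of $\theta$ together with the triangle identities then upgrades the unit $\id\to\theta^L\theta$ to an equivalence, so $\theta$ is fully faithful as well, completing the proof.
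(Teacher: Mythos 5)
Your approach is essentially the same as the paper's in its key ingredients: both proofs rest on $R$-dualizability of $S$ (giving the projection formula and hence an ambidextrous adjoint to extension of scalars), a Beck--Chevalley-type base-change check, and an appeal to the $\i$-categorical Barr--Beck theorem (the paper cites \cite[4.7.6.3]{lurie-higheralgebra}, you cite \cite[\S 4.7]{lurie-higheralgebra}). The paper's proof verifies the two hypotheses of \cite[4.7.6.3]{lurie-higheralgebra} directly for the cosimplicial diagram $\Mod_{(S^{\otimes_R})\otimes_R A}$: it first checks colimit preservation (trivially) and limit preservation (via $S\otimes_R M\simeq F_R(D_RS,M)$, which has a left adjoint), and then writes out the left- and right-adjointability of the Beck--Chevalley squares by exhibiting the relevant natural equivalences. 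This yields the fully faithful adjoints; faithfulness of $S$ then supplies conservativity and upgrades $\theta$ to an equivalence.

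The place where your argument has a genuine gap is precisely the step you flag yourself. You reframe the statement as Amitsur descent for $A\to B:=S\otimes_R A$ and assert that the arguments of \cite[6.15, 6.18]{lurie-dag7} and \cite[3.21]{mathew-galois} "apply verbatim with $(R,S)$ replaced by $(A,B)$." But those references are stated for $\EE_\i$-ring maps, and $B$ is not commutative when $A$ is not: the object $B^{\otimes_A\bullet+1}$ is not the Amitsur cosimplicial $\EE_\i$-ring of a map of $\EE_\i$-rings (its cosimplicial ring structure comes entirely from $S^{\otimes_R\bullet+1}\otimes_R A$, exactly as you note). You acknowledge that "the only genuinely new point is checking that the Beck--Chevalley/splitting arguments of the cited references survive base change to the not-necessarily-commutative algebra $A$," but that checking \emph{is} the content of the proposition's proof; a reference cannot supply it, and you do not carry it out. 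The paper's explicit verification of the two conditions of \cite[4.7.6.3]{lurie-higheralgebra} is what fills that hole. If you want to salvage your reduction, you would need to prove a version of the cited descent theorems for a cosimplicial algebra of the form $S^{\otimes_R\bullet+1}\otimes_R A$ in the $A$-linear (rather than $\EE_\i$) setting --- which amounts to repeating the paper's argument rather than citing around it.
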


\begin{proof}
We will prove this result by verifying the two criteria of \cite[4.7.6.3]{lurie-higheralgebra} (a consequence of the $\i$-categorical Barr--Beck theorem) for both this cosimplicial diagram of categories and the corresponding diagram of opposite categories.

The first criterion asks that colimits of simplicial objects exist in $\Mod_A$ and that the extension-of-scalars functor $S \otimes_R (-)\co \Mod_A \to \Mod_{S \otimes_R A}$ preserve them. However, both categories are cocomplete and the given functor is left adjoint to the forgetful functor, hence preserves all colimits. The same condition on the opposite category asks that $S \otimes_R (-)$ preserve totalizations of certain cosimplicial objects, but since $S$ is $R$-dualizable there is a natural equivalence
\[
S \otimes_R M \simeq F_R(D_RS,M).
\]
This equivalent functor has a left adjoint, given by $N \mapsto D_R S \otimes_S M$, and so preserves all limits.

The second criterion is a ``Beck--Chevalley'' condition, as follows. For any $\alpha\co [m] \to [n]$ in $\Delta$, consider the induced diagram of $\i$-categories
\[
\xymatrix{
\Mod_{S^{\otimes_R m} \otimes_R A} \ar[r]^-{d^0} \ar[d] &
\Mod_{S^{\otimes_R (1+m)} \otimes_R A} \ar[d] \\
\Mod_{S^{\otimes_R n} \otimes_R A} \ar[r]^-{d^0} &
\Mod_{S^{\otimes_R (1+n)} \otimes_R A}.
}
\]
Then we ask that these diagrams are left adjointable and right adjointable \cite[4.7.5.13]{lurie-higheralgebra}: the horizontal arrows admit left and right adjoints, and that the resulting natural transformation between the composites is an equivalence. In our case, this diagram is generically of the following form:
\[
\xymatrix{
\Mod_B \ar[r] \ar[d] &
\Mod_{S \otimes_R B} \ar[d] \\
\Mod_{B'} \ar[r] &
\Mod_{S \otimes_R B'}
}
\]
Here the horizontal arrows are extension of scalars to $S$, while the vertical arrows are extensions of scalars induced by a map of $R$-algebras $B \to B'$. The natural transformation between composed left adjoints is the natural equivalence
\[
(D_RS \otimes_S M) \otimes_B B' \to D_RS \otimes_S (M \otimes_B B'),
\]
and the one between composed right adjoints is the natural equivalence
\[
N \otimes_{S \otimes_R B}(S \otimes_R B') \to N \otimes_B B',
\]
verifying the Beck--Chevalley condition and its opposite.

Therefore, the map from $\Mod_A$ to the limit category has fully faithful left and right adjoints. If $S$ is faithful, then the functor $\Mod_A \to \Mod_{S \otimes_R A}$ is conservative and \cite[4.7.6.3]{lurie-higheralgebra} additionally verifies that $\Mod_A$ is equivalent to the limit, making $\Mod_A$ monadic and comonadic over $\Mod_{S \otimes_R A}$.
\end{proof}

\begin{remark}
This construction has a stricter lift. If we lift $R$ and $S$ to strictly commutative ring objects in a model category and $G$ is an honest group acting on $S$, the operation of tensoring with the right $R$-module $S$ implements a left Quillen functor between the category of $R$-modules and the category of modules over the twisted group algebra $S\langle G\rangle \simeq \End_R(S)$.
\end{remark}

\subsection{Group actions}

Let $G\simeq\Omega BG$ be the space of automorphisms of a connected Kan complex $BG$ with basepoint $i\co \Delta^0 \to BG$. For an $\i$-category $\C$, the category of $G$-objects in $\C$ is the functor category $\C^{BG} = \Fun(BG,\C)$. Evaluation at the basepoint determines a functor $i^*\co \C^{BG} \to \C$.

If $\C$ is complete and cocomplete, the functor $i^*$ admits left and right adjoints $i_!$ and $i_*$ respectively, given by left and right Kan extension. These are naturally described by the colimit and limit of the constant diagram on $G$ with value $X$, or equivalently the tensor and cotensor of $X$ with $G$:
\begin{align*}
  i_! X &\simeq G \otimes X, &i_* X &\simeq X^G.
\end{align*}
\begin{proposition}
If $\C$ is complete and cocomplete, the forgetful functor $i^*$ makes $\C^{BG}$ monadic and comonadic over $\C$ in the sense of \cite[4.7.4.4]{lurie-higheralgebra}.

Suppose $G$ is equivalent to a finite discrete group and let $p\co \C \to \D$ be a functor between complete and cocomplete $\i$-categories which preserves finite products, with induced map $p_*\co \C^{BG} \to \D^{BG}$. If $\TT^\C$ and $\TT^\D$ are the induced comonads on $\C$ and $\D$, then the resulting natural transformation $p \circ \TT^\C \to \TT^\D \circ p$ between comonads is an equivalence.
\end{proposition}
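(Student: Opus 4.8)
For the first assertion, the plan is to invoke the $\i$-categorical Barr--Beck--Lurie theorem together with its comonadic dual. Since $\C$ is complete and cocomplete, so is $\C^{BG}=\Fun(BG,\C)$ (with limits and colimits computed pointwise), and $i^*$ admits both a left adjoint $i_!$ and a right adjoint $i_*$; hence $i^*$ preserves all small limits and colimits, and in particular preserves colimits of $i^*$-split simplicial objects and limits of $i^*$-cosplit cosimplicial objects. So all that remains to check is that $i^*$ is conservative, which follows from the connectedness of $BG$: a natural transformation of functors $BG\to\C$ is an equivalence precisely when it is a pointwise equivalence, and any object of $BG$ is joined to the basepoint by a path, so naturality propagates invertibility of the basepoint component to every component. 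The (co)monadicity criteria then apply, giving that $i^*$ is both monadic and comonadic; write $\TT^\C = i^*i_*$ for the resulting comonad on $\C$.

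For the second assertion I would first compute the comonads explicitly. The functor $i_*$ is right Kan extension along $i\co\Delta^0\into BG$, and $\Map_{BG}(\ast,\ast)\simeq G$, so the underlying object of $i_*X$ is the cotensor $X^G$; because $G$ is equivalent to a finite discrete group this cotensor is the finite product $\prod_{g\in G}X$. Therefore $\TT^\C(X)\simeq\prod_{g\in G}X$ naturally in $X$, and likewise $\TT^\D(Y)\simeq\prod_{g\in G}Y$, with the counit and comultiplication the evident $G$- and $(G\times G)$-indexed projection and diagonal maps. Next I would identify the comparison transformation: the tautological equivalence $i^*_\D\circ p_*\simeq p\circ i^*_\C$ (evaluate a $BG$-object at the basepoint, then apply $p$) has a mate $p_*\circ i_{*,\C}\to i_{*,\D}\circ p$ relative to the adjunctions $i^*\dashv i_*$, and applying $i^*_\D$ to this mate yields exactly the natural transformation $p\circ\TT^\C\to\TT^\D\circ p$ of the statement. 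Since $p_*$ is levelwise postcomposition with $p$, unwinding this mate shows that at an object $X$ it is the canonical comparison map
\[
p\Bigl(\prod_{g\in G}X\Bigr)\too\prod_{g\in G}p(X),
\]
which is an equivalence because $G$ is finite and $p$ preserves finite products.

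The one step I expect to require care is the verification in the second part that the abstractly defined comonad morphism really does coincide with the naive product-comparison map --- a short computation with mates and the pointwise description of Kan extensions; everything else is formal. It is worth emphasizing that both the finiteness and the discreteness of $G$ are used there: for a general $\i$-group $G$, the functor $X\mapsto X^G$ is an iterated limit over $BG$ that $p$ need not preserve merely by virtue of preserving finite products.
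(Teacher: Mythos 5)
Your proof is correct and follows essentially the same route as the paper: part one invokes the Barr--Beck--Lurie criterion and its dual after observing that $i^*$ is conservative (via connectedness of $BG$) and preserves all limits and colimits by virtue of having both adjoints, and part two identifies the comparison transformation as the mate $p\,i^*i_* \to i^*i_*\,p$ and evaluates it as the finite-product comparison $p(X^G) \to p(X)^G$, which $p$ preserves by hypothesis. The only difference is presentational — you spell out the mate-chasing and explicitly flag where finiteness and discreteness enter, whereas the paper writes the composite of unit/counit maps directly; the content is identical.
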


\begin{proof}
For the first statement it suffices, by \cite[4.7.4.5]{lurie-higheralgebra} and its dual, to observe that $i^*$ is conservative and preserves all limits and colimits, being both a left and right adjoint.

For the second statement, the natural map is provided by the adjunction in the form of a composite
\[
p i^* i_* \cong i^* p_* i_* \xrightarrow{i^*(\eta)} i^* i_* i^* p_* i_* \cong i^* i_* p i^* i_* \xrightarrow{i^* i_* p(\epsilon)} i^* i_* p.
\]
For $X \in \C$, this takes the form of the natural limit transformation $p(X^G) \to p(X)^G$, which is an equivalence by assumption.
\end{proof}

\begin{corollary}
\label{cor:fixedpoint-constr}
If $G$ is a finite group, associated to a ring $S \in (\Ring_{R/})^{BG}$ there is a cosimplicial commutative $R$-algebra
\[
\TT^\bullet(S) = \left\{ i^* S\rrarrow \TT(i^*S)\rrrarrow \TT(\TT(i^*S)) \rrrrarrow\cdots\right\}
\]
induced by the comonad $\TT$, whose underlying cosimplicial $R$-module is the fixed-point construction
\[
S^{hG}=\left\{ S\rrarrow S^G \rrrarrow S^{G \times G} \rrrrarrow\cdots\right\}.
\]

\end{corollary}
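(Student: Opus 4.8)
The plan is to construct $\TT^\bullet(S)$ as the canonical cosimplicial object (``cobar resolution'') attached to the comonad $\TT$ on $\C := \Ring_{R/}$, and then to read off its underlying cosimplicial $R$-module by transporting along the forgetful functor, using the comparison of comonads supplied by the preceding proposition.

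First I would note that $\C = \Ring_{R/} = \CAlg(\S)_{R/}$ is presentable, hence complete and cocomplete, so the preceding proposition applies: $i^*\co \C^{BG} \to \C$ is comonadic with comonad $\TT = i^* i_*$, and $\TT X \simeq X^G$ is the cotensor of $X$ with (the finite set underlying) $G$. Since $i^* \dashv i_*$, the object $S \in \C^{BG}$ equips $i^* S$ with a canonical $\TT$-coalgebra structure, namely the unit $i^* S \to i^* i_* i^* S = \TT(i^*S)$. Feeding this coalgebra into the standard construction of the cosimplicial object associated to a coalgebra over a comonad --- with cofaces built from the comultiplication $\delta\co \TT \to \TT\circ\TT$ together with the coaction, and codegeneracies from the counit $\epsilon\co \TT \to \id$ --- produces a functor $\Delta \to \C$, that is, a cosimplicial commutative $R$-algebra whose $n$-th term is $\TT^n(i^*S)$. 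This is $\TT^\bullet(S)$: $i^*S$ in degree $0$, $\TT(i^*S)$ in degree $1$, and so on, exactly the shape displayed in the statement.

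It remains to identify the underlying cosimplicial $R$-module. Let $p\co \Ring_{R/} \to \Mod_R$ be the forgetful functor to underlying $R$-modules; via the identification $\Ring_{R/} \simeq \CAlg(\Mod_R)$ this is monadic, hence preserves all limits, in particular finite products, and it is manifestly compatible with the $G$-action, inducing $p_*\co \C^{BG} \to (\Mod_R)^{BG}$ over $p$. Since $\Mod_R$ is complete and cocomplete and $G$ is a finite discrete group, the preceding proposition yields an equivalence of comonads $p\circ\TT^{\C} \xrightarrow{\ \sim\ } \TT^{\Mod_R}\circ p$, where $\TT^{\Mod_R} = i^* i_*$ is the analogous comonad on $\Mod_R$. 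An equivalence of comonads is compatible with counits and comultiplications, so applying $p$ to $\TT^\bullet(S)$ returns precisely the cobar cosimplicial object of $\TT^{\Mod_R}$ on the underlying $R$-module of $S$. Because $G$ is discrete, the cotensor $\TT^{\Mod_R}(M) = M^G$ is the finite product $\prod_{g\in G} M$, so $(\TT^{\Mod_R})^n(M) \simeq M^{G^n}$ with the familiar insertion-of-identity and action/projection maps; hence the underlying cosimplicial $R$-module of $\TT^\bullet(S)$ is the homotopy-fixed-point cosimplicial object $S^{hG}$ displayed in the statement.

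The step I expect to require the most care is the first: pinning down, in the $\i$-categorical setting, the ``canonical cosimplicial object associated to a coalgebra over a comonad'' and checking that its cofaces and codegeneracies agree with those of the ad hoc fixed-point complex. This is standard but deserves an explicit citation to \cite{lurie-higheralgebra}; alternatively one may avoid comonads altogether and use the presentation $BG \simeq \colim_{\Delta^{\op}} N_\bullet G$ with $N_n G$ the finite set $G^{\times n}$, which directly exhibits $S^{hG} = \lim_{BG} S$ as the totalization of $[n] \mapsto S^{G^{\times n}}$ and presents each term levelwise as a finite product of copies of the commutative $R$-algebra $S$. It is also worth flagging that discreteness of $G$ is what collapses the cotensor to a finite product, which is exactly what lets ``$p$ preserves finite products'' suffice in invoking the preceding proposition.
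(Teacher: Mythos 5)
Your argument is the one the paper evidently intends: the corollary carries no written proof and sits directly after the proposition asserting (co)monadicity of $i^*$ and comparing the induced comonads along any finite-product-preserving functor between complete and cocomplete $\i$-categories, and those are precisely the tools you invoke. In particular, the second half of your proposal --- that the forgetful functor $p\co\Ring_{R/}\to\Mod_R$ preserves finite products (being a right adjoint), so that $p\circ\TT^{\Ring_{R/}}\simeq\TT^{\Mod_R}\circ p$ as comonads by the preceding proposition, and that the underlying cosimplicial $R$-module is therefore the fixed-point complex $[n]\mapsto S^{G^n}$ --- is correct and is exactly what the proposition was set up to deliver.

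The one wrinkle, which you yourself flag as the place ``requiring the most care,'' concerns the construction of $\TT^\bullet(S)$ itself. The displayed object has $\TT^n(i^*S)$ in cosimplicial degree $n$, hence $i^*S$ in degree $0$; but the standard cobar construction of a $\TT$-coalgebra $X$ places $\TT^{n+1}X$ in degree $n$, with $X$ appearing only as an augmentation. There is no second natural map $X\to\TT X$ manufactured from $\epsilon$, $\delta$, and the coaction alone, so the first construction you describe does not literally produce the displayed cosimplicial object. What does produce it is either $\lim_{BG}$ applied to the monadic bar resolution $(i_*i^*)^{\bullet+1}S$ in $\C^{BG}$ (the identity $\lim_{BG}\circ\, i_*\simeq\mathrm{id}$ collapses degree $n$ to $\TT^n(i^*S)$), or, most cleanly, the simplicial presentation $BG\simeq |N_\bullet G|$ that you offer as an alternative: this exhibits $\lim_{BG}S$ as the totalization of $[n]\mapsto S^{G^{\times n}}$, with each term a finite product of copies of the commutative $R$-algebra $S$. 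Your alternative route closes the gap completely, and the rest of the argument then goes through as written.
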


\subsection{Descent}

\begin{proposition}
\label{prop:cosimp-equiv}
For a Galois extension $R \to S$, there is an equivalence of cosimplicial $R$-algebras between the Amitsur complex $S^\otimes_R$ and the fixed-point construction $\TT^\bullet(S)$ of Corollary~\ref{cor:fixedpoint-constr}.
\end{proposition}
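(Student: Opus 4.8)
The plan is to produce a canonical map of cosimplicial commutative $R$-algebras $c\co S^{\otimes_R}\to\TT^\bullet(S)$ and then verify that it is an equivalence in every cosimplicial degree; since equivalences in $\Fun(\Delta,\CAlg_R)$ are detected degreewise, this yields the claimed equivalence of cosimplicial objects.

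\emph{The comparison map.} As recalled above, the Amitsur complex $S^{\otimes_R}$ is the left Kan extension of $S\in\CAlg_R$ along $\{[0]\}\into\Delta$, and since $[0]$ is terminal in $\Delta$ this exhibits $(-)^{\otimes_R}$ as left adjoint to evaluation at $[0]$ on $\Fun(\Delta,\CAlg_R)$. Consequently a map of cosimplicial $R$-algebras out of $S^{\otimes_R}$ is the same datum as a map in $\CAlg_R$ out of $S$ in cosimplicial degree $0$. Applying this to the cosimplicial $R$-algebra $\TT^\bullet(S)$ from Corollary~\ref{cor:fixedpoint-constr}, whose degree-zero term is $\TT^0(S)=i^*S\simeq S$, the identity of $S$ is adjoint to a canonical map $c\co S^{\otimes_R}\to\TT^\bullet(S)$, which is the identity in degree $0$. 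Tracing $c$ through the comonadic cobar structure of $\TT^\bullet(S)$ and the presentation of $S^{\otimes_R 2}$ as the coproduct $S\sqcup_R S$ under $R$, its degree-one component is the map $S\otimes_R S\to\TT^1(S)=S^G$ determined by the ``constant'' coface map and the coface map classifying the $G$-action; this is precisely the composite $S\otimes_R S\to S\otimes_R D_R R[G]\simeq D_S S[G]\simeq S^G$ appearing in the definition of a $G$-Galois extension. (Alternatively, after rectifying $R\to S$ to strictly commutative rings carrying an honest $G$-action, as in the ``stricter lift'' remark, this same formula manifestly defines a strict map of cosimplicial commutative $R$-algebras, bypassing the $\i$-categorical coherence bookkeeping.)

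\emph{It is a degreewise equivalence.} In degree $0$ the map $c$ is the identity, and in degree $1$ it is the structure map of the Galois extension, hence an equivalence by hypothesis. For $n\ge 2$ we reduce to degree $1$: because $G$ is finite, $\TT^n(S)=S^{G^n}$ is a finite product of copies of $S$, so forming it commutes with $S\otimes_R(-)$, and $c_n$ is identified with the iterate of $c_1$ obtained by alternately applying $S\otimes_R(-)$ and products indexed by powers of $G$ --- e.g.\ $S^{\otimes_R (n+1)}\xrightarrow{\,S\otimes_R c_{n-1}\,}S\otimes_R S^{G^{n-1}}\simeq\prod_{G^{n-1}}(S\otimes_R S)\xrightarrow{\,\prod c_1\,}\prod_{G^{n-1}}S^G=S^{G^n}$ --- a composite of equivalences. (Equivalently, both $S^{\otimes_R}$ and $\TT^\bullet(S)$ satisfy the codescent condition that the degree-$n$ term is the iterated pushout along the spine of $\Delta^n$ of $n$ copies of the degree-one term over the degree-zero term, so any map of such objects that is an equivalence in degrees $\le 1$ is a degreewise equivalence.)

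\emph{Expected main obstacle.} The genuine content is the identification in the first step: one must check that the map $c$, produced abstractly from the universal property of the Amitsur complex, has degree-one component equal to the canonical map $S\otimes_R S\to D_S S[G]$ from the definition of a $G$-Galois extension, which requires carefully unwinding the coface maps of the comonadic cobar construction of Corollary~\ref{cor:fixedpoint-constr} and its coaugmentation by $R\simeq S^{hG}$ (or, more cheaply, carrying this out in a strict point-set model). The remaining ingredients --- the reduction of higher degrees to degree one via base change along the finite group $G$, and the principle that degreewise equivalences of cosimplicial objects are equivalences --- are formal. The statement is the $\EE_\i$-ring-spectrum incarnation of the classical fact that the Amitsur complex of a $G$-Galois extension computes homotopy fixed points; cf.~\cite{rognes-galois}.
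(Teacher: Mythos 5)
Your proof takes essentially the same route as the paper's very terse argument: extend the identity on $S$ via the universal property of the left Kan extension to a map $S^{\otimes_R}\to\TT^\bullet(S)$, then check it is a degreewise equivalence by induction from the Galois condition in degree $1$, using finiteness of $G$. One small inaccuracy: the fact that $(-)^{\otimes_R}$ is left adjoint to evaluation at $[0]$ is a consequence of the general adjunction between left Kan extension and restriction, not of $[0]$ being terminal in $\Delta$; otherwise the details you supply (including the reduction of higher cosimplicial degrees to degree $1$ and the codescent reformulation) are correct and fill in precisely what the paper leaves implicit.
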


\begin{proof}
The universal property of the left Kan extension implies that the identity map $S \simeq \TT^0(S)$ extends to a map of cosimplicial objects $S^{\otimes_R} \to S^{hG}$, unique up to contractible choice. It suffices to verify that this induces equivalences $S^{\otimes_R (n+1)} \to S^{G^n}$, which follows by induction from the case $n=1$.
\end{proof}

Now suppose that $R\to S$ is a faithful Galois extension of $R$ by the stably dualizable group $G$.
Write $f\co BG\to\Ring_{R/}$ for the functor classifying $S$ as a (naive) $G$-equivariant commutative $R$-algebra, so that $S\simeq f(*)$ and $R\simeq\lim f$, and write
\[
(\Mod_S)^{hG}:=\lim\Mod_f
\]
for the ``fixed-points'' of the $\i$-category $\Mod_f$, the $\i$-category of $G$-semilinear $S$-modules.
Lastly, we write $N^{hG}$ for the limit of a $G$-semilinear $S$-module $N$, and view it as an $R\simeq S^{hG}$-module.

\begin{theorem}
\label{thm:fixedpoint-categories}
Let $R\to S$ be a faithful $G$-Galois extension with $G$ finite, and $A \in \Alg_R$. Then the canonical map
\[
\Mod_A\too(\Mod_{S \otimes A})^{hG}
\]
is an equivalence of $\i$-categories.
\end{theorem}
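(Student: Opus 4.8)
The plan is to reduce the statement to the Amitsur-complex descent result of Proposition~\ref{prop:amitsur-limit}, using the identification of the Amitsur complex with the cobar construction of the $G$-action provided by Proposition~\ref{prop:cosimp-equiv} and Corollary~\ref{cor:fixedpoint-constr}.

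First I would check that the hypotheses of Proposition~\ref{prop:amitsur-limit} hold: by Rognes' dualizability result the underlying $R$-module of $S$ is dualizable, so $S$ is a proper commutative $R$-algebra, and $S$ is faithful by assumption. Hence the natural functor $\theta\co\Mod_A\to\lim_{\Delta}\Mod_{S^{\otimes_R}\otimes_R A}$ is an equivalence, where $S^{\otimes_R}$ is the Amitsur complex. It then remains to identify $\lim_{\Delta}\Mod_{S^{\otimes_R}\otimes_R A}$ with $(\Mod_{S\otimes A})^{hG}$, compatibly with the canonical functors out of $\Mod_A$.

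Next I would unwind the homotopy fixed points. Since $\mathrm{pt}\to BG$ is an effective epimorphism of spaces whose \v{C}ech nerve is the simplicial space $[n]\mapsto G^{\times n}$, we have $BG\simeq\colim_{\Delta^{\op}}G^{\times\bullet}$, and therefore for any functor $F\co BG\to\Cat_\i$ the limit $\lim_{BG}F$ is the totalization of the cosimplicial $\i$-category $[n]\mapsto\lim_{G^{\times n}}F|_{G^{\times n}}$. Because $G$ is finite, $G^{\times n}$ is a finite set and this limit is just the $|G|^n$-fold power of $F(*)$; taking $F=\Mod_{S\otimes_R A}$ with its $G$-action, and using that both $\Mod\co\Alg_R\to\Cat_\i$ and $-\otimes_R A\co\Ring_{R/}\to\Alg_R$ preserve finite products, the totalization becomes $\lim_{\Delta}\Mod_{S^{G^{\bullet}}\otimes_R A}=\lim_{\Delta}\Mod_{\TT^\bullet(S)\otimes_R A}$, with $\TT^\bullet(S)$ the cosimplicial commutative $R$-algebra of Corollary~\ref{cor:fixedpoint-constr}. (Equivalently: applying the finite-product-preserving functor $\Mod\circ(-\otimes_R A)$ to the cobar construction associated to the comonad $\TT$ on $\Ring_{R/}^{BG}$ produces the cobar construction computing $(\Mod_{S\otimes A})^{hG}$, by the comonadicity results of the group-actions subsection.) Finally, Proposition~\ref{prop:cosimp-equiv} supplies an equivalence of cosimplicial $R$-algebras $S^{\otimes_R}\simeq\TT^\bullet(S)$, which stays an equivalence after applying $-\otimes_R A$ degreewise and then $\Mod$; composing everything yields $\Mod_A\xrightarrow{\sim}\lim_{\Delta}\Mod_{S^{\otimes_R}\otimes_R A}\simeq(\Mod_{S\otimes A})^{hG}$, and since both $\theta$ and the canonical comparison are induced by the $G$-equivariant $R$-algebra maps $A\to S^{\otimes_R(\bullet+1)}\otimes_R A$, a short diagram chase identifies this composite with the map in the statement.

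The step I expect to be the main obstacle is the middle one: checking carefully, in the $\i$-categorical setting, that the $\i$-category of $G$-semilinear $(S\otimes_R A)$-modules is exactly the totalization of the cosimplicial $\i$-category coming from the Amitsur complex, with all coface and codegeneracy maps and all higher coherence data matching. This is largely bookkeeping on top of the comonadic-descent package of the preceding subsection, but it is where the finiteness of $G$ and the product-preservation of $\Mod$ and of $-\otimes_R A$ are genuinely used, and where one must be careful that the equivalence produced is precisely the canonical one named in the statement.
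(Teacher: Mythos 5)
Your proposal is correct and follows essentially the same route as the paper's proof: both reduce to Proposition~\ref{prop:amitsur-limit} via the comparison of the Amitsur complex with the cobar construction (Proposition~\ref{prop:cosimp-equiv} and Corollary~\ref{cor:fixedpoint-constr}), and both use finiteness of $G$ so that the relevant cosimplicial terms are finite powers, which lets $-\otimes_R A$ and $\Mod$ pass through the cobar resolution. The only cosmetic difference is that you unwind $\lim_{BG}$ via the \v{C}ech nerve of $\mathrm{pt}\to BG$, whereas the paper invokes the (co)monadicity statement from the group-actions subsection directly, but these are the same argument in different packaging.
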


\begin{proof}
Tensoring the equivalence of Proposition \ref{prop:cosimp-equiv} with $A$, we obtain maps of cosimplicial objects
\[
(S^{\otimes_R}) \otimes_R A \xrightarrow{\sim} \TT^\bullet(S) \otimes_R A \to \TT^\bullet(A).
\]
The natural map $\TT(X) \otimes_R Y \to \TT(X \otimes_R Y)$ is equivalent to the map $X^G \otimes_R Y \to (X \otimes_R Y)^G$ and is therefore an equivalence, because both sides are a $|G|$-fold coproduct of copies of $X \otimes_R Y$.

Since $S$ is faithful and dualizable as an $R$-module, Proposition~\ref{prop:amitsur-limit} shows that there is an equivalence
\[
\Mod_A \simeq \lim (\Mod_{S^{\otimes_R} \otimes_R A}).
\]
The equivalence of cosimplicial rings shows that this extends to an equivalence
\[
\Mod_A \simeq \lim \Mod_{\TT^\bullet A} \simeq (\Mod_{S \otimes_R A})^{hG}.\qedhere
\]
\end{proof}

\begin{corollary}
\label{cor:section-equivalence}
Let $R\to S$ be a faithful $G$-Galois extension with $G$ finite, associated to a functor $f\co BG\to\Ring_{R/}$, and consider the diagram
\[
\xymatrix{
& \Mod\ar[d]\\
BG\ar[r]^f \ar@{.>}[ur] & \Ring.
}
\]
Then the map
\[
\Mod_R\too\Fun_{/\Ring}(BG,\Mod),
\]
which sends the $R$-module $M$ to the $G$-Galois module $S \otimes_R M$, is an equivalence.
\end{corollary}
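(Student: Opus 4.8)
The plan is to identify the $\i$-category $\Fun_{/\Ring}(BG,\Mod)$ of lifts of $f$ with the homotopy fixed points $(\Mod_S)^{hG} = \lim_{BG}(\Mod\circ f)$, and then to recognise the functor in the statement as the $A = R$ instance of Theorem~\ref{thm:fixedpoint-categories}. First I would pull back the coCartesian fibration $\Mod\to\Ring$ \cite[4.5.3.6]{lurie-higheralgebra} along $f$, obtaining a coCartesian fibration $f^*\Mod = BG\times_\Ring\Mod \to BG$ whose classifying functor is $\Mod\circ f\co BG\to\Cat^\land_\i$ and whose fibre over the basepoint is $\Mod_S$; by definition $\Fun_{/\Ring}(BG,\Mod)$ is the $\i$-category of sections of this fibration. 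Because $BG$ is an $\i$-groupoid, any section $BG\to f^*\Mod$ preserves equivalences, hence carries every morphism of $BG$ to an equivalence of $f^*\Mod$; since equivalences are coCartesian edges, every section is a coCartesian section. As the $\i$-category of coCartesian sections of a coCartesian fibration computes the limit of its classifying functor \cite[3.3.3.2]{lurie-htt}, this gives $\Fun_{/\Ring}(BG,\Mod)\simeq\lim_{BG}(\Mod\circ f) = (\Mod_S)^{hG}$.

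Next I would check that, under this identification, the functor named in the corollary corresponds to the canonical comparison functor $\Mod_R = \Mod_{\lim f}\to\lim_{BG}(\Mod\circ f)$ coming from the functoriality of $R\mapsto\Mod_R$ and the Galois equivalence $R\simeq\lim f = S^{hG}$. This is a matter of unwinding the straightening equivalence: composing the comparison functor with evaluation at the basepoint $\lim_{BG}(\Mod\circ f)\to\Mod_S$ recovers base change along $R\to S$, that is $M\mapsto S\otimes_R M$ equipped with its residual $G$-action, which is exactly the $G$-Galois module assigned to $M$ in the statement. This comparison functor is precisely the map $\Mod_A\to(\Mod_{S\otimes_R A})^{hG}$ of Theorem~\ref{thm:fixedpoint-categories} specialised to $A = R$, and that theorem (applicable since $R\to S$ is a faithful $G$-Galois extension with $G$ finite) asserts that it is an equivalence; this finishes the proof.

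The only real work here is bookkeeping rather than mathematics: one must justify the reduction from arbitrary sections to coCartesian sections over the $\i$-groupoid $BG$, and, more delicately, verify that the straightening/unstraightening comparison intertwines the ``form the $G$-semilinear module $S\otimes_R M$'' functor of the corollary with the comparison functor appearing in Theorem~\ref{thm:fixedpoint-categories}. No new geometric or algebraic input beyond that theorem and the formal properties of (co)Cartesian fibrations is required.
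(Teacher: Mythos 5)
Your proposal is correct and takes essentially the same route as the paper: identify $\Fun_{/\Ring}(BG,\Mod)$ with the limit $\lim_{BG}(\Mod\circ f)$ via \cite[3.3.3.2]{lurie-htt} (the paper leaves implicit the observation that over the $\i$-groupoid $BG$ every section is a coCartesian section, which you spell out), and then invoke Theorem~\ref{thm:fixedpoint-categories} with $A=R$. Your additional unwinding of why the comparison functor agrees with $M\mapsto S\otimes_R M$ is correct bookkeeping that the paper omits.
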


\begin{proof}
The $\i$-category of sections from $BG$ to the pullback of $\Mod \to \Ring$ is equivalent to the limit of the functor $\Mod_f\co BG\to{\Cat^\land_\i}$ it classifies \cite[3.3.3.2]{lurie-htt}, which in turn is equivalent to $\Mod_R$ by Theorem \ref{thm:fixedpoint-categories}.
\end{proof}

\begin{lemma}
For an $\i$-operad $\O$, the $\i$-category of $\O$-monoidal $\i$-categories has limits which are computed in $\Cat_\i$.
\end{lemma}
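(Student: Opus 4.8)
The plan is to realise the $\i$-category of $\O$-monoidal $\i$-categories as a full subcategory of a functor $\i$-category that is closed under limits. First I would recall that straightening--unstraightening over $\O^\otimes$ \cite[3.2.0.1]{lurie-htt} exhibits an equivalence between the $\i$-category of coCartesian fibrations $\C^\otimes \to \O^\otimes$ with coCartesian-edge-preserving maps, on the one hand, and the functor $\i$-category $\Fun(\O^\otimes, \Cat_\i)$ on the other. Under this equivalence the coCartesian fibrations of $\i$-operads -- that is, the $\O$-monoidal $\i$-categories -- are exactly the functors $F\co \O^\otimes \to \Cat_\i$ satisfying the $\O$-monoid (Segal) condition: for every object $X$ of $\O^\otimes$ lying over $\langle n\rangle$, the inert maps $X \to X_i$ with $1 \le i \le n$ exhibit $F(X)$ as the product $\prod_{i=1}^n F(X_i)$ \cite[\S 2.4.2]{lurie-higheralgebra}; and strong $\O$-monoidal functors correspond to natural transformations of the associated $\O$-monoids. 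Thus the $\i$-category of $\O$-monoidal $\i$-categories is identified with the full subcategory $\Mon_\O(\Cat_\i) \subseteq \Fun(\O^\otimes, \Cat_\i)$.

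Next I would observe that $\Cat_\i$ admits all small limits, hence so does $\Fun(\O^\otimes, \Cat_\i)$, with limits computed objectwise \cite[\S 5.1.2]{lurie-htt}. The crucial point is that $\Mon_\O(\Cat_\i)$ is closed under these limits. Indeed, if $\{F_\alpha\}$ is a small diagram of $\O$-monoids and $F = \lim_\alpha F_\alpha$ is its objectwise limit, then for any $X$ over $\langle n\rangle$ the comparison map $F(X) \to \prod_{i=1}^n F(X_i)$ is the limit over $\alpha$ of the maps $F_\alpha(X) \to \prod_{i=1}^n F_\alpha(X_i)$, using that limits commute with the finite products $\prod_{i=1}^n$; since each of these maps is an equivalence and a limit of equivalences is an equivalence, $F$ again satisfies the Segal condition. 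Therefore $\Mon_\O(\Cat_\i)$ has all small limits, and they are computed by evaluation at the objects of $\O^\otimes$ followed by limits in $\Cat_\i$. Restricting evaluation to the objects of $\O \subseteq \O^\otimes$ shows in particular that the forgetful functor from $\O$-monoidal $\i$-categories to $\Cat_\i$ preserves small limits, which is the statement of the lemma.

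I do not expect a substantive obstacle here, as the lemma is purely formal. The only step that takes some care is the first paragraph: one must invoke the straightening equivalence in precisely the form that matches \emph{strong} (coCartesian-preserving) $\O$-monoidal functors with natural transformations, and confirm that the $\O$-monoid condition on the straightened functor is the unstraightened form of the ``coCartesian fibration of $\i$-operads'' condition. After this identification, closure of the Segal condition under objectwise limits is immediate, and the remainder is the standard behaviour of limits in functor $\i$-categories.
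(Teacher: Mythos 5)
Your argument is essentially the paper's: the paper cites \cite[2.4.2.6]{lurie-higheralgebra} for the equivalence between $\O$-monoidal $\i$-categories and $\Mon_\O(\Cat_\i)\subseteq\Fun(\O^\otimes,\Cat_\i)$, and then cites \cite[3.2.2.1]{lurie-higheralgebra} for the fact that the forgetful functor to $\Fun(\O,\Cat_\i)$ creates limits. You instead unwind those two citations -- invoking straightening to produce the first equivalence and observing closure of the Segal condition under objectwise limits (since finite products commute with limits) for the second -- but the route is the same, just spelled out rather than referenced. One small thing to keep in mind: the paper explicitly notes that the same proof works in the setting of large $\i$-categories, which matters here because $\Cat_\i$-valued $\O$-monoids are large; your argument goes through verbatim in that setting, so this is not a gap, but it is worth remarking.
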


\begin{proof}
In \cite[2.4.2.6]{lurie-higheralgebra} it is shown that there is an equivalence between $\O$-monoidal $\i$-categories and $\O$-algebra objects in $\Cat_\i$, and so \cite[3.2.2.1]{lurie-higheralgebra} shows that limits of the underlying $\i$-categories lift uniquely to limits of $\O$-monoidal $\i$-categories. The same proof applies within the category of large $\i$-categories.
\end{proof}

\begin{corollary}
\label{cor:monoidal-limits}
Let $f\co I\to{\Cat^{\O}_\i}$ be a diagram of $\O$-monoidal $\i$-categories and $\O$-monoidal functors. Then the canonical map
\[
\Alg_{/\O}(\lim f) \to \lim (\Alg_{/\O} \circ f)
\]
is an equivalence.
\end{corollary}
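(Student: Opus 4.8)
The plan is to present both $\Alg_{/\O}(\lim f)$ and $\lim(\Alg_{/\O}\circ f)$ as limits of a common diagram and then to interchange the two limits. First I would unwind the preceding lemma. Through the identification of $\O$-monoidal $\i$-categories with $\O$-monoid objects of $\Cat^\land_\i$ used there \cite[2.4.2.6]{lurie-higheralgebra}, together with the fact that this full subcategory of $\Fun(\O^\otimes,\Cat^\land_\i)$ is closed under limits---it is carved out by the operadic Segal product conditions, which are preserved by limits---the limit $\lim f$ is computed objectwise over $\O^\otimes$. Concretely, it is presented by a coCartesian fibration $(\lim f)^\otimes\to\O^\otimes$ whose fiber over each $X\in\O^\otimes$ is $\lim_{i\in I}f(i)^\otimes_X$, with the structure functors (in particular the inert pushforwards) acting coordinatewise in $i$.

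Next I would recall that, for an $\O$-monoidal $\i$-category $\C$ presented by $\C^\otimes\to\O^\otimes$, the $\i$-category $\Alg_{/\O}(\C)$ is the full subcategory of the section category $\Fun_{\O^\otimes}(\O^\otimes,\C^\otimes)$ spanned by the sections carrying inert morphisms of $\O^\otimes$ to (coCartesian) inert morphisms of $\C^\otimes$ \cite[2.1.3.1]{lurie-higheralgebra}. Applying this to $(\lim f)^\otimes$ and using the previous paragraph: since the fiber of $(\lim f)^\otimes$ over every object is the corresponding limit of fibers of the $f(i)^\otimes$ and all structure functors act coordinatewise, a section of $(\lim f)^\otimes\to\O^\otimes$ is the same datum as a compatible $I$-indexed family of sections of the $f(i)^\otimes\to\O^\otimes$, and such a family is coCartesian over a given inert morphism of $\O^\otimes$ if and only if each of its members is. Hence
\[
\Alg_{/\O}(\lim f)\simeq\lim_{i\in I}\Alg_{/\O}(f(i)),
\]
and it remains only to check---routine bookkeeping---that this equivalence is the comparison map of the statement, i.e. the one induced by applying $\Alg_{/\O}$ to the projection cone $\{\lim f\to f(i)\}_{i\in I}$.

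The step I expect to be the main obstacle is keeping the first paragraph honest: a limit of coCartesian fibrations over a fixed base $\O^\otimes$, formed naively in the slice $(\Cat^\land_\i)_{/\O^\otimes}$, need not be a coCartesian fibration at all, so the assertion that ``limits of $\O$-monoidal $\i$-categories are computed in $\Cat^\land_\i$'' must be interpreted through the equivalence with $\O$-monoid objects, where limits genuinely are pointwise. Once that is set up correctly, the remaining manipulations---commuting the formation of inert-preserving sections past the limit over $I$, and recognizing inert-preservation as a pointwise condition---are formal, and require no input beyond the preceding lemma and the cited parts of \cite{lurie-higheralgebra}.
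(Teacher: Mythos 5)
Your argument is correct and takes essentially the same route as the paper: compute $\lim f$ pointwise via the preceding lemma and present $\Alg_{/\O}$ as a section-category construction that preserves limits. The one place you are more careful than the paper's one-line proof is in verifying that the inert-preservation condition cutting out $\Alg_{/\O}(-)$ inside the full section category $\Fun_{/\O^\otimes}(\O^\otimes,-)$ is a pointwise (hence limit-compatible) condition --- a step the paper elides by identifying $\Alg_{/\O}(\C^\otimes)$ outright with the full section category.
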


\begin{proof}
The $\i$-category $\Alg_{/\O}(\C^\otimes)$ of $\O$-algebra objects in an $\O$-monoidal $\i$-category $p\co \C^\otimes \to \O^\otimes$ is the $\i$-category of functors $\Fun_{/\O^\otimes}(\O^\otimes, \C^\otimes)$, and $\Fun_{/\O^\otimes}(\O^\otimes,-)\co \Cat_\i^\O\to\Cat_\i$ evidently preserves limits in the target.
\end{proof}

\begin{proposition}
Let $R\to S$ be the $G$-Galois extension associated to a functor $f\co BG\to\Ring_{R/}$, and consider the diagram
\[
\xymatrix{
& \Alg\ar[d]\\
BG\ar[r]^f \ar@{.>}[ur] & \Ring.
}
\]
Then the map $\Alg_R\too\Fun_{/\Ring}(BG,\Alg)$, which sends the $R$-algebra $A$ to the $G$-equivariant $S$-algebra $S \otimes_R A$, is an equivalence.
\end{proposition}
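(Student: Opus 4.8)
The plan is to bootstrap from the module-level descent statement, Corollary~\ref{cor:section-equivalence}, using the fact that forming associative algebra objects commutes with limits of monoidal $\i$-categories. Throughout, let $\O$ denote the associative operad, so that $\O$-monoidal $\i$-categories are monoidal $\i$-categories and, for an $\EE_\i$-ring $R'$, the $\i$-category $\Alg_{R'}$ of (associative) $R'$-algebras is identified with $\Alg_{/\O}(\Mod_{R'}^\otimes)$ by the translation of Section~\ref{sec:commutative-algebras}.

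First I would rewrite the target as a limit. By \cite[3.3.3.2]{lurie-htt}, the $\i$-category $\Fun_{/\Ring}(BG,\Alg)$ of sections of the pulled-back coCartesian fibration $f^*\Alg\to BG$ is the limit $\lim_{BG}\Alg_f$ of the classifying functor $\Alg_f\colon BG\to\Cat^\land_\i$, where $\Alg_f$ sends $b\in BG$ to $\Alg_{f(b)}$. Since the coCartesian fibration $\Alg\to\Ring$ was built by applying the fibrewise algebra-objects construction to $\Mod^\otimes\to\Ring$, this functor factors as $\Alg_f\simeq\Alg_{/\O}\circ\Mod_f^\otimes$, where $\Mod_f^\otimes\colon BG\to{\Cat^\O_\i}$ is the composite of $f$ with $R'\mapsto\Mod_{R'}^\otimes$, the transition maps being the symmetric monoidal base-change functors.

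Next, applying Corollary~\ref{cor:monoidal-limits} to the diagram $\Mod_f^\otimes$, there is an equivalence
\[
\Fun_{/\Ring}(BG,\Alg)\;\simeq\;\lim_{BG}\bigl(\Alg_{/\O}\circ\Mod_f^\otimes\bigr)\;\simeq\;\Alg_{/\O}\Bigl(\lim_{BG}\Mod_f^\otimes\Bigr).
\]
It then remains to identify $\lim_{BG}\Mod_f^\otimes$ with $\Mod_R^\otimes$ as a monoidal $\i$-category. The limit cone $\mathrm{const}_R\Rightarrow f$ (coming from $R\simeq\lim f$) together with the functoriality of $\Mod^\otimes$ produces a monoidal functor $\Mod_R^\otimes\to\lim_{BG}\Mod_f^\otimes$; since limits of $\O$-monoidal $\i$-categories are computed in $\Cat_\i$ (the lemma preceding Corollary~\ref{cor:monoidal-limits}), its underlying functor is precisely the map $M\mapsto(b\mapsto f(b)\otimes_R M)$ which Corollary~\ref{cor:section-equivalence} asserts is an equivalence, and a monoidal functor with underlying equivalence is itself an equivalence. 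Hence $\Mod_R^\otimes\xrightarrow{\sim}\lim_{BG}\Mod_f^\otimes$, and applying $\Alg_{/\O}$ yields $\Alg_R\xrightarrow{\sim}\Fun_{/\Ring}(BG,\Alg)$. Finally I would unwind the chain of identifications to confirm this composite is the stated map, i.e.\ that the section attached to $A\in\Alg_R$ is $b\mapsto f(b)\otimes_R A$, which at the basepoint recovers the $G$-equivariant $S$-algebra $S\otimes_R A$.

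I do not expect a genuine obstacle here: the substantive input is entirely contained in results already available, namely Corollaries~\ref{cor:section-equivalence} and~\ref{cor:monoidal-limits} and the lemma on limits of monoidal $\i$-categories. The only real work is coherence bookkeeping, namely checking that the monoidal structure placed on $\Mod_R$ by $\lim_{BG}\Mod_f^\otimes$ is carried to the usual one by the canonical comparison functor, and that the factorization $\Alg_f\simeq\Alg_{/\O}\circ\Mod_f^\otimes$ is natural in $b\in BG$; both follow from the compatibilities of the symmetric monoidal coCartesian fibrations $\Alg^\otimes\to\Mod^\otimes\to\Ring\times\Comm^\otimes$ established earlier, with no new computation.
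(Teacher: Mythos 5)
Your proposal is correct and follows essentially the same route as the paper's own (very terse) proof: deduce the result from the module-level descent equivalence of Corollary~\ref{cor:section-equivalence} by observing that $\Mod_f$ is a diagram of (symmetric) monoidal $\i$-categories, and then apply Corollary~\ref{cor:monoidal-limits} (with the preceding lemma that limits of $\O$-monoidal $\i$-categories are computed in $\Cat_\i$) to commute forming algebra objects with the limit over $BG$. The only difference is expository; you make explicit the chain of identifications that the paper compresses into one sentence.
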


\begin{proof}
This follows from the corresponding statement for modules, by noting that $f$ is comes from a diagram $BG\to\CAlg(\Cat_\i)$ of symmetric monoidal $\i$-categories and symmetric monoidal functors, together with Corollary~\ref{cor:monoidal-limits}.
\end{proof}

We now consider the diagram of $\i$-categories
\[
\xymatrix{
&\Pic \ar[dr] \ar[r] & \Mod^{\cg} \ar[d] \ar[r] & \Az \ar[dl] \ar[r] & \Br \ar[dll] \\
BG^\lhd \ar[rr] && \Ring,
}
\]
where the bottom map describes $R$ as the limit of the $G$-action on $S$. For each of the vertical maps we may take spaces of sections over the cone point or over $BG$, recovering fixed-point objects for the action of $G$ on $\Pic_S$, $\Mod_S^{\cg}$, $\Az_S$, and $\Br_S$ respectively.

\begin{theorem}
\label{thm:descentdiag}
Let $R\to S$ be the $G$-Galois extension with $G$ finite. There is a commutative diagram of symmetric monoidal $\i$-categories
\[
\xymatrix{
\Pic_R \ar[d] \ar[r] & 
\Mod^{\cg}_R \ar[d] \ar[r] & 
\Az_R \ar[d] \ar[r] & 
\Cat_{R,\omega} \ar[d] \\
(\Pic_S)^{hG} \ar[r] &
(\Mod^{\cg}_S)^{hG} \ar[r] &
(\Az_S)^{hG} \ar[r] &
(\Cat_{S,\omega})^{hG}
}
\]
where the left three vertical arrows are equivalences. On the full subcategory of $\Cat_{R,\omega}$ spanned by categories of the form $\Mod_A$ for $A$ an $R$-algebra, the right-hand map is fully faithful.
\end{theorem}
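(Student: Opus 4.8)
The plan is to build the entire diagram from a single limit cone $BG^\lhd \to \Ring$ with cone point $R$ — the cone exhibiting $R \simeq S^{hG}$, composed into $\Ring$ — applied to the sequence of symmetric monoidal coCartesian fibrations over $\Ring$
\[
\Pic \too \Mod^{\cg} \too \Az \too \Mod(\Mod_{\S,\omega})
\]
(the last term pulled back along $R\mapsto\Mod_R$; $\Mod^{\cg}$ being the fibration of compact generators). Taking coCartesian sections over $BG^\lhd$ recovers, by evaluation at the initial object (the cone point), the fibers over $R$, i.e.\ the top row $\Pic_R \to \Mod_R^{\cg} \to \Az_R \to \Cat_{R,\omega}$; restriction along $BG \into BG^\lhd$ gives on sections the functor into $\Fun_{/\Ring}(BG,-)$, which by \cite[3.3.3.2]{lurie-htt} is the limit of the classifying functor restricted along $f\co BG\to\Ring$, namely the homotopy fixed points of the $G$-action on the fiber over $S$; this produces the bottom row, and the restriction map supplies the commuting vertical arrows, so commutativity of the square is automatic. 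Because each functor over $\Ring$ is symmetric monoidal, Corollary~\ref{cor:monoidal-limits} with $\O=\Comm$ upgrades the bottom row and the vertical maps to a diagram of symmetric monoidal $\i$-categories.

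For the first three vertical maps I reduce to the module and algebra cases. Corollary~\ref{cor:section-equivalence} (equivalently Theorem~\ref{thm:fixedpoint-categories} with $A=R$) gives $\Mod_R \xrightarrow{\sim} (\Mod_S)^{hG}$, $M \mapsto S\otimes_R M$, and the proposition preceding this theorem gives $\Alg_R \xrightarrow{\sim} (\Alg_S)^{hG}$. Now $\Pic_R$, $\Mod^{\cg}_R$, $\Az_R$ are the full subgroupoids of $\Mod_R$ (resp.\ $\Alg_R$) on the invertible modules, the compact generators, and the Azumaya algebras, while $(\Pic_S)^{hG}$, $(\Mod^{\cg}_S)^{hG}$, $(\Az_S)^{hG}$ are the full subgroupoids of $(\Mod_S)^{hG}$ (resp.\ $(\Alg_S)^{hG}$) on the $G$-semilinear objects whose underlying $S$-object lies in the corresponding subgroupoid — homotopy fixed points of a $G$-stable full subgroupoid imposing exactly this membership condition. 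So it suffices that invertibility, the compact-generator property, and the Azumaya property satisfy faithful descent along $R\to S$: invertibility and perfectness are Proposition~\ref{prop:module-descent}; generation descends because $F_R(M,N)\otimes_R S \simeq F_S(S\otimes_R M, S\otimes_R N)$ for $M$ perfect together with faithfulness of $S$; and $A$ is Azumaya iff its underlying module is a compact generator (descends, as just noted) and $A\otimes_R A^\op \to \End_R(A)$ is an equivalence (descends by faithfulness of $S$).

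Finally, for the right-hand map, fix $R$-algebras $A,B$; one must show
\[
\Map_{\Cat_{R,\omega}}(\Mod_A,\Mod_B) \too \Map_{\Cat_{S,\omega}}(\Mod_{S\otimes_R A},\Mod_{S\otimes_R B})^{hG}
\]
is an equivalence, mapping spaces in a homotopy-fixed-point category being homotopy fixed points of mapping spaces. By graded Eilenberg--Watts \cite[4.8.4]{lurie-higheralgebra}, an $\Mod_R$-linear colimit-preserving functor $\Mod_A\to\Mod_B$ is tensoring with an $A$-$B$-bimodule, i.e.\ a right module over $C:=A^\op\otimes_R B$, and it preserves compact objects exactly when that module is perfect as a right $B$-module; hence $\Map_{\Cat_{R,\omega}}(\Mod_A,\Mod_B)$ is the space of right $C$-modules with perfect underlying $B$-module, and likewise over $S$. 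Since $R\to S$ is a faithful $G$-Galois extension with $G$ finite, so are the base changes $C\to S\otimes_R C$ and $B\to S\otimes_R B$; thus Theorem~\ref{thm:fixedpoint-categories} gives $\Mod_C\simeq(\Mod_{S\otimes_R C})^{hG}$, and "perfect over $B$" satisfies faithful descent along $B\to S\otimes_R B$ by Proposition~\ref{prop:module-descent}, so restricting the equivalence $\Mod_C\simeq(\Mod_{S\otimes_R C})^{hG}$ to this $G$-stable full subgroupoid yields the claim. I expect this last step — pinning down which functors are genuinely morphisms in $\Cat_{R,\omega}$ and checking that the corresponding bimodules descend — to be the main obstacle; the three equivalences follow quickly once the relevant properties are seen to descend.
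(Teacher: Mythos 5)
You follow essentially the paper's route: reduce the three equivalences to $\Mod_R \xrightarrow{\sim} (\Mod_S)^{hG}$ and $\Alg_R \xrightarrow{\sim} (\Alg_S)^{hG}$ plus descent of the relevant module-theoretic properties, and reduce full faithfulness to a bimodule statement via Eilenberg--Watts / \cite[4.8.4]{lurie-higheralgebra}. There are two small differences worth noting. For the compact-generator arrow the paper argues via the endomorphism algebra: if $S\otimes_R M$ is a compact generator then $\End_R(M)$ base-changes to an Azumaya $S$-algebra, hence is itself Azumaya, hence $M$ is a generator. Your more direct argument, using $F_R(M,N)\otimes_R S \simeq F_S(S\otimes_R M, S\otimes_R N)$ for perfect $M$ together with faithfulness of $S$, is equally valid. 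For full faithfulness you are in fact a bit more careful than the paper's stated proof: the paper's displayed square computes $\Map_{\Cat_R}(\Mod_A,\Mod_B)\simeq \Mod^\simeq_{A\otimes_R B^\op}$ and its $S$-analogue, whereas the theorem is stated for $\Cat_{R,\omega}$, whose morphisms are the compact-preserving functors, i.e.\ the bimodules that are perfect over $B$. Descending that perfectness condition along $B\to S\otimes_R B$ is precisely the additional step you flag and supply; this is a genuine, if small, gap that your write-up closes. One terminological slip: $C\to S\otimes_R C$ and $B\to S\otimes_R B$ are not themselves faithful $G$-Galois extensions (Galois is a condition on commutative ring spectra, and $B$, $C$ are merely associative). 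What you actually need---and correctly cite---is Theorem~\ref{thm:fixedpoint-categories} with $R\to S$ Galois and $A=C$ (respectively $A=B$) an arbitrary $R$-algebra, which imposes no commutativity on $A$.
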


\begin{proof}
We already have equivalences $\Mod_R \simeq (\Mod_S)^{hG}$ and $\Alg_R \simeq (\Alg_S)^{hG}$, so for the left three arrows it suffices to identify the essential images of the $\i$-categories $\Pic_R$, $\Mod_R^{\cg}$, and $\Az_R$ of invertible modules, compact generators, and Azumaya algebras.

Proposition~\ref{prop:module-descent} implies that the property of  being invertible descends, as do subcategories of equivalences, so that the essential image of $\Pic_R$ is the subcategory $(\Pic_S)^{hG}$ of $(\Mod_S)^{hG}$. Similarly, Proposition~\ref{prop:module-descent} implies that the properties of being dualizable and faithful descend, and that for a dualizable $R$-algebra $A$ the map $A \otimes_R A^{op} \to \End_R(A) \simeq D_RA \otimes_R A$ is an equivalence if and only if the same is true for the $S$-algebra $S \otimes_R A$. Therefore, the essential image of $\Az_R$ is the subcategory $(\Az_S)^{hG}$ of $(\Alg_S)^{hG}$.

Compact generators are taken to compact generators, and so the second vertical arrow is defined. Further, if $M$ is an $R$-module whose image in $\Mod_S$ is a compact generator, then $M$ is compact and $\End_R(M)$ is an $R$-algebra whose image $\End_S(S \otimes_R M)$ is an Azumaya $S$-algebra, as already shown. Therefore $\End_R(M)$ is an Azumaya $R$-algebra, implying that $M$ is a generator.

It remains to verify full faithfulness of the right-hand map. We have a commutative diagram
\[
\xymatrix{
(\Mod^\simeq_{A \otimes_R B^\op}) \ar[r] \ar[d] &
(\Mod^\simeq_{(S \otimes_R A) \otimes_S (S \otimes_R B)^\op})^{hG} \ar[d] \\
\Map_{\Cat_{R}}(\Mod_A,\Mod_B) \ar[r] &
\Map_{\Cat_{S}}(\Mod_{S \otimes_R A},\Mod_{S \otimes_R B})^{hG}.
}
\]
The vertical maps are equivalences by the results of \cite[4.8.4]{lurie-higheralgebra}, and the top map is an equivalence by Theorem~\ref{thm:fixedpoint-categories} using the equivalence $S \otimes_R (A \otimes B^\op) \simeq (S \otimes_R A) \otimes_S (S \otimes_R B)^\op$.
\end{proof}

In particular, this gives a descent criterion for Morita equivalence.

\begin{corollary}
\label{cor:moritadetection}
The group $\pi_0 (B\Pic_S^{hG})$ has, as a subgroup, the group of Morita equivalence classes of $R$-algebras $A$ such that there exists an $S$-module $M$ and an equivalence of $S$-algebras $S \otimes_R A \simeq \End_S(M)$.
\end{corollary}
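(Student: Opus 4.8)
The plan is to identify the group in the statement with the kernel of the base-change map $\pi_0\Br_R\to\pi_0\Br_S$, and then to locate that kernel inside $\pi_0((B\Pic_S)^{hG})$ by means of the fully faithful descent functor supplied by Theorem~\ref{thm:descentdiag}. First I would note that an $R$-algebra $A$ equipped with an equivalence $S\otimes_R A\simeq\End_S(M)$ of $S$-algebras is automatically Azumaya: the $S$-algebra $\End_S(M)$ is Azumaya exactly when $M$ is a compact generator of $\Mod_S$, and the Azumaya condition then descends along the faithful $G$-Galois extension $R\to S$ via the equivalence $\Az_R\simeq(\Az_S)^{hG}$ of Theorem~\ref{thm:descentdiag}. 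Such an $A$ satisfies $[S\otimes_R A]=0$ in $\pi_0\Br_S$, and conversely an Azumaya $R$-algebra with trivial image in $\pi_0\Br_S$ has $S\otimes_R A$ Morita equivalent to $S$, hence of the form $\End_S(M)$ for a compact generator $M$. So the group to be embedded is exactly $\mathcal K:=\ker(\pi_0\Br_R\to\pi_0\Br_S)$, a subgroup of the Brauer group $\pi_0\Br_R$.

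Next I would restrict the right-hand column of Theorem~\ref{thm:descentdiag}. All objects of $\Br_R\subset\Cat_{R,\omega}$ and of $\Br_S\subset\Cat_{S,\omega}$ have the form $\Mod_A$, so the symmetric monoidal functor $\Cat_{R,\omega}\to(\Cat_{S,\omega})^{hG}$ restricts to a symmetric monoidal, fully faithful functor $\Br_R\to(\Br_S)^{hG}$; it does land in $(\Br_S)^{hG}$ because base change carries Azumaya algebras to Azumaya algebras and compact generators to compact generators. A fully faithful functor of $\i$-groupoids is injective on $\pi_0$, and being symmetric monoidal between grouplike objects it induces an injective group homomorphism $\pi_0\Br_R\hookrightarrow\pi_0((\Br_S)^{hG})$.

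Then I would pin down the relevant subgroup of the target. Using $\Pic_S\simeq\Omega\Br_S$, the space $B\Pic_S$ is the connected component of the unit of the grouplike $\i$-groupoid $\Br_S$, and this component is $G$-stable since $G$ fixes $S$. Writing $\Br_S\simeq B\Pic_S\sqcup\Br_S'$ $G$-equivariantly, with $\Br_S'$ the union of the remaining (possibly $G$-permuted) components, and using that a functor out of the connected $\i$-groupoid $BG$ into a coproduct lands in a single summand, one gets $(\Br_S)^{hG}\simeq(B\Pic_S)^{hG}\sqcup(\Br_S')^{hG}$. Hence $(B\Pic_S)^{hG}$ is the union of those components of $(\Br_S)^{hG}$ whose underlying object in $\Br_S$ is Morita trivial over $S$; being a grouplike sub-$E_\infty$-space, it contributes a subgroup $\pi_0((B\Pic_S)^{hG})\subseteq\pi_0((\Br_S)^{hG})$.

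Finally I would assemble the pieces: for $[A]\in\mathcal K$ the underlying $\Br_S$-object of the image of $[A]$ in $(\Br_S)^{hG}$ is $[S\otimes_R A]=0$, which lies in $B\Pic_S$, so the image of $[A]$ lies in $\pi_0((B\Pic_S)^{hG})$; restricting the injection of the second step then yields the desired embedding $\mathcal K\hookrightarrow\pi_0((B\Pic_S)^{hG})$. The main obstacle is not computational but is the input it relies on, namely the fully faithfulness of $\Br_R\to(\Br_S)^{hG}$ coming from Theorem~\ref{thm:descentdiag}; the same point explains why one obtains only a subgroup and not all of $\pi_0((B\Pic_S)^{hG})$, since that functor need not be essentially surjective when a $G$-equivariant $S$-linear category with a compact generator fails to acquire one upon passing to $R$-fixed points, one of the phenomena flagged in the introduction.
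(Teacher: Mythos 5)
Your proof is correct and takes essentially the same route as the paper: identify the group as the kernel of $\pi_0\Br_R\to\pi_0\Br_S$, apply Theorem~\ref{thm:descentdiag} to get a fully faithful map on the module-category subgroupoids (hence a $\pi_0$-injection), and use Proposition~\ref{prop:bglquotient}/$\Pic_S\simeq\Omega\Br_S$ to identify the target component as $(B\Pic_S)^{hG}$. The paper compresses the component bookkeeping into the observation that taking maximal subgroupoids commutes with limits, whereas you argue it via the $G$-stable decomposition $\Br_S\simeq B\Pic_S\sqcup\Br_S'$ and connectedness of $BG$; both are fine, and your explicit remark about the failure of essential surjectivity (which is why only a subgroup, not an isomorphism, is claimed) is a useful clarification that the paper's terser wording glosses over.
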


\begin{proof}
The $\i$-category of such $R$-algebras is the preimage of the component of $\Mod_S$ in $\Cat_{S}$, and all such algebras are Azumaya $R$-algebras by the previous result; this component is $B\Pic_S$ by Proposition~\ref{prop:bglquotient}. The maximal subgroupoid in $\Cat_{R}$ spanned by objects in this preimage is therefore equivalent to $(B\Pic_S)^{hG}$, as taking maximal subgroupoids preserves all limits and colimits.
\end{proof}

\subsection{Spectral sequence tools}
\label{sec:spectral-sequences}

For an object $X$ in an $\i$-category $\C$, we write $B\Aut_\C(X)$ for the subgroupoid of $\C^\simeq$ spanned by objects equivalent to $X$ and $\Aut_\C(X)$ for the space of self-equivalences.
\begin{definition}
Let $G$ be a group and $f\co BG \to \Cat_\i$ a functor classifying the action of $G$ on an $\i$-category $\C$. Write $\C_{hG} \to BG$ for the associated fibration (the colimit) and $\C^{hG}$ for the limit.
\end{definition}

Restricting gives us a Kan fibration $(\C_{hG})^\simeq \to BG$ of Kan
complexes whose space of sections is $(\C^{hG})^\simeq$
\cite[3.3.3.2]{lurie-htt}.  The descent diagram in
Theorem~\ref{thm:descentdiag} will now allow us to carry out
computations using the Bousfield--Kan spectral sequence for spaces of
sections. In our cases of interest there will be obstruction groups
that are annihilated by late differentials, and so we need to use the
more sophisticated obstruction theory due to Bousfield
\cite{bousfield-obstructions}. We will review this obstruction theory now.

For a cosimplicial object $\D^\bullet\co \Delta \to \Cat_\i$, the limits
\[
\Tot^n(\D) = \lim_{\Delta_{\leq n}} \D^n
\]
give a tower of $\i$-categories whose limit is the limit of $\D^\bullet$.

\begin{proposition}
Let $f\co \C_{hG} \to BG$ be a Kan fibration classifying the action of $G$ on a Kan complex $\C$, viewed as an $\i$-groupoid. Then there is a tower
\[
\cdots \to \Tot^2 \to \Tot^1 \to \Tot^0 = \C
\]
of Kan fibrations whose limit is $\C^{hG}$. Given an object $X \in \C$, there is an obstruction theory for existence and uniqueness of lifts of $X$ to an object of $\C^{hG}$, natural in $X$ and $\C$.
\begin{enumerate}
\item An object $X \in \C$ is in the essential image of $\Tot^1$ if and only if the equivalence class $[X] \in \pi_0 {\cal C}$ is fixed by the action of the group $G$. Equivalently, this is true if and only if the map
\[
\pi_0 \Aut_{{\cal C}_{hG}} (X) \to \pi_0 \Aut_{BG}(*) = G
\]
is surjective.
\item Given an object $X \in {\cal C}$ with a lift $Y \in \Tot^1$, consider the surjection of groups
\[
\pi_0 \Aut_{{\cal C}_{hG}} (X) \to G
\]
as above. The obstruction to $X$ being in the essential image of $\Tot^2$ is whether this map of groups splits, and the obstruction to uniqueness of lift to $\Tot^2$ is parametrized by the choice of splitting.
\item {\cite[2.4]{bousfield-obstructions}} If $X$ lifts to $Y \in \Tot^n$ for $n \geq 2$, we have a fringed spectral sequence (starting at $E_1$) with $E_2$-term
\[
H^s(G; \pi_t (B\Aut_{\C} X)),
\]
defined for $t > 1$ or for $0 \leq s \leq t \leq 1$. Further pages $E_r^{s,t}$ only exist for $2r-2 \leq n$, and the $E_r$-page depends on a choice of lift of $X$ to $\Tot^{r-1}$. For $r \geq 2$ the $E_r$-page is defined on the region
\[
\{ (s,t) \mid s \geq 0, t-s \geq 0\} \cup \{(s,t) \mid s \geq 0, t-r \geq \tfrac{r-2}{r-1}(s-r)\}.
\]
\item {\cite[5.2]{bousfield-obstructions}} If $r \geq 1$ and $Y$ is a lift of $X$ to $\Tot^r$ which admits a further lift to $\Tot^{2r}$, then there is an obstruction class
\[
\theta_{2r+1} \in E_{r+1}^{2r+1,2r}
\]
which is zero if and only if $Y$ can be lifted to $\Tot^{2r+1}$.
\item {\cite[5.2]{bousfield-obstructions}} If $r \geq 2$ and $Y$ is a lift of $X$ to $\Tot^r$ which admits a further lift to $\Tot^{2r-1}$, then there is an obstruction class
\[
\theta_{2r} \in E_r^{2r,2r-1}
\]
which is zero if and only if $Y$ can be lifted to $\Tot^{2r}$.
\item If $Y \in \C^{hG}$, the above spectral sequences converge (in the region $t-s > 0$) to $\pi_{t-s} (B\Aut_{\C^{hG}}Y)$.
\item If $\C \simeq \Omega \D$ for a Kan complex $\D$ with compatible $G$-action, the spectral sequences for $\C$ and $\D$ are compatible. In particular, if the map $BG \to \S$ representing the $G$-action on $\C$ lifts to a functor from $BG$ to the category of $E_\infty$-spaces, we can construct an associated $K$-theory spectrum $K(\C)$ such that the spectral sequence above extends to the homotopy fixed-point spectral sequence for the action of $G$ on $K(\C)$.
\end{enumerate}
\end{proposition}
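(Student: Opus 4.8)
The plan is to deduce the entire statement from Bousfield's obstruction theory \cite{bousfield-obstructions}, applied to the standard cosimplicial resolution computing $\C^{hG}$, reserving a direct hands-on argument for the bottom of the tower in parts (1)--(2). To set things up, I would fix the simplicial model $BG = N_\bullet G$, so that $\C^{hG} = \lim_{BG} f$ is the totalization of the cobar cosimplicial Kan complex $\C^\bullet$ with $\C^n = \Map(G^{\times n},\C)$ (cofaces using the $G$-action on one factor and insertion of the identity, codegeneracies deleting a factor). Since $\C$ is a Kan complex this cosimplicial space is Reedy fibrant, so the matching-object tower $\cdots \to \Tot^1 \to \Tot^0 = \C$ is a tower of Kan fibrations with inverse limit $\C^{hG}$ --- the tower appearing in the statement --- and passing to maximal subgroupoids later does no harm since the whole tower is built from limits. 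This reduces everything to the homotopy spectral sequence and obstruction theory of the Tot tower of a pointed cosimplicial Kan complex, which is exactly Bousfield's framework once a vertex is chosen.

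For parts (1)--(2) I would compute $\Tot^1$ and $\Tot^2$ directly. After the usual normalization, the fiber of $\Tot^1 \to \Tot^0$ over a vertex $X$ is the space of compatible choices, one for each $g \in G$, of an equivalence $g \cdot X \xrightarrow{\sim} X$ in $\C$ (the identity when $g = e$); this is nonempty precisely when $[X] \in \pi_0 \C$ is $G$-fixed. To obtain the second formulation, note that the Kan fibration $\C_{hG} \to BG$ with fiber $\C$ yields a fiber sequence $\Aut_\C(X) \to \Aut_{\C_{hG}}(X) \to \Aut_{BG}(\ast) = G$, so the image of $\pi_0 \Aut_{\C_{hG}}(X) \to G$ is exactly $\{ g : g \cdot X \simeq X \}$, which is all of $G$ if and only if $[X]$ is $G$-fixed. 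For (2), a further lift of $Y \in \Tot^1$ to $\Tot^2$ amounts to choosing coherence $2$-cells making the transport system $g \mapsto (g \cdot X \simeq X)$ homotopy-coherently multiplicative; on $\pi_0$ this is exactly a set-theoretic section of the extension
\[
1 \to \pi_0 \Aut_\C(X) \to \pi_0 \Aut_{\C_{hG}}(X) \to G \to 1
\]
that is a group homomorphism, i.e.\ a splitting, and the $\Tot^2$-lifts of $X$ are classified by the splittings. I expect this last bookkeeping to be the main obstacle: the displayed extension need not be central, and the cases $t \le 1$ sit in the fringe, so translating ``matching-object data'' into ``splittings'' requires care --- this is the one genuinely non-formal step, everything else reducing to a citation.

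For parts (3)--(5) I would quote Bousfield directly. The homotopy spectral sequence of the Tot tower of $\C^\bullet$ has $E_1^{s,t}$ equal to the normalized cochains of $s \mapsto \pi_t(\C^s)$ at the chosen lift, so that $E_2^{s,t} = H^s(G; \pi_t(B\Aut_\C X))$ --- ordinary group cohomology for $t \ge 2$ and the pointed/non-abelian fringe for $t \le 1$ --- which is \cite[2.4]{bousfield-obstructions}; the regions of definition of the $E_r$, their dependence on a lift to $\Tot^{r-1}$, and the late obstruction classes $\theta_{2r+1} \in E_{r+1}^{2r+1, 2r}$ and $\theta_{2r} \in E_r^{2r, 2r-1}$ are \cite[5.2]{bousfield-obstructions}. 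The only thing to check in order to invoke these is that our tower is of the type Bousfield treats, which the setup above arranges.

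For parts (6)--(7): convergence in (6) is the standard statement that the homotopy spectral sequence of the tower of fibrations $\{\Tot^n\}$ abuts, in the range $t - s > 0$, to $\pi_{t-s}$ of the component at $Y$ of $\lim_n \Tot^n = \C^{hG}$, i.e.\ to $\pi_{t-s}(B\Aut_{\C^{hG}} Y)$, with the $\lim^1$ correction absorbed into the abutment. For (7), if $\C \simeq \Omega \D$ compatibly with the $G$-action then $\C^n = \Omega \D^n$ levelwise, and since each $\Tot^n$ is a finite limit it commutes with $\Omega$, so the tower for $\C$ is the loop of the tower for $\D$ and their spectral sequences agree after looping; and if the classifying map $BG \to \mathcal{S}$ of the action lifts to $E_\infty$-spaces, group completion (the recognition principle, as used for $\pic$ and $\br$ above, cf.\ \cite{clausen}) produces a spectrum $K(\C)$ with $G$-action, whereupon running the same cosimplicial/Tot machinery levelwise on $\Omega^\infty K(\C)$ identifies the spectral sequence above with the homotopy fixed-point spectral sequence for $K(\C)^{hG}$.
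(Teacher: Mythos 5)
The paper gives no proof of this proposition at all: parts (3)--(5) carry inline citations to Bousfield \cite[2.4, 5.2]{bousfield-obstructions}, and the remark immediately following the statement gives the same informal description of $\Tot^1$ and $\Tot^2$ (choices of equivalences $\phi_g$ and homotopies witnessing the cocycle condition) that your direct computation for parts (1)--(2) spells out. Your proposal therefore correctly reconstructs the intended argument, namely: realize $\C^{hG}$ as the totalization of the cobar cosimplicial Kan complex, identify the matching-tower fibers in low degrees by hand, and invoke Bousfield's obstruction theory for the higher pages. The only place where you should be slightly more careful is part (6): the phrase ``$\lim^1$ correction absorbed into the abutment'' glosses over the partial-convergence hypotheses that Bousfield's framework actually requires, though in the paper's applications $G$ is finite and the relevant homotopy groups are finitely generated, so the tower stabilizes degreewise and the issue does not arise.
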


\begin{remark}
The beginning of the obstruction theory may be described as follows. In order for $X$ to lift to the limit $\C^{hG}$, a lift to $\Tot^1$ is determined by choosing equivalences $\phi_g\co {}^g X \to X$ for all $g \in G$. A lift to $\Tot^2$ is then determined by witnesses for the cocycle condition, in the form of homotopies from $\phi_{gh}$ to $\phi_g \circ {}^g(\phi_h)$.
\end{remark}

\begin{remark}
The user (particularly if they are used to stable work) may benefit from being explicitly reminded of some of the dangers of the ``fringe effect.'' While the splittings in the second obstruction can be parametrized by $H^1(G;\pi_0 \Aut_{\C}(X))$, this does not occur until an initial splitting is chosen (indeed, otherwise the action of $G$ on $\pi_* \Aut_\C(X)$ is not even defined). The structure of the spectral sequence, at arbitrarily large pages, may also depend strongly on the choices of lift $Y$.
\end{remark}

Because we will be interested in understanding different lifts, it will be useful to be more systematic about the obstructions to this.

\begin{definition}
For an $\i$-category $\C$ and objects $X$ and $Y$ in $\C$, let $\Equiv_{\C}(X,Y)$ be the pullback in the diagram
\[
\xymatrix{
\Equiv_{\C}(X,Y) \ar[r] \ar[d] &
\Map(\Delta^1, \C^\simeq) \ar[d] \\
\{(X,Y)\} \ar[r] &
\Map(\partial \Delta^1, \C^\simeq).
}
\]
\end{definition}

\begin{proposition}
The space $\Equiv_\C(X,Y)$ is a Kan complex, and composition of functions gives a left action of the group $\Aut_\C(Y)$ on $\Equiv_\C(X,Y)$. If $\Equiv_\C(X,Y)$ is nonempty, any choice of point $f\in\Equiv_\C(X,Y)$ produces an equivalence $f^*\co \Aut_\C(Y) \to \Equiv_\C(X,Y)$.
\end{proposition}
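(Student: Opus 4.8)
The plan is to treat the three assertions separately, working throughout inside the Kan complex $\C^\simeq$: there $\Equiv_\C(X,Y)$ is the space of paths from $X$ to $Y$, equivalently the union of those components of $\Map_\C(X,Y)$ consisting of equivalences of $\C$, and $\Aut_\C(Y)$ is the loop space $\Omega_Y\C^\simeq=\Equiv_\C(Y,Y)$. The Kan-complex claim is purely formal: since $\partial\Delta^1\hookrightarrow\Delta^1$ is a monomorphism of simplicial sets and $\C^\simeq$ is a Kan complex, the restriction map $\Map(\Delta^1,\C^\simeq)\to\Map(\partial\Delta^1,\C^\simeq)=\C^\simeq\times\C^\simeq$ is a Kan fibration; $\Equiv_\C(X,Y)$ is its pullback along $\{(X,Y)\}\co\Delta^0\to\C^\simeq\times\C^\simeq$, and pullbacks of Kan fibrations are Kan fibrations, so $\Equiv_\C(X,Y)$ is a Kan complex.

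Next I would construct the action. The composition product of $\C$ restricts to $\C^\simeq$, since composites of equivalences are equivalences, and on mapping spaces it exhibits $\Map_\C(Y,Y)$ as an $\EE_1$-monoid in spaces acting on the left on $\Map_\C(X,Y)$ by $(g,f)\mapsto g\circ f$. Restricting to the unions of invertible components, which are closed under this composition, yields the asserted left action of the group object $\Aut_\C(Y)$ on $\Equiv_\C(X,Y)$; the associativity and unit coherences are inherited from those already present in $\C$. (One could instead take this to be the concatenation action of $\Omega_Y\C^\simeq$ on the space of paths of $\C^\simeq$ ending at $Y$, restricted over the starting-point evaluation.)

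For the torsor claim I would fix a point $f\in\Equiv_\C(X,Y)$, i.e.\ an equivalence $f\co X\to Y$ in $\C$, and show the orbit map $g\mapsto g\circ f$ --- which is the map the proposition denotes $f^*$ --- is an equivalence. By the standard characterization of equivalences in an $\i$-category, precomposition with $f$ is an equivalence of spaces $(-)\circ f\co\Map_\C(Y,Y)\to\Map_\C(X,Y)$. Since $f$ is invertible in the homotopy category $h\C$, a class $[g]$ is invertible there precisely when $[g\circ f]$ is, so this equivalence carries the invertible components of $\Map_\C(Y,Y)$ bijectively onto those of $\Map_\C(X,Y)$ and hence restricts to an equivalence $\Aut_\C(Y)\xrightarrow{\ \sim\ }\Equiv_\C(X,Y)$, which is exactly $f^*$. (Equivalently: evaluation at the source makes the space of paths of $\C^\simeq$ ending at $Y$ into a Kan fibration over $\C^\simeq$ with contractible total space, hence over the component of $Y$ it is the path--loop fibration presenting $B\Aut_\C(Y)$; nonemptiness of $\Equiv_\C(X,Y)$ puts $X$ in that component, and the fiber over $X$ is then an $\Aut_\C(Y)$-torsor trivialized by $f$.)

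The first and third claims are routine. The single point requiring care is the second: producing an honest, coherently associative and unital action of the \emph{group} $\Aut_\C(Y)$ rather than a bare map of spaces. This is why I would obtain the action by restricting the composition product of $\C$, where the coherence is already encoded, instead of writing out point-set concatenation formulas and then verifying coherence by hand.
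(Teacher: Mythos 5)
The paper states this proposition without a proof, treating it as a standard fact, so there is no ``paper approach'' against which to compare. Your argument is correct and fills in the details sensibly.

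A few small remarks. For the Kan-complex claim, your argument is exactly what the paper's own definition sets up: $\partial\Delta^1 \into \Delta^1$ is a cofibration, $\C^\simeq$ is fibrant, hence $\Map(\Delta^1,\C^\simeq)\to\Map(\partial\Delta^1,\C^\simeq)$ is a Kan fibration and the fiber over $(X,Y)$ is Kan. For the action, you correctly identify the only real delicacy---producing a coherently associative and unital $\Aut_\C(Y)$-action rather than a mere map---and your parenthetical alternative is the cleanest way to handle it: since $\C^\simeq$ is a Kan complex, the slice $\C^\simeq_{/Y}$ is a contractible Kan complex and $\C^\simeq_{/Y}\to\C^\simeq$ is a Kan (indeed right) fibration; its fiber over $X$ is $\Equiv_\C(X,Y)$, exhibiting it canonically as a torsor over $\Aut_\C(Y)=\Omega_Y\C^\simeq$, with all coherences for free. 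For the equivalence $f^*$, the observation that $g\mapsto g\circ f$ is an equivalence on full mapping spaces and preserves invertibility in both directions (because $f$ is itself invertible in $h\C$) is exactly right; it identifies the union of invertible components of $\Map_\C(Y,Y)$ with those of $\Map_\C(X,Y)$, which is the torsor statement. No gaps.
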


\begin{proposition}
Let $G$ be a group acting on an $\i$-category $\C$, let $p\co \C^{hG} \to \C$ be the limit, and suppose and $X$ and $Y$ are objects in $\C^{hG}$. Then the map
\[
\Equiv_{\C^{hG}}(X,Y) \to \Equiv_\C(p(X),p(Y))^{hG}.
\]
is an equivalence of Kan complexes.
\end{proposition}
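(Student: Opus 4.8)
The plan is to reduce the statement to a single formal computation of homotopy limits in the $\infty$-category $\mathcal{S}$ of spaces, exploiting that $\Equiv_\C(-,-)$ depends only on the maximal subgroupoid $\C^\simeq$ and is built from cotensors and pullbacks, all of which commute with the homotopy fixed-point functor $(-)^{hG}=\lim_{BG}(-)$.

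First I would reduce to the case where $\C$ is an $\infty$-groupoid. The functor $(-)^\simeq\co\Cat^\land_\i\to\mathcal{S}$ is a right adjoint (to the inclusion), hence preserves limits, so $(\C^{hG})^\simeq\simeq(\C^\simeq)^{hG}$ compatibly with $p$. Since $\Equiv_\C(-,-)$ is defined entirely in terms of $\C^\simeq$, we may replace $\C$ by $\C^\simeq$ and assume $\C$ is a Kan complex with $G$-action, classified by a functor $F\co BG\to\mathcal{S}$ with $\C^{hG}=\lim F$ and $p=\mathrm{ev}_*\co\lim F\to F(*)=\C$ evaluation at the basepoint. In this case $\Map(\Delta^1,\C^\simeq)=\C^{\Delta^1}$ and $\Map(\partial\Delta^1,\C^\simeq)=\C\times\C$, so $\Equiv_\C(p(X),p(Y))$ is the homotopy pullback of $\{(p(X),p(Y))\}\to\C\times\C\leftarrow\C^{\Delta^1}$, the right leg being evaluation at the two endpoints.

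The key step is to run this pullback inside $\Fun(BG,\mathcal{S})$, where limits and cotensors are computed objectwise. The objects $X,Y\in\C^{hG}=\lim F$ correspond to maps ${\bf 1}\to F$ out of the terminal (constant) functor ${\bf 1}$, and together they give $(X,Y)\co{\bf 1}\to F\times F$; combined with the endpoint map $F^{\Delta^1}\to F\times F$ we obtain a pullback square in $\Fun(BG,\mathcal{S})$ with apex $P:={\bf 1}\times_{F\times F}F^{\Delta^1}$. Evaluating at the basepoint, $P(*)=\{(X(*),Y(*))\}\times_{\C\times\C}\C^{\Delta^1}$, and since $X(*)=p(X)$ and $Y(*)=p(Y)$ this is exactly $\Equiv_\C(p(X),p(Y))$; thus $P\co BG\to\mathcal{S}$ is precisely the functor whose homotopy limit is $\Equiv_\C(p(X),p(Y))^{hG}$ — indeed this twisted $G$-action is what that notation refers to. On the other hand $\lim\co\Fun(BG,\mathcal{S})\to\mathcal{S}$ is a right adjoint, hence preserves the pullback defining $P$ as well as the objectwise cotensors $F^{\Delta^1}$, $F^{\partial\Delta^1}$ (which are themselves limits); therefore
\[
\lim P\;\simeq\;{\bf 1}\times_{\C^{hG}\times\C^{hG}}(\C^{hG})^{\Delta^1}\;=\;\Equiv_{\C^{hG}}(X,Y),
\]
the map ${\bf 1}\to\C^{hG}\times\C^{hG}$ being $(X,Y)$. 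So both sides of the proposition are canonically identified with $\lim P$, and it remains to check that the comparison map of the proposition, induced by functoriality of $\Equiv$ along $p$, is the identity under this identification — which holds because that comparison is exactly the leg of the limit cone for $P$ coming from evaluation at the basepoint.

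I expect the only real obstacle to be the coherence bookkeeping in this last point: verifying that the objectwise pullback functor $P$ packages the same twisted $G$-action on $\Equiv_\C(p(X),p(Y))$ that is implicit in the notation $\Equiv_\C(p(X),p(Y))^{hG}$, and that the natural map of the proposition is the canonical projection $\lim P\to\lim P$. Everything else is formal, since $(-)^\simeq$, $\lim_{BG}$, finite products, and $\Map(\Delta^1,-)$ are all limit-type constructions and limit-type constructions commute. (Alternatively one could first dispose of the case $\Equiv_{\C^{hG}}(X,Y)=\emptyset$ and then, using the preceding proposition on $\Equiv$, reduce the nonempty case to the claim that $\Aut_{\C^{hG}}(Y)\to\Aut_\C(p(Y))^{hG}$ is an equivalence, i.e.\ that $\Omega$ commutes with $(-)^{hG}$; but this reduction carries the same bookkeeping overhead, so the direct argument above seems cleanest.)
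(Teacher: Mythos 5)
Your proposal is correct and is essentially the same argument as the paper's, which simply observes in one sentence that the fixed-point construction, being a limit, commutes with the limit-type constructions (maximal subgroupoids, mapping objects, pullbacks) out of which $\Equiv_\C(-,-)$ is built. You have spelled out that one-line argument carefully, and the elaboration is accurate.
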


\begin{proof}
The fixed-point construction, as a limit, commutes with taking maximal subgroupoids, mapping objects, and pullbacks.
\end{proof}

We may therefore apply the tower of $\Tot$-objects to both $\Aut_\C(Y)$ and $\Equiv_\C(X,Y)$ to obtain the following.
\begin{proposition}
Let $f\co \C_{hG} \to BG$ be a Kan fibration classifying the action of $G$ on a Kan complex $\C$, $p\co \C^{hG} \to \C$ the limit, and $X$ and $Y$ objects in $\C^{hG}$.
\begin{enumerate}
\item There are towers of Kan fibrations:
\begin{align*}
  \cdots \to \Aut^2(Y) \to \Aut^1(Y) \to \Aut^0(Y) &= \Aut_\C(p(Y))\\
\!\!\!\!\!\! \cdots \to \Equiv^2(X,Y) \to \Equiv^1(X,Y) \to \Equiv^0(X,Y) &= \Equiv_\C(p(X),p(Y))
\end{align*}
The limits are $\Aut_{\C^{hG}}(Y)$ and $\Equiv_{\C^{hG}}(X,Y)$ respectively.
\item The spaces $\Aut^n(Y)$ are $\i$-groups which act on the spaces $\Equiv^n(X,Y)$.
\item If $\Equiv^n(X,Y)$ is nonempty, any choice of point produces an equivalence of partial towers $\Aut^{\leq n}(Y) \to \Equiv^{\leq n}(X,Y)$.
\item {\cite[5.2]{bousfield-obstructions}} If $\Equiv^{n}(X,Y)$ is nonempty, there is an obstruction class
\[
\theta_{n+1} \in E_r^{n+1,n+1}
\]
in the spectral sequence calculating $\pi_* B\Aut_{\C^{hG}}(Y)$, defined for $2r\leq n+1$, which is zero if and only if $\Equiv^{n+1}(X,Y)$ is nonempty.
\end{enumerate}
\end{proposition}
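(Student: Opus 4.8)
The plan is to prove all four parts by applying the $\Tot$-tower obstruction theory of this subsection not to $\C$ but to the Kan complexes $\Aut_\C(p(Y))$ and $\Equiv_\C(p(X),p(Y))$, which — as in the preceding proposition, where $\Equiv_\C(p(X),p(Y))^{hG}$ was already given meaning — carry $G$-actions induced from the $G$-action on $\C$ together with the homotopy-fixed-point data of $X$ and $Y$. Explicitly, let $Z^\bullet$ denote the cosimplicial Kan complex with $k$-th term $\Aut_\C(p(Y))^{G^k}$ produced from this $G$-action by the comonad construction of Corollary~\ref{cor:fixedpoint-constr} (applied here to a Kan complex rather than a ring), and let $W^\bullet$ be the analogous cosimplicial Kan complex built from $\Equiv_\C(p(X),p(Y))$; define $\Aut^n(Y):=\Tot^n(Z^\bullet)$ and $\Equiv^n(X,Y):=\Tot^n(W^\bullet)$. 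Part~(1) is then immediate from the $\Tot$-tower formalism: these are towers of Kan fibrations of Kan complexes with limits $\Aut_\C(p(Y))^{hG}$ and $\Equiv_\C(p(X),p(Y))^{hG}$, which the preceding proposition identifies with $\Aut_{\C^{hG}}(Y)$ and $\Equiv_{\C^{hG}}(X,Y)$ (the first being its $X=Y$ special case).

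For part~(2), finiteness of $G$ makes each term of $Z^\bullet$ and $W^\bullet$ a finite product of copies of $\Aut_\C(p(Y))$, resp.\ $\Equiv_\C(p(X),p(Y))$. The composition law on $\Aut_\C(p(Y))$ and its left action on $\Equiv_\C(p(X),p(Y))$ (from the earlier proposition) are $G$-equivariant and preserve these products, so $Z^\bullet$ is a cosimplicial $\i$-group acting levelwise on $W^\bullet$; since $\Tot^n$ is a finite homotopy limit it preserves group objects and their actions, giving the $\i$-group $\Aut^n(Y)$ and its action on $\Equiv^n(X,Y)$.

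For part~(3), note first that whenever $\Equiv_\C(p(X),p(Y))$ is nonempty — which it is, being a retract of the nonempty $\Equiv^n(X,Y)$ — each term $W^k$ is, by the earlier proposition, a torsor over $Z^k$, and one checks the coface and codegeneracy maps are torsor morphisms, so that $W^\bullet$ is a cosimplicial torsor over the cosimplicial group $Z^\bullet$. Now a point of $\Equiv^n(X,Y)=\Tot^n(W^\bullet)$ is precisely a map of $n$-truncated cosimplicial spaces from the terminal object to $W^\bullet|_{\Delta_{\leq n}}$; since the target is a torsor over the group object $Z^\bullet|_{\Delta_{\leq n}}$, such a map trivializes it, furnishing a $Z^\bullet$-equivariant equivalence $Z^\bullet|_{\Delta_{\leq n}}\simeq W^\bullet|_{\Delta_{\leq n}}$. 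As $\Tot^k$ for $k\leq n$ depends only on the restriction to $\Delta_{\leq n}$, applying $\Tot^{\leq n}$ yields the asserted equivalence of partial towers $\Aut^{\leq n}(Y)\to\Equiv^{\leq n}(X,Y)$, compatibly with the $\Aut^{\leq n}(Y)$-action. I expect this to be the main obstacle: verifying that $W^\bullet$ really is a cosimplicial $Z^\bullet$-torsor with all of the required coherences, and that a $\Tot^n$-point trivializes the $n$-truncated torsor in the homotopy-coherent sense, is routine in principle but requires some care.

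For part~(4), once $\Equiv^n(X,Y)$ is nonempty, part~(3) identifies its partial tower with that of $\Aut_{\C^{hG}}(Y)$, so the Bousfield spectral sequence of part~(3) of the previous proposition, converging to $\pi_*B\Aut_{\C^{hG}}(Y)$, is available, with its $E_r$-page defined in the stated range $2r\leq n+1$. The cosimplicial torsor $W^\bullet$ over $Z^\bullet$ is exactly the input of \cite[5.2]{bousfield-obstructions}, which supplies the obstruction to extending a point of $\Tot^n(W^\bullet)$ to $\Tot^{n+1}(W^\bullet)$ — equivalently, since the tower maps are Kan fibrations and the $\pi_0$-level obstruction is governed by the $Z^\bullet$-torsor structure, the obstruction to $\Equiv^{n+1}(X,Y)$ being nonempty — as a single class $\theta_{n+1}$ on the diagonal $E_r^{n+1,n+1}$, vanishing precisely when the lift exists. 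Transporting this through the identifications of parts~(1) and~(3) completes the proof.
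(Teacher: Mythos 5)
Your proposal is correct and follows exactly the route the paper has in mind: the paper offers no explicit proof, only the one-line remark immediately preceding the statement ("We may therefore apply the tower of $\Tot$-objects to both $\Aut_\C(Y)$ and $\Equiv_\C(X,Y)$\ldots"), and your argument — taking $Z^\bullet$ and $W^\bullet$ to be the cobar cosimplicial spaces $\Aut_\C(p(Y))^{G^\bullet}$ and $\Equiv_\C(p(X),p(Y))^{G^\bullet}$, observing that $\Tot^n$ is a finite limit hence preserves the group/torsor structure, and feeding the resulting truncated towers into Bousfield's obstruction machinery — is precisely the intended unpacking of that remark.
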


\section{Calculations}
\label{sec:calculations}

\subsection{Algebraic Brauer groups of even-periodic ring spectra}

In this section we assume that $E$ is an even-periodic $\EE_\i$-ring spectrum: there is a unit in $\pi_2 E$, and $\pi_1 E$ is trivial.

We can describe specific Azumaya algebras for these groups using Theorem~\ref{thm:obstruction-calcs} and the algebras described in the proof of \cite[7.10]{small-brauerwallgroup}. 
\begin{example}
\label{ex:quaternion}
Let $u \in \pi_2 E$ be a unit and $\pi_0 E \to R$ a quadratic Galois extension with Galois automorphism $\sigma$. There is an Azumaya $E$-algebra whose coefficient ring is the graded quaternion algebra
\[
R\langle S \rangle / (S^2 - u, S r - {}^\sigma r S),
\]
where $S$ is in degree $1$ and $R$ is concentrated in degree zero.
\end{example}

\begin{example}
\label{ex:halfquaternion}
Suppose $2$ is a unit in $\pi_0 E$ and $u \in \pi_2 E$ is a unit. There is an Azumaya $E$-algebra whose coefficient ring is (perhaps unexpectedly) the $1$-periodic graded ring
\[
(\pi_* E)[x] / (x^2 - u) \cong (\pi_0 E)[x^{\pm 1}],
\]
which is of rank two over $\pi_* E$. If $A$ and $B$ are two such algebras determined by units $u$ and $v$, then $A \wedge_E B$ is equivalent to a quaternion algebra from Example~\ref{ex:quaternion} determined by the unit $u \in \pi_2(E)$ and the quadratic Galois extension $\pi_0(E) \to \pi_0(E)[y] / (y^2 + uv^{-1})$.
\end{example}

If $E$ is even periodic and we fix a unit $u \in \pi_2 E$, the category of $E$-modules has $\ZZ/2$-graded homotopy groups in the classical sense. Therefore, the set of Morita equivalence classes of algebraic Azumaya algebras over $E$ is the same as the set of Morita equivalence classes of $\ZZ/2$-graded Azumaya algebras over $\pi_0(E)$: the {\em Brauer--Wall} group $BW(\pi_0 E)$. This $\ZZ/2$-graded Brauer group of a commutative ring has been largely determined (generalizing work of Wall over a field). In order to state the result, we will need to recall the definition of the group of $\ZZ/2$-graded quadratic extensions of a ring $R$.
\begin{definition}
Suppose $R$ is a commutative ring, viewed as $\ZZ/2$-graded and concentrated in degree $0$. Then $Q_2(R)$ is the set of isomorphism classes of quadratic graded $R$-algebras: $\ZZ/2$-graded $R$-algebras whose underlying ungraded $R$-algebra is commutative, separable, and projective of rank two. 
\end{definition}
In the ungraded case the corresponding set is identified with the \'etale cohomology group $H^1_{et}(\Spec(R), \ZZ/2)$; similarly $Q_2(R)$ admits a natural group structure. If $\Spec(R)$ is connected, then there are two possible types of element in $Q_2(R)$. In a quadratic graded $R$-algebra $L = (L_0,L_1)$, either $L_1$ has rank $0$ and we have an ungraded quadratic extension $R \to L_0$, or $L_1$ has rank $1$ and $L$ is of the form $(R,L_1)$ for some rank $1$ projective $R$-module $L_1$. In the latter case, the multiplication map $L_1 \otimes_R L_1 \to R$ must be an isomorphism. Carrying this analysis further yields the following result.
\begin{proposition}
When $\Spec(R)$ is connected, there is a short exact sequence
\[
0 \to H^1_{et}(R, \ZZ/2) \to Q_2(R) \to \ZZ/2.
\]
Here the \'etale cohomology group $H^1_{et}(R, \ZZ/2)$ parametrizes
ungraded $\ZZ/2$-Galois extensions of $R$, and the map $Q_2(R) \to \ZZ/2$ sends a $\ZZ/2$-graded quadratic $R$-algebra $(L_0, L_1)$ to the rank of $L_1$. The image of $Q_2(R)$ in $\ZZ/2$ is nontrivial if and only if $2$ is a unit in $R$.
\end{proposition}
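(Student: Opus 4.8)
The plan is to prove the proposition by analyzing the rank of the degree-one part $L_1$, which (as noted just before the statement) can only be $0$ or $1$ when $\Spec R$ is connected. First I would check that the asserted map $\rho\co Q_2(R)\to\ZZ/2$, $[(L_0,L_1)]\mapsto\mathrm{rank}_R(L_1)\bmod 2$, is well defined: $L_1$ is a direct summand of the finitely generated projective rank-two module $L$, hence finitely generated projective of locally constant rank, which is constant since $\Spec R$ is connected; and because $R\cdot 1\subseteq L_0$ we have $\mathrm{rank}(L_0)\ge 1$, so $\mathrm{rank}(L_1)\in\{0,1\}$. That $\rho$ is a homomorphism, and that regarding a quadratic étale $R$-algebra as graded in degree zero defines an injective homomorphism $H^1_{et}(R,\ZZ/2)\to Q_2(R)$, I would deduce from the definition of the group structure on $Q_2(R)$ in \cite{childs-garfinkel-orzech,ikai-gradedazumaya}: the relevant tensor-type product carries no Koszul signs on degree-zero-concentrated objects, where it agrees with the ordinary tensor construction underlying the group structure on $H^1_{et}(R,\ZZ/2)$, while on degree-one parts the ranks add modulo two; injectivity holds because a graded isomorphism of degree-zero-concentrated algebras is simply an ungraded isomorphism.

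Next I would establish exactness at $Q_2(R)$. Given a class with $\rho[(L_0,L_1)]=0$, the module $L_1$ is finitely generated projective of rank zero, hence $L_1=0$, so $L=L_0$ is an ungraded commutative separable projective $R$-algebra of rank two; over a connected base such an algebra is either split or a connected degree-two étale cover, and in both cases a $\ZZ/2$-Galois extension, so the class lies in the image of $H^1_{et}(R,\ZZ/2)$. Conversely that image consists precisely of classes with $L_1=0$, on which $\rho$ vanishes; combined with injectivity of $H^1_{et}(R,\ZZ/2)\to Q_2(R)$ this gives exactness of $0\to H^1_{et}(R,\ZZ/2)\to Q_2(R)\to\ZZ/2$.

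It remains to identify the image of $\rho$. If $\mathrm{rank}(L_1)=1$ then $\mathrm{rank}(L_0)=1$, and a commutative $R$-algebra that is projective of rank one as a module has structure map $R\to L_0$ an isomorphism, so $L=R\oplus L_1$ with $L_1$ invertible and the whole algebra structure encoded in the multiplication $L_1\otimes_R L_1\to L_0=R$. Localizing at a prime $\mathfrak p$ and trivializing $L_1$ gives $L_{\mathfrak p}\cong R_{\mathfrak p}[x]/(x^2-u)$; this algebra has discriminant $4u$, and since separability is local and equivalent to invertibility of the discriminant, $4u$ — hence $2$ — is a unit in $R_{\mathfrak p}$ for every $\mathfrak p$, so $2\in R^\times$. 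Conversely, when $2\in R^\times$, the free graded $R$-algebra $R[x]/(x^2-1)$ with $x$ in degree one has discriminant $4\in R^\times$, so it is a commutative separable projective rank-two algebra whose degree-one part $Rx$ has rank one, and $\rho$ sends its class to $1$. I expect the main obstacle to be the local separability analysis in this last step — identifying the discriminant of $R_{\mathfrak p}[x]/(x^2-u)$ as $4u$, recalling that separability localizes, and noting that a rank-one $L_1$ is automatically invertible — together with being careful that $\rho$ and the inclusion $H^1_{et}(R,\ZZ/2)\to Q_2(R)$ are genuine group homomorphisms rather than merely maps of pointed sets.
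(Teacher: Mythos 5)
Your proof is correct, and it fills in what the paper leaves implicit: the proposition carries no proof environment, being presented as a direct consequence of the preceding paragraph's case analysis on the rank of $L_1$, so your argument is essentially the ``carrying this analysis further'' that the authors elide. The well-definedness of $\rho$ via connectedness, the identification of rank-$0$ $L_1$ with ordinary quadratic \'etale covers (which, being degree-$2$ \'etale, are automatically $\ZZ/2$-torsors), and the observation that for rank-$1$ $L_1$ the structure map forces $L_0\cong R$ are all sound, and your local discriminant computation showing $4u\in R^\times_{\mathfrak p}$ correctly pins down exactly when the rank-$1$ case can occur. Two small remarks. First, in the rank-$1$ case you do not actually need to reprove that $L_1\otimes_R L_1\to R$ is an isomorphism -- your separability/discriminant argument already forces both $2$ and $u$ to be units locally, hence forces that pairing to be perfect; but the paper records the fact separately, and it is worth noting that $u$ being a unit is part of what separability buys, not something assumed. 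Second, you are right to flag the group-structure verifications as the place requiring care: the group law on $Q_2(R)$ is a twisted (Baer-sum-type) product rather than the plain graded tensor product (which would double the rank), so one really does need the definitions from \cite{childs-garfinkel-orzech} or \cite{ikai-gradedazumaya} to confirm that it restricts to the usual law on $H^1_{et}(R,\ZZ/2)$ for degree-zero-concentrated algebras and that $\rho$ is additive. Your outline handles that correctly.
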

\begin{theorem}[{\cite{small-brauerwallgroup}}]
\label{thm:brauerwall-sequence}
Suppose that $R$ possesses no idempotents. Then the Brauer--Wall group $BW(R)$ is contained in a short exact sequence
\[
0 \to \Br(R) \to BW(R) \to Q_2(R) \to 0,
\]
where the subgroup is generated by Azumaya algebras concentrated in even degree.
\end{theorem}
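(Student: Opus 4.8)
The plan is to construct two group homomorphisms $\iota\colon\Br(R)\to BW(R)$ and $\theta\colon BW(R)\to Q_2(R)$, check that $\theta\circ\iota=0$, and then establish injectivity of $\iota$, surjectivity of $\theta$, and exactness at $BW(R)$. The map $\iota$ sends an ordinary Azumaya $R$-algebra to itself, concentrated in degree $0$; since the graded (Koszul-signed) tensor product of two $\ZZ/2$-graded algebras concentrated in degree $0$ carries no signs and coincides with the ordinary tensor product, $\iota$ is a homomorphism, and its image is exactly the subgroup generated by even-degree Azumaya algebras. The map $\theta$ is the graded discriminant: given a graded Azumaya $R$-algebra $A$, form $C_A(A_0)$, the ordinary centralizer in $A$ of the even part $A_0$; because $A_0$ is homogeneous this is a graded subalgebra, and it is commutative, separable and projective of rank $1$ or $2$. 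Since $\Spec R$ is connected this rank is constant; when it is $2$ we have $C_A(A_0)\in Q_2(R)$ and set $\theta[A]=[C_A(A_0)]$, and when it is $1$ we have $C_A(A_0)\cong R$ and set $\theta[A]=0$. That $\theta$ is well defined on graded Morita classes and multiplicative, $C_{A\otimes_R B}\bigl((A\otimes_R B)_0\bigr)\cong C_A(A_0)\cdot C_B(B_0)$, is a double-centralizer computation; over a field it is Wall's, and it descends to an arbitrary commutative ring because centralizers, separability, rank, and the group laws are stable under localization and faithfully flat base change, and \'etale-locally every graded Azumaya $R$-algebra is a product of graded matrix algebras, graded quaternion algebras, and the rank-two algebras $R[x]/(x^2-u)$. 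Finally $\theta\circ\iota=0$, since a degree-$0$ Azumaya algebra $A$ has $C_A(A_0)=Z(A)=R$.

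For injectivity of $\iota$, suppose $A$ is a degree-$0$ Azumaya $R$-algebra with $\iota[A]=0$. Then there is an invertible graded $A$-$R$-bimodule, equivalently (by the graded Eilenberg--Watts and Morita theorems) a graded generator $P=(P_0,P_1)$ of the graded module category over $R$ with $\End_R(P)\cong A$. The odd part of $\End_R(P)$ is $\Hom_R(P_0,P_1)\oplus\Hom_R(P_1,P_0)$, which must vanish because $A$ lies in degree $0$. Each $P_i$ is finitely generated projective, hence has open-and-closed support; since $\Spec R$ is connected a nonzero $P_i$ is supported everywhere, and if both $P_0,P_1$ were nonzero then $\Hom_R(P_0,P_1)$ would be nonzero after localizing at any prime, a contradiction. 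Hence $P$ is concentrated in a single degree, $A\cong\End_R(P_i)$ is ordinarily Morita trivial, and $[A]=0$ in $\Br(R)$; this is exactly where the hypothesis that $R$ has no idempotents enters. For surjectivity of $\theta$ it suffices to hit a generating set of $Q_2(R)$: the subgroup $H^1_{et}(R,\ZZ/2)$ of ungraded quadratic $\ZZ/2$-Galois extensions $L/R$, with Galois automorphism $\sigma$, is hit by the graded quaternion algebra $L\langle S\rangle/(S^2-1,\ Sr-{}^{\sigma}r\,S)$ of Example~\ref{ex:quaternion} with $u=1$, whose even part is $L$ and whose discriminant is therefore $L$; and when $2$ is a unit the remaining generator of $Q_2(R)/H^1_{et}\hookrightarrow\ZZ/2$ is hit by the graded algebra $R[x]/(x^2-1)$ of Example~\ref{ex:halfquaternion} with $u=1$, which equals its own centralizer-of-even-part.

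It remains to prove exactness at $BW(R)$, which is the main obstacle. One inclusion is $\theta\circ\iota=0$. For the converse, let $[A]\in\ker\theta$. If $C_A(A_0)\cong R$ has rank $1$, then by the double-centralizer theorem $A$ is graded Morita equivalent to an Azumaya $R$-algebra concentrated in degree $0$, so $[A]\in\im\iota$. If instead $C_A(A_0)\cong R\times R$ is split \'etale of rank $2$, pick the nontrivial idempotent $e\in C_A(A_0)\subseteq A_0$; one shows $A$ is graded Morita equivalent to $eAe$, whose centralizer-of-even-part is $e(R\times R)e\cong R$ of rank $1$, reducing to the previous case. The genuine work is to make the graded double-centralizer theorem --- and the statement that a graded Azumaya algebra with trivial graded discriminant is graded Morita equivalent to something concentrated in even degree --- hold over an arbitrary commutative ring with the Koszul signs tracked correctly; over a field both statements are Wall's, and the ring-theoretic versions follow by faithfully flat descent from the field case, using the local structure theory for graded Azumaya algebras above together with the fact that being (Morita equivalent to something) concentrated in even degree descends. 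Everything else is formal given the connectedness of $\Spec R$.
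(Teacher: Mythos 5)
The paper does not prove Theorem~\ref{thm:brauerwall-sequence}; it is stated with the attribution $[\text{\cite{small-brauerwallgroup}}]$ and no argument is given, so there is no proof in the paper to compare your attempt against. Judged on its own merits, your reconstruction has the right skeleton and is consistent with what I understand Small's argument (extending Wall's field case) to be. You identify the two maps correctly: $\iota$ is inclusion of ungraded Azumaya algebras, and $\theta$ is the class of $C_A(A_0)$, the centralizer of the even part, with rank-one centralizers interpreted as the identity element $[R\times R]$ of $Q_2(R)$. Your injectivity argument for $\iota$ is complete and correctly locates where the no-nontrivial-idempotents hypothesis enters: connectedness of $\Spec R$ forces a graded progenerator whose endomorphism algebra sits in degree zero to itself be concentrated in a single degree. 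Your surjectivity argument is also correct, producing preimages of both pieces of $Q_2(R)$ from quaternion and ``half-quaternion'' algebras.

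The genuinely hard content --- that $\theta$ descends to graded Morita classes, is multiplicative, and that $\ker\theta\subseteq\operatorname{im}\iota$ --- you reduce to ``Wall's theorem over a field plus faithfully-flat descent,'' which is the right roadmap but is not itself a proof. Two places in particular need more than a gesture. First, well-definedness of $\theta$ must survive the rank dichotomy: $M_n(R)$ in degree $0$ has $C_A(A_0)\cong R$ of rank one, while the graded-Morita-equivalent $M_{p,q}(R)$ with $p,q>0$ has $C_A(A_0)\cong R\times R$ of rank two, and one must check that this is the \emph{only} way the rank can jump within a Morita class (equivalently, that for $A_1\neq 0$ the assignment $A\mapsto C_A(A_0)$ genuinely is a graded Morita invariant valued in $Q_2(R)$). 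Second, your local structure claim --- ``\'etale-locally every graded Azumaya $R$-algebra is a product of graded matrix algebras, graded quaternion algebras, and the rank-two algebras'' --- is not quite right as stated: \'etale-locally a graded Azumaya algebra is graded Morita \emph{trivial}, i.e.\ a single graded matrix algebra, while the tensor decomposition into Azumaya $\otimes$ quaternion $\otimes$ rank-two factors is a local-ring statement; you need to be careful not to conflate the two when setting up the descent. None of this casts doubt on the theorem, but it does mean the exactness-at-$BW(R)$ step is an outline rather than an argument.
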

\begin{corollary}
Suppose that $E$ is even-periodic and that $\pi_0 E$ possesses no idempotents.
Then the subgroup of the Brauer group of $E$ generated by algebraic Azumaya algebras is contained in a short exact sequence
\[
0 \to \Br(\pi_0 E) \to \pi_0 \Br(E)^{\rm alg} \to Q_2(\pi_0 E) \to 0,
\]
where the subgroup is generated by algebraic Azumaya algebras with homotopy concentrated in even degrees. In $Q_2(\pi_0 E)$, the elements of $H^1_{et}(\pi_0 E, \ZZ/2)$ detect the algebras of Example~\ref{ex:quaternion}, while the map to $\ZZ/2$ detects any of the ``half-quaternion'' algebras of Example~\ref{ex:halfquaternion}.
\end{corollary}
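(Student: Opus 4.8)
The plan is to transfer Small's structure theorem for the Brauer--Wall group (Theorem~\ref{thm:brauerwall-sequence}) across the identification of $\pi_0\Br(E)^{\rm alg}$ with $BW(\pi_0 E)$ established above, and then to read off the two detection statements from the explicit description of the map $BW(\pi_0 E)\to Q_2(\pi_0 E)$. First I would upgrade that identification to an isomorphism of \emph{groups}. Fixing a unit $u\in\pi_2 E$ makes $\Mod_E$ into a $\ZZ/2$-graded setting with $\pi_\star E\cong(\pi_0 E)[u^{\pm 1}]$, and the corollary to Theorem~\ref{thm:obstruction-calcs} makes $\pi_\star$ an equivalence from the homotopy category of algebraic $\ZZ/2$-graded Azumaya $E$-algebras to that of $\ZZ/2$-graded Azumaya $\pi_0 E$-algebras. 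Since the symmetry isomorphism of $\Mod_E$ induces the Koszul sign on graded homotopy, $\pi_\star$ carries $\otimes_E$ to the super tensor product over $\pi_0 E$ and $A\mapsto A^{\op}$ to the graded opposite; hence algebraic Azumaya $E$-algebras are closed under both operations, their Morita classes form the subgroup $\pi_0\Br(E)^{\rm alg}\subseteq\pi_0\Br(E)$, and (by graded Morita theory together with the obstruction theory of Section~\ref{sec:matrix-obstructions}, as recorded in the discussion preceding Theorem~\ref{thm:brauerwall-sequence}) $\pi_\star$ descends to a group isomorphism $\pi_0\Br(E)^{\rm alg}\xrightarrow{\sim}BW(\pi_0 E)$.

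Next I would apply Theorem~\ref{thm:brauerwall-sequence}. Since $\pi_0 E$ has no idempotents, $BW(\pi_0 E)$ fits in a short exact sequence $0\to\Br(\pi_0 E)\to BW(\pi_0 E)\to Q_2(\pi_0 E)\to 0$ whose left-hand subgroup is generated by Azumaya algebras concentrated in $\ZZ/2$-degree zero. Pulling this back along the isomorphism of the previous paragraph yields the asserted sequence, and under $\pi_\star$ an Azumaya $\pi_0 E$-algebra concentrated in degree $0$ corresponds exactly to an algebraic Azumaya $E$-algebra with homotopy concentrated in even degrees, giving the stated description of the subgroup.

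It remains to identify the detection maps. Recall from the proposition preceding Theorem~\ref{thm:brauerwall-sequence} that $0\to H^1_{et}(\pi_0 E,\ZZ/2)\to Q_2(\pi_0 E)\to\ZZ/2$, where $H^1_{et}(\pi_0 E,\ZZ/2)$ classifies ungraded $\ZZ/2$-Galois extensions and the quotient map sends a graded quadratic algebra $(L_0,L_1)$ to the rank of $L_1$. Tracing the connecting map $BW(\pi_0 E)\to Q_2(\pi_0 E)$ through the even/graded center, the graded quaternion algebra of Example~\ref{ex:quaternion} attached to a quadratic Galois extension $\pi_0 E\to R$ is sent to the class $[R]\in H^1_{et}(\pi_0 E,\ZZ/2)$, while the half-quaternion algebra of Example~\ref{ex:halfquaternion}, whose $\ZZ/2$-graded coefficient ring $(\pi_0 E)[x^{\pm 1}]$ has degree-$0$ part $\pi_0 E$ and a rank-one degree-$1$ part, is sent to an element with nonzero image in $\ZZ/2$ (consistently with that image being nonzero precisely when $2$ is invertible). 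The most delicate point is the first step: matching the super-sign conventions on the spectrum side (the symmetry of $\Mod_E$) with those built into $BW(\pi_0 E)$ and the super tensor product, and confirming that Morita equivalence of \emph{algebraic} Azumaya $E$-algebras, rather than of arbitrary Azumaya $E$-algebras, is detected on graded homotopy; everything after that is a citation of Theorem~\ref{thm:brauerwall-sequence} and a routine computation on the explicit algebras of Examples~\ref{ex:quaternion} and~\ref{ex:halfquaternion}.
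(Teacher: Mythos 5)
Your proposal matches the paper's implied argument: transport Small's theorem across the identification of Morita classes of algebraic Azumaya $E$-algebras with $BW(\pi_0 E)$, where that identification comes from the degenerate K\"unneth/universal-coefficient spectral sequences and the Goerss--Hopkins obstruction theory (Theorem~\ref{thm:obstruction-calcs} and its corollary), and then read off the detection statements from the explicit generators of $Q_2(\pi_0 E)$. The paper states the set-level identification tersely in the paragraph preceding Theorem~\ref{thm:brauerwall-sequence} and treats the corollary as immediate; you are somewhat more careful than the text in flagging that the identification must be upgraded to an isomorphism of \emph{groups} (compatibility of $\pi_\star$ with $\otimes_E$, the graded opposite, and the super/Koszul sign convention) and in noting that one should check Morita equivalence of algebraic $E$-algebras is detected on $\pi_\star$ --- points the paper leaves implicit --- but the route is the same.
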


\begin{example}
\label{ex:KU-algebraic}
In the case where $E$ is the complex $K$-theory spectrum $KU$, with coefficient ring $\ZZ[\beta^{\pm 1}]$, the relevant Brauer--Wall group $BW(\ZZ)$ is trivial and all $\ZZ/2$-graded algebraic Azumaya algebras are Morita equivalent. Therefore, there are no $\ZZ$-graded algebraic Azumaya algebras over $KU$ other than those of the form $\End_{KU} (M)$ for $M$ a coproduct of suspensions of $KU$.
\end{example}
\begin{example}
Suppose that $\pi_0 E$ is a Henselian local ring with residue field $k$. Then extension of scalars determines isomorphisms $H^1_{et}(\pi_0 E, \ZZ/2) \to H^1_{et}(k, \ZZ/2)$ and $\Br(\pi_0 E) \to \Br(k)$ (\cite[5]{azumaya-algebras}, \cite[6.1]{grothendieck-brauerI}), and hence an isomorphism $BW(\pi_0 E) \to BW(k)$. If $k$ is finite (for example, when $E$ is a Lubin--Tate spectrum associated to a formal group law over a finite field) the group $\Br(k)$ is trivial and the Galois cohomology group is $\ZZ/2$, so we find that the Brauer--Wall group of $k$ is $\ZZ/2$ if $k$ has characteristic $2$ and is of order $4$ if $k$ has odd characteristic. The algebraic $\ZZ/2$-graded Azumaya $E$-algebras are generated (up to Morita equivalence) by those of Examples~\ref{ex:quaternion} and~\ref{ex:halfquaternion}.
\end{example}
\begin{example}
If we form the localized ring $KU[1/2]$, we may use global class field theory to analyze the result. The ordinary Brauer group is $\ZZ/2$, generated by the Hamilton quaternions over $\ZZ[1/2]$, and this algebra lifts to an Azumaya algebra as originally shown in \cite[6.3]{baker-richter-szymik}. The \'etale cohomology group is $\ZZ/2 \times \ZZ/2$, with nonzero elements corresponding to the quadratic extensions obtained by adjoining $i$, $\sqrt{2}$, or $\sqrt{-2}$. Finally, $KU[1/2]$ also has Azumaya algebras given by its $1$-periodifications, generating the quotient $\ZZ/2$ of the Brauer--Wall group $BW(\ZZ[1/2])$. The full group has order $16$, and one can show that it is isomorphic to $\ZZ/8 \times \ZZ/2$. These can be given specific generators: the $\ZZ/8$-factor is generated by an algebra with coefficient ring $\ZZ[\beta^{\pm(1/2)}, 1/2]$ as an algebra over $KU_*$, while the $\ZZ/2$-factor is generated by an algebra with coefficient ring
\[
KU_*\left[\sqrt 2, 1/2\right]\langle S\rangle / (S^2 - \beta, S \sqrt 2 + \sqrt 2 S).
\]
\end{example}

\begin{remark}
The short exact sequence of Theorem~\ref{thm:brauerwall-sequence} is generalized in \cite[Section~4]{childs-garfinkel-orzech} for many more groups, and by applying their results one can compute the Brauer--Wall group classifying algebraic Azumaya algebras for an overwhelming abundance of examples. For the $4$-periodic localization $KO[1/2]$ we may show that the Brauer--Wall group has 16 elements, combining the order-2 Brauer group of $\ZZ[1/2]$ with the order-8 collection of Galois extensions of $\ZZ[1/2]$ with cyclic Galois group of order four. For the $p$-complete Adams summand $L_p$ at an odd prime $p$, the Brauer--Wall group has $(p-1)$ elements if $p \equiv 1 \mod 4$ and $2(p-1)$ elements if $p \equiv 3 \mod 4$. By contrast, $p$-{\em local} spectra such as $K_{(p)}$, $KO_{(p)}$, or $L_{(p)}$ tend to have much larger Brauer groups because $\ZZ_{(p)}$ and its finite extensions have infinite Brauer groups.
\end{remark}

\subsection{Homotopy fixed-points of $\Pic(KU)$}

In this section we study the Galois extension $KO \to KU$. Most of the structure of the homotopy fixed-point spectral sequence for $\Pic(KU)$ has been determined in depth by Mathew and Stojanoska using tools they developed for comparing with the homotopy fixed-point spectral sequence for $KU$ \cite[7.1]{mathew-stojanoska-picard}. However, for our purposes we will require information about the behavior of the spectral sequence in small, negative degrees.

We recall the following about the category of naive $G$-spectra.

\begin{proposition}
For a $G$-equivariant spectrum $X$ such that $\pi_i(X) = 0$ for $n < i < m$, the $d_{n-m+1}$-differential
\[
H^s(G;\pi_n(X)) \to H^{s+m-n+1}(G;\pi_m(X))
\]
in the homotopy fixed-point spectral sequence for $X^{hG}$ is given by an equivariant $k$-invariant
\[
k^G \in \pi_{n-m-1} F_{\SS[G]}(H\pi_n X, H\pi_m X),
\]
which determines a cohomology operation of degree $(m-n+1)$ on Borel equivariant cohomology. The forgetful map
\[
\pi_{n-m-1} F_{\SS[G]}(H\pi_n X, H\pi_m X) \to \pi_{n-m-1} F_{\SS}(H\pi_n X, H\pi_m X)
\]
sends $k^G$ to the underlying $k$-invariant of $X$.
\end{proposition}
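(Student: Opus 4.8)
The plan is to reduce to a two-stage Postnikov situation and then read off the differential from the long exact sequence of the fibre sequence that defines the $k$-invariant.

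First I would present the homotopy fixed-point spectral sequence of $X^{hG}$ as the spectral sequence of the tower $\{(\tau_{\le p}X)^{hG}\}_p$ obtained by applying $(-)^{hG}$ to the ($G$-equivariant) Postnikov tower of $X$; this is the standard identification, under which the associated graded in Postnikov filtration $p$ is $(\Sigma^p H\pi_p X)^{hG}$, a spectrum with $\pi_{p-s} = H^s(G;\pi_p X)$, and it is how one sees in general that the lowest differentials of such a spectral sequence are given by $k$-invariants. Because $\pi_i X = 0$ for $n < i < m$, the Postnikov layers strictly between filtration $n$ and filtration $m$ are trivial, so the part of the filtered spectrum $X^{hG}$ relevant to the differential leaving the $\pi_n$-line and entering the $\pi_m$-line is the subquotient $(\tau_{[n,m]}X)^{hG}$, where $\tau_{[n,m]}X := \tau_{\le m}\tau_{\ge n}X$. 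I would write $N := \pi_n X$, $M := \pi_m X$, and $HN, HM$ for the corresponding Eilenberg--Mac Lane $\SS[G]$-modules.

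Now $\tau_{[n,m]}X$ has a single nontrivial Postnikov attaching map, giving a fibre sequence of $\SS[G]$-modules
\[
\Sigma^m HM \to \tau_{[n,m]}X \to \Sigma^n HN \xrightarrow{\,k^G\,} \Sigma^{m+1}HM,
\]
which is, by definition, what the equivariant $k$-invariant $k^G \in [\Sigma^n HN,\Sigma^{m+1}HM]_{\SS[G]} = \pi_{n-m-1}F_{\SS[G]}(HN,HM)$ classifies; it is $\SS[G]$-linear because the Postnikov tower of a $G$-spectrum is functorial in $G$-spectra, and viewed as a map $HN \to \Sigma^{m-n+1}HM$ of $\SS[G]$-modules it is exactly a stable degree-$(m-n+1)$ operation on Borel $G$-cohomology, which on a point is the stated map $H^s(G;N) \to H^{s+m-n+1}(G;M)$ on group cohomology. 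Applying the exact functor $(-)^{hG}$ gives the fibre sequence $(\Sigma^m HM)^{hG} \to (\tau_{[n,m]}X)^{hG} \to (\Sigma^n HN)^{hG} \xrightarrow{(k^G)^{hG}} (\Sigma^{m+1}HM)^{hG}$, whose long exact sequence in homotopy is exactly the two-line spectral sequence computing $\pi_\ast (\tau_{[n,m]}X)^{hG}$: it is concentrated in the rows $t=n$ (entries $H^s(G;N)$, coming from $\pi_\ast(\Sigma^n HN)^{hG}$) and $t=m$ (entries $H^s(G;M)$), it has a single nonvanishing differential, and by construction that differential is $\pi_\ast$ of $(k^G)^{hG}$, i.e.\ the action of $k^G$ on group cohomology; the bookkeeping identifies it as the $d_{m-n+1}$ of the spectral sequence (source column $s$ in row $n$, target column $s+m-n+1$ in row $m$). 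Combined with the previous paragraph this identifies the differential in the spectral sequence of $X^{hG}$ with $k^G$, up to a universal sign.

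For the final statement, the forgetful functor from $\SS[G]$-modules to $\SS$-modules preserves homotopy groups, hence Postnikov truncations, hence carries the equivariant fibre sequence above to the ordinary Postnikov fibre sequence of the underlying spectrum of $X$; it therefore sends $k^G$ to the underlying $k$-invariant of $X$, which is the content of the claim about $\pi_{n-m-1}F_{\SS[G]}(HN,HM) \to \pi_{n-m-1}F_{\SS}(HN,HM)$. The step I expect to be the main obstacle is the first one: the identification of the homotopy fixed-point spectral sequence with the spectral sequence of the Postnikov tower (and, relatedly, the tracking of the subquotient structure when $X$ has homotopy outside $[n,m]$), since one must pin the differential down to $k^G$ on the nose rather than merely to some homomorphism with the same kernels and cokernels; naturality of the spectral sequence in $X$ applied to the fibre sequence alone does not suffice, because a nonzero $k$-invariant induces the zero map on $E_2$-pages, its source and target living in different rows.
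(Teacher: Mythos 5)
The paper does not actually prove this Proposition; it is introduced with ``We recall the following about the category of naive $G$-spectra'' and stated without argument, as a standard fact. Your proof is the standard one and is correct: you filter $X^{hG}$ by applying $(-)^{hG}$ to the equivariant Postnikov tower, reduce to the two-stage truncation $\tau_{[n,m]}X$ (the comparison maps $\tau_{\ge n}X \to X$ and $\tau_{\ge n}X \to \tau_{[n,m]}X$ induce isomorphisms on the $E_2$-rows $t=n$ and $t=m$, so the differential in question is the same as in the two-layer case), and read the $d_{m-n+1}$-differential off the long exact sequence of the cofiber sequence of $\SS[G]$-modules classified by $k^G$; and the forgetful functor to $\SS$-modules preserves homotopy groups, hence Postnikov truncations, hence carries $k^G$ to the underlying $k$-invariant. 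Two remarks: the index in the paper's statement ($d_{n-m+1}$) is a typo for $d_{m-n+1}$, which your argument correctly uses throughout; and you are right to flag that the genuinely non-formal steps are (i) identifying the Bousfield--Kan homotopy fixed-point filtration with the Postnikov filtration, and (ii) pinning the first differential of a two-row spectral sequence down to the attaching map itself rather than merely to a homomorphism with the same kernel and cokernel---both of these are standard lemmas, but naming them is exactly the right instinct, and the reduction to $\tau_{[n,m]}X$ sketched above is how one handles the subquotient bookkeeping you mention.
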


Using the adjunction
\[
F_{\SS[G]}(X,Y) \simeq F_{\SS[G]}(\SS,F_{\SS}(X,Y)) = F_\SS(X,Y)^{hG},
\]
we recover the following computational tool.

\begin{proposition}
\label{prop:cohomops}
For functors $BG \to \S$ representing spectra $X$ and $Y$ with $G$-action, there exists a spectral sequence with $E_2$-term
\[
E_2^{s,t} = H^s(G; \pi_t F_\SS(X,Y)) \Rightarrow \pi_{t-s} F_{\SS[G]}(X,Y).
\]
Furthermore, the edge morphism in this spectral sequence recovers the natural map to $\pi_* F_\SS(X,Y)$.
\end{proposition}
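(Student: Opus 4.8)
The plan is to obtain this spectral sequence as the homotopy fixed-point spectral sequence of the $G$-spectrum $F_\SS(X,Y)$, and then to translate its abutment and edge map across the adjunction displayed immediately before the statement. Nothing new is needed beyond that adjunction and the standard homotopy fixed-point machinery already used in Section~\ref{sec:spectral-sequences}.

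First I would fix the action: the functors $BG \to \S$ exhibit $X$ and $Y$ as modules over the group ring $\SS[G]$, and the mapping spectrum $F_\SS(X,Y)$ of the underlying spectra inherits the conjugation $G$-action, so it is again a $G$-spectrum, say $Z$. For any $G$-spectrum $Z$ there is a homotopy fixed-point spectral sequence
\[
E_2^{s,t} = H^s(G;\pi_t Z) \Rightarrow \pi_{t-s} Z^{hG},
\]
arising as the Bousfield--Kan spectral sequence of the $\Tot$-tower of the cobar cosimplicial object whose limit is $Z^{hG}$ — the same sort of tower described in Section~\ref{sec:spectral-sequences}, now applied to a cosimplicial object of spectra rather than of $\i$-categories. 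Applying this with $Z = F_\SS(X,Y)$ and invoking the displayed identification $F_{\SS[G]}(X,Y) \simeq F_{\SS[G]}(\SS, F_\SS(X,Y)) = F_\SS(X,Y)^{hG}$ rewrites the abutment $\pi_{t-s} Z^{hG}$ as $\pi_{t-s} F_{\SS[G]}(X,Y)$ and the $E_2$-term as $H^s(G;\pi_t F_\SS(X,Y))$, which is precisely the asserted spectral sequence.

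For the last sentence, recall that the edge homomorphism of a homotopy fixed-point spectral sequence is the composite $\pi_t Z^{hG} \twoheadrightarrow E_\infty^{0,t} \hookrightarrow E_2^{0,t} = (\pi_t Z)^G \hookrightarrow \pi_t Z$, and that it is induced on homotopy groups by the canonical map $Z^{hG} \to Z$ that forgets the fixed-point structure. Under $Z^{hG} \simeq F_{\SS[G]}(X,Y)$ this canonical map is exactly the functor forgetting the $\SS[G]$-module structure, $F_{\SS[G]}(X,Y) \to F_\SS(X,Y)$ (equivalently, forgetting the trivial $\SS[G]$-structure on the source $\SS$ in $F_{\SS[G]}(\SS, Z) \to F_\SS(\SS, Z) = Z$). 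Hence the edge morphism recovers the natural map $\pi_\ast F_{\SS[G]}(X,Y) \to \pi_\ast F_\SS(X,Y)$, as claimed.

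I expect the only delicate points to be bookkeeping rather than mathematics: verifying that the $G$-action being placed on $F_\SS(X,Y)$ is genuinely the conjugation action, so that $Z^{hG}$ is $F_{\SS[G]}(X,Y)$ and not some mismatched object; that the grading conventions ($s$ cohomological, total degree $t-s$, second quadrant) agree with those fixed in Section~\ref{sec:spectral-sequences} and with the preceding proposition on equivariant $k$-invariants, so the two statements are literally compatible; and, if one wants strong rather than merely conditional convergence, that the relevant homotopy groups are bounded enough — which is automatic in the finite-$G$, connective-coefficient situations where this tool is invoked. Multiplicativity and naturality in $X$ and $Y$, should they be needed downstream, follow from the corresponding properties of the cobar construction.
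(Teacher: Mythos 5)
Your proof is correct and follows exactly the route the paper takes: the paper displays the adjunction $F_{\SS[G]}(X,Y) \simeq F_\SS(X,Y)^{hG}$ immediately before the proposition and states the result as a direct consequence of that identification together with the standard homotopy fixed-point spectral sequence, which is precisely your argument. Your additional remarks about the edge morphism and the conjugation action are accurate elaborations of points the paper leaves implicit.
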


We may then apply this to calculate the possible first two $C_2$-equivariant $k$-invariants of $\pic(KU)$, both between degrees $0$ and $1$ and between degrees $1$ and $3$.

\begin{proposition}
Let $\ZZ^-$ be $\ZZ$ with the sign action of $C_2$, and
\[
\beta^-\co H^*(C_2;\ZZ/2) \to H^{*+1}(C_2;\ZZ^-)
\]
the Bockstein map associated to the short exact sequence
\[
0 \to \ZZ^- \to \ZZ^- \to \ZZ/2 \to 0.
\]
Let $x \in H^1(G;\ZZ/2)$ denote the generator. We have
\[
\pi_{-2} F_{\SS[C_2]} (H\ZZ/2, H\ZZ/2) \cong (\ZZ/2)^3,
\]
generated by the operations $\Sq^2(-)$, $x \cdot \Sq^1(-)$, and $x^2 \cdot (-)$. We also have
\[
\pi_{-3} F_{\SS[C_2]} (H\ZZ/2, H\ZZ^-) \cong (\ZZ/2)^2,
\]
generated by the operations $\beta^- \circ \Sq^2(-)$ and $\beta^-(x^2 \cdot (-))$.

The restriction to the group of nonequivariant operations sends the generators involving $x$ to zero.
\end{proposition}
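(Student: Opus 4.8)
The plan is to reduce both groups to computations with $H^*(BC_2;\ZZ/2)$, using that the source $H\ZZ/2$ is $2$-torsion and the targets are (twisted) Eilenberg--Mac Lane spectra, together with the long exact sequence from the Bockstein cofiber sequence. For the untwisted group: since $H\ZZ/2$ has no nontrivial self-equivalences, both $C_2$-actions are trivial and $F_{\SS[C_2]}(H\ZZ/2,H\ZZ/2)\simeq F_\SS(H\ZZ/2,H\ZZ/2)^{hC_2}\simeq F(\Sigma^\infty_+BC_2,F_\SS(H\ZZ/2,H\ZZ/2))$. The endomorphism ring $F_\SS(H\ZZ/2,H\ZZ/2)$ is an $H\ZZ/2$-module, hence noncanonically a product $\prod_{n\ge 0}\Sigma^{-n}(H\ZZ/2)^{\dim_{\ZZ/2}\A^n}$ with $\A$ the mod $2$ Steenrod algebra. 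Since $F(\Sigma^\infty_+BC_2,-)$ commutes with products and $\pi_{-k}F(\Sigma^\infty_+BC_2,H\ZZ/2)=H^k(BC_2;\ZZ/2)$, this gives $\pi_{-2}F_{\SS[C_2]}(H\ZZ/2,H\ZZ/2)\cong\bigoplus_n H^{2-n}(BC_2;\ZZ/2)^{\dim\A^n}\cong(\ZZ/2)^3$, with the three summands indexed by $(n,\deg_{BC_2})=(2,0),(1,1),(0,2)$ and representing $\Sq^2(-)$, $x\cdot\Sq^1(-)$, $x^2\cdot(-)$ on Borel cohomology. The same bookkeeping records $\pi_{-1}=\langle\Sq^1(-),x\cdot(-)\rangle$ and $\pi_0=\langle\id\rangle$, which I use below.

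For the twisted group I would apply $F_{\SS[C_2]}(H\ZZ/2,-)$ to the $C_2$-equivariant cofiber sequence $H\ZZ^-\xrightarrow{\,2\,}H\ZZ^-\xrightarrow{\,\rho\,}H\ZZ/2\xrightarrow{\,\beta^-\,}\Sigma H\ZZ^-$ (the middle term is untwisted, since $\ZZ^-/2=\ZZ/2$ carries the trivial action). As $2\cdot\id_{H\ZZ/2}=0$, multiplication by $2$ is null on $F_{\SS[C_2]}(H\ZZ/2,-)$, so the long exact sequence splits into short exact sequences
\[
0\to\pi_nF_{\SS[C_2]}(H\ZZ/2,H\ZZ^-)\xrightarrow{\rho_*}\pi_nF_{\SS[C_2]}(H\ZZ/2,H\ZZ/2)\xrightarrow{(\beta^-)_*}\pi_{n-1}F_{\SS[C_2]}(H\ZZ/2,H\ZZ^-)\to 0.
\]
Since $\pi_tF_\SS(H\ZZ/2,H\ZZ^-)=H^{-t}(H\ZZ/2;\ZZ)=0$ for $t\ge 0$, the spectrum $F_\SS(H\ZZ/2,H\ZZ^-)$ is coconnective with trivial $\pi_0$, hence $\pi_{\ge 0}F_{\SS[C_2]}(H\ZZ/2,H\ZZ^-)=0$ (the homotopy fixed-point spectral sequence has no terms contributing in non-negative total degree). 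Feeding this and the dimensions $1,2,3$ of $\pi_0,\pi_{-1},\pi_{-2}$ of the untwisted group into the short exact sequences and inducting downwards yields $\dim\pi_{-1}=1$, $\dim\pi_{-2}=1$, and $\dim\pi_{-3}F_{\SS[C_2]}(H\ZZ/2,H\ZZ^-)=2$.

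To name the generators I continue chasing these sequences. The $n=0$ case identifies $\pi_{-1}F_{\SS[C_2]}(H\ZZ/2,H\ZZ^-)$ with the line spanned by $\beta^-=(\beta^-)_*(\id)$, whose image $\rho_*\beta^-=\rho\beta^-$ lies in $\langle\Sq^1(-),x\cdot(-)\rangle$ with nonzero $\Sq^1$-component (its nonequivariant reduction is $\rho\beta=\Sq^1$). Hence the $n=-1$ case shows $\pi_{-2}F_{\SS[C_2]}(H\ZZ/2,H\ZZ^-)$ is spanned by $\beta^-\circ(x\cdot(-))$, and applying $\rho_*$ together with the Cartan formula on Borel cohomology ($\Sq^1(x\cup\alpha)=x^2\cup\alpha+x\cup\Sq^1\alpha$) shows the kernel of the surjection $(\beta^-)_*\colon\langle\Sq^2(-),x\Sq^1(-),x^2(-)\rangle\to\pi_{-3}F_{\SS[C_2]}(H\ZZ/2,H\ZZ^-)$ is a line of the form $\langle x\Sq^1(-)\rangle$ or $\langle x\Sq^1(-)+x^2(-)\rangle$. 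In either case $\Sq^2(-)$ and $x^2(-)$ descend to a basis of the quotient, so the generators are $\beta^-\circ\Sq^2$ and $\beta^-\circ(x^2\cdot(-))$.

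Finally, restriction to nonequivariant operations is the edge homomorphism of the spectral sequence of Proposition~\ref{prop:cohomops}; concretely it annihilates everything in the image of cup product with a positive-degree class of $H^*(BC_2;\ZZ/2)$ (such a class restricts to $0$ over a point), so it kills $x\cdot\Sq^1(-)$, $x^2\cdot(-)$, and $\beta^-\circ(x^2\cdot(-))$, while sending $\Sq^2$ and $\beta^-\circ\Sq^2$ to the nonequivariant operations $\Sq^2$ and $\beta\circ\Sq^2$. The step I expect to be the real obstacle is this generator-identification for the twisted group: the dimension counts are formal once the untwisted group is computed, but matching the named operations requires tracking how $\beta^-$ interacts with multiplication by $x$ on Borel cohomology carefully enough to pin down that the relevant kernel line has nonzero $x\Sq^1$-coordinate (equivalently, that $\rho\beta^-\in\{\Sq^1,\,\Sq^1+x\cdot(-)\}$ and then feeding this through the Cartan formula).
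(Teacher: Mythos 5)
Your proof is correct, and it takes a genuinely different route from the paper's. The paper computes both groups by the same template: it runs the homotopy fixed-point spectral sequence $H^s(C_2;\pi_t F_\SS(-,-))\Rightarrow\pi_{t-s}F_{\SS[C_2]}(-,-)$ (Proposition~\ref{prop:cohomops}) for each of the two function spectra separately, reads off upper bounds ($\leq 8$, resp.\ $\leq 4$) from the $E_2$-page, and then establishes that the three (resp.\ two) named operations are linearly independent by evaluating on $\pi_*F_{\SS[C_2]}(\Sigma^\infty_+EC_2, H\ZZ/2)\cong H^*(BC_2;\ZZ/2)$ and observing that they land in distinct cohomological filtrations. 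You instead compute the untwisted group exactly, with no sandwich: since both $C_2$-actions are trivial ($\Aut(H\ZZ/2)$ is contractible), $F_{\SS[C_2]}(H\ZZ/2,H\ZZ/2)\simeq F(\Sigma^\infty_+ BC_2,F_\SS(H\ZZ/2,H\ZZ/2))$, and the target endomorphism spectrum splits as a product of shifts of $H\ZZ/2$, so the $E_2$-page of the paper's spectral sequence collapses for free and the group is $\bigoplus_n H^{2-n}(BC_2;\ZZ/2)\otimes A^n$. For the twisted group you then lean on the splitting of the Bockstein long exact sequence (valid because $2\cdot\id_{H\ZZ/2}=0$, so multiplication by $2$ is null on the mapping spectrum), together with the coconnectivity of $F_\SS(H\ZZ/2,H\ZZ^-)$, to inductively pin down the dimensions and to name generators by a Cartan-formula chase. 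This buys a cleaner linear-independence argument (it is built into the direct computation) at the cost of needing to track $\rho\beta^-$ through the Cartan formula; the paper's argument avoids the Bockstein chase but has to supply a separate injectivity argument. Your hedged step --- showing $\rho\beta^-\in\{\Sq^1,\Sq^1+x\cdot(-)\}$ so that the kernel of $(\beta^-)_*$ on $\pi_{-2}$ has nonzero $x\Sq^1$-coordinate --- is in fact airtight: the nonequivariant restriction $\rho\beta=\Sq^1\neq 0$ forces the $\Sq^1$-component, and either possibility leads to a kernel ($\langle x\Sq^1\rangle$ or $\langle x\Sq^1+x^2\rangle$) complementary to $\langle\Sq^2,x^2\cdot(-)\rangle$, so the stated generators work in both cases.
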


\begin{proof}
Proposition~\ref{prop:cohomops} gives us two spectral sequences, pictured in Figure~\ref{fig:kinvt}:
\begin{align*}
  H^s(C_2; \pi_t F_\SS(H\ZZ/2, H\ZZ/2)) &\Rightarrow \pi_{t-s} F_{\SS[C_2]}(H\ZZ/2, H\ZZ/2)\\
  H^s(C_2; \pi_{t-s} F_\SS(H\ZZ/2, H\ZZ)) &\Rightarrow \pi_{t-s} F_{\SS[C_2]}(H\ZZ/2, H\ZZ^-)
\end{align*}
There is an isomorphism $\pi_{-*} F_\SS(H\ZZ/2, H\ZZ/2) \cong A^*$, where $A^*$ is the mod-2 Steenrod algebra; this group is isomorphic to $\ZZ/2$ for $-2 \leq * \leq 0$ and is trivial for all other $* \geq -2$. Similarly, there is an isomorphism $\pi_{-*} F_\SS(H\ZZ/2, H\ZZ) \cong \Sq^1 \cdot A^* \subset A^*$; this group is isomorphic to $\ZZ/2$ for $* = -1, -3$ and is trivial for all other $* \geq -3$. The associated spectral sequences appear in Figure~\ref{fig:kinvt}.
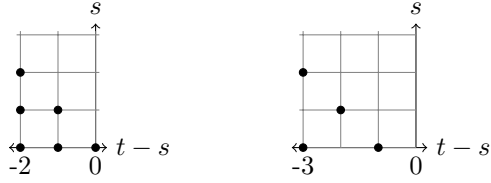
\begin{figure}
\centering
\begin{tikzpicture}[scale=0.5]
\draw[->] (0,0) -- (0,3.3);
\node[above] at (0,3.3) {$s$};
\node[right] at (0.3,0) {$t-s$};
\node[below] at (0,0) {0};
\node[below] at (-2,0) {-2};
\draw[<->] (-2.3,0) -- (0.3,0);
\draw[help lines] (-2.1,0) grid (0.1,3.1);
\filldraw (-2,2) \tdot (-2,1) \tdot (-2,0) \tdot (-1,1) \tdot (-1,0) \tdot (0,0) \tdot;
\end{tikzpicture}
\hskip 3pc
\begin{tikzpicture}[scale=0.5]
\draw[->] (0,0) -- (0,3.3);
\node[above] at (0,3.3) {$s$};
\node[right] at (0.3,0) {$t-s$};
\node[below] at (0,0) {0};
\node[below] at (-3,0) {-3};
\draw[<->] (-3.3,0) -- (0.3,0);
\draw[help lines] (-3.1,0) grid (0,3.1);
\filldraw (-3,2) \tdot (-3,0) \tdot (-2,1) \tdot (-1,0) \tdot;
\end{tikzpicture}
\caption{Spectral sequences for equivariant $k$-invariants}
\label{fig:kinvt}
\end{figure}
These spectral sequences place an upper bound of $8$ on the size of the group $\pi_{-2} F_{\SS[C_2]}(H\ZZ/2, H\ZZ/2)$ and of $4$ on the size of the group $\pi_{-3} F_{\SS[C_2]}(H\ZZ/2, H\ZZ^-)$. However, these cohomology operations we have described in these groups are linearly independent over $\ZZ/2$, as can be checked by applying them to elements in the group 
\[
\pi_* F_{\SS[C_2]}(\Sigma^\infty_+ EC_2, H\ZZ/2) \cong H^*(BC_2; \ZZ/2).
\]
(These represent elements in different cohomological filtration in this spectral sequence.)
\end{proof}

\begin{proposition}
The first two $C_2$-equivariant $k$-invariants of $\pic(KU)$ are $\Sq^2 + x\Sq^1$ and $\beta^- \Sq^2$.
\end{proposition}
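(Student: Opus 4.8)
The plan is to determine the two $C_2$-equivariant $k$-invariants $k_1\in\pi_{-2}F_{\SS[C_2]}(H\ZZ/2,H\ZZ/2)$ and $k_2\in\pi_{-3}F_{\SS[C_2]}(H\ZZ/2,H\ZZ^-)$ of $\pic(KU)$ one at a time: the previous proposition already confines each to an $8$- or $4$-element group, and the remaining cases will be eliminated using the homotopy fixed point spectral sequence of the preceding Proposition applied to $\pic(KU)^{hC_2}\simeq\pic(KO)$. First I would record the relevant $C_2$-modules: $\pi_0\pic(KU)=\Pic(KU)=\ZZ/2$ and $\pi_1\pic(KU)=(\pi_0 KU)^\times=\ZZ/2$ carry the trivial action, $\pi_2\pic(KU)=\pi_1 KU=0$, and $\pi_3\pic(KU)=\pi_2 KU=\ZZ^-$, the sign arising because complex conjugation negates the Bott class. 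Thus $k_1$ and $k_2$ are precisely the first two Postnikov $k$-invariants of $\pic(KU)$ and lie exactly in the groups of the previous proposition ($k_2$ is carried by a single operation because $\pi_2$ vanishes). Since that proposition identifies the kernel of the forgetful map to nonequivariant operations as the span of the $x$-involving generators, it suffices to (a) compute the underlying nonequivariant $k$-invariants, and (b) determine the coefficients of the generators containing $x$.

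For (a): the unit map $\SS\to KU$ induces a $C_2$-equivariant map $\pic(\SS)\to\pic(KU)$ (trivial action on the source) which is an isomorphism on $\pi_1$ and the surjection $\ZZ\to\ZZ/2$ on $\pi_0$. The first $k$-invariant of $\pic(\SS)$ is $\Sq^2$, and mod $2$ reduction $H\ZZ\to H\ZZ/2$ induces an isomorphism on the degree-$2$ part of $F_\SS(-,H\ZZ/2)$ (since $A^1\cdot\Sq^1=0$), so naturality of $k$-invariants forces the underlying first $k$-invariant of $\pic(KU)$ to be $\Sq^2$. For the second, $\tau_{\geq1}\pic(KU)\simeq\Sigma\gl_1(KU)$, so the $k$-invariant from $\pi_1$ to $\pi_3$ is a suspension of the bottom $k$-invariant of $\gl_1(KU)$, namely $\beta\Sq^2$ (see \cite{mathew-stojanoska-picard}). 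Hence $k_1=\Sq^2+(\text{combination of }x\Sq^1\text{ and }x^2)$ and $k_2=\beta^-\Sq^2+c\cdot\beta^-(x^2\cdot(-))$ with $c\in\ZZ/2$.

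For (b): in the HFPSS for $\pic(KU)^{hC_2}\simeq\pic(KO)$, the $d_2$-differential on the $\pi_0$-line is induced by $k_1$ and the $d_3$-differential on the $\pi_1$-line by $k_2$. The classes $E_2^{0,0}=H^0(C_2;\pi_0)$ and $E_2^{0,1}=H^0(C_2;\pi_1)$ survive to $E_\infty$, since the restrictions $\pi_0\pic(KO)=\ZZ/8\to\pi_0\pic(KU)=\ZZ/2$ and $\pi_1\pic(KO)=\ZZ/2\to\pi_1\pic(KU)=\ZZ/2$ are onto; evaluating the candidate operations on $1$ and using $\beta^-(x^2)\neq 0$ in $H^3(C_2;\ZZ^-)$, this shows that no $x^2$-term can occur. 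In particular $c=0$, so $k_2=\beta^-\Sq^2$, and $k_1\in\{\Sq^2,\ \Sq^2+x\Sq^1\}$. To decide the last case, note that the class $x\in E_2^{1,0}$ sits in total degree $-1$ and is never the target of a differential, yet $\pi_{-1}\pic(KO)=0$ because $\pic(KO)$ is connective; so $x$ must support a differential. Using that $KU$ is $2$-periodic — whence the higher $k$-invariants of $\pic(KU)$ are periodic copies of $\Sq^2$ and $\beta\Sq^2$ — one checks that every potential later target $E_r^{1+r,r-1}$ with $r>2$ has already been annihilated on the $E_3$-page by a $d_3$ emanating from the $\pi_{r-2}$-line (the modules $\pi_n\pic(KU)$ alternate between the trivial $\ZZ$ and the sign module $\ZZ^-$, and $H^s(C_2;\ZZ)$, $H^s(C_2;\ZZ^-)$ vanish in the relevant parities). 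Hence $x$ can only die by a nonzero $d_2$, and direct evaluation then forces the $x\Sq^1$-term, giving $k_1=\Sq^2+x\Sq^1$.

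The main obstacle is this final step: showing that the entire negative cone of the Picard HFPSS for $\pic(KO)$ collapses, so that $x\in E_2^{1,0}$ has no admissible higher target. This requires simultaneous control of the pattern of $d_3$-differentials governed by the periodic $\beta\Sq^2$ (equivalently $\beta^-\Sq^2$) $k$-invariants and of $H^*(C_2;\ZZ)$ and $H^*(C_2;\ZZ^-)$ in all degrees, matched carefully against the stems in which $x$ could be killed — exactly the kind of small negative-degree behavior not visible from the nonnegative-degree computations of \cite{mathew-stojanoska-picard}.
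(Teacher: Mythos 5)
The first half of your argument — computing the underlying nonequivariant $k$-invariants $\Sq^2$ and $\beta\Sq^2$, then using survival of the $H^0$-classes coming from $\pic(KO)$ to rule out the $x^2$- and $\beta^-(x^2\cdot(-))$-terms — matches the paper's proof essentially exactly, and is correct.

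The last step, distinguishing $\Sq^2$ from $\Sq^2 + x\Sq^1$, is where you diverge from the paper, and here there is a genuine gap. You argue that $x\in E_2^{1,0}$, in total degree $-1$, must support a differential because ``$\pi_{-1}\pic(KO)=0$ by connectivity.'' But the homotopy fixed-point spectral sequence converges to $\pi_*\left(\pic(KU)^{hC_2}\right)$, not to $\pi_*\pic(KO)$, and these two spectra only agree after passing to connective covers: the equivalence $\Pic(KU)^{hC_2}\simeq\Pic(KO)$ (i.e., $\Pic$ commutes with limits) is a statement about $\Omega^\infty$, hence controls only nonnegative homotopy. The spectrum $\pic(KU)^{hC_2}$ is not connective in general. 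Indeed, for any connective spectrum $X$ with trivial $G$-action one has $X^{hG}\simeq F\bigl((BG)_+,X\bigr)$, whose negative homotopy is typically nonzero, e.g.\ $\pi_{-n}(H\ZZ/2)^{hC_2}\cong H^n(BC_2;\ZZ/2)\cong\ZZ/2$ for all $n\geq 0$. So if the first equivariant $k$-invariant really were the untwisted $\Sq^2$, the class $x$ would legitimately be a permanent cycle detecting a nonzero element of $\pi_{-1}\pic(KU)^{hC_2}$ — no contradiction, and your argument cannot get off the ground. (Your secondary ``one checks'' step, about higher targets vanishing, also presupposes the $k$-invariants being computed, and hence risks circularity; but the primary problem is the preceding one.)

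The paper avoids the negative stem entirely and argues in $\pi_0$: if the first $k$-invariant were the untwisted $\Sq^2$, then $\tau_{\leq 1}\pic(KU)$ would carry the trivial $C_2$-action up to equivalence, so $(\tau_{\leq 1}\pic(KU))^{hC_2}\simeq F\bigl((BC_2)_+,\tau_{\leq 1}\pic(KU)\bigr)$ would split off a copy of $\tau_{\leq 1}\pic(KU)$, forcing the extension from $E_\infty^{0,0}=H^0(C_2;\pi_0)$ to $E_\infty^{1,1}=H^1(C_2;\pi_1)$ in $\pi_0$ to be split. But $\pi_0\pic(KO)=\ZZ/8$ exhibits a nonsplit extension: $[\Sigma KO]$ lives in filtration $0$, while $2[\Sigma KO]=[\Sigma^2 KO]$ drops to filtration $1$ (as $\Sigma^2 KU\simeq KU$) and is nontrivial there. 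This hidden-extension argument uses only what is already known about $\pi_0\pic(KO)$ in nonnegative degrees, sidestepping the question of whether $\pic(KU)^{hC_2}$ has negative homotopy. You should replace your final step with an argument along these lines, or else independently establish $\pi_{-1}\pic(KU)^{hC_2}=0$, which is a nontrivial claim.
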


\begin{proof}
The underlying nonequivariant $k$-invariants must be the first two $k$-invariants of $\pic(KU)$. These are $\Sq^2$ and $\beta \Sq^2$, where $\beta$ is the nonequivariant Bockstein.

Moreover, the generating elements in $\pi_0 \pic(KU)$ and $\pi_1 \pic(KU)$ are the images of the classes $[\Sigma KO]$ and ${-1}$ from $\pic(KO)$ respectively, and hence must survive the homotopy fixed-point spectral sequence. These classes would support a nontrivial $d_2$ or $d_3$ differential if the cohomology operation involved a nonzero multiple of $x^2$ or $\beta^- x^2$ respectively. This shows that the second $k$-invariant can only be $\beta^- \Sq^2$, and the first $k$-invariant can only be $\Sq^2$ or $\Sq^2 + x\Sq^1$.

Suppose that the second $k$-invariant were $\Sq^2$. This $k$-invariant is in the image of the map
\[
\pi_{-2} F_{\SS}(H\ZZ/2,H\ZZ/2) \to \pi_{-2} F_{\SS[C_2]}(H\ZZ/2,H\ZZ/2)
\]
induced by the ring map $\SS[C_2] \to \SS$, and so the resulting
$C_2$-equivariant Postnikov stage $\tau_{\leq 1} \pic(KU)$ would be
equivalent to one with the trivial $C_2$-action. We would then have
the equivalence
\[
(\tau_{\leq 1} \pic(KU))^{hC_2} \simeq F((BC_2)_+, \tau_{\leq 1} \pic(KU)).
\]
This splits off a copy of $\tau_{\leq 1} \pic(KU)$ so there could be no hidden extensions from $H^0(C_2; \pi_0
\pic(KU)$ to $H^1(C_2; \pi_1 \pic(KU))$ in the homotopy fixed-point
spectral sequence. However, there is a hidden extension:
the class $[\Sigma KO] \in \pi_0 \pic(KO)$ has nontrivial image in $H^0(C_2; \pi_0 \pic(KU))$ and twice it is $[\Sigma^2 KO]$, which has nontrivial image in $H^1(C_2; (\pi_0 KU)^\times)$.
\end{proof}

\begin{proposition}
\label{prop:fixedpointcalc}
The homotopy fixed-point space $B\Pic(KU)^{hC_2}$ has homotopy groups
\[
\pi_n B\Pic(KU)^{hC_2} =
\begin{cases}
\pi_{n-2} GL_1(KO)&\text{if }n \geq 2\\
\ZZ/8&\text{if }n = 1\\
\ZZ/2&\text{if }n = 0.
\end{cases}
\]
\end{proposition}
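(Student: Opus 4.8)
The plan is to reduce the computation to the (known) homotopy of $\pic(KO)$, together with one extra group in high cohomological filtration controlled by the $C_2$-equivariant $k$-invariants computed above. First I would observe that $B\Pic(KU)$ is the space $\Omega^\infty\Sigma\,\pic(KU)$, carrying the $C_2$-action induced by complex conjugation on $\Mod_{KU}$; since $\Sigma$ is invertible on spectra and $\Omega^\infty$ is a right adjoint, both commute with the limit $(-)^{hC_2}$, so $B\Pic(KU)^{hC_2}\simeq\Omega^\infty\Sigma\bigl(\pic(KU)^{hC_2}\bigr)$ and hence $\pi_n B\Pic(KU)^{hC_2}\cong\pi_{n-1}\bigl(\pic(KU)^{hC_2}\bigr)$ for all $n\geq 0$. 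It then suffices to compute $\pi_m\pic(KU)^{hC_2}$ for $m\geq -1$.

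For $m\geq 0$ I would use descent. Theorem~\ref{thm:descentdiag}, applied to the faithful $C_2$-Galois extension $KO\to KU$, gives an equivalence $\Pic(KO)\xrightarrow{\sim}\Pic(KU)^{hC_2}$ of grouplike symmetric monoidal $\i$-groupoids; passing to connective spectra (again using that $\Omega^\infty$ commutes with $(-)^{hC_2}$) identifies $\pic(KO)$ with the connective cover $\tau_{\geq 0}\bigl(\pic(KU)^{hC_2}\bigr)$. Thus $\pi_m\pic(KU)^{hC_2}\cong\pi_m\pic(KO)$ for $m\geq 0$, which is $\ZZ/8$ for $m=0$, is $(\pi_0 KO)^\times=\pi_0\GL_1(KO)$ for $m=1$, and is $\pi_{m-1}\GL_1(KO)$ for $m\geq 2$. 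Feeding this into the isomorphism of the previous paragraph yields the asserted values of $\pi_n B\Pic(KU)^{hC_2}$ for every $n\geq 1$.

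It remains to compute $\pi_{-1}\pic(KU)^{hC_2}$, which is $\pi_0 B\Pic(KU)^{hC_2}$. For this I would run the homotopy fixed-point spectral sequence $E_2^{s,t}=H^s(C_2;\pi_t\pic(KU))\Rightarrow\pi_{t-s}\pic(KU)^{hC_2}$. Here $\pi_0\pic(KU)=\pi_1\pic(KU)=\ZZ/2$ are trivial $C_2$-modules, $\pi_{2k}\pic(KU)=0$ for $k\geq 1$, and $\pi_{2k+1}\pic(KU)=\pi_{2k}KU\cong\ZZ$ with conjugation $(-1)^k$, so the only nonzero entries on the line $t-s=-1$ are copies of $\ZZ/2$, at $(s,t)=(1,0)$, $(2,1)$, and $(4k+2,4k+1)$ for $k\geq 1$. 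The first equivariant $k$-invariant $\Sq^2+x\Sq^1$ gives the differential $d_2(x)=x^3\neq 0$, which kills the class at $(1,0)$, and the second, $\beta^-\Sq^2$, gives a nonzero $d_3$ out of $(2,1)$; the remaining higher-filtration classes are controlled by Mathew--Stojanoska's analysis of this spectral sequence \cite{mathew-stojanoska-picard}, whose differentials in this range agree, via their logarithmic comparison, with those of the $KU$ homotopy fixed-point spectral sequence, leaving exactly one surviving $\ZZ/2$ on the line $t-s=-1$. Since every group involved is $\ZZ/2$ there is no extension problem, so $\pi_{-1}\pic(KU)^{hC_2}\cong\ZZ/2$.

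The hard part is this last step: ruling out the infinite family of potential contributions in negative degree and isolating the single survivor forces one to handle the spectral sequence near the fringe (total degrees $-1,0,1$), where the explicit equivariant $k$-invariants and the full Mathew--Stojanoska differential analysis are both essential. I would also double-check that the permanent cycles representing the generators of $\pi_0\pic(KO)=\ZZ/8$ and of $\pi_1\pic(KO)$ — the images of $[\Sigma KO]$ and $-1$ already used in the proof of the preceding proposition — are not disturbed by these differentials, so that the $n\geq 1$ part of the answer is genuinely $\pi_{n-1}\pic(KO)$ and not merely an associated graded.
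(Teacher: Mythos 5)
Your argument is correct, and for $n\geq 1$ it takes a genuinely different and more conceptual route than the paper's. The paper reads all of $\pi_m\pic(KU)^{hC_2}$ (for $m\geq -1$) off the homotopy fixed-point spectral sequence, supplementing it with the $k$-invariant differentials at $t=0,1$, the Mathew--Stojanoska analysis for $t>3$, and the hidden extension forced by the image of $\ZZ/8$; in this way it \emph{re-derives} $\Pic(KO)\cong\ZZ/8$. You instead short-circuit the nonnegative-degree computation by invoking the symmetric monoidal equivalence $\Pic(KO)\simeq\Pic(KU)^{hC_2}$ of Theorem~\ref{thm:descentdiag} and quoting the Hopkins--Mahowald--Sadofsky computation as a black box; this is cleaner and correctly produces the answer for all $n\geq 1$. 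For $n=0$, i.e.\ $\pi_{-1}\pic(KU)^{hC_2}$, you run essentially the same spectral sequence as the paper. One thing to tighten: you attribute the fate of the higher-filtration classes on the line $t-s=-1$ to the Mathew--Stojanoska comparison, but their comparison only controls the $t>3$ portion (hence the differentials \emph{out of} the class at $(s,t)=(6,5)$), and cannot by itself rule out incoming $d_3$-, $d_5$-, or $d_6$-differentials into $(6,5)$ from $(3,3)$, $(1,1)$, $(0,0)$, which is precisely the fringe-region uncertainty that the paper itself flags in the remark following the proposition. The correct argument — which you relegate to the ``double-check'' at the end, but which really ought to be the load-bearing step — is that these three sources are permanent cycles, being exactly the classes whose survival is forced by $\pi_0\pic(KU)^{hC_2}\cong\Pic(KO)\cong\ZZ/8$ (the edge map detects $[\Sigma KU]$, and an order count then forces $(1,1)$ and $(3,3)$ to survive and all higher entries on $t-s=0$ to die), so they support no differentials and $(6,5)$ survives. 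With that promoted from an afterthought to the main argument, your proof is arguably \emph{more} complete at this point than the paper's.
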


\begin{proof}
The homotopy fixed-point spectral sequence
\[
H^s(C_2; \pi_t \pic(KU)) \Rightarrow \pi_{t-s} \pic(KU)^{hC_2}
\]
is pictured in Figure~\ref{fig:stable}; we refer to \cite{mathew-stojanoska-picard} for the portion with $t > 3$, obtained by comparison with the homotopy fixed point spectral sequence for $KU$. The differentials supported on $t=0$ and $t=1$ are the stable cohomology operations we just determined.
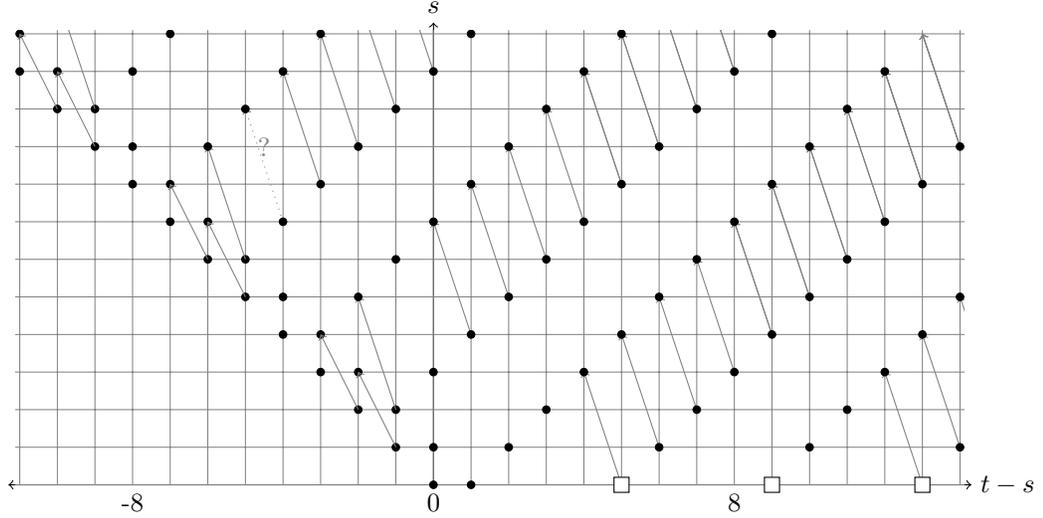
\begin{figure}
\centering
\begin{tikzpicture}[scale=0.5]
\draw[->] (0,0) -- (0,12.3);
\node[above] at (0,12.3) {$s$};
\node[right] at (14.3,0) {$t-s$};
\node[below] at (0,0) {0};
\node[below] at (8,0) {8};
\node[below] at (-8,0) {-8};
\draw[<->] (-11.3,0) -- (14.3,0);
\clip (-11.1,-1) rectangle (14.1,12.1);
\draw[help lines] (-12.1,0) grid (20.1,12.1);

\filldraw \foreach \x in {0,...,12} {(-\x,\x) \tdot};
\filldraw \foreach \x in {0,...,12} {(-\x+1,\x) \tdot};

\foreach \x in {5,13,...,24} {
  \foreach \y in {0,...,12} {
    \foreach \d in {0,4,...,12} {
      \draw[->,color=gray] (\x+\y-\d,\y+\d) -- (\x+\y-\d-1,\y+\d+3);}}}

\foreach \x in {0,4,...,12} {
  \draw[->,color=gray] (-\x-1,\x+1) -- (-\x-2,\x+3);
  \draw[->,color=gray] (-\x-2,\x+2) -- (-\x-3,\x+4);
  \draw[->,color=gray] (-\x-1,\x+2) -- (-\x-2,\x+5);
}

\draw[->,dotted,color=gray] (-4, 7) -- (-5,10);
\node[above,color=gray] at (-4.5,8.5) {?};

\filldraw[fill=white] \foreach \x in {5,9,...,16} {(\x-.2,-.2)
  rectangle (\x+.2,.2)};
\filldraw \foreach \x in {3,7,...,24} {\foreach \y in {1,3,...,12}
  {(\x-\y,\y) \tdot}};
\filldraw \foreach \x in {5,9,...,24} {\foreach \y in {2,4,...,12}
  {(\x-\y,\y) \tdot}};
\end{tikzpicture}
\caption{Fixed-point spectral sequence for $\Pic(KU)$ up to $E_3$}
\label{fig:stable}
\end{figure}
The inclusion of $\ZZ/8$ into $\pi_0 \pic(KO) \cong \pi_0 \pic(KU)^{hC_2}$ forces the hidden extension in degree $0$.
\end{proof}

This recovers the calculation of the Picard group of $KO$ by \cite{hopkins-mahowald-sadofsky}.

There are potential further differentials in negative degrees in the homotopy fixed-point spectral sequence which we have not addressed here. There are potential sources for a $d_4$-differential when $t=0$, $s \equiv 3 \mod 4$. There are also potential targets for a $d_3$- or $d_5$- or $d_6$-differential when $t=5$, $s \equiv 2 \mod 4$, though these latter would be impossible if the Postnikov stage $\pic(KU) \to \tau_{\leq 3} \pic(KU)$ split off equivariantly. It seems likely that a precise formulation of the periodic structure in this spectral sequence would be able to address these questions.

\subsection{Lifting from $KU$ to $KO$}

In this section we examine those Azumaya $KO$-algebras whose extension to $KU$ are algebraic.

By Example~\ref{ex:KU-algebraic}, we have the following.
\begin{proposition}
Any algebraic Azumaya $KU$-algebra is of the form $\End_{KU} N$, where $N$ is a finite coproduct of suspensions of $KU$.
\end{proposition}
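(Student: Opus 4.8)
The statement is essentially a restatement of Example~\ref{ex:KU-algebraic}, and the plan is to make the reduction behind it explicit by combining the rigidity of the Goerss--Hopkins obstruction theory with the vanishing of the relevant Brauer--Wall group. First I would fix a grading for $KU$ coming from a unit $u \in \pi_2 KU$, so that $\Mod_{KU}$ carries $\ZZ/2$-graded homotopy groups; since $\pi_1 KU = 0$, the graded ring $\pi_\star KU \cong \ZZ[\beta^{\pm 1}]$ is isomorphic to $\ZZ$ placed in degree $0$ (with the Koszul sign rule trivially present). Moreover, since every invertible $KU$-module is a suspension of $KU$ (so $\Pic(KU) \cong \ZZ/2$ is generated by $\Sigma KU$), the condition that $\pi_\star A$ be a projective $\pi_\star KU$-module is the condition that every $\pi_n A$ be free over $\ZZ$, independently of the choice of grading; so ``algebraic Azumaya $KU$-algebra'' is unambiguous.

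Let $A$ be an algebraic Azumaya $KU$-algebra, so that $\pi_\star A$ is an Azumaya $\pi_\star KU$-algebra, i.e.\ a $\ZZ/2$-graded Azumaya $\ZZ$-algebra. By Theorem~\ref{thm:brauerwall-sequence} the Brauer--Wall group $BW(\ZZ)$ sits in a short exact sequence whose kernel is $\Br(\ZZ) = 0$ and whose cokernel injects into $Q_2(\ZZ)$; and $Q_2(\ZZ) = 0$, because $H^1_{et}(\ZZ,\ZZ/2) = 0$ (there are no everywhere-unramified quadratic extensions of $\QQ$) and $2$ is not a unit in $\ZZ$. Hence $BW(\ZZ) = 0$, so $\pi_\star A$ is Morita trivial as a $\ZZ/2$-graded $\ZZ$-algebra. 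By graded Morita theory a Morita-trivial graded Azumaya algebra is the endomorphism algebra of a graded generator, so $\pi_\star A \cong \End_{\pi_\star KU}(P_\star)$ for some graded generator $P_\star$ of $\Mod_{\pi_\star KU}$; since $\ZZ$ has trivial Picard group and connected spectrum, $P_\star$ is a nonzero finite graded-free module, $P_\star \cong (\pi_\star KU)^I$ for a nonempty finite $\ZZ/2$-graded set $I$, and therefore $\pi_\star A \cong \Mat_I(\pi_\star KU)$.

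To finish I would transport this back to $KU$-algebras. Set $N = \bigoplus_{i \in I} \Sigma^{\gamma_i} KU$, a finite coproduct of suspensions of $KU$. Since $\pi_\star N$ is free over $\pi_\star KU$, the K\"unneth and universal-coefficient spectral sequences of Theorem~\ref{thm:picard-kunneth-universal} degenerate and yield $\pi_\star \End_{KU}(N) \cong \End_{\pi_\star KU}(\pi_\star N) = \Mat_I(\pi_\star KU)$, which is Azumaya over $\pi_\star KU$; in particular $\End_{KU}(N)$ is itself an algebraic Azumaya $KU$-algebra. Now $\pi_\star A$ and $\pi_\star \End_{KU}(N)$ are isomorphic Azumaya $\pi_\star KU$-algebras, so the corollary to Theorem~\ref{thm:obstruction-calcs} identifying the homotopy category of algebraic Azumaya $KU$-algebras with the category of Azumaya $\pi_\star KU$-algebras (an equivalence, hence fully faithful and essentially surjective) produces an equivalence of $KU$-algebras $A \simeq \End_{KU}(N)$, as asserted.

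Almost everything here is bookkeeping; the only two points needing a little care are the reduction of all versions of ``algebraic'' to the single $\ZZ/2$-graded one, which is legitimate because $KU$ has period $2$ and $\Pic(KU)$ is generated by $\Sigma KU$, and the passage from abstract graded Morita triviality to the concrete matrix algebra over $\pi_\star KU$, which uses only the triviality of $\Pic(\ZZ)$ and the absence of idempotents in $\ZZ$. The genuine content is entirely supplied by the already-established Brauer--Wall computation $BW(\ZZ)=0$ together with the obstruction-theoretic rigidity, so there is no real obstacle beyond assembling these inputs.
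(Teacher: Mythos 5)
Your proof is correct and follows essentially the same route as the paper, which simply invokes Example~\ref{ex:KU-algebraic} ($BW(\ZZ)=0$ forces graded Morita triviality of $\pi_\star A$, and the equivalence of homotopy categories from Theorem~\ref{thm:obstruction-calcs} transports this back to an equivalence $A \simeq \End_{KU}(N)$). You have merely made the bookkeeping in that example explicit, including the verification that $Q_2(\ZZ)=0$ and the independence of ``algebraic'' from the choice of grading, which is a worthwhile elaboration but not a different argument.
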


Therefore, by Proposition~\ref{prop:fixedpointcalc} and Corollary~\ref{cor:moritadetection}, there are at most two Morita equivalence classes of Azumaya $KO$-algebras whose extensions to $KU$ are algebraic.

The following shows that the nontrivial Morita equivalence class is realizable.

\begin{proposition}
There exists a unique equivalence class of quaternion algebra $Q$ over $KO$ such that
\begin{itemize}
\item $KU \otimes_{KO} Q \simeq M_2(KU)$, and
\item there is no $KO$-module $M$ such that $Q \not\simeq \End_{KO}(M)$ as $KO$-algebras.
\end{itemize}
This algebra has homotopy groups isomorphic, as a $KO_*$-algebra, to the homotopy groups of a twisted group algebra:
\[
\pi_* Q \cong \pi_* KU\langle C_2 \rangle \cong \pi_* \End_{KO} KU.
\]
\end{proposition}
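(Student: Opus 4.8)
The plan is to realise the unique nontrivial Brauer class over $KO$ directly: equip the $KU$-algebra $M_2(KU)$ with an exotic $C_2$-homotopy-fixed structure and descend. First, by Example~\ref{ex:KU-algebraic}, Proposition~\ref{prop:fixedpointcalc} and Corollary~\ref{cor:moritadetection}, the Morita classes of $KO$-algebras that become Morita trivial after base change along $KO\to KU$ form a subgroup of $\pi_0 B\Pic(KU)^{hC_2}\cong\ZZ/2$, and any such class with underlying $KU$-algebra $M_2(KU)$ automatically lies in this subgroup since $M_2(KU)=\End_{KU}(KU^2)$. The trivial class is already realised both by $M_2(KO)=\End_{KO}(KO^2)$ and, more relevantly, by $\End_{KO}(KU)\simeq KU\langle C_2\rangle$ (using the Remark after Proposition~\ref{prop:amitsur-limit} and that $KU$ is a compact generator of $\Mod_{KO}$ — it is perfect, being the cofibre of $\eta\colon\Sigma KO\to KO$, and faithful since the Galois extension is faithful), because $KU\otimes_{KO}\End_{KO}(KU)\simeq\End_{KU}(KU\otimes_{KO}KU)\simeq\End_{KU}(KU\times KU)\simeq M_2(KU)$. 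Thus $\End_{KO}(KU)$ already has the desired $KU$-base change and homotopy groups but the wrong (trivial) Morita class; the task is to produce a second, inequivalent $KO$-algebra $Q$ with $KU\otimes_{KO}Q\simeq M_2(KU)$ whose Morita class is the nonzero element of $\ZZ/2$, then establish uniqueness and compute $\pi_*Q$.

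For the construction, the $C_2$-action on $KU$ induces one on $\Az_{KU}$ fixing the component $B\Aut_{\Alg_{KU}}(M_2(KU))$, so $B\Aut_{\Alg_{KU}}(M_2(KU))^{hC_2}$ is nonempty, its components are the equivalence classes of $KO$-algebras $Q$ with $KU\otimes_{KO}Q\simeq M_2(KU)$, and via Corollaries~\ref{cor:fibersequence} and~\ref{cor:pgl-sequence} the map to $\pi_0 B\Pic(KU)^{hC_2}$ recording the Morita class is controlled by the $C_2$-homotopy-fixed-point spectral sequences of $\pic(KU)$ and $\GL_1(KU)$. I would run Bousfield's obstruction theory from Section~\ref{sec:spectral-sequences} on the associated $\Tot$-tower, starting from $M_2(KU)$ at $\Tot^0$ and choosing at each stage a lift whose image in $\pi_0 B\Pic(KU)^{hC_2}$ is the nonzero class; the successive obstructions lie in low and negative degrees of the homotopy fixed-point spectral sequences for $B\Pic(KU)^{hC_2}$ and $\GL_1(KU)^{hC_2}$. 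Showing that these obstructions vanish is the heart of the argument and the main obstacle: one needs the computation of the first two $C_2$-equivariant $k$-invariants of $\pic(KU)$ (namely $\Sq^2+x\Sq^1$ and $\beta^-\Sq^2$) together with the structure of the spectral sequence for $B\Pic(KU)^{hC_2}$ near the bottom, as in Figure~\ref{fig:stable}, to see that the potentially obstructing classes — precisely the entries involved in the late differentials flagged at the end of Section~\ref{sec:spectral-sequences} — either vanish or are hit. The resulting ``path through the spectral sequence'' produces a homotopy-fixed object whose underlying $KU$-algebra is $M_2(KU)$ and whose descent $Q$ is Morita nontrivial (hence not of the form $\End_{KO}(M)$) and a quaternion, i.e.\ rank-$4$, Azumaya $KO$-algebra.

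It then remains to compute $\pi_*Q$ and to prove uniqueness. For the homotopy groups I would compare the descent spectral sequences converging to $\pi_*Q$ and to $\pi_*\End_{KO}(KU)\cong\pi_*KU\langle C_2\rangle$: both have $E_2$-page $H^s(C_2;\pi_t M_2(KU))$, and the two $C_2$-equivariant structures on $M_2(KU)$ differ by the exotic class of $\pi_0 B\Pic(KU)^{hC_2}$, which sits in positive cohomological filtration of the Picard spectral sequence and hence leaves the algebra form $\pi_* M_2(KU)$ — and therefore the differentials and extension problems computing $\pi_*$ — unchanged; this yields the stated isomorphism $\pi_*Q\cong\pi_*KU\langle C_2\rangle\cong\pi_*\End_{KO}(KU)$ of $KO_*$-algebras. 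Finally, uniqueness follows from the obstruction theory for the $\Equiv^n$-towers of Section~\ref{sec:spectral-sequences}: any two $KO$-algebras satisfying the two bullet points lie over the same nonzero class in $\ZZ/2$, and the obstructions to connecting them by a path live in the same low-degree groups analysed in the construction step and vanish, so the equivalence class of $Q$ is unique.
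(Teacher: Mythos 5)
The overall shape of your plan is right: you correctly identify that the existence/uniqueness question is a Bousfield obstruction theory problem on the $\Tot$-tower for the $C_2$-homotopy-fixed-points of $B\Aut_{\Alg_{KU}}(M_2(KU))$, and that the equivariant $k$-invariant computations from Section~6.2 are what ultimately make the spectral sequence analysis possible. But the construction step, which you yourself flag as ``the heart of the argument and the main obstacle,'' is left as a black box, and the framing you offer for it would not actually go through.

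You propose to lift $M_2(KU)$ up the $\Tot$-tower ``choosing at each stage a lift whose image in $\pi_0 B\Pic(KU)^{hC_2}$ is the nonzero class.'' This is not a procedure that one can run: the Morita class of a partial lift in $\Tot^n$ for small $n$ is not defined, because $\pi_0 B\Pic(KU)^{hC_2}$ only becomes visible once the fixed-point spectral sequence for $B\Pic(KU)$ has converged in total degree~$0$, which first occurs at filtration $5$. The choices made in low filtration carry none of that information. Moreover, the obstruction groups for lifting in Bousfield's theory are not groups that obviously vanish, and you cannot simply assert that they do.

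The mechanism the paper actually uses is different, and it is what you need. Rather than attempting to show obstructions vanish abstractly, one \emph{anchors} the obstruction theory at the globally-defined $KO$-algebra $\End_{KO}(KU)$, which already lifts $M_2(KU)$ all the way. Because this basepoint exists, the potential existence obstruction in $E_2^{4,3}\cong\ZZ/2$ (an obstruction class for extending a lift from $\Tot^3$ to $\Tot^4$) must vanish at the basepoint. One then counts lifts: the group $E_3^{5,5}\cong\ZZ/2$ parametrizes the \emph{differences} between lifts from $\Tot^4$ to $\Tot^5$, it consists entirely of permanent cycles (because, as the spectral sequence in Figure~\ref{fig:unstable} shows, the $E_4$-page vanishes for $s\geq 5$ in the range $t-s\geq -1$), and the same vanishing region ensures that lifts from $\Tot^5$ onward are unique. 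The conclusion that there are exactly two $KO$-algebras lifting $M_2(KU)$, one of which (namely $Q$) is Morita-nontrivial because the map to $B\Pic(KU)$ is an isomorphism on the relevant $E_r$-entry, falls out of this count; it is not proven by exhibiting $Q$ by hand through vanishing obstructions, and it is not proven by a separate appeal to the $\Equiv$-towers for uniqueness. Your sketch would have you re-prove the vanishing region in the disguise of ``showing the obstructing classes are zero or hit,'' and would still leave you without the anchor at $\End_{KO}(KU)$ that makes the count possible. The parts of your proposal dealing with the homotopy group identification and with the Morita-nontriviality of $Q$ are essentially sound, and the descent-spectral-sequence comparison for $\pi_* Q$ is a reasonable way to make explicit something the paper leaves implicit; but the existence argument as written has a genuine gap.
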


\begin{proof}
The $KO$-algebras $A$ such that $KU \otimes_{KO} A \simeq M_2(KU)$ are parametrized by the preimage of the component $B\Aut_{\Alg_{KU}} M_2(KU) \subset \Az_{KU}$. We may therefore apply the obstruction theory of Section~\ref{sec:spectral-sequences}. We know that there is a chain of equivalences
\begin{align*}
KU \otimes_{KO} \End_{KO}(KU) &\simeq 
\End_{KU}(KU \otimes_{KO} KU) \\&\simeq 
\End_{KU}(KU \oplus KU) \simeq 
M_2(KU),
\end{align*}
and so we may use $\End_{KO}(KU)$ as a basepoint for the purposes of calculations. The obstruction theory then takes place in a fringed spectral sequence with $E_2$-term
\[
H^s(C_2; \pi_t B\Aut_{\Alg_{KU}}(M_2(KU))).
\]

By Corollary~\ref{cor:pgl-sequence}, we have a long exact sequence
\begin{align*}
  \cdots &\to (\pi_n KU)^\times \to (\pi_n M_2(KU))^\times \to \pi_n (\Aut_{\Alg_{KU}}(M_2 KU)) \to \cdots\\
  &\to (\pi_0 KU)^\times \to (\pi_0 M_2(KU))^\times \to \pi_0
  (\Aut_{\Alg_{KU}}(M_2(KU))) \to \pi_0 \Pic(KU).
\end{align*}
Since $\pi_* M_2(KU) \cong M_2 (\pi_*(KU))$, we find that $\Aut_{\Alg_{KU}}(M_2 KU)$ has trivial homotopy groups in odd degrees, and that for $k > 0$ there are short exact sequences
\[
0 \to \pi_{2k} KU \to \pi_{2k} M_2(KU) \to \pi_{2k} \Aut_{\Alg_{KU}}(M_2 KU) \to 0.
\]
Moveover, the $C_2$-action on $\pi_{2k}(KU \otimes_{KO} \End_{KO}(KU)) \cong M_2(KU_{2k})$ is given in matrix form by
\[
\begin{bmatrix}
a & b \\ c & d
\end{bmatrix}
\mapsto
(-1)^k \begin{bmatrix}
d & c \\ b & a
\end{bmatrix}.
\]
We may now use this to calculate group cohomology. We find that for $s,t > 0$, the cohomology $H^s(C_2; \pi_t M_2(KU))$ vanishes with this action and we have isomorphisms
\[
H^s(C_2; \pi_t \Aut_{\Alg_{KU}}(M_2 KU)) \to H^{s+1}(C_2; \pi_t \Pic(KU)),
\]
realized by the natural map $B\Aut_{\Alg_{KU}}(M_2 KU) \to B\Pic(KU)$. We display the spectral sequence for calculating lifts of $M_2(KU)$ in Figure~\ref{fig:unstable} through the $E_3$-term.  The regions where the spectral sequence is undefined at $E_2$ or $E_3$ are blocked out, and the nonabelian cohomology $H^s(C_2; \PGL_2(\ZZ))$ is indicated with $\circledast$.
\begin{figure}
\centering
\begin{tikzpicture}[scale=0.5,>=stealth]
\draw[->] (0,0) -- (0,12.3);
\node[above] at (0,12.3) {$s$};
\node[right] at (14.3,0) {$t-s$};
\node[below] at (0,0) {0};
\node[below] at (8,0) {8};
\node[below] at (-8,0) {-8};
\draw[<->] (-11.3,0) -- (14.3,0);
\clip (-11.1,-1) rectangle (14.1,12.1);
\draw[help lines] (-12.1,0) grid (20.1,12.1);
\draw[pattern=crosshatch dots,pattern color=black!40] (0,0) -- (0,2) -- (-11,13) -- (-13,13) -- (-13,0);
\draw[pattern=crosshatch dots,pattern color=black!20] (0,2) -- (0,3) -- (-5,13) -- (-11,13);

\foreach \x in {7,15,...,24} {
  \foreach \y in {0,...,12} {
      \draw[->,color=gray] (\x+\y,\y) -- (\x+\y-1,\y+3);}}

\foreach \x in {2,-2} {
  \foreach \y in {0,...,12} {
      \draw[->,color=gray] (\x+\y,5-\x+\y) -- (\x+\y-1,8-\x+\y);}}

\foreach \x in {3,7,...,16} {
  \filldraw[fill=white] (\x-.2,-.2) rectangle (\x+.2,.2);
  \filldraw[fill=white] (\x-.1,-.1) rectangle (\x+.1,.1);}
\foreach \x in {5,9,...,16} {
  \filldraw[fill=white] (\x-.2,-.2) rectangle (\x+.2,.2);}
\filldraw \foreach \x in {3,7,...,24} {\foreach \y in {2,4,...,12}
  {(\x-\y,\y) \tdot}};
\filldraw \foreach \x in {5,9,...,24} {\foreach \y in {1,3,...,12}
  {(\x-\y,\y) \tdot}};
\filldraw[fill=white] (1,0) circle [radius=0.3] (0,1) circle [radius=0.3];
\node at (1,0) {$\ast$};
\node at (0,1) {$\ast$};
\end{tikzpicture}
\caption{Fixed-point spectral sequence for $B\Aut(KU \otimes_{KO} \End_{KO}(KU))$ up to $E_3$}
\label{fig:unstable}
\end{figure}
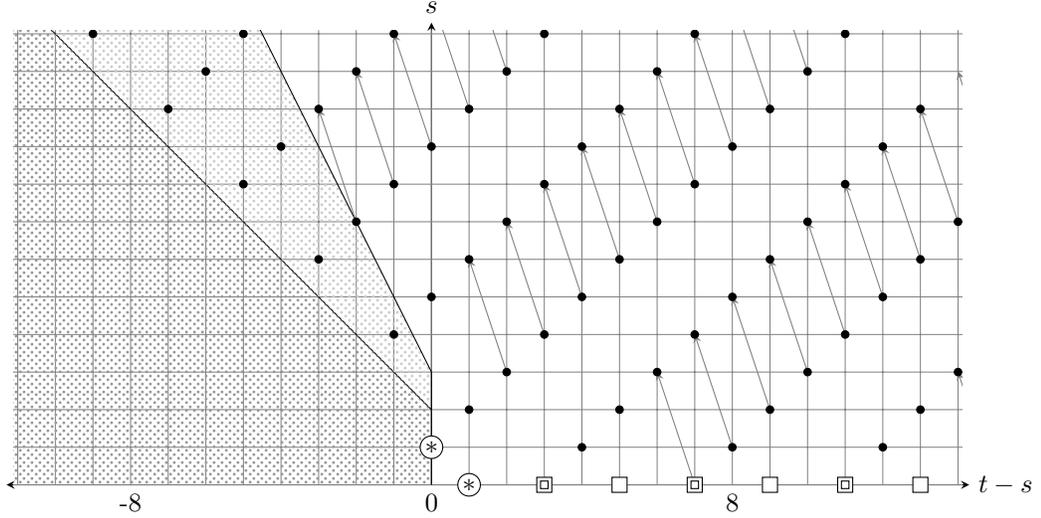
The first detail we note about this spectral sequence is that for $t-s \geq -1$, the $E_4$-page vanishes entirely for $s \geq 5$.
There are potential obstructions to lifting in the column $t-s = -1$ and to uniqueness in the column $t-s = 0$; we will now discuss these obstruction groups using the machinery of Section~\ref{sec:spectral-sequences}.

Because the groups $E_r^{s,s-1}$ and $E_r^{s,s}$ become trivial at
$E_4$ for $s > 5$, there are no obstructions to existence
or uniqueness of lifting algebras beyond $\Tot^5$: any Azumaya $KU$-algebra equivalent to $M_2(KU)$ with a lift to $\Tot^5$ has an essentially unique further lift to an Azumaya $KO$-algebra.

The group $E_2^{4,3}$ is $\ZZ/2$, and this group is a potential home for obstructions for a point in $\Tot^2$ which lifts to $\Tot^3$ to also lift to $\Tot^4$ (see Remark~\ref{rmk:quaternion-obstruction} for further elaboration). Since we have already chosen a lift of $M_2(KU)$ to the algebra $\End_{KO}(KU)$ in the homotopy limit to govern the obstruction theory, the obstruction must be zero at this basepoint.

The group $E_3^{5,5}$ parametrizes differences between lifts from $\Tot^4$ to $\Tot^5$. This group is $\ZZ/2$, and contains only permanent cycles due to the fact that the spectral sequence has a vanishing region at $E_4$. Therefore, there are two distinct lifts of $KU \otimes_{KO} \End_{KO}(KU)$ from $\Tot^2$ to $\Tot^5$, representing two inequivalent $KO$-algebras which become equivalent to $M_2(KU)$ after extending scalars. One of these is $\End_{KO}(KU)$; we will refer to the other algebra as $Q$.

Moreover, the map $B\Aut_{\Alg_{KU}}(M_2 KU) \to B\Pic(KU)$ induces an isomorphism on homotopy fixed-point spectral sequences in the relevant degree. The generator of $E_3^{5,5}$ representing $Q$ therefore maps to the nontrivial element of $\pi_0 (B\Pic(KU))^{hC_2} \subset \pi_0 \Br(KO)$, and so any points of the fixed-point category with distinct lifts to $\Tot^5$ are Morita inequivalent.

Hence, there exists precisely one other $KO$-algebra, $Q$, whose image in $\Tot^4$ is the same as the image of $\End_{KO}(KU)$, and $Q$ is Morita inequivalent to any endomorphism algebra.
\end{proof}

\begin{remark}
\label{rmk:quaternion-obstruction}
The obstruction group $E_2^{4,3}$ deserves some mention. There is an element in $H^1(C_2; \pi_1 B\Aut_{\Alg_{KU}}(M_2 KU))$ whose image in $H^2(C_2; \pi_2 B\Pic(KU))$ is nontrivial. More explicitly, $\pi_1 B\Aut_{\Alg_{KU}}(M_2 KU)$ contains $PGL_2(\ZZ)$ and this $H^1$-class is represented by the alternative action
\[
\begin{bmatrix}
a & b \\ c & d
\end{bmatrix}
\mapsto
(-1)^t
\begin{bmatrix}
0 & 1 \\ -1 & 0
\end{bmatrix}
\begin{bmatrix}
a & b \\ c & d
\end{bmatrix}
\begin{bmatrix}
0 & -1 \\ 1 & 0
\end{bmatrix}
\]
of $C_2$ on $\pi_{2t} M_2(KU)$. One might hope that there is a $KO$-algebra $A$ such that $KU \otimes_{KO} A$ is $M_2(KU)$ with this alternative $C_2$-action on the coefficient ring.

For example, we might imagine finding a self-map $\phi\co KO \to KO$ representing multiplication by $-1 \in \pi_0(KO)$, and using it to produce an action of $C_2$ on $KU \otimes_{KO} M_2(KO)$ such that the generator acts on the $KU$ factor by complex conjugation and on the $M_2(KO)$-factor by
\[
\begin{bmatrix}
a & b \\ c & d
\end{bmatrix}
\mapsto
\begin{bmatrix}
0 & 1 \\ \phi & 0
\end{bmatrix}
\begin{bmatrix}
a & b \\ c & d
\end{bmatrix}
\begin{bmatrix}
0 & \phi \\ 1 & 0
\end{bmatrix}.
\]

In the classical setup, one encounters a sequence of difficulties with carrying this program out. The spectrum $KO$ cannot be a fibrant-cofibrant $KO$-module if $KO$ is strictly commutative, so we require a replacement in order for $\phi$ to be defined. Then this replacement is not strictly the unit for the smash product and so we cannot move $\phi$ across a smash product without an intervening homotopy. In order to make this a ring homomorphism one either wants $\phi^2$ to be the identity, or one wants to replace $\phi$ by an automorphism so that we can genuinely replace this with a conjugation action. And so on. One is left with the feeling that these are technical details and the tools are just barely inadequate for the job, but this is not the case: this $H^1$-class cannot be realized by an algebra {\em at all} because the image in $H^2(C_2;\pi_2 B\Pic(KU))$ supports a $d_3$-differential (see Figure~\ref{fig:stable}). These seemingly mild details are {\em fundamental} to the situation.
\end{remark}


\bibliography{pgl}

\end{document}